\newtheorem{theorem}{Theorem}[section]
\newtheorem{prop}{Proposition}[section]
\newtheorem{cor}{Corollary}[section]
\newtheorem{conj}{Conjecture}[section]
\newtheorem{lemma}[theorem]{Lemma}
\newtheorem{remark}{Remark}
\newtheorem{example}{Example}
\newtheorem{definition}{Definition}[section]
\newcommand{\R}{\mathbb{R}}
\newcommand{\Z}{\mathbb{Z}}
\newcommand{\N}{\mathbb{N}}
\newcommand{\Q}{\mathbb{Q}}
\newcommand{\inte}{{\mathop{\bf int}}}  
\newcommand{\USC}{{\mathrm{USC}}} 
\newcommand{\LSC}{{\mathrm{LSC}}} 
\newcommand{\conv}{\mathop{\bf conv}}
\newcommand{\cone}{\mathop{\bf cone}}
\newcommand{\Poisson}{\mbox{Poisson}}
\newcommand{\eg}{{\it e.g.}}
\newcommand{\ie}{{\it i.e.}}
\newcommand{\br}{\mathbf{br}}  
\title{Hamilton-Jacobi scaling limits of Pareto peeling in 2D}
\author{Ahmed Bou-Rabee}
\author{Peter S. Morfe}
\begin{document}

	\begin{abstract}
	Pareto hull peeling is a discrete algorithm, generalizing convex hull peeling, for sorting points in Euclidean space. We prove that Pareto peeling of a random point set in two dimensions has a scaling limit described by a first-order Hamilton-Jacobi equation and give an explicit formula for the limiting Hamiltonian, which is both non-coercive and non-convex.  This contrasts with convex peeling, which converges to curvature flow.  The proof involves direct geometric manipulations in the same spirit as Calder (2016).
	\end{abstract}
\maketitle


\section{Introduction}
\subsection{Overview}
	Consider $\R^2$ equipped with a norm $\varphi(\cdot)$ and let $A$ be a finite subset of $\R^2$. A point $x\in \R^2$ is in the {\it Pareto hull} of $A$
	if, for every $y \in \R^2 \setminus \{a\}$, there exists $a \in A$ such that $\varphi(a-x) < \varphi(a-y)$. The Pareto hull peeling process proceeds by repeatedly taking the Pareto hull, $\mathcal{P}(A)$, and removing points on its boundary:
	\begin{equation}
	E_1(A) = \mathcal{P}(A) \quad \mbox{and} \quad E_{k+1}(A) = \mathcal{P}(A \cap \inte(E_k(A))).
	\end{equation}
	When the unit ball of $\varphi(\cdot)$ has no flat spots, the Pareto hull coincides with the convex hull \cite{thisse1984some}. Calder-Smart showed that convex hull peeling of points drawn (independently) at random converges to curvature flow as the number of points goes to infinity \cite{calder2020limit}. Here we consider the more general case and find that whenever the unit ball of $\varphi(\cdot)$ has a facet,  the scaling limit of Pareto hull peeling solves a first-order Hamilton-Jacobi equation. Hence the facets lead to faster, `ballistic' motion contrasting with the strictly convex case which has a slower, `diffusive' limit. Higher dimensional analogues are discussed in Section \ref{sec:conclusion}.

	\begin{figure}[h!]
	\includegraphics[width=0.24\textwidth]{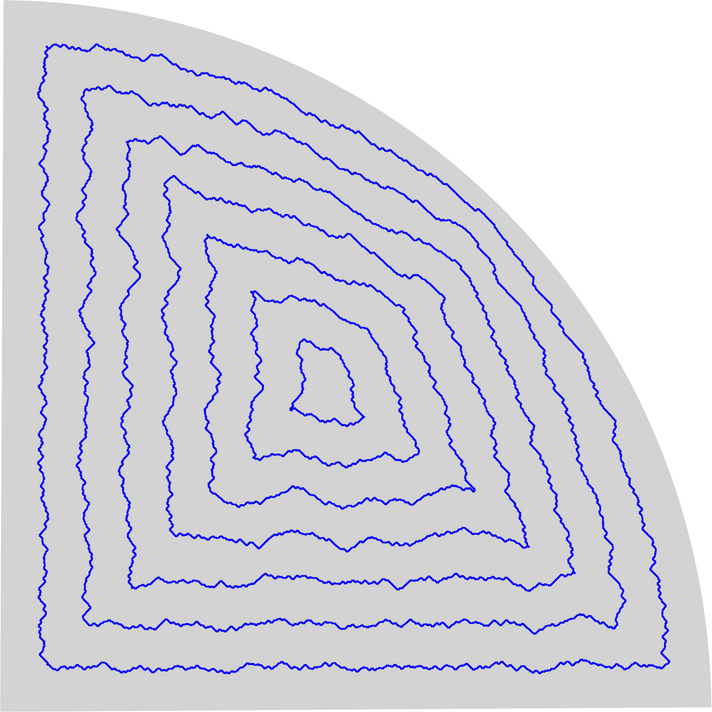} \quad
	\includegraphics[width=0.26\textwidth]{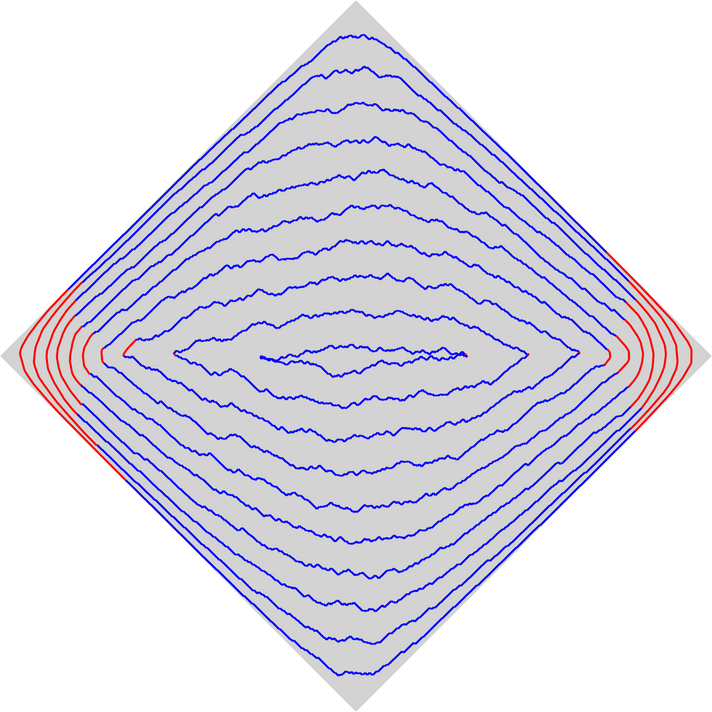}  \quad
	\includegraphics[width=0.24\textwidth]{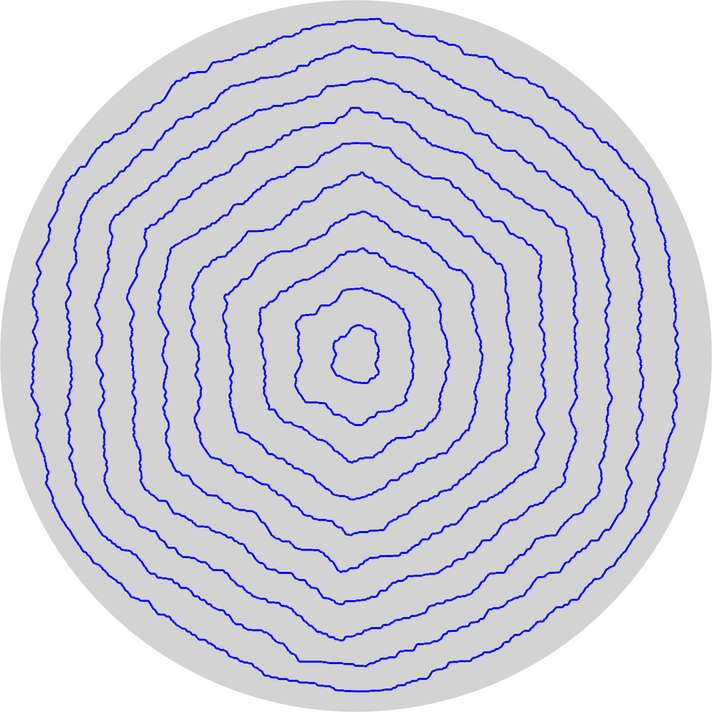}  
	\caption{Peels, $E_k$, for $k$ a multiple of $30$, of Pareto peeling of homogeneous Poisson clouds in the shaded domains, with respect to various $\varphi$ displayed in Figure \ref{fig:norm-balls}. Peels are colored blue if `constrained' by a facet and red otherwise --- this is made precise in Section \ref{sec:pareto-hulls}.} \label{fig:pareto-peeling}
	\end{figure}
	
	\begin{figure}[t]
	\includegraphics[width=0.15\textwidth]{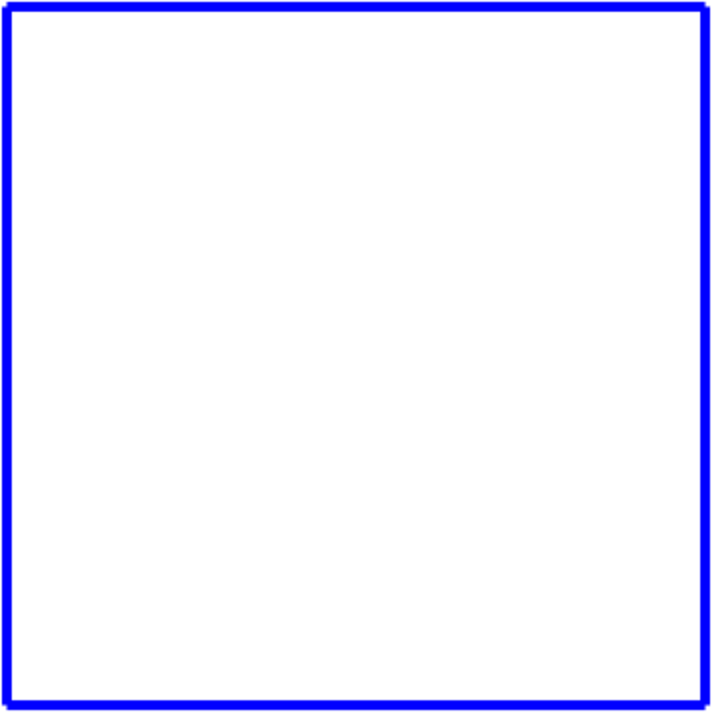}   \qquad
	\includegraphics[width=0.15\textwidth, height = 0.15\textwidth]{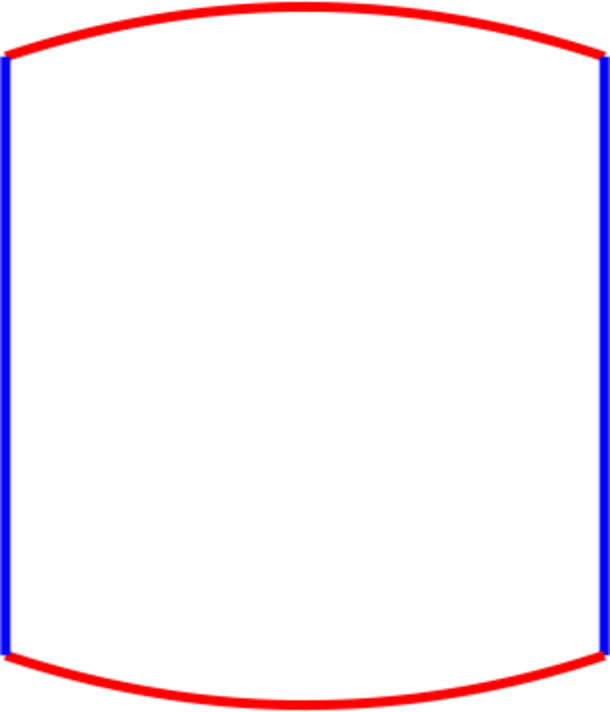}  \quad
	\includegraphics[width=0.18\textwidth]{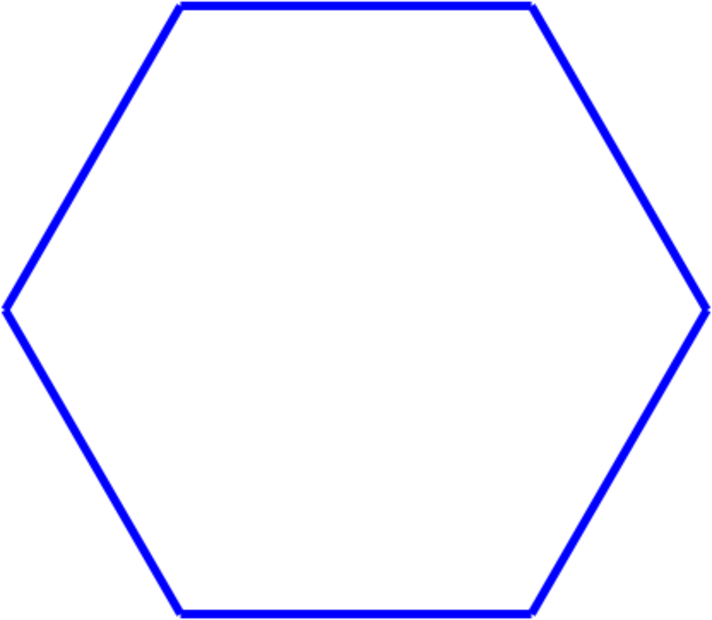}  \quad
	\caption{Unit balls, $\{ \varphi \leq 1\}$,  with flat edges outlined in blue and `round' edges in red.} \label{fig:norm-balls}
	\end{figure}

	\subsection{Background}
The term Pareto hull or Pareto envelope originates from computer science \cite{corne2000pareto, nouioua2005enveloppes}; 
however, these hulls were studied much earlier under the name {\it sets of strictly efficient points} in (what is now known as) the field of location analysis \cite{laporte2019introduction, smith2009locational}. Briefly, location analysis is a specialized branch of combinatorial optimization which studies the `best location' for a set of `facilities' under various constraints. 

The so-called {\it Fermat-Weber} or {\it point objective} problem aims to determine the location 
of a single facility which minimizes the distance to a finite number of demand points, \eg,  deciding where to build a factory
serving multiple customers. Since this is a multi-objective problem, there are several ways to define optimal --- 
one way to do so is with the Pareto hull. (Other definitions, in the context of this paper, are discussed in Section \ref{sec:conclusion}.)

Under this notion of optimality, Kuhn showed that when the chosen distance metric is Euclidean, the set of optimal solutions to the Fermat-Weber problem lie in the convex hull of the set of demand points \cite{kuhn1967pair, kuhn1973note}. In particular, the Pareto hull coincides with the convex hull in this case. Following earlier work of Ward-Wendell, Thisse-Ward-Wendell extended this characterization in two dimensions to any distance induced by a norm with strictly convex unit ball \cite{ward1985using, thisse1984some}. 
Geometric properties of the Pareto hull and general algorithms to compute them appear in papers by Ndiaye-Michelot \cite{ndiaye1997, ndiaye1998efficiency}, Durier \cite{durier1987sets, durier1990pareto},
Durier-Michelot \cite{durier1985geometrical, durier1994set},
and Pelegrin-Fernandez \cite{pelegrin1988determination, pelegrin1989determination}.

Notably, Durier-Michelot \cite{durier1986sets} present a beautiful and deep characterization of the Pareto hull 
in terms of supporting cones --- this generalizes the halfspace description of convex hulls. 
We use this to extend the dynamic programming principle for convex hull peeling
\cite{calder2020limit} to Pareto peeling. In fact, the scaling limits in this paper may be thought of as continuum versions
of this dynamic programming principle. 

The limiting equations we derive are closely related to the continuum limit of nondominated sorting, proving a conjecture of Calder \cite{calderminicourse}. 
Briefly, nondominated sorting is an algorithm for sorting points in Euclidean space according to the coordinatewise partial order.
Calder-Esedoglu-Hero showed that nondominated sorting of random points has a scaling limit described by an explicit Hamilton-Jacobi equation \cite{calder2014continuum,calder2014hamilton,calder2015pde,calder2017numerical}. Recently Calder-Cook established a rate of convergence to this continuum limit \cite{cook2022rates}
--- it would be interesting to adapt those ideas to Pareto peeling.

\subsection{Main result}

Our convergence result is captured via the {\it height function} of Pareto hull peeling, 
\begin{equation} \label{eq:height}
u_A = \sum_{k \geq 1} 1_{\inte(E_k(A))}.
\end{equation} 
For simplicity, we model our random data via a Poisson process \cite{kingman1992poisson}, $X_{n f}$, of intensity $n f$ in an open set $\mathrm{U}$.  Our only restriction on $f$ is that it is a bounded, strictly positive, continuous function in $\mathrm{U}$. We will write $u_n := u_{X_{nf}}$, and rescale by  $\bar{u}_n(x) := n^{-1/2} u_n(x)$. 

We require that the domain $\mathrm{U}$ be a bounded, open {\it Pareto efficient} subset of $\R^2$, that is, a set for which $\inte(\mathcal{P}(\bar{\mathrm{U}})) = \mathrm{U}$.
 We further assume that $\mathrm{U}$ is `compatible' with $\varphi$. The definition of compatibility is somewhat technical and will be given in Definition \ref{def:compatible} of Section \ref{subsec:domain-consistency} below. Importantly, we later indicate some necessity of this condition via an explicit counterexample.  For now, we note that when the unit ball of $\varphi$ is a polygon, any convex set is both Pareto efficient and compatible. 
In fact, it is useful to note that convex sets are always Pareto efficient (Lemma \ref{lemma:convexity-and-efficient}), but the two notions are not equivalent; see Figure \ref{fig:non-convex-example}.

In our main result, we assume that $\varphi$ is a norm in $\R^{2}$ for which the unit ball $\{\varphi \leq 1\}$ is not strictly convex, or, more precisely:
	\begin{align}
		&\text{The unit ball} \, \, \{\varphi \leq 1\} \, \, \text{has at least one boundary facet.}\label{eq:linear-assumption}
	\end{align}
An easy example is when $\{\varphi \leq 1\}$ is a polygon, but many more complicated shapes are also possible.

\begin{theorem}\label{theorem:pareto-convergence}
If $\varphi$ satisfies \eqref{eq:linear-assumption} and $\mathrm{U}$ is a bounded, open Pareto efficient set in $\R^{2}$ that is compatible with $\varphi$ (see Definition \ref{def:compatible} below), then, on an event of probability 1, the sequence of rescaled height functions $(\bar{u}_{n})_{n \in \mathbb{N}}$ converges uniformly in $\bar{\mathrm{U}}$
to the unique viscosity solution $\bar{u}$ of the PDE:
\begin{equation} \label{eq:eikonal}
\begin{cases}
\bar{H}_\varphi(D\bar{u}) = f & \mbox{ in $\mathrm{U}$}, \\
\bar{u} = 0 &\mbox{ on $\partial \mathrm{U}$}.
\end{cases}
\end{equation}
Here $\bar{H}_\varphi(\cdot)$ is a non-negative, continuous Hamiltonian that only depends on $\{\varphi \leq 1\}$ and is neither convex nor coercive (see \eqref{eq:effective-hamiltonian} below for the formula).

\end{theorem}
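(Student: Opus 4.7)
The plan is to adapt Calder's approach for convex hull peeling to the Pareto setting, leveraging the richer cone structure that facets of $\{\varphi \leq 1\}$ provide. The first step is to extract a stochastic dynamic programming principle (DPP) for the height function $u_n$ from the Durier--Michelot characterization of Pareto hulls: a point $x$ lies on $\partial \mathcal{P}(A)$ iff there is a cone $C$ from an admissible family $\mathcal{C}_\varphi$ depending only on $\{\varphi \leq 1\}$ such that $(x + C) \cap A = \emptyset$. Iterating, $u_n(x) \geq k + 1$ iff for every $C \in \mathcal{C}_\varphi$ the translated cone $x + C$ contains a point of $X_{nf}$ with height at least $k$, which is the discrete analogue of a first-order Hamilton--Jacobi equation whose Hamiltonian is built from $\mathcal{C}_\varphi$. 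The effective Hamiltonian $\bar{H}_\varphi(p)$ is then identified via a planar cell problem: peel a unit-intensity Poisson process on the half-plane $\{x \cdot p \leq 0\}$ and count peels per unit distance from the boundary line. By self-similarity of the Poisson process and standard concentration inequalities, this count converges deterministically to $\bar{H}_\varphi(p)$. The $n^{-1/2}$ scaling reflects the fact that a facet can accommodate $\Theta(\sqrt{n})$ points along a single peel, and only facets whose outer normal is suitably aligned with $p$ contribute; this accounts for both the non-coercivity (directions not supported by any facet give $\bar{H}_\varphi(p) = 0$) and the non-convexity (the Hamiltonian is a maximum of contributions indexed by facet pairs).

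With a candidate Hamiltonian in hand, the viscosity sub- and super-solution properties of any uniform subsequential limit $\bar u$ of $\bar u_n$ follow by the standard test-function argument, adapted to the stochastic DPP. At a strict local maximum $x_0$ of $\bar u_n - \phi$ with $p = D\phi(x_0)$, applying the cone DPP on scale $n^{-1/2}$ compares $\bar u_n$ near $x_0$ to the planar cell problem at slope $p$ on a small ball where $f$ is nearly constant; taking $n \to \infty$ yields $\bar{H}_\varphi(p) \leq f(x_0)$ almost surely. The supersolution inequality is symmetric, established via lower barriers obtained by peeling deterministic seed configurations. Equicontinuity, needed to extract subsequential limits in the first place, would come from a priori Lipschitz bounds on $\bar u_n$ in facet-normal directions coming from the same cell problem analysis. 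The boundary condition $\bar u_n \to 0$ on $\partial \mathrm{U}$ depends crucially on the compatibility hypothesis: near $\partial \mathrm{U}$, compatibility guarantees that $\mathrm{U}$ locally resembles the interior of a Pareto hull at every scale, so each peel immediately meets $\partial \mathrm{U}$ and no ``boundary-pinned'' stack of peels can accumulate.

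The principal obstacle is the non-convexity and non-coercivity of $\bar H_\varphi$, which preclude a direct appeal to Crandall--Lions--Ishii comparison. I would hope to recover uniqueness by exploiting the explicit structure of $\bar H_\varphi$ as a supremum over contributions that are convex and coercive in facet-normal directions, and then carrying out a ``sliced'' comparison argument along each facet direction, within the class of solutions vanishing on $\partial \mathrm{U}$. A secondary but delicate task is the quantitative use of compatibility near $\partial \mathrm{U}$; since the paper flags a counterexample showing it is necessary, tracking how it propagates through the DPP at boundary points is likely to absorb a disproportionate share of the technical effort.
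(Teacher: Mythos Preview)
Your outline captures the DPP-to-viscosity-solution skeleton correctly, and the use of compatibility for the boundary condition is on target. But there is a genuine gap in the supersolution step, and a misjudgment on comparison.

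\textbf{The supersolution is not symmetric to the subsolution.} You write that the supersolution inequality is ``symmetric, established via lower barriers obtained by peeling deterministic seed configurations.'' This is where the real work lies, and it is \emph{not} symmetric. The subsolution side is indeed a direct adaptation of Calder's nondominated-sorting argument: the DPP gives $u_n \leq s_{X_{nf}}^{Q_p}$ for every $p \in \mathcal{N}^*$, so the longest-chain asymptotics bound $\bar H_\varphi(D\psi(x_0)) \leq f(x_0)$. For the supersolution, however, you must rule out the case $\bar H_\varphi(D\psi(x_0)) = 0$, i.e.\ $D\psi(x_0)^\perp \in \cone(\mathcal{E})$. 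Since $\bar H_\varphi$ is neither coercive nor strictly positive away from the origin, a test function touching $u_*$ from below with gradient in a degenerate direction is not automatically excluded by any cell-problem comparison; your lower-barrier idea produces nothing there because the relevant cone contribution is a half-space, whose ``peeling rate'' is zero at this scaling. The paper handles this with a separate suite of \emph{growth lemmas} (box growth, planar growth) showing that near any point the height function must increase at rate $\sim n^{1/2}$ in directions transverse to every candidate degenerate tangent, forcing corners in $\bar u$ that preclude $D\psi(x_0)$ from lying in the degenerate set. This is the technically new part of the argument and your proposal does not account for it.

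\textbf{Comparison is easier than you suggest.} You propose a ``sliced'' comparison along facet directions to circumvent non-convexity and non-coercivity. This is unnecessary: $\bar H_\varphi$ is positively $2$-homogeneous, so $w^\epsilon := (1-\epsilon)w$ satisfies $\bar H_\varphi(Dw^\epsilon) \leq (1-\epsilon)^2 f$, which is a strict subsolution since $f > 0$. Standard strict comparison then applies directly. Also, you do not need equicontinuity or a priori Lipschitz bounds to extract limits: the paper works with upper and lower half-relaxed limits $u^*$, $u_*$, proves sub/supersolution for each, and lets comparison do the rest. Finally, the natural cell problem is $Q_p$-nondominated sorting in a cone, not peeling in a half-plane; the half-plane contribution is precisely what vanishes at the $n^{1/2}$ scale.
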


\begin{figure}
		\includegraphics[width=0.25\textwidth]{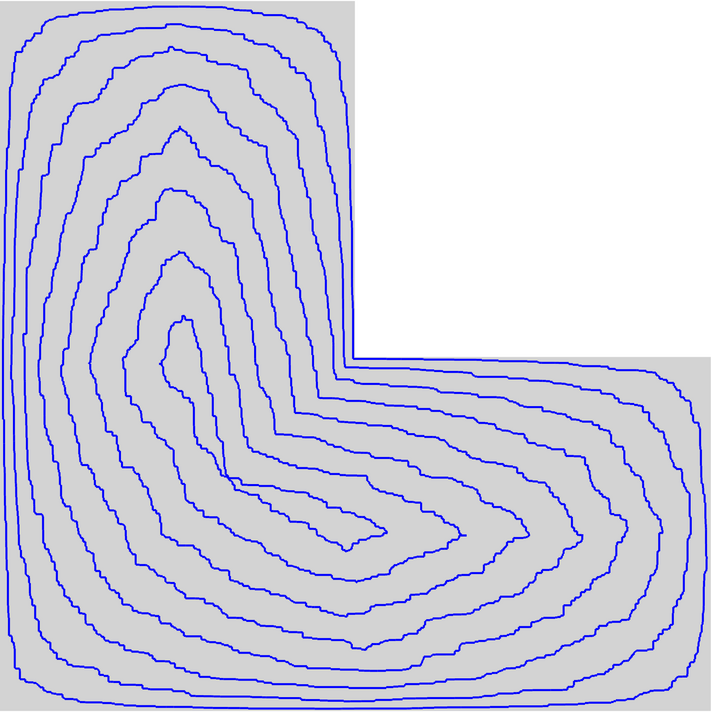} \qquad		\includegraphics[width=0.25\textwidth]{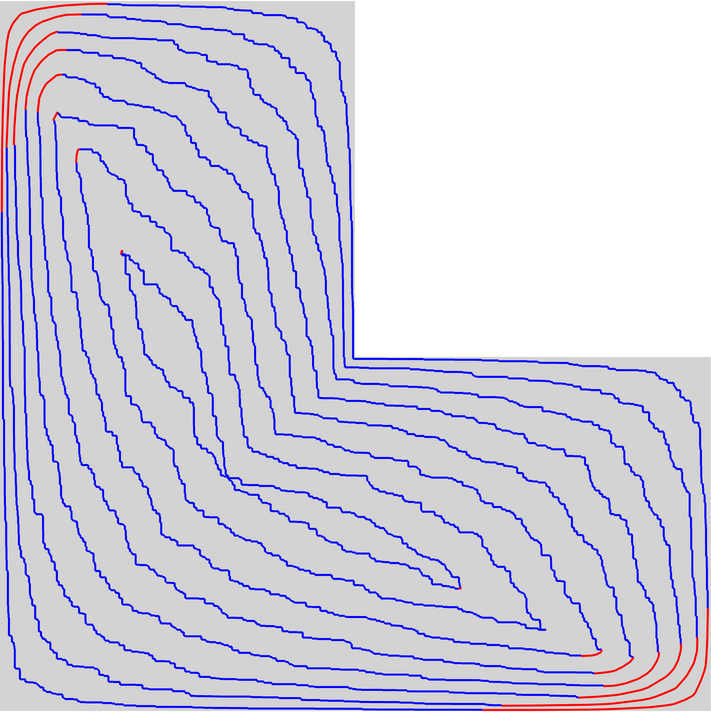}
		\caption{Pareto peeling of homogeneous Poisson clouds in the shaded domains with respect to the two norms in Example \ref{example:hamiltonian-l1-norm}. The domains are Pareto efficient with respect to these norms, but not convex.}
	\label{fig:non-convex-example}
\end{figure}

In the theorem, assumption \eqref{eq:linear-assumption} is a necessary condition.  When \eqref{eq:linear-assumption} fails, $\{\varphi \leq 1\}$ is strictly convex and classical results in location analysis imply that the Pareto hull is nothing but the convex hull.  In this case, Calder-Smart \cite{calder2020limit} have already shown that convex hull peeling converges, but with the
larger $n^{\frac{2}{3}}$ rescaling identified earlier by Dalal \cite{dalal2004counting}. As a natural byproduct of our arguments, we give a self-contained proof that the $n^{\frac{1}{2}}$ scaling of convex hull peeling is trivial.

\begin{cor} \label{cor:convex-peeling}  If $\{\varphi \leq 1\}$ is strictly convex, \ie, if \eqref{eq:linear-assumption} does not hold, then $\bar{u}_{n} \to \infty$ locally uniformly in $\mathrm{U}$.  \end{cor}

Finally, where the compatibility assumption is concerned, we prove that convergence may fail if it does not hold.

\begin{cor} \label{cor:compatible-counterexample} There is a norm $\varphi$ satisfying \eqref{eq:linear-assumption} and an open, bounded Pareto efficient set $\mathrm{U}$ in $\R^{2}$ that is \emph{not} compatible with $\varphi$ and for which the rescaled height functions do not converge uniformly to a continuous function in $\bar{\mathrm{U}}$.  \end{cor}

Our proof explicitly identifies the effective Hamiltonian $\bar{H}_{\varphi}(\cdot)$ in Theorem \ref{theorem:pareto-convergence}. The full description  of $\bar{H}_{\varphi}$ will be postponed till Section \ref{sec:pareto-hulls}.  For now, we give an example that already demonstrates the main qualitative features of these functions.  When $\varphi$ is the $\ell^{1}$ norm, this confirms a conjecture of Calder \cite{calderminicourse} and reflects the fact that nondominated sorting partly describes the local behavior of Pareto peeling. 
\begin{example}\label{example:hamiltonian-l1-norm}
For $p' \in \R^2$ when $\varphi(x) = |x_1| + |x_2|$,  the effective Hamiltonian is
\[
\bar{H}_{\varphi}(p') = |p'_1 p'_2|.
\]
If instead $\varphi(x) =  \max( |x_1 - x_2|, \|x\|_2)$, then we have
	\[
	\bar{H}_{\varphi}(p') = \max( p'_1 p'_2, 0).
	\]
	The unit balls for the prior two norms are displayed in Figure \ref{fig:norm-pics}. 
\end{example}

\subsection{Method of Proof}  There are three main steps in the proof of Theorem \ref{theorem:pareto-convergence}.  First we prove that the height function is determined by a dynamic programming principle (DPP).  The full description of the DPP is given in Section \ref{subsec:norm-partition}. Roughly, there is a family of cones $\{Q_{p}\}_{p \in \mathcal{N}^{*}}$ and a family of halfspaces $\{H_{q^{\perp}}\}_{q \in \mathcal{E}}$ 
such that the height function for Pareto peeling of a finite set of points $A$ is given by
	\begin{equation*}
		u_{A}(x) = \min \left\{ \inf_{p \in \mathcal{N}^{*}} \sup_{y \in x + \inte(Q_{p})} u_{n}(y) + 1_{A}(y), \inf_{q \in \mathcal{E}} \sup_{y \in x + \inte(H_{q^{\perp}})} u_{n}(y) + 1_{A}(y) \right\}.
	\end{equation*}
The proof of this fact uses a geometric characterization of Pareto hulls from location analysis, recalled as Theorem \ref{thm:cone_efficient} below.

This DPP is a generalization of the DPPs appearing in nondominated sorting and convex hull peeling and serves as a starting point for the analysis.   In fact, the structure of the limiting Hamiltonian  $\bar{H}_{\varphi}$ mirrors that of the DPP, 
	\begin{equation*}
		\bar{H}_{\varphi}(\xi) = \max \left\{ \sup_{p \in \mathcal{N}^{*}} \frac{\langle \xi,v_{p} \rangle \langle \xi, w_{p} \rangle}{|v_{p} \times w_{p}|}, 0 \right\},
	\end{equation*}
where $\{(v_{p},w_{p}) \, \mid \, p \in \mathcal{N}^{*}\}$ are certain vectors associated to the cones $\{Q_{p}\}_{p \in \mathcal{N}^{*}}$.  In particular, the cones of the DPP are in one-to-one correspondence with the quadratic terms, while the influence of the halfspaces $\{H_{q^{\perp}}\}_{q \in \mathcal{E}}$, represented by the zero appearing in the supremum, becomes trivial in the limit.  Due to the vanishing contribution of
the halfspaces, the Hamiltonian is always non-convex and non-coercive, and it becomes necessary to separately analyze the nondegenerate directions ($\xi$ with $\bar{H}_{\varphi}(\xi) > 0$) and degenerate directions ($\xi$ with $\bar{H}_{\varphi}(\xi) = 0$).  The presence of degenerate directions is a fundamental difference between Pareto hull peeling and convex hull peeling, as in the latter the limiting PDE is isotropic.

The analysis of nondegenerate directions (Sections \ref{sec:subsolution} and \ref{sec:supersolution}) is the second step of the proof.  Here we argue that, locally near a point where $\bar{H}_{\varphi}(D\bar{u}) > 0$ holds, an affine transformation of nondominated sorting describes the behavior of the height function. This is achieved via a geometric ``direct verification" argument that builds on the techniques of \cite{calder2016direct}.  

The third step (Section \ref{sec:degenerate-directions}) is to show that $D\bar{u}$ never points in a degenerate direction, or, more precisely, $\bar{H}_{\varphi}(D\bar{u}) > 0$ holds in the viscosity sense. The ideas involved in this step are completely new, as here, unlike in nondominated sorting, it is necessary to understand the behavior of the height function at points where it is influenced by multiple cones in the family $\{Q_{p}\}_{p \in \mathcal{N}^{*}}$.  The nondegeneracy of $D\bar{u}$ is proved using a series of growth lemmas. 
These lemmas demonstrate how the randomness of the point cloud influences the graph of the height function, forcing it to develop corners in the degenerate directions (see Remark \ref{remark:corners}). These corners are the reason that $\bar{H}_{\varphi}(Du) > 0$ holds.

These three steps show local convergence of the height function. To argue that the convergence occurs globally, we impose a compatibility condition on $\mathrm{U}$, the support of the point cloud,
which ensures that $\bar{u} = 0$ on $\partial \mathrm{U}$. We expect that this condition is necessary and provide a counterexample where the boundary condition fails in Section \ref{sec:counterexample}.

Lastly, this work constitutes a contribution to the location analysis literature. While the sets $\mathcal{E}$ and $\{Q_{p}\}_{p \in \mathcal{N}^{*}}$ are not new in location analysis, the fact that only the latter appears in the limiting Hamiltonian $\bar{H}_{\varphi}$ leads to the strange geometric properties of the limiting height function $\bar{u}$ already highlighted above.  At a technical level, this paper contributes some new notions, such as the compatibility condition, and also highlights the utility of convex duality (particularly dual cones), which may be useful elsewhere.  Finally, as is explained next, the limit $\bar{u}$ can be interpreted as the ``arrival time" function of a certain geometric flow.  To the best of our knowledge, this flow is new.  
\subsection{Level Set Formulation}\label{sec:level-set-pde}

The scaling limit of the height functions can be rephrased in terms of the Pareto hull peeling process itself.  Notice that if the limit function $u$ solves \eqref{eq:eikonal}, then the function $v$ defined in $\mathrm{U} \times (0,\infty)$ by 
	\begin{equation} \label{eq:arrival-time}
		v(x,t) = u(x) - t
	\end{equation}
is a solution of the parabolic PDE
	\begin{equation} \label{eq:level-set}
		\sqrt{f} v_{t} + \sqrt{\bar{H}_{\varphi}(Dv)} = 0 \quad \text{in} \, \, \mathrm{U} \times (0,\infty).
	\end{equation}
This can be understood as the level set formulation of a geometric flow.  

More precisely, if we define sets $(E_{t})_{t \geq 0}$ by 
	\begin{equation} \label{eq:geometric-flow}
		E_{t} = \{x \in \mathrm{U} \, \mid \, v(x,t) > 0\} = \{x \in \mathrm{U} \, \mid \, u(x) > t\},
	\end{equation}
then these sets form a generalized level set evolution with normal velocity
	\begin{equation} \label{eq:normal-velocity}
		V_{\partial E_{t}} = \sqrt{f^{-1} \bar{H}_{\varphi}(n_{\partial E_{t}})}.
	\end{equation}
The correspondence between level set PDE such as \eqref{eq:level-set} and generalized level set evolutions is explained in \cite{barles_souganidis_fronts}.
	
Stated in these terms, our result reads as follows:
	
	\begin{cor} \label{cor:level-set} Given $n \in \N$, let $\{E^{(n)}_{k}\}_{k \in \N}$ be the Pareto hull peeling process associated with $X_{nf}$.  If $\varphi$ satisfies \eqref{eq:linear-assumption} and $\mathrm{U}$ is a bounded, open Pareto efficient set compatible with $\varphi$, then, with probability one,
		\begin{equation*}
			\bar{E}^{(n)}_{\lfloor n^{\frac{1}{2}} t \rfloor} \to \bar{E}_{t} \quad \text{for each} \, \, t > 0,
		\end{equation*}
	where $(E_{t})_{t \geq 0}$ is the generalized level set evolution with velocity \eqref{eq:normal-velocity} and initial datum $E_{0} = \mathrm{U}$ and the convergence is in the Hausdorff metric. \end{cor}
	
We reiterate that when \eqref{eq:linear-assumption} fails, the norm ball $\{\varphi \leq 1\}$ is strictly convex, Pareto hull peeling coincides with convex hull peeling, and the scaling is different.  In \cite{calder2020limit}, it is shown that, in this case,
	\begin{equation*}
		E^{(n)}_{\lfloor n^{\frac{2}{3}} t \rfloor} \to E_{t}
	\end{equation*}
where $(E_{t})_{t \geq 0}$ shrinks according to affine curvature flow.
\begin{figure}
	\includegraphics[width=0.25\textwidth]{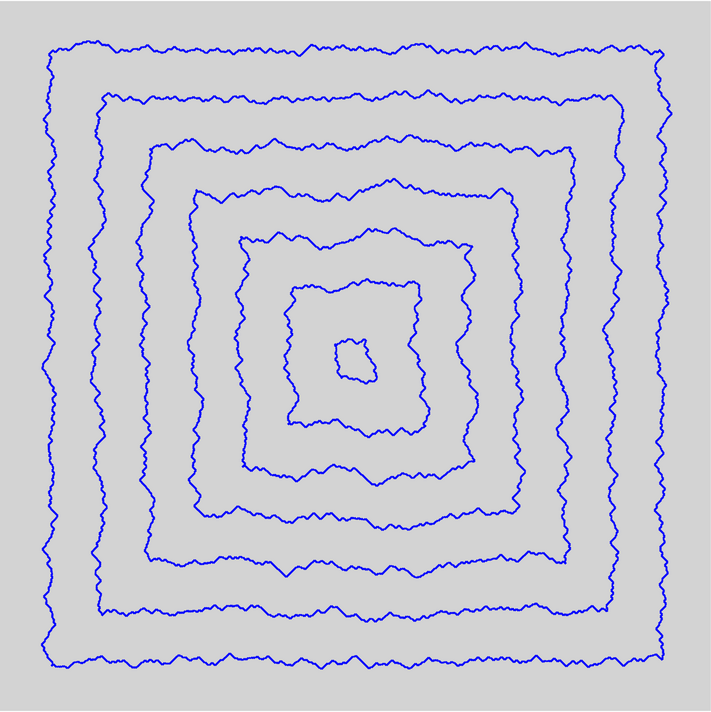} \qquad
	\includegraphics[width=0.25\textwidth]{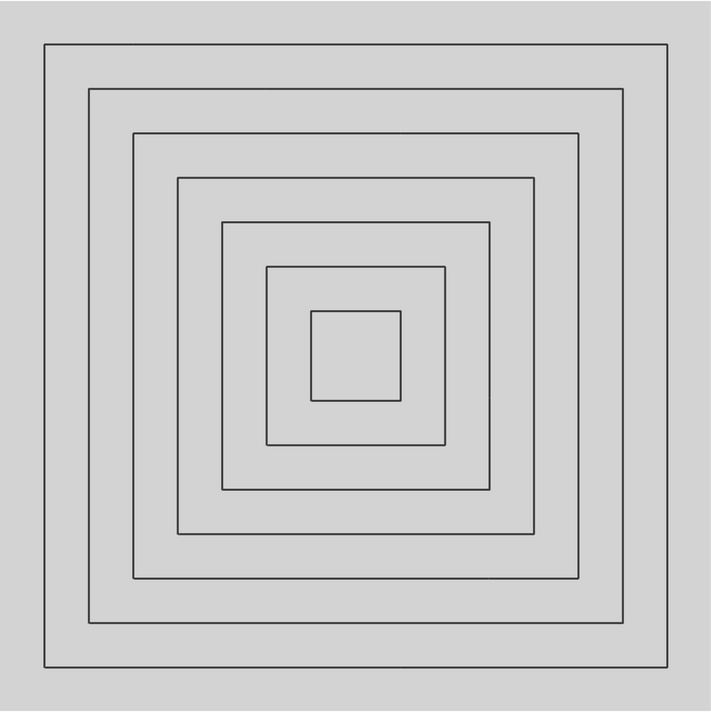}
	
	\caption{Pareto peeling of a homogeneous Poisson cloud with respect to the $\ell^{\infty}$ norm in a square domain $\mathrm{U} = (-1,1)^2$. On the left is a simulation and on the right 
		are level sets of the viscosity solution $u(x_1, x_2) =  1 - \max(|x_1|,|x_2|)$.} \label{fig:square_flow}
\end{figure}

\subsection{Outline of the Paper}  In Section \ref{sec:pareto-hulls}, the effective Hamiltonian is defined, its properties are discussed, and the necessary geometric preliminaries are reviewed.  This is also where the dynamic programming principle for the height functions is stated and proved.  In Section \ref{sec:q-nds}, we recall the basic scaling limit results related to nondominated sorting and explain how they can be generalized to the setting required here.  Additionally, at the end of Section \ref{sec:q-nds}, we describe how nondominated sorting can be regarded as an infinite volume limit of Pareto hull peeling.  The proof of Theorem \ref{theorem:pareto-convergence} is outlined in Section \ref{sec:viscosity-solutions}, which also includes preliminaries on viscosity solutions, estimates on the asymptotic behavior of the height functions, and proofs of the corollaries.  Sections \ref{sec:subsolution}, \ref{sec:supersolution}, and \ref{sec:degenerate-directions} comprise the main technical contributions of the paper and are devoted to proving that the limiting height functions solve \eqref{eq:eikonal}.  Finally, Section \ref{sec:conclusion} is a discussion of open questions for future work.

\subsection{Notation and conventions} \label{sec:notation}

\begin{itemize}
	\item Given $x \in \mathbb{R}^{2}$, we will sometimes write $x = (x_{1},x_{2})$ with $x_{1},x_{2} \in \mathbb{R}$ denoting the first and second components of $x$ with respect to the standard orthonormal basis of $\mathbb{R}^{2}$.
	\item Unless made explicit, $C, c$ are positive constants which may change from line to line.  Dependence of $C$ on other constants is indicated by a subscript (\eg, $C_{d}$ denotes a constant depending on the dimension $d$).
	\item For a subset $A$ of $\R^2$ write $|A|$ for its Lebesgue measure, $\bar{A}$ for closure, $\inte(A)$ for interior, and $\partial A$ for boundary. 
	\item For $x,y \in \R^2$, 
	\begin{equation} \label{eq:notation-for-squares}
	[x,y] = [x_1, y_1] \times [x_2, y_2]
	\end{equation}
	and for $a,b \in \bar{\R}$
	\[
	[a,b]^2 = [a,b] \times [a,b]
	\]
	and for $x \in \R^2$ and $b \in \bar{\R}$, 
	\[
	[x,b]^2 = [x_1,b] \times [x_2, b]
	\]
	and vice-versa. 
	
	\item $\langle x, y \rangle$ denotes the Euclidean inner product of $x,y \in \R^{2}$.
	
	\item $x \times y$ is the cross product of two vectors $x, y \in \R^{2}$.  Recall this can be computed via the determinant
	\begin{equation*}
	x \times y = \det \left( \begin{array}{c c}
	x_{1} & y_{1} \\
	x_{2} & y_{2}
	\end{array} \right).
	\end{equation*}
	Alternatively, using wedge products, $x \times y$ is the real number such that $x \wedge y = (x \times y) (1,0) \wedge (0,1)$.  
	
	\item Given a vector $q \in \R^2 \setminus \{0\}$, we denote by $H_{q}$ the halfspace determined by $q$ by	
		\begin{equation*}
			H_{q} = \{x \in \R^{2} \, \mid \, \langle q, x \rangle \geq 0\}.
		\end{equation*}
	
	\item $\|x\|_{\infty}$ denotes the $\ell^{\infty}$ norm, $\|x\|_{1}$ denotes the $\ell^{1}$ norm, and $\|x\| = \|x\|_2$ denotes the Euclidean or $\ell^{2}$ norm. 
	\item 
	
	$B(x_0, r) = \{ x \in \R^2 \, \mid \, \|x-x_0\|\leq r\}$ denotes the ball of radius $r$ centered around $x_0$.
	\item $S^{1}$ denotes the set of all unit vectors in $\mathbb{R}^{2}$, that is, $S^{1} = \{v \in \mathbb{R}^{2} \, \mid \, \|v\| = 1\}$.
	\item 
	Given $p = (p_{1},p_{2}) \in \R^{2}$, we denote by $p^{\perp}$ the vector defined by
	\begin{equation*}
	p^{\perp} = (-p_{2},p_{1}).
	\end{equation*}
	
	\item  $\cone(\mathrm{C}) = \{ a v \in \R^2 \, \mid \, a \geq 0 \mbox{ and } v \in \mathrm{C}\}$ and $\conv(C)$ is the convex hull of $C$. 
	
	\item Differential inequalities are interpreted in the viscosity sense.
	
	\item Given two random variables $X, Y$, we write $Y \overset{\mathcal{D}}{=} X$ if $X$ and $Y$ have the same distribution.
	
\end{itemize}

Lastly, we sometimes use the fact that the inner product and cross product can be computed in terms of lengths and angles.  More precisely, given $v, w \in \mathbb{R}^{2}$, we have
	\begin{equation*}
		\langle w, v \rangle = \|v\| \|w\| \cos(\theta) \quad \text{and} \quad w \times v = \|v\| \|w\| \sin(\theta),
	\end{equation*}
where $\theta$ is the angle traversed going from $w$ to $v$.  (See, \eg, \cite{marsden} or \cite{div-grad-curl}.)

\subsection{Code}
Programs used to generate the figures are included in the arXiv submission and also at \url{https://github.com/nitromannitol/2d_pareto_peeling}.

\subsection{Acknowledgments}
We thank Jeff Calder for helpful suggestions and encouragement. A.B. thanks Charles K. Smart for many inspiring discussions.  P.S.M. gratefully acknowledges his thesis advisor, P.E.\ Souganidis, for introducing him to viscosity solutions and homogenization and for unwavering support these past few years.

A.B. was partially supported by Charles K. Smart's NSF grant DMS-2137909 and NSF grant DMS-2202940. P.S.M. was partially supported by P.E. Souganidis's NSF grants DMS-1600129 and DMS-1900599 and NSF grant DMS-2202715.



\section{Pareto Hulls and the Effective Hamiltonian}\label{sec:pareto-hulls}

\begin{figure}
	\begin{tikzpicture}[scale=1.9]

\draw[thick,->] (0,0) -- (0,1.9);
\draw[thick,->] (0,0) -- (0,-1.9); 
\draw[thick,->] (0,0) -- (-1.9,0);
\draw[thick,->] (0,0) -- (1.9,0);

\draw[anchor = west] (2,0) node {$w_{p}$}; 
\draw[anchor = east] (0,2) node {$v_{p}$};
\draw[anchor = east] (-2,0) node {$-w_{p}$}; 
\draw[anchor = east] (0,-2) node {$-v_{p}$};

\draw[dashed] (-1,0) -- (0,1);
\draw[dashed] (0,-1) -- (1,0);

\draw[dashed] (0,1) arc[start angle=90, end angle=0, radius=1];
\draw[dashed] (0,-1) arc[start angle=-90, end angle=-180, radius=1];

\draw[anchor = east] (1.1,1.1) node {$Q_{p}$};
\draw[anchor = east] (-1.1, -1.1) node {$-Q_{p}$};
\draw[anchor = east] (1.1,-1.1) node {$-\mathcal{Q}_{p}$};
\draw[anchor = east] (-1.1,1.1) node {$\mathcal{Q}_{p}$};


%

%

\end{tikzpicture} 
	\caption{The cones associated to the norm $\varphi(x) = \max\{|x_{1} - x_{2}|,\|x\|_{2}\}$ from Example \ref{example:hamiltonian-l1-norm}.  In this case, $\mathcal{N}^{*} = \{(1,-1),(-1,1)\}$ so there are only two flat cones $\{Q_{p},-Q_{p}\}$.  Notice that $\mathcal{Q}_{p}$ and $-\mathcal{Q}_{p}$ are the cones generated by the facets (flat parts) of the curve $\{\varphi = 1\}$.  The circular arcs are $\mathcal{E}$.}\label{fig:cone_definition}
\end{figure}
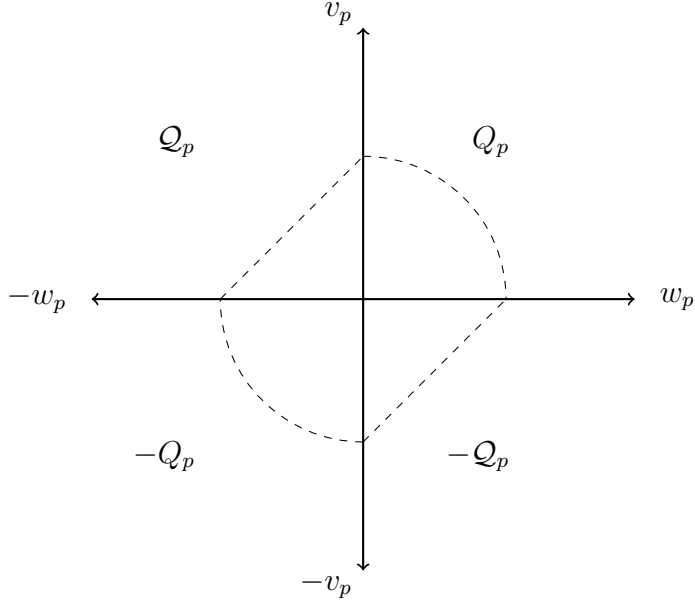

\begin{figure}
	\includegraphics[scale=0.125]{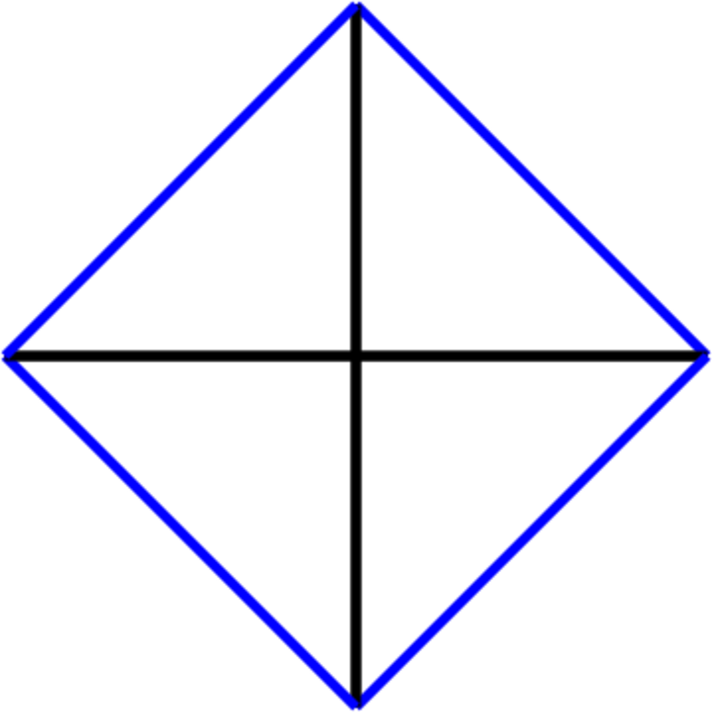} \qquad
	\includegraphics[scale=0.125]{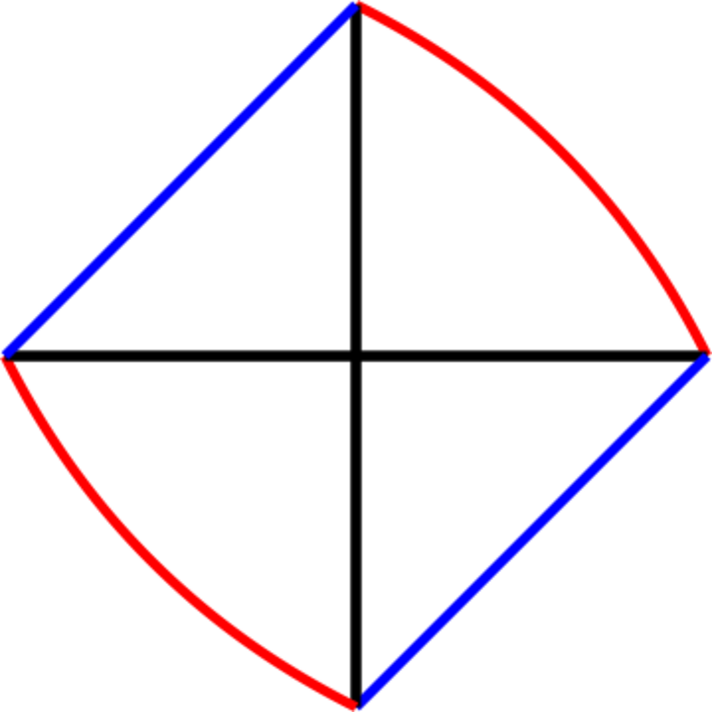}
	\caption{Unit balls of the two norms described in Example \ref{example:hamiltonian-l1-norm} partitioned into $\{\mathcal{Q}_p\}$ cones (in blue). }\label{fig:norm-pics}
\end{figure}

In this section, we give an explicit formula for the effective Hamiltonian $\bar{H}_{\varphi}$ in terms of certain geometric objects associated to the norm $\varphi$.   We then review the definition and main properties of the the Pareto hull and show that the height function associated with Pareto hull peeling satisfies a dynamic programming principle.  The section concludes with the derivation of some properties of $\bar{H}_{\varphi}$, including continuity and non-coercivity.

\subsection{Geometric Preliminaries} \label{subsec:norm-partition}
\begin{figure}
	\includegraphics[scale=0.125]{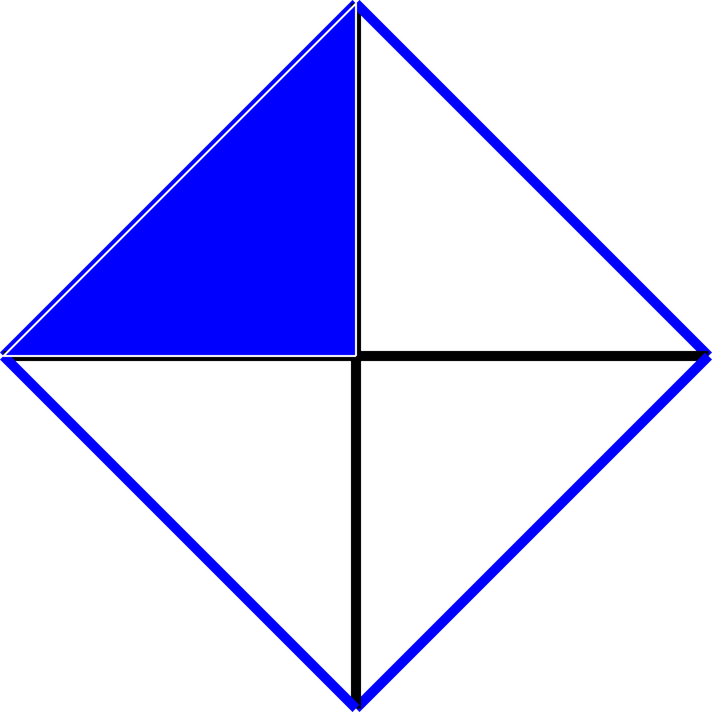} 
	\includegraphics[scale=0.125]{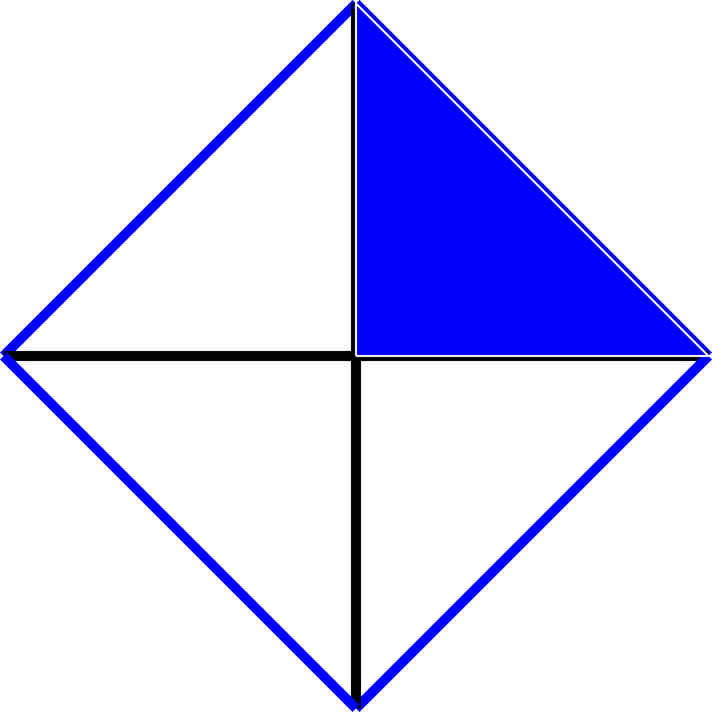} 
	\includegraphics[scale=0.125]{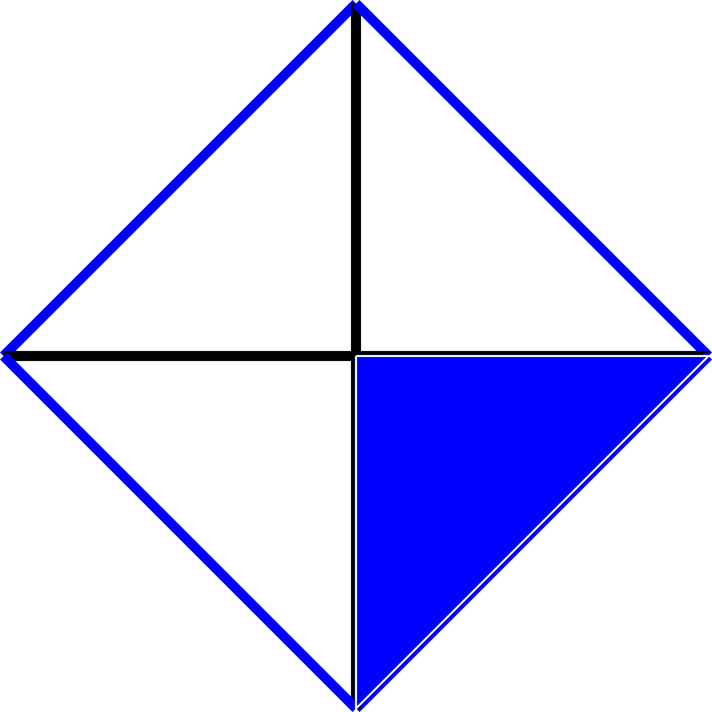} 
	\includegraphics[scale=0.125]{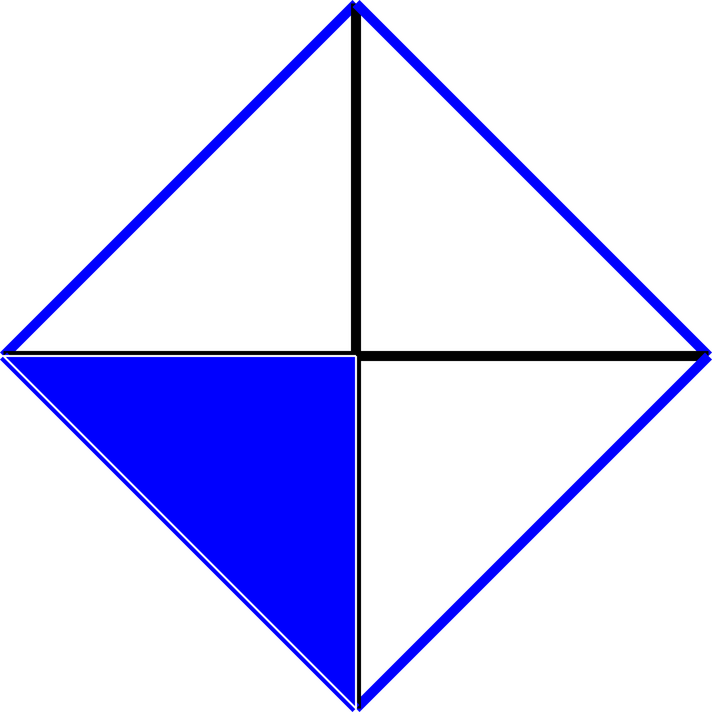} \\
		\includegraphics[scale=0.125]{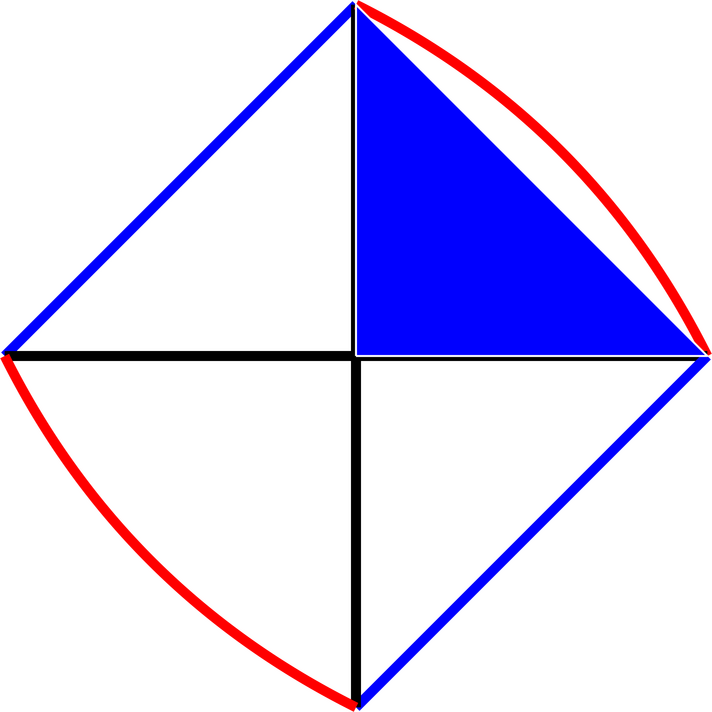} 
	\includegraphics[scale=0.125]{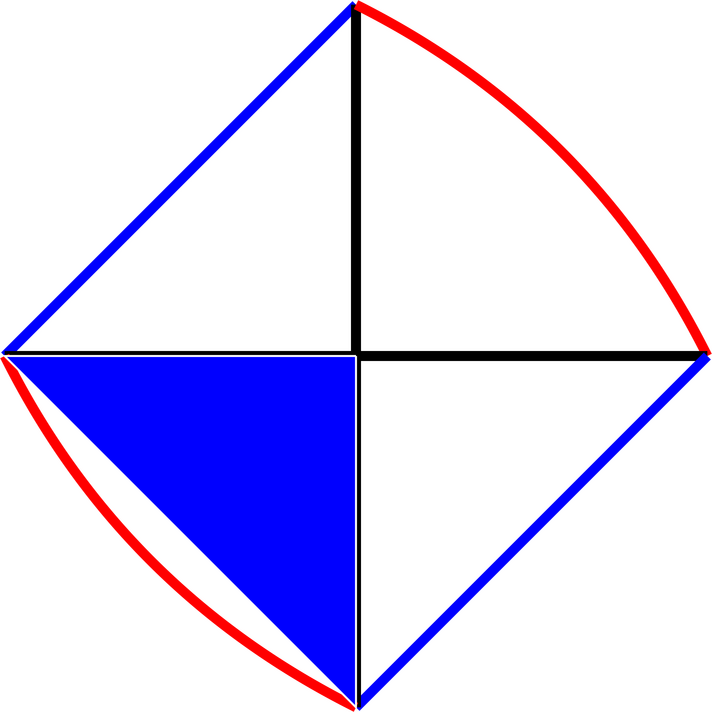} 
	\caption{$\{Q_p\}$ cones of the two norms described in Figure \ref{fig:norm-pics}.}
\end{figure}

Before describing the link between Pareto hull peeling and the limiting Hamilton-Jacobi PDE \eqref{eq:eikonal}, we need to fix notation and recall a number of concepts from convex analysis and location analysis.  

Recall that $\varphi$ denotes a norm in $\R^2$.  In this section, we impose no assumptions on $\varphi$ other than being a norm.  In the discussion that follows, we use basic facts and language from convex analysis; Rockafellar \cite{rockafellar} and Schneider \cite{schneider} contain the main definitions and results and the reader can consult, \eg, Bellettini \cite[Sections 2.1--2.2]{bellettini} and Morfe and Souganidis \cite[Section 2]{morfe_souganidis} for a discussion of the properties of norms specifically.  

It is convenient for us to note that $\varphi$ has a dual norm $\varphi^{*}$, determined by the formula
	\begin{equation*}
		\varphi^{*}(p) = \max\left\{ \frac{\langle p,q\rangle}{\varphi(q)} \, \mid \, q \in \R^{2} \setminus \{0\}\right\}.
	\end{equation*}  
Let $\mathcal{N}^{*}$ be the set of corner points of $\{\varphi^{*} = 1\}$, that is,
\begin{equation} \label{eq:normball-corners}
\mathcal{N}^{*} = \{p \in \{\varphi^{*} = 1\} \, \mid \, \# \partial \varphi^{*}(p) > 1\},
\end{equation} 
where $\partial \varphi^{*}$ denotes the subdifferential of $\varphi^{*}$ and $\# \partial \varphi^{*}(p)$ is the cardinality of $\partial \varphi^{*}(p)$.  We will be interested in the corresponding cones $\{\mathcal{Q}_{p}\}_{p \in \mathcal{N}^{*}}$ given by 
\begin{equation*}
\mathcal{Q}_{p} = \{q \in \R^{2} \, \mid \, \langle q, p \rangle = \varphi(q)\}.
\end{equation*}  
In view of the definition of $\mathcal{N}^{*}$, the cones $\{\inte(\mathcal{Q}_{p}) \, \mid \, p \in \mathcal{N}^{*}\}$ are nonempty and pairwise disjoint.  Therefore, by the separability of $\mathbb{R}^{2}$, it follows that $\mathcal{N}^{*}$ is a countable set.  At the same time, the sets $\{\partial \varphi^{*}(p)\}_{p \in \mathcal{N}^{*}}$ are precisely the boundary facets of $\{\varphi = 1\}$.  Thus, we may reformulate our main assumption \eqref{eq:linear-assumption} as:
	\begin{equation} \label{eq:facets_characterize}
		\mathcal{N}^{*} \, \, \text{is nonempty} \quad \iff \quad \eqref{eq:linear-assumption} \, \,  \text{holds.}
	\end{equation}

It is straightforward to verify that $\mathcal{Q}_{p} = \cone(\partial \varphi^{*}(p))$ for each $p \in \mathcal{N}^{*}$.  Therefore, geometrically, $\{\mathcal{Q}_{p}\}_{p \in \mathcal{N}^{*}}$ are the cones determined by the facets of $\{\varphi = 1\}$, as in Figures \ref{fig:cone_definition} and \ref{fig:norm-pics}.

Since we are working in dimension $d = 2$, for each $p \in \mathcal{N}^{*}$, we can fix a basis $\{w_{p},v_{p}\} \subseteq \R^{2}$ such that 
\begin{equation} \label{eq:normball-facets}
\mathcal{Q}_{p} = \{c v_{p} - d w_{p} \, \mid \, c,d \geq 0\}.
\end{equation}
We can and will assume that $\|w_{p}\| = \|v_{p}\| = 1$ and $w_{p} \times v_{p} > 0$.

In what follows, we denote by $Q_{p}$ the convex cone obtained from $\mathcal{Q}_{p}$ by 
\begin{equation} \label{eq:normball-flat-cones}
Q_{p} = \{a w_{p} + b v_{p} \, \mid \, a, b \geq 0\}.
\end{equation}
We will refer to the sets $\{Q_p\}_{p \in \mathcal{N}^{*}}$ as {\it flat cones}.  The importance of these cones to the study of Pareto hulls was fully realized in the work of Durier and Michelot \cite{durier1986sets, durier1987sets}; see also the papers by Pelegrin and Fernandez \cite{pelegrin1988determination,pelegrin1989determination}.

It is important to note that $\mathcal{N}^{*}$ is invariant under negation, that is, $p \in \mathcal{N}^{*}$ if and only if $-p \in \mathcal{N}^{*}$, which is immediate since $\partial \varphi^{*}(-p) = - \partial \varphi^{*}(p)$.  Further, a direct computation shows that
	\begin{equation} \label{eq:negation-thing}
		\mathcal{Q}_{-p} = -\mathcal{Q}_{p} \quad \text{and} \quad Q_{-p} = -Q_{p}.
	\end{equation}
	
Finally, let 
\begin{equation} \label{eq:extreme-points}
\mathcal{E} = \mbox{extreme points of $\{\varphi \leq 1\}$}.
\end{equation}
As is well known, the curve $\{\varphi = 1\}$ equals the union of the facets $\{\partial \varphi^{*}(p)\}_{p \in \mathcal{N}^{*}}$ and the extreme points $\mathcal{E}$.  Thus, since every point in $\mathbb{R}^{2}$ is a multiple of some element of $\{\varphi = 1\}$,
	\begin{equation} \label{eq:decomposition_round_flat}
		\mathbb{R}^{2} =  \cone(\mathcal{E}) \cup \bigcup_{p \in \mathcal{N}^{*}} \cone(\partial \varphi^{*}(p)) = \cone(\mathcal{E}) \cup \bigcup_{p \in \mathcal{N}^{*}} \mathcal{Q}_{p}.
	\end{equation}

In this paper, we are primarily interested in norms $\varphi$ with at least one boundary facet.  Accordingly, the following standard terminology will be useful to keep in mind, so much so that we give a careful definition.

\begin{definition} \label{def:norms} If $\varphi$ is a norm in $\mathbb{R}^{2}$ such that (i) $\mathcal{N}^{*}$ is finite and (ii) $\mathbb{R}^{2} = \bigcup_{p \in \mathcal{N}^{*}} \mathcal{Q}_{p}$, then $\varphi$ is said to be \emph{polyhedral}.  Equivalently, $\varphi$ is polyhedral if and only if the unit ball $\{\varphi \leq 1\}$ is a polygon.

If $\varphi$ is a norm in $\mathbb{R}^{2}$ such that $\mathcal{N}^{*} = \emptyset$ (or, equivalently, $\{\varphi = 1\} = \mathcal{E}$), then the unit ball $\{\varphi \leq 1\}$ is said to be \emph{strictly convex.}   \end{definition}

At this stage, since it will be needed shortly, let us fix the notation $H_{q}$ for the half-space in $\mathbb{R}^{2}$ determined by the vector $q \in \mathbb{R}^{2}$, that is,
	\begin{equation} \label{eq:halfspace-formula}
		H_{q} = \{x \in \mathbb{R}^{2} \, \mid \, \langle q,x \rangle \geq 0\}.
	\end{equation}

\begin{figure}
		\includegraphics[scale=0.1]{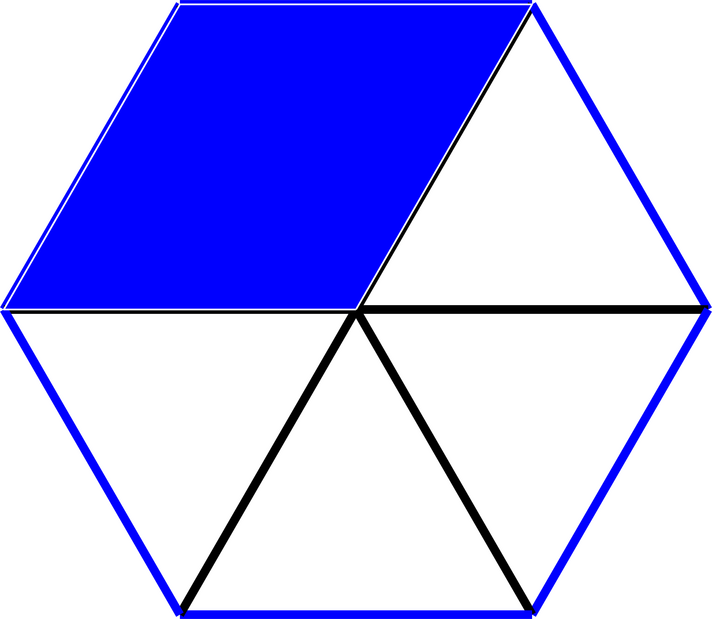} 
	\includegraphics[scale=0.1]{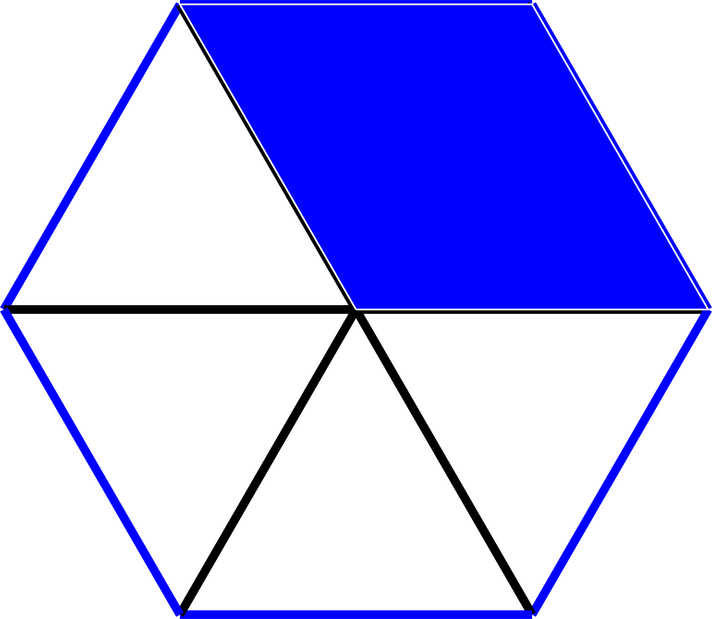} 
	\includegraphics[scale=0.1]{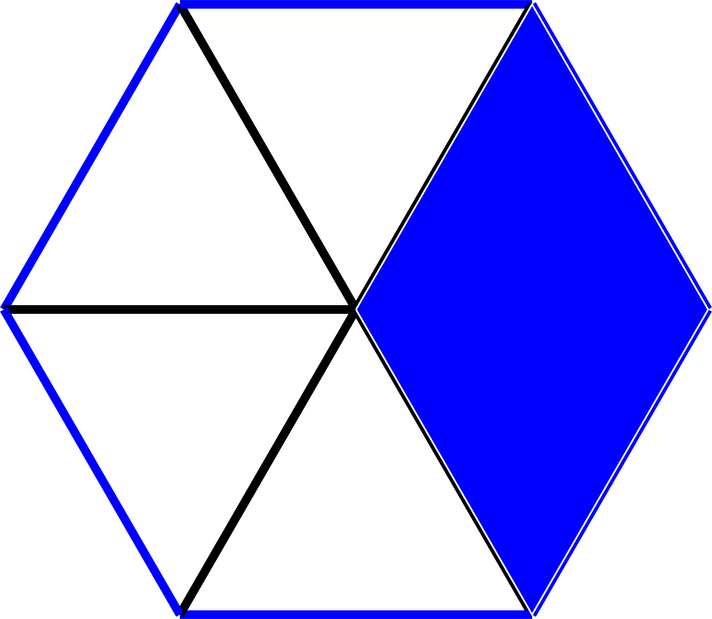} 
	\includegraphics[scale=0.1]{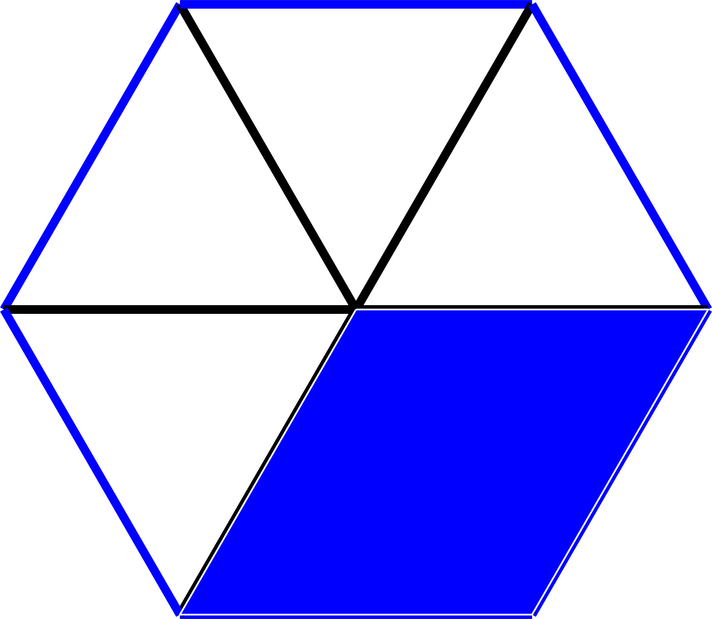} 
		\includegraphics[scale=0.1]{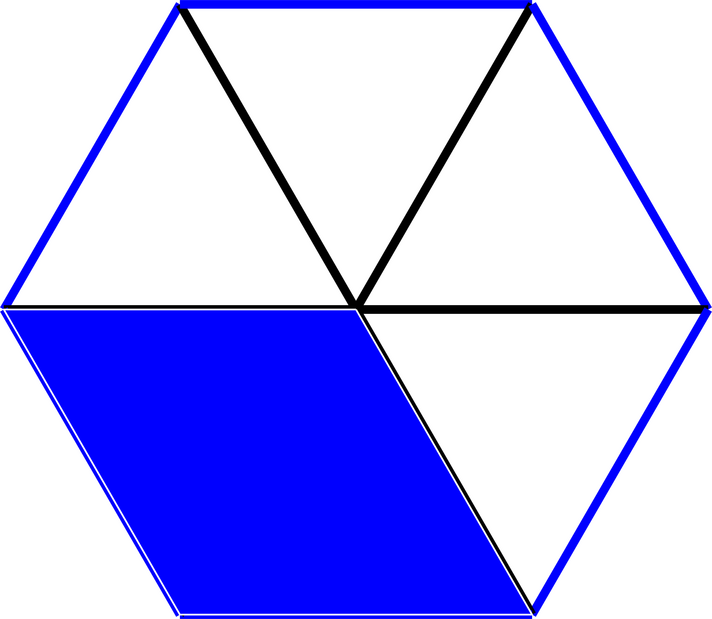} 
			\includegraphics[scale=0.1]{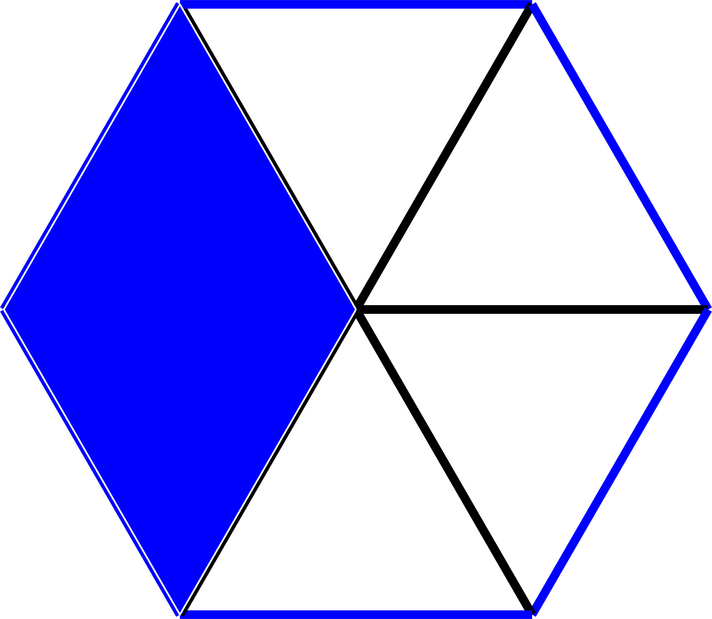}  \\
			
	\includegraphics[scale=0.1]{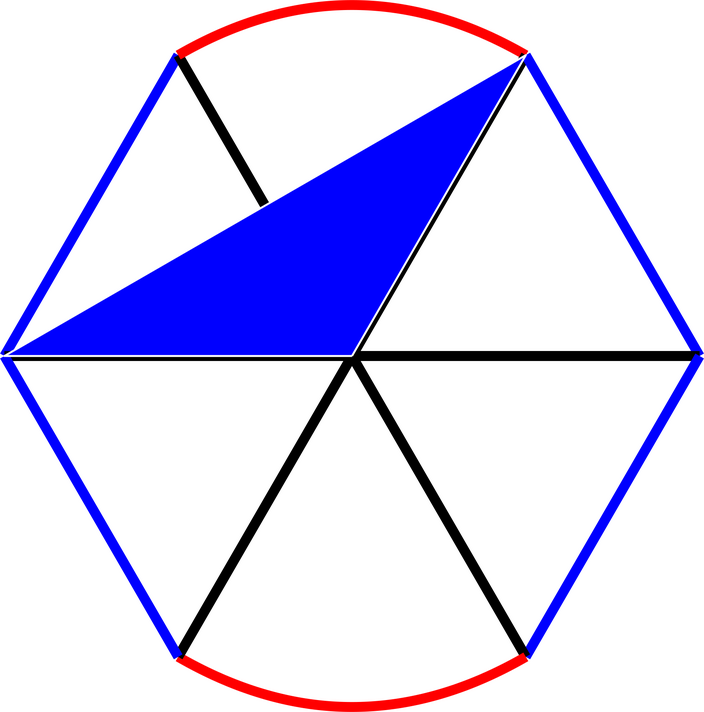} 
	\includegraphics[scale=0.1]{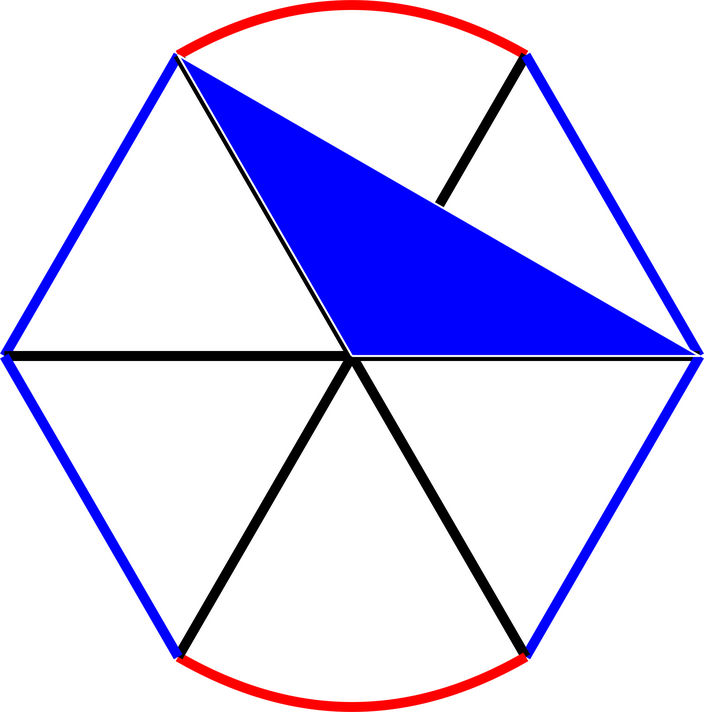} 
	\includegraphics[scale=0.1]{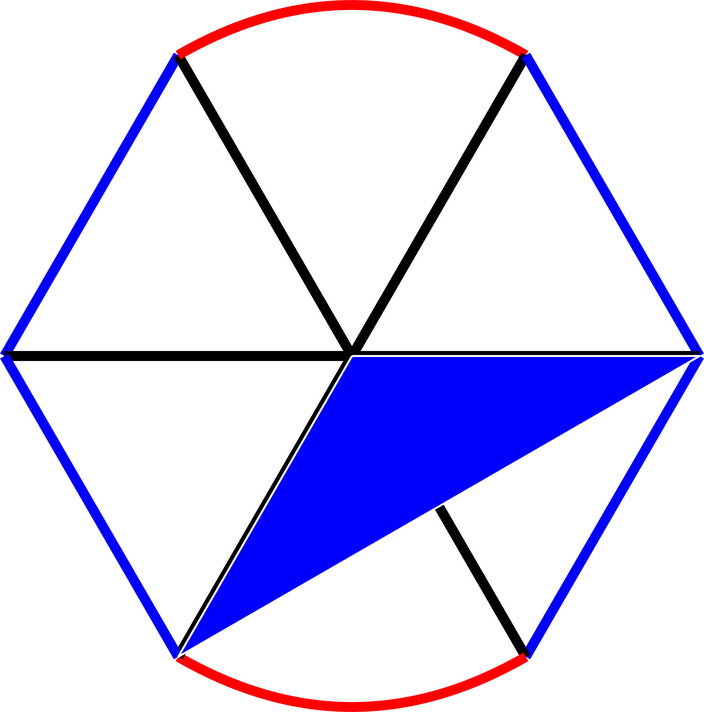} 
	\includegraphics[scale=0.1]{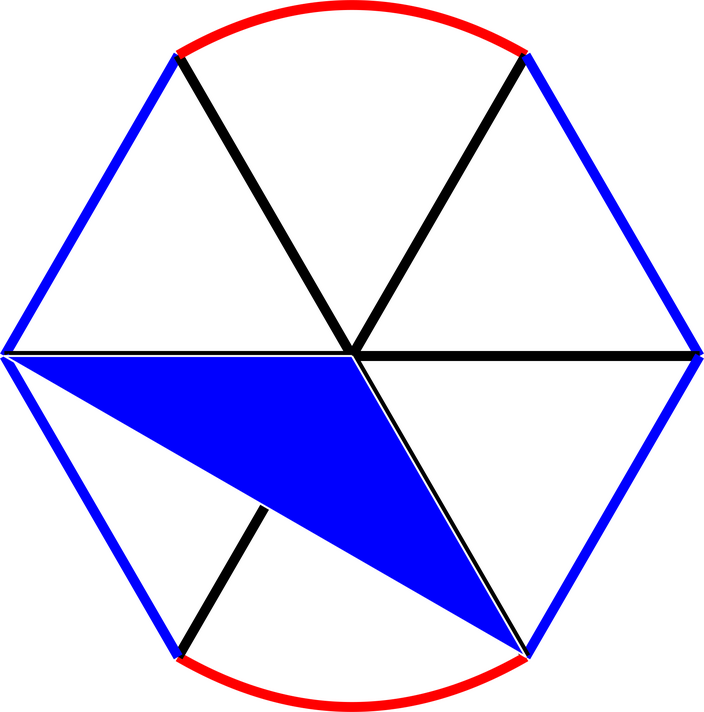} 
	\caption{$\{Q_p\}$ cones of the two indicated hexagonal norms.}
\end{figure}

\subsection{Pareto Hulls and Dynamic Programming}  With the notation of the previous section, we can now define the effective Hamiltonian:
\begin{equation} \label{eq:effective-hamiltonian}
\bar{H}_{\varphi}(\xi) = \max \left\{ \sup_{p \in \mathcal{N}^{*}} \frac{\langle \xi, v_{p} \rangle \langle \xi, w_{p} \rangle}{|v_{p} \times w_{p}|}, 0 \right\}.
\end{equation}
In addition to making $\bar{H}_{\varphi}$ non-negative, which is convenient for the level-set description \eqref{eq:normal-velocity}, the zero in the definition can be understood as the contribution from the `round parts' $\mathcal{E}$.

The derivation of the Hamilton-Jacobi equation \eqref{eq:eikonal} and the formula \eqref{eq:effective-hamiltonian} uses the fact that, in a certain sense, the height functions of Pareto peeling themselves satisfy a discrete PDE.  We make this precise via a dynamic programming formulation.

Before doing so, let us recall the definition of the Pareto hull.  Let $A \subseteq \R^2$ be a given compact set and recall that the {\it Pareto hull} of $A$ with respect to $\varphi$, $\mathcal{P}(A)$,  is
\begin{equation}
\mathcal{P}(A) := \{ x \in \R^2 : \forall y \not = x  \mbox{ there exists $a \in A$  with $\varphi(a-x) < \varphi(a-y)$} \}.
\end{equation}
 It is worth emphasizing at this point that the Pareto hull is monotone, that is,
\begin{equation} \label{eq:monotonicity}
\mathcal{P}(A) \subseteq \mathcal{P}(B) \quad \text{if} \quad A \subseteq B.
\end{equation}
For convenience, we abuse notation by defining $\mathcal{P}(\emptyset) = \emptyset$.

Recall from the introduction that \emph{Pareto hull peeling} of a finite set $A$ is a collection of sets, $\{E_{k}(A)\}_{k \in \N}$, defined recursively via
\begin{equation*}
E_1(A) = \mathcal{P}(A) \quad \mbox{and} \quad E_{k+1}(A) = \mathcal{P}(A \cap \inte(E_k(A))).
\end{equation*}
Associated to this process is a height function $u_{A}$ defined in $\R^{2}$ by 
\begin{equation*}
u_{A}(x) = \sum_{k = 1}^{\infty} 1_{\inte(E_{k}(A))}(x).
\end{equation*}
The next result, which identifies the dynamic programming principle satisfied by $u_{A}$, plays a fundamental role in what follows.

\begin{prop} \label{prop:dpp}  Given $A \subseteq \R^{2}$ finite, the height function $u_{A}$ satisfies a dynamic programming principle:
	\begin{equation} \label{eq:pareto-peeling-dpp}
	u_{A}(x) = \min \left\{\inf_{p \in \mathcal{N}^{*}} \sup_{y \in x + \inte(Q_{p})} u_{A}(y) + 1_{A}(y), \inf_{q \in \mathcal{E}} \sup_{y \in x + \inte(H_{q^{\perp}})} u(y) + 1_{A}(y) \right\} \quad \text{for} \, \, x \in \R^{2}.
	\end{equation}
Further, if $\varphi$ is polyhedral (see Definition \ref{def:norms}), then the dynamic programming principle simplifies as follows:
	\begin{equation} \label{eq:polyhedral-dpp}
		u_{A}(x) = \min_{p \in \mathcal{N}^{*}} \sup_{y \in x + \inte(Q_{p})} u_{A}(y) + 1_{A}(y) \quad \text{for} \, \, x \in \mathbb{R}^{2}.
	\end{equation}\end{prop}

\subsection{Cone characterization of Pareto hulls}  The dynamic programming principle follows from a characterization of Pareto hulls using the cones $\{Q_{p}\}$ and $\{H_{q^{\perp}}\}$ defined above.  
\begin{theorem}[\cite{durier1986sets, durier1987sets}]\label{thm:cone_efficient}
	Let $A$ be a compact set in $\R^2$, then
	\begin{align*}
	x \in \inte(\mathcal{P}(A)) \iff &\mbox{ for every $p \in \mathcal{N}^*$, }  A \cap (x + \inte(Q_p)) \not= \emptyset \\
	&\mbox{ and for every $q \in \mathcal{E}$, } A \cap (x + \inte(H_{q^{\perp}})) \not = \emptyset.
	\end{align*}
	If $\varphi$ is polyhedral, the halfspace constraint is unnecessary,  
	\[
	x \in \inte(\mathcal{P}(A)) \iff \mbox{ for every $p \in \mathcal{N}^*$, }  A \cap (x + \inte(Q_p)) \not= \emptyset.
	\]
\end{theorem}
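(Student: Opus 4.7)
I would prove the theorem by reducing the Pareto hull definition to an infinitesimal condition on the subdifferential of $\varphi$ and then unpacking that condition using the 2D geometry of the dual unit ball.

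\textbf{Infinitesimal reformulation.} The main claim I would establish first is
\[
x \in \inte(\mathcal{P}(A)) \iff 0 \in \inte\bigl(\conv(S(x))\bigr), \quad S(x) := \bigcup_{a \in A,\, a \neq x} \partial \varphi(a - x).
\]
The directional-derivative identity $\varphi'(u;-v) = -\min_{p \in \partial\varphi(u)}\langle p, v\rangle$ shows that $y = x + tv$ weakly dominates $x$ for small $t > 0$ iff $v$ lies in the polar cone to $S(x)$, which rules out Pareto efficiency; convexity of $\varphi(a - \cdot)$ promotes the first-order inequality to a global one, while upper semicontinuity of $\partial\varphi$ lets one pass from ``$x \in \mathcal{P}(A)$'' to the stronger ``$x \in \inte(\mathcal{P}(A))$'' by strengthening weak positivity to the open version above.

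\textbf{Case analysis over $v \in S^{1}$.} The positive-spanning condition is equivalent to saying that for every $v \in S^{1}$ there exist $a \in A$ and $p \in \partial\varphi(a-x)$ with $\langle p, v\rangle > 0$. I would then apply the decomposition \eqref{eq:decomposition_round_flat}, splitting into either $v \in \inte(\mathcal{Q}_{p})$ for some $p \in \mathcal{N}^{*}$ (flat) or $v = \lambda q$ for some $q \in \mathcal{E}$ (round). In the flat case, using the identity $p \in \partial\varphi(u) \iff u \in \mathcal{Q}_{p}$ and the orientation convention $w_{p} \times v_{p} > 0$, the region of $u = a - x$ producing a strict witness is precisely an open cone $\inte(Q_{p'})$ for an index $p'$ adjacent to $p$ in the facet arrangement of $\{\varphi^{*} = 1\}$; enumerating over all $p$ gives exactly the family $\{Q_{p}\}_{p \in \mathcal{N}^{*}}$ appearing in the theorem. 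In the round case, an analogous calculation shows the witness region is the open halfspace $\inte(H_{q^{\perp}})$, using that $q \in \mathcal{E}$ is extremal on $\{\varphi = 1\}$ so that its normal cone is a nontrivial face of $\{\varphi^{*} = 1\}$ perpendicular to $q^{\perp}$.

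\textbf{Polyhedral case and main obstacle.} When $\varphi$ is polyhedral, $\mathcal{E}$ is finite and each extreme point is the endpoint of two adjacent facets; a limiting argument (approximating $q$ by interior directions of the two adjacent flat cones) then shows the halfspace conditions follow from the cone conditions, giving the simplified DPP \eqref{eq:polyhedral-dpp}. The main obstacle is the cone identification in the flat case: one must verify that the reflection $-w_{p} \leftrightarrow w_{p}$ distinguishing $\mathcal{Q}_{p}$ from $Q_{p}$, combined with the adjacency pattern of dual-ball facets, exactly accounts for the shift between ``the direction of the perturbation $v$'' and ``the direction of the witness $a - x$''. The boundary cases where $v$ lies on the ray shared by two flat cones (or passes through an extreme point adjacent to a facet) introduce multiple subgradients simultaneously and require a careful consistency check between the strictness of $\langle p, v\rangle > 0$ and openness of the cone/halfspace in the theorem's statement.
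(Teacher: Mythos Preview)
The paper does not prove this theorem: it is quoted from the location-analysis literature, and the in-paper ``proof'' is a three-line citation of results in Durier--Michelot \cite{durier1986sets,durier1987sets} (with an alternative reference to \cite{pelegrin1988determination,pelegrin1989determination}). So there is no in-paper argument against which to compare your sketch; you are attempting to reconstruct what those references do.

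Your proposed first step is not correct as stated. The equivalence $x \in \inte(\mathcal{P}(A)) \iff 0 \in \inte(\conv(S(x)))$ fails. Take $\varphi = \|\cdot\|_{1}$, $A = \{(\pm 1,0),(0,\pm 1)\}$, and $x = 0$. Each $a \in A$ lies on a coordinate axis, so $\partial\varphi(a)$ is an entire edge of the dual square and $S(0) = \partial[-1,1]^{2}$; hence $0 \in \inte(\conv(S(0)))$. But $0 \notin \inte(\mathcal{P}(A))$: for any small $\epsilon > 0$ one checks directly that $(0,0) \in \bigcap_{a \in A} \{y : \varphi(a-y) \leq \varphi(a - (\epsilon,\epsilon))\}$, so $(\epsilon,\epsilon) \notin \mathcal{P}(A)$ (equivalently, no open quadrant $\inte(Q_{p})$ meets $A$, so the cone condition in the theorem fails at $0$). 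What positive spanning of $S(x)$ actually characterises is $x \in \mathcal{P}(A)$, not membership in the interior. Your appeal to upper semicontinuity of $\partial\varphi$ to bridge this gap goes the wrong way: upper semicontinuity is precisely why $S(x')$ can collapse discontinuously as $x'$ leaves $x$ (in the example, perturbing $x$ off the origin shrinks each edge of $S$ to a single corner), so one cannot conclude that nearby $x'$ also satisfy the spanning condition. The downstream cone-identification step inherits this defect and would have to be rebuilt on a different intermediate characterisation --- one that tracks the open cones $\inte(Q_{p})$ directly rather than the subgradient set $S(x)$.
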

\begin{proof}
	The proof combines Remark 2.1 in \cite{durier1986sets} and Theorem 4.1 in \cite{durier1986sets}
	together with Proposition 2.5 in \cite{durier1987sets}.
	For a different proof (and algorithms),
	see \cite{pelegrin1988determination,pelegrin1989determination}. 
\end{proof}

The previous representation is reminiscent of the halfspace separation characterization of convex hulls.  In fact, the following reformulation, which will be useful in what follows, shows that the Pareto hull can be thought of as a constrained convex hull.

\begin{cor} \label{cor:-constrained-convex-hull}  Given $A \subseteq \R^{2}$ compact and $x \in \R^{2}$, the inclusion $x \in \inte(\mathcal{P}(A))$ holds if and only if the following two conditions are satisfied:
	\begin{itemize}
		\item[(i)] $A \cap (x + \inte(Q_{p})) \neq \emptyset$ for each $p \in \mathcal{N}^{*}$,
		
		\item[(ii)] $x \in \inte(\conv(A))$.
	\end{itemize}
\end{cor}  	

For the reader's convenience, the proof of Corollary \ref{cor:-constrained-convex-hull} is provided at the end of Section \ref{prop:key-duality} below.

As an immediate consequence of the previous corollary, we recover the classical result that in dimension 2, the Pareto hull coincides with the convex hull whenever $\{\varphi \leq 1\}$ is strictly convex.

\begin{cor}[\cite{thisse1984some}]  If $\{\varphi \leq 1\}$ is strictly convex, then $\mathcal{P}(A) = \conv(A)$ for any compact $A \subseteq \R^{2}$. \end{cor}  

\begin{proof}  By our Definition \ref{def:norms}, the strict convexity of $\{\varphi \leq 1\}$ means that $\mathcal{N}^{*} = \emptyset$.  Hence condition (i) in the previous corollary is always vacuous in this setting.  Accordingly, that result reduces to the simple identity $\mathcal{P}(A) = \conv(A)$.  \end{proof}  

\subsection{Proof of the dynamic programming principle}  
We next use Theorem \ref{thm:cone_efficient} to prove the dynamic programming principle.  
\begin{proof}[Proof of Proposition \ref{prop:dpp}] In what follows, let $\mathcal{L} = \{Q_p : p \in \mathcal{N}^*\}$ if $\varphi$ is polyhedral.  Otherwise, if $\varphi$ is not polyhedral, we let $\mathcal{L} = \{Q_{p} : p \in \mathcal{N}^{*}\} \cup \{ H_{q^\perp} : q \in \mathcal{E}\}$.  Observe that all cones in $\mathcal{L}$ are convex.

	Let $A_k = A \cap \inte(E_k(A))$ and set $A_0 = A$. By monotonicity \eqref{eq:monotonicity}, $E_k(A) \supseteq E_{k+1}(A)$ so $A_k \supseteq A_{k+1}$ and hence $u_A(x) = k$ if $x \in A_{k} \backslash A_{k+1}$ for all $k \geq 0$. 
	
	Take $x \in \R^2$ and set $j = u_A(x)$. Thus, $x \not \in \inte(E_{j+1}(A))$
	and hence, by Theorem \ref{thm:cone_efficient}, there is 
	$Q \in \mathcal{L}$ so that 
	\begin{equation} \label{eq:strict1}
	A_j \cap (x + \inte(Q)) = \emptyset.
	\end{equation}
	This implies, together with $Q$ being a convex cone, 
	that  $u_A(z) + 1_A(z) \leq j$ for all $z \in (x + \inte(Q))$.
	Indeed, suppose for sake of contradiction that for some $z \in (x + \inte(Q))$  we have $u_A(z) +  1_A(z) \geq j+1$
	and consider the two possible cases, $u_A(z) \geq j+1$ or $u_A(z) = j$ and $z \in A$. 
	
	In the latter case, $z \in A_j \cap (x + \inte(Q))$, which contradicts \eqref{eq:strict1}. In the former case, $z \in \inte(E_{j+1}(A))$ and hence there is $z' \in A_j \cap (z + \inte(Q))$.  
	However, by convexity of $Q$, $\inte(Q) + \inte(Q) \subseteq \inte(Q)$ which implies $z' \in A_j \cap (x + \inte(Q))$, contradicting \eqref{eq:strict1}. Thus, 
	\[
	\inf_{Q \in \mathcal{L}} \sup_{z \in A \cap (x + \inte(Q))} (1_{A}(z) + u_A(z)) \leq j = u_{A}(x).
	\]

	For the other direction, assume $j \geq 1$ and let $Q \in \mathcal{L}$ be given. Since 
	$x \in \inte(E_{j}(A))$, by Theorem \ref{thm:cone_efficient}, there is $z \in A_{j-1} \cap (x + \inte(Q))$
	and so 
	\[
	\inf_{Q \in \mathcal{L}} \sup_{z \in A \cap (x + \inte(Q_{}))} (1_{A}(z) + u_A(z)) \geq 1 + (j-1) = u_{A}(x).
	\]
\end{proof}

\subsection{Duality} \label{sec:duality} In the sequel, convex duality will play a recurring role.  Thus, in this section, we describe some basic observations related to duality that will be useful in what follows.

First, the dual cones $\{Q^{*}_{p} \, \mid \, p \in \mathcal{N}^{*}\}$ determine the directions in which the Hamiltonian is nonzero.  These are defined by
\begin{equation}
Q^{*}_{p} = \bigcap_{v \in Q_{p}} \{\xi \in \R^{2} \, \mid \, \langle \xi, v \rangle \leq 0\}.
\end{equation}

Next, in the analysis of the Hamiltonian $\bar{H}_{\varphi}$, it will be convenient to define dual bases $\{(v_{p}^{*},w_{p}^{*}) \, \mid \, p \in \mathcal{N}^{*}\}$ by the rule
\begin{equation*}
\langle v_{p}^{*}, v_{p} \rangle = \langle w_{p}^{*}, w_{p} \rangle = 1, \quad \langle v_{p}^{*}, w_{p} \rangle = \langle w_{p}^{*}, v_{p} \rangle = 0.
\end{equation*}
These are well-defined since the pairs in $\{(w_{p},v_{p}) \, \mid \, p \in \mathcal{N}^{*}\}$ are themselves bases.  

Note that the dual bases provide coordinates for $\R^{2}$ in the sense that, given any $p \in \mathcal{N}^{*}$,
\begin{equation} \label{eq:coordinates}
\xi = \langle \xi, v_{p}^{*} \rangle v_{p} + \langle \xi, w_{p}^{*} \rangle w_{p} \quad \text{for each} \, \, \xi \in \R^{2}.
\end{equation}
Using these coordinates, we obtain an alternative formula for the expressions appearing in the definition of $\bar{H}_{\varphi}$.  

\begin{prop} \label{prop:formula_thing} For each $\xi \in \R^{2}$ and $p \in \mathcal{N}^{*}$,
	\begin{align} \label{eq:planar_thing}
		\langle \xi^{\perp} ,v_{p}^{*} \rangle = (w_{p} \times v_{p})^{-1} \langle \xi, w_{p} \rangle \quad \text{and} \quad \langle \xi^{\perp}, w_{p}^{*} \rangle = -(w_{p} \times v_{p})^{-1} \langle \xi, v_{p} \rangle.
	\end{align}
In particular,
	\begin{equation} \label{eq:duality}
	\frac{\langle \xi,v_{p} \rangle \langle \xi, w_{p} \rangle}{|v_{p} \times w_{p}|} = - |v_{p} \times w_{p}| \langle \xi^{\perp}, v_{p}^{*} \rangle \langle \xi^{\perp}, w_{p}^{*} \rangle.
	\end{equation}
\end{prop}  

\begin{proof}  Observe that we can write
	\begin{equation*}
	\langle \xi, v_{p} \rangle = \langle -(\xi^{\perp})^{\perp}, v_{p} \rangle = -\langle \langle \xi^{\perp}, v_{p}^{*} \rangle v_{p}^{\perp} + \langle \xi^{\perp}, w_{p}^{*} \rangle w_{p}^{\perp}, v_{p} \rangle = - \langle \xi^{\perp}, w_{p}^{*} \rangle \langle w_{p}^{\perp}, v_{p} \rangle.
	\end{equation*}
	A similar computation shows that $\langle \xi, w_{p} \rangle = - \langle \xi^{\perp}, v_{p}^{*} \rangle \langle v_{p}^{\perp}, w_{p} \rangle$.  At the same time, 
	\begin{equation*}
	\langle w_{p}^{\perp}, v_{p} \rangle = -\langle v_{p}^{\perp}, w_{p} \rangle = w_{p} \times v_{p}. 
	\end{equation*}
	Combining these formulas gives \eqref{eq:planar_thing}, from which \eqref{eq:duality} follows directly.\end{proof}  

Next, we show how the identities in \eqref{eq:planar_thing} imply an important bijective correspondence between $Q_{p}^{*}$ and $\mathcal{Q}_{p}$.

\begin{prop} \label{prop:key-duality} If $p \in \mathcal{N}^{*}$ and $\xi \in \R^{2}$, then $-\xi \in Q_{p}^{*}$ (resp.\ $-\xi \in \inte(Q_{p}^{*})$) if and only if $\xi^{\perp} \in \mathcal{Q}_{p}$ (resp.\ $\xi^{\perp} \in \inte(\mathcal{Q}_{p})$).  \end{prop}

\begin{proof}  In what follows, it will be important to recall that the cone $\mathcal{Q}_{p}$ and the basis $\{w_{p},v_{p}\}$ are related via the equation \eqref{eq:normball-facets}.

By definition of $Q_{p}$, $-\xi \in Q_{p}^{*}$ if and only if $\langle \xi, v_{p} \rangle \geq 0$ and $\langle \xi, w_{p} \rangle \geq 0$.  Since $w_{p} \times v_{p} > 0$ by the choice of $\{w_{p},v_{p}\}$, the formulas in \eqref{eq:planar_thing} imply that this occurs if and only if $\langle \xi^{\perp}, v_{p}^{*} \rangle \geq 0$ and $\langle \xi^{\perp}, w_{p}^{*} \rangle \leq 0$.  That is, by definition of $\mathcal{Q}_{p}$, $-\xi \in Q_{p}^{*}$ if and only if $\xi^{\perp} \in \mathcal{Q}_{p}$.  
	
	The previous argument works just as well if the interiors are considered instead.\end{proof}  

The last proposition helps us to unpack the formula \eqref{eq:effective-hamiltonian}.  Among the consequences, it shows that $\bar{H}_{\varphi}$ is never coercive.

\begin{prop} \label{prop:noncoercive}  (i)  Given $\xi \in \R^{2}$, there are at most two $p \in \mathcal{N}^{*}$ such that 
	\begin{equation*}
	\frac{\langle \xi, v_{p} \rangle \langle \xi, w_{p} \rangle}{|v_{p} \times w_{p}|} > 0.
	\end{equation*} 
	Furthermore, such $p$ necessarily satisfy $\xi^{\perp} \in \inte(\mathcal{Q}_{p}) \cup (-\inte(\mathcal{Q}_{p}))$.  
	
	(ii)  Given $\xi \in \R^{2}$,
	\begin{equation*}
	\bar{H}_{\varphi}(\xi) = 0 \quad \iff \quad \xi^{\perp} \in \cone(\mathcal{E}).
	\end{equation*}  
\end{prop}  

\begin{proof}  (i) If $\langle \xi, v_{p} \rangle \langle \xi, w_{p} \rangle > 0$, then $\xi \in \inte(Q_{p}^{*}) \cup (-\inte(Q_{p}^{*}))$.  Hence the previous result implies $\xi^{\perp} \in \inte(\mathcal{Q}_{p}) \cup (-\inte(\mathcal{Q}_{p}))$.  Since the sets $\{\inte(\mathcal{Q}_{p}) \, \mid \, p \in \mathcal{N}^{*}\}$ are disjoint and $\mathcal{Q}_{-p} = - \mathcal{Q}_{p}$, this determines $p$ up to negation.
	
	(ii)  Notice that if $\xi \in \inte(\mathcal{Q}_{p})$ for some $p \in \mathcal{N}^{*}$, then $\varphi(\xi)^{-1} \xi \notin \mathcal{E}$ since $\{\varphi = 1\}$ is flat in $\inte(\mathcal{Q}_{p})$.  Accordingly, $\cone(\mathcal{E}) \cap \inte(\mathcal{Q}_{p}) = \emptyset$ for each $p \in \mathcal{N}^{*}$.  We conclude by combining this last observation with (i).   \end{proof}  

Combining everything we have done in this section, we obtain the following alternative formula for $\bar{H}_{\varphi}$:
\begin{equation} \label{eq:alternative}
\bar{H}_{\varphi}(\xi) = \left\{ \begin{array}{r l}
-|v_{p} \times w_{p}| \langle \xi^{\perp}, v_{p}^{*} \rangle \langle \xi^{\perp}, w_{p}^{*} \rangle, & \text{if} \, \, \xi^{\perp} \in \mathcal{Q}_{p} \, \, \text{for some} \, \, p \in \mathcal{N}^{*}, \\
0, & \text{otherwise.}
\end{array} \right.
\end{equation}
This formula suggests that $\bar{H}_{\varphi}$ is more naturally interpreted as a function of the tangent vector $n_{\partial E_{t}}^{\perp}$ rather than the normal vector $n_{\partial E_{t}}$ in \eqref{eq:normal-velocity}.  Note that this explains the otherwise counter-intuitive $90^{\circ}$ discrepancy between the middle images in Figures \ref{fig:pareto-peeling} and \ref{fig:norm-balls}.

Before proceeding further to establish the continuity of $\bar{H}_{\varphi}$, let us return to Corollary \ref{cor:-constrained-convex-hull}, the proof of which is facilitated by the correspondence between $Q_{p}^{*}$ and $\mathcal{Q}_{p}$.

	\begin{proof}[Proof of Corollary \ref{cor:-constrained-convex-hull}] First, we prove the easier ``if" direction.  Suppose that $A \subseteq \mathbb{R}^{2}$ is compact and $x \in \mathbb{R}^{2}$ satisfies conditions (i) and (ii) of the corollary.  To see that $x \in \mathcal{P}(A)$, we invoke Theorem \ref{thm:cone_efficient}.  In view of (i), we only need to show that, given any $q \in \mathcal{E}$, there is an $a \in A$ such that $a \in x + \inte(H_{q^{\perp}})$.  Yet this follows directly from the fact that $x \in \inte(\conv(A))$ and classical separation theorems for convex sets (see \cite[Section 11]{rockafellar} or \cite[Section 1.3]{schneider}).
	
	Next, we prove the ``only if" direction.  Suppose that $A \subseteq \mathbb{R}^{2}$ is compact and $x \in \mathcal{P}(A)$.  By Theorem \ref{thm:cone_efficient}, $x$ satisfies (i) so it only remains to prove that (ii) also holds.  Toward this end, again by separation theorems, it suffices to show that if $q \in \mathbb{R}^{2} \setminus \{0\}$, then $A \cap (x + \inte(H_{q^{\perp}})) \neq \emptyset$.  The theorem implies that this is true if $q \in \mathcal{E}$, hence also if $q \in \cone(\mathcal{E})$ by homogeneity, so it only remains to consider the case when $q \notin \cone(\mathcal{E})$.  
	
	Suppose that $q \notin \cone(\mathcal{E})$.  By \eqref{eq:decomposition_round_flat}, there is a $p \in \mathcal{N}^{*}$ such that $q \in \mathcal{Q}_{p}$.  Thus, $q^{\perp} \in Q_{p}^{*}$ holds by Proposition \ref{prop:key-duality}.  By definition of the dual cone $Q_{p}^{*}$, this means that $-H_{q^{\perp}} \supseteq Q_{p}$, hence $H_{q^{\perp}} \supseteq -Q_{p} = Q_{-p}$ by \eqref{eq:negation-thing}.  Yet $x \in \mathcal{P}(A)$ so the theorem implies $A \cap (x + \inte(Q_{-p})) \neq \emptyset$.  Therefore, $A \cap (x + \inte(H_{q^{\perp}}) \neq \emptyset$, as claimed.   \end{proof}

\subsection{Continuity of the Hamiltonian}  In this section, we show that the effective Hamiltonian $\bar{H}_{\varphi}$ given by \eqref{eq:effective-hamiltonian} is a continuous function for an arbitrary norm $\varphi$. 


We start by proving that $\bar{H}_{\varphi}$ is locally bounded.  That is a consequence of the next lemma.

\begin{lemma}\label{lemma:local-boundedness}  For each $p \in \mathcal{N}^{*}$, if $\theta(p) = (\pi - \arcsin(w_{p} \times v_{p}))/2$, then
	\begin{equation*}
	\frac{\langle \xi, v_{p} \rangle \langle \xi, w_{p} \rangle}{|v_{p} \times w_{p}|} \leq \frac{\|\xi\|^{2} \tan(\theta(p))}{2} \quad \text{for each} \, \, \xi \in Q_{p}^{*}.
	\end{equation*}
\end{lemma}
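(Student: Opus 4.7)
The plan is to work in coordinates adapted to the basis $\{w_p, v_p\}$ and carry out an explicit maximization of the left-hand side over the dual cone $Q_p^*$. After a rotation, which preserves every quantity in sight, I may assume $w_p = (1,0)$ and $v_p = (\cos\alpha, \sin\alpha)$ for some $\alpha \in (0, \pi)$, so that $|v_p \times w_p| = \sin\alpha$. Writing $\xi = r(\cos\psi, \sin\psi)$, the target quantity becomes $r^2 \cos\psi \cos(\psi - \alpha)/\sin\alpha$, while the dual cone condition $\xi \in Q_p^*$ translates to the pair of inequalities $\cos\psi \leq 0$ and $\cos(\psi - \alpha) \leq 0$. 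In particular the product is nonnegative on $Q_p^*$, which is why a one-sided bound is even possible.

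Next I would apply the product-to-sum identity
\[
\cos\psi \cos(\psi - \alpha) = \tfrac{1}{2}\bigl(\cos\alpha + \cos(2\psi - \alpha)\bigr)
\]
and maximize over the admissible values of $\psi$. The two inequalities defining $Q_p^*$ force $\psi \in [\pi/2 + \alpha,\, 3\pi/2]$, whence $2\psi - \alpha \in [\pi + \alpha,\, 3\pi - \alpha]$, an interval that contains $2\pi$ for every $\alpha \in (0,\pi)$. Consequently $\cos(2\psi - \alpha) \leq 1$, with equality along the bisector $\psi = \pi + \alpha/2$ of $Q_p^*$. This yields the sharp bound
\[
\frac{\langle \xi, v_p\rangle \langle \xi, w_p\rangle}{|v_p \times w_p|} \leq \frac{\|\xi\|^2 (1 + \cos\alpha)}{2\sin\alpha} = \frac{\|\xi\|^2}{2}\cot(\alpha/2),
\]
where the last equality is the standard half-angle identity $(1 + \cos\alpha)/\sin\alpha = \cot(\alpha/2)$.

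It then remains to verify that $\cot(\alpha/2) \leq \tan(\theta(p))$, for which I would split into two cases according to the branch of $\arcsin$. If $\alpha \leq \pi/2$ then $\arcsin(\sin\alpha) = \alpha$, so $\theta(p) = (\pi-\alpha)/2$ and $\tan(\theta(p)) = \cot(\alpha/2)$ exactly, and the bound is in fact sharp. If instead $\alpha > \pi/2$ then $\arcsin(\sin\alpha) = \pi - \alpha$, so $\theta(p) = \alpha/2 > \pi/4$, and $\tan(\theta(p)) = \tan(\alpha/2) > 1 > \cot(\alpha/2)$. In either case the desired inequality holds. The only step needing some care is correctly identifying the range of $\psi$ on $Q_p^*$; once the interval $[\pi/2 + \alpha,\, 3\pi/2]$ is pinned down, the rest is elementary trigonometry, so I do not anticipate any real obstacle.
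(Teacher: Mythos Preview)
Your proof is correct and follows essentially the same route as the paper: rotate to adapted coordinates, apply the product-to-sum identity, and maximize the cosine term over the dual cone. The only differences are cosmetic (you align $w_p$ with the $x$-axis while the paper places $w_p,v_p$ symmetrically about the $y$-axis) and your final case split on the branch of $\arcsin$ is a welcome clarification, since when the angle $\alpha$ between $w_p$ and $v_p$ exceeds $\pi/2$ the paper's stated rotation does not literally apply, whereas your bound $\cot(\alpha/2) \leq \tan(\theta(p))$ handles both branches cleanly.
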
  

As we will see below, the angle $\theta(p)$ is small for all but finitely many $p \in \mathcal{N}^{*}$.  This will be used to prove that $\bar{H}_{\varphi}$ is continuous.

\begin{proof}  For convenience, we write $v = v_{p}$, $w = w_{p}$, and $\theta = \theta(p)$.  Since rotations don't change inner or cross products, we can rotate the plane so that
	\[
	w = (\cos(\theta),\sin(\theta)) \quad \mbox{and}  \quad v = (-\cos(\theta),\sin(\theta)).
	\]
	  Note that, after this rotation, we have $\xi \in Q_{p}^{*} \setminus \{0\}$ only if $-\xi = \|\xi\| (\cos(\psi),\sin(\psi))$ for some $\psi \in (0,\pi)$.  Henceforth, we will restrict attention to such angles $\psi$.
	
	Since $\|v\| = \|w\| =1$, our assumptions imply that
	\begin{equation*}
	\frac{\langle \xi, v \rangle \langle \xi, w \rangle}{|v \times w|} = \frac{\|\xi\|^{2} \cos(\pi - \psi - \theta) \cos(\psi - \theta)}{\sin(\pi - 2\theta)}.
	\end{equation*}
	Rewriting the numerator, we see that
	\[
	\cos(\pi - \psi - \theta) \cos(\psi - \theta) = (1/2)(-\cos(2 \theta)-\cos(2 \psi)), 
	\]
	which is maximized at $\psi = \frac{\pi}{2}$. 	Thus, 
	\begin{align*}
	\frac{\langle \xi, v \rangle \langle \xi, w \rangle}{|v \times w|} \leq \frac{\|\xi\|^{2} \cos\left(\frac{\pi}{2} - \theta \right) \cos \left(\frac{\pi}{2} - \theta \right)}{\sin(\pi - 2 \theta)} = \frac{\|\xi\|^{2} \tan(\theta)}{2}.
	\end{align*}
\end{proof}  

\begin{prop} \label{prop:hj-finite-set}
	  For each $R, M > 0$, let $\mathcal{N}^{*}_{R,M} \subseteq \mathcal{N}^{*}$ denote the subset 
	\begin{equation*}
	\mathcal{N}^{*}_{R,M} = \{p \in \mathcal{N}^{*} \, \mid \, |v_{p} \times w_{p}|^{-1} \langle \xi, v_{p} \rangle \langle \xi, w_{p} \rangle \geq M \quad \text{for some} \, \, \xi \in B(0,R)\}.
	\end{equation*}
	Then $\mathcal{N}^{*}_{R,M}$ is a finite set.  \end{prop}  

\begin{proof}  Since $\{\varphi^{*} \leq 1\}$ has finite perimeter, the disjoint line segments $\{\mathcal{Q}_{p} \cap \{\varphi^{*} = 1\} \, \mid \, p \in \mathcal{N}^{*}\}$ have summable lengths.  From this, it follows that, for each $\delta > 0$, the sets $\mathcal{N}^{*}(\delta)$ given by 
	\begin{equation} \label{eq:finite-set}
	\mathcal{N}^{*}(\delta) = \left \{p \in \mathcal{N}^{*} \, \mid \, |v \times w| > \delta \, \, \text{for some} \, \, v, w \in \mathcal{Q}_{p} \right\} 
	\end{equation}
	are finite.  
	
	In view of \eqref{eq:finite-set}, it only remains to show that, for each $R, M > 0$, there is a $\delta > 0$ such that $\mathcal{N}^{*}_{R,M} \subseteq \mathcal{N}^{*}(\delta)$.  However, this follows from Lemma \ref{lemma:local-boundedness}.  Indeed, if $\xi \in B(0,R)$ and $|v_{p} \times w_{p}|^{-1} \langle \xi, v_{p} \rangle \langle \xi, w_{p} \rangle \geq M$, then
	\begin{equation*}
	\frac{2M}{\tan(\theta(p))} \leq \|\xi\|^{2} \leq R^{2}
	\end{equation*}
	where $\tan(\theta(p)) = (\pi - \arcsin(w_{p} \times v_{p}))/2$.  From this, we deduce that $\tan(\theta(p)) \geq \frac{2 M}{R^{2}}$ and this readily implies $p \in \mathcal{N}^{*}(\delta)$ for some $\delta > 0$ depending on $M$ and $R$.         \end{proof}  

\begin{prop}  $\bar{H}_{\varphi}$ is continuous.  \end{prop}  

\begin{proof}  Note that $\bar{H}_{\varphi}$ is a lower semicontinuous function, being the supremum of a family of continuous functions.
	
	To show that it is continuous, it suffices to prove that $\bar{H}_{\varphi}$ restricts to a continuous function in $B(0,R)$ for each $R > 0$.  Given such an $R$ and a sequence $(\xi_{n})_{n \in \N} \subseteq B(0,R)$ converging to some $\xi \in \R^{2}$, there are two possibilities: either $\bar{H}_{\varphi}(\xi_{n}) \to 0$ or else $\bar{H}_{\varphi}(\xi_{n}) \geq M$ for some $M > 0$ and sufficiently large $n$.
	
	In the second case, Proposition \ref{prop:hj-finite-set} implies that $(\xi_{n})_{n \geq N} \subseteq \cup_{p \in \mathcal{N}^{*}_{R,M}} Q_{p}^{*}$.  At the same time, since $\mathcal{N}_{R,M}^{*}$ is finite, $\bar{H}_{\varphi}$ restricts to a continuous function in that set by Proposition \ref{prop:key-duality} and \eqref{eq:alternative}.  Therefore, $\bar{H}_{\varphi}(\xi) = \lim_{n \to \infty} \bar{H}_{\varphi}(\xi_{n})$.  
	
	In the other case, we know that $\lim_{n \to \infty} \bar{H}_{\varphi}(\xi_{n}) = 0$.  Therefore, by lower semicontinuity, $\bar{H}_{\varphi}(\xi) \leq 0$.  At the same time, $\bar{H}_{\varphi}$ is a non-negative function so this implies $0 = \bar{H}_{\varphi}(\xi)$, and hence $\bar{H}_{\varphi}(\xi) = \lim_{n \to \infty} \bar{H}_{\varphi}(\xi_{n})$.    \end{proof}

	\subsection{Affine invariance} \label{sec:affine-invariance}
We note that Pareto peeling has a certain invariance with respect
to linear transformations of the plane. 
\begin{lemma} \label{lemma:affine-invariance}
	Let $A$ be a finite set of points in $\R^2$, then for any bijective linear map $L:\R^2 \to \R^2$
	\[
	x \in \inte(\mathcal{P}(A)) \iff L(x) \in \inte(\mathcal{P}_L(L(A))), 
	\]
	where $\mathcal{P}_L$ denotes the Pareto hull with respect to the norm, $\varphi \circ L$.
\end{lemma}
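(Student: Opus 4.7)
The plan is to argue directly from the definition of the Pareto hull, using $L$ as a change of variables. By definition, $x \in \mathcal{P}(A)$ iff for every $y \in \R^{2} \setminus \{x\}$ there is $a \in A$ with $\varphi(a-x) < \varphi(a-y)$. Substitute $a' = L(a)$, $x' = L(x)$, $y' = L(y)$; by linearity, $a - x = L^{-1}(a' - x')$, so $\varphi(a-x) = (\varphi \circ L^{-1})(a' - x')$, and likewise for the $y$-term. Since $L$ is a bijection, the quantifier over $y \in \R^{2} \setminus \{x\}$ corresponds to one over $y' \in \R^{2} \setminus \{L(x)\}$, and the existential over $a \in A$ to one over $a' \in L(A)$. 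Combining these observations yields the equivalence $x \in \mathcal{P}(A) \iff L(x) \in \mathcal{P}_{L}(L(A))$ at the level of Pareto hulls themselves, once one matches the convention so that the composed norm on the right-hand side is the one induced by the substitution.

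To promote this to interiors, I would invoke the fact that $L$, being a bijective linear map in finite dimension, is a homeomorphism of $\R^{2}$, hence $L(\inte(S)) = \inte(L(S))$ for every $S \subseteq \R^{2}$. Applying this to $S = \mathcal{P}(A)$ gives the claim as stated.

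A more structural alternative would use the cone characterization of Theorem \ref{thm:cone_efficient}. One first checks that the geometric data of the composed norm -- the dual-norm corner set $\mathcal{N}^{*}$, the cones $\{Q_{p}\}$ and halfspaces $\{H_{q^{\perp}}\}$, and the extreme points $\mathcal{E}$ -- are obtained from those of $\varphi$ by the appropriate linear action of $L$ on the plane (and its transpose on the dual). The cone-membership condition $A \cap (x + \inte(Q_{p})) \neq \emptyset$ then transforms directly to $L(A) \cap (L(x) + \inte(L(Q_{p}))) \neq \emptyset$, which is the cone condition for $L(x)$ in $\mathcal{P}_{L}(L(A))$, and similarly for the halfspaces. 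I do not expect any real obstacle in either approach: the content of the lemma is essentially bookkeeping for a change of variables, and the only mildly delicate point is tracking the $L$ versus $L^{-1}$ convention relating $\varphi$ to the transformed norm, which does not affect the overall structure of the argument.
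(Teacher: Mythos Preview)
Your primary approach, working directly from the definition of the Pareto hull, is correct and complete. You obtain the set-level equality $L(\mathcal{P}(A)) = \mathcal{P}_{L^{-1}}(L(A))$ and then pass to interiors using that $L$ is a homeomorphism; both steps are sound. You also correctly flag the $L$ versus $L^{-1}$ issue in the composed norm, which is a genuine notational wrinkle in the statement but, as you say, does not affect the argument.

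The paper takes a different route: it invokes the cone characterization (Theorem~\ref{thm:cone_efficient}) and checks that the cone-intersection conditions $A \cap (x + \inte(Q)) \neq \emptyset$ transform under $L$ to $L(A) \cap (L(x) + \inte(L(Q))) \neq \emptyset$, concluding by another application of Theorem~\ref{thm:cone_efficient} on the transformed side. This is essentially your ``more structural alternative.'' Your primary approach is more elementary in that it avoids the cone theorem entirely and yields the statement at the level of the Pareto hulls themselves, with the interior statement as a corollary; the paper's approach has the minor advantage of working directly with interiors throughout (so no separate homeomorphism step), and it previews the cone machinery that drives the rest of the paper. Either is fine here.
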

\begin{proof}
	As $L$ is bijective, it suffices to prove one direction.  If $x \in \mathcal{P}(A)$, 
	then by Theorem \ref{thm:cone_efficient}, for every cone $Q \in \{ Q_p, H_{q^\perp}\}$, 
	\[
	A \cap (x + \inte(Q)) \not = \emptyset.
	\]
	Given such a $Q$, fix $y \in A$ and a $q_{Q} \in Q$ such that
	\[
	y = x + q_{Q}.
	\]
	Since $L$ is linear, 
	\[
	L(y) = L(x) + L(q_Q)
	\]
	and $L(y) \in L(A)$, $L(q_Q) \in \inte(L(Q))$, meaning
	\[
	L(A) \cap (L(x) + \inte(L(Q))) \not = \emptyset ,
	\]
	and we conclude by another application of Theorem \ref{thm:cone_efficient}. 
	\end{proof}

\begin{cor} \label{cor:affine_invariance}
	For any finite set $A \subseteq \R^2$ and any bijective linear map $L:\R^2 \to \R^2$, the height functions are related by $h_A(x) = h_{L(A)}(L(x))$.
\end{cor} 



\section{Preliminaries from Nondominated Sorting} \label{sec:q-nds}

In this section, for the sake of completeness, we recall the fundamental results on nondominated sorting that will be needed in the rest of the paper.  Due to the form of the dynamic programming principle satisfied by the height function, \ie, \eqref{eq:pareto-peeling-dpp}, it will be necessary to present the results in a more general setting in which the standard cone $[0,\infty)^{2}$ is replaced by a flat cone $Q_{p}$ for some $p \in \mathcal{N}^{*}$.  At the level of nondominated sorting, this change presents no new difficulties.

At the end of the section, we explain how nondominated sorting can be regarded as an infinite volume limit of Pareto hull peeling, 
and relate the two continuum PDE.

\begin{figure}
	\includegraphics[width=0.5\textwidth]{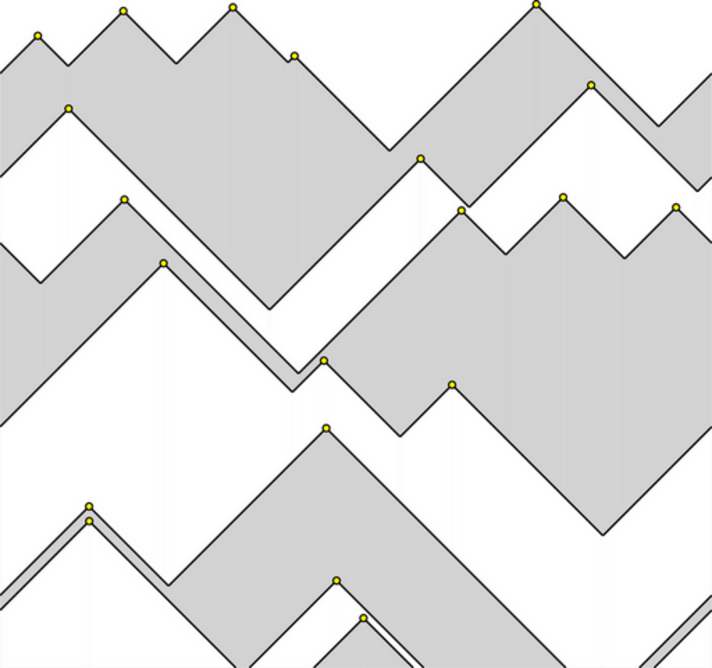}
	\caption{$Q$-nondominated sorting of a Poisson point cloud where $Q = \{x \in \R^2: \|x\|_{\infty} = x_2\}$. The shading indicates 
		alternating layers.} \label{fig:q-nds}
\end{figure}

\subsection{Nondominated Sorting}

We recall the definition of nondominated sorting.  As we will see below, nondominated sorting will be useful in characterizing the local behavior of the limiting height function $\bar{u}$ in the nondegenerate directions of the Hamiltonian.  In this sense, in the language of homogenization theory, it serves as a ``cell problem" for Pareto hull peeling.

Given a finite set of distinct points, $A \subseteq \R^2$, nondominated sorting arranges the set of points into layers by repeatedly removing or peeling the set of minimal elements. Specifically, let 
\begin{equation} \label{eq:component-wise-order}
x \leq y \iff x_1 \leq y_1 \quad \mbox{and} \quad x_2 \leq y_2
\end{equation}
denote the component-wise partial order.  Given $x, x' \in \mathbb{R}^{2}$, we say $x'$ {\it dominates} $x$
if $x' \leq x$.
Write $\mathcal{D}(A)$ for the set of all points in $\mathbb{R}^{2}$ that are dominated by some point in $A$.  If no such point $x' \in A$ exists, we say $x$ is {\it nondominated} relative to $A$.  If $A$ is empty, designate $\mathcal{D}(A) = \emptyset$. 

Following \cite{calder2014continuum,calder2014hamilton,calder2015pde,calder2017numerical}, we define {\it nondominated sorting} of a finite set $A \subseteq \mathbb{R}^{2}$ as follows: define $\{S_{j}(A)\}_{j \in \mathbb{N}}$ by
\begin{equation} \label{eq:nds-hulls}
S_1(A) = \mathcal{D}(A) \quad \mbox{ and } \quad S_{j+1}(A) = \mathcal{D}(A \cap \inte(S_j(A))).
\end{equation}
The nondominated sorting {\it depth function} of $A$ is defined to be 
\begin{equation} \label{eq:nds-depth}
s_{A}(x) = \sum_{n=1}^{\infty} 1_{\inte(S_n(A))}.
\end{equation}

\begin{remark}  Note that our definition of the depth function differs from that slightly from that in \cite{calder2014continuum,calder2014hamilton,calder2015pde,calder2017numerical}: the definition in those works involves the sum of $1_{S_{n}(A)}$ rather than $1_{\inte(S_{n}(A))}$.  Whereas the version in those works is upper semi-continuous, ours is lower semi-continuous.  The two definitions can be recovered from one another by taking upper and lower semi-continuous envelopes.  \end{remark}

Calder-Esedoglu-Hero showed that if $A$ consists of randomly scattered points, then, in the large sample limit, $s_A$ converges to the solution of a Hamilton-Jacobi equation \cite{calder2014continuum,calder2014hamilton,calder2015pde,calder2017numerical}.
A rate of convergence was recently established by Cook-Calder \cite{cook2022rates}.
These analyses relied on an equivalence between nondominated sorting of a set $A$ and the {\it longest chain} in $A$, a longest, totally ordered subset of $A$, $x_1 \leq \cdots \leq x_n$, $\{x_i\} \subseteq A$. Denote the length of the longest chain in $A$ by $\ell(A)$ and observe that 
\[
s_A(x) = \ell( (-\infty,x)^2 \cap A).
\]
Using this equivalence, one readily deduces that $s_{A}$ can be computed via a form of dynamic programming:
	\begin{equation} \label{eq:dpp-nds}
		s_{A}(x) = \sup_{y < x} \left( s_{A}(y) + 1_{A}(y) \right).
	\end{equation}
(Here we write $y < x$ if $y_{1} < x_{1}$ and $y_{2} < x_{2}$.)

\subsection{$Q$-nondominated Sorting}  In order to use the asymptotics of nondominated sorting in our analysis of Pareto hull peeling, it will be natural to generalize the former algorithm to include other partial orders on $\mathbb{R}^{2}$.

The definition of nondominated sorting is easily extended beyond \eqref{eq:component-wise-order} to any partial order on $\R^2$. Recall that any proper cone 
$Q \subseteq \R^2$ induces a partial order \cite{boyd2004convex}. A cone $Q$ is {\it proper} if it is convex, closed, 
has nonempty interior, and is pointed: $Q \cap (-Q) = \{0\}$. In particular, 
given any proper cone $Q$, let 
\begin{equation} \label{eq:cone-order}
x \leq_{Q} y \iff (y-x) \in Q
\end{equation}
denote the associated partial order. Observe that when $Q = [0,\infty)^2$, \eqref{eq:cone-order} coincides
with \eqref{eq:component-wise-order}.

By replacing \eqref{eq:component-wise-order} in nondominated sorting by \eqref{eq:cone-order}, 
we get {\it $Q$-nondominated sorting}, see Figure \ref{fig:q-nds}.  In particular, the $Q$-dominated points $\mathcal{D}^{Q}(A)$ associated with a finite set $A$ are defined by 
	\begin{equation*}
		\mathcal{D}^{Q}(A) = \bigcup_{x' \in A} \{x \in \mathbb{R}^{2} \, \mid \, x' \leq_{Q} x\}.
	\end{equation*}
Hence $Q$-nondominated sorting is described by the sets $\{S_{j}^{Q}(A)\}_{j \in \mathbb{N}}$ given by 
	\begin{equation*}
		S_{1}^{Q}(A) = \mathcal{D}^{Q}(A), \quad S_{j + 1}^{Q}(A) = \mathcal{D}^{Q}(A \cap \inte(S_{j}^{Q}(A))).
	\end{equation*}
The depth function $s_A^{Q}$ and longest chain operator $\ell^{Q}(A)$ for $Q$-nondominated sorting are defined entirely analogously.  This leads to the following {\it dynamic programming principle} for the depth function,
\begin{equation} \label{eq:nds-dpp}
s^{Q}_A(x) = \sup_{y \in (x - \inte(Q))} \left( s_{A}^{Q}(y) + 1_{A}(y) \right)
\end{equation}
and a representation in terms of the $Q$-longest chain
	\begin{equation} \label{eq:q-longest-chain}
		s^{Q}_{A}(x) = \ell^{Q}((x - Q) \cap A).
	\end{equation}

\begin{remark}
	Topological properties of $Q$-nondominated point sets have been analyzed previously in an optimization context \cite{luc1985structure, dinh2005generalized}.
\end{remark}

\subsection{Convergence of the longest chain}
In this section we record known results on the longest chain of Poisson points in rectangles and simplicial domains.
Let $\mathcal{B}$ denote the set of bounded coordinate rectangles in $\R^2$ and let $X_{n f}$ denote a Poisson process of intensity $n f$ in $\R^2$.

\begin{prop}[\cite{hammersley1972few, calder2015pde}] \label{prop:longest-chain-convergence}
	Fix a non-negative $f \in L^{\infty}_{\text{loc}}(\mathbb{R}^{2})$ and, for each $n \in \mathbb{N}$, let $X_{nf}$ be a Poisson process in $\mathbb{R}^{2}$ of intensity $nf$. On an event of probability 1, 
	for all $B \in \mathcal{B}$,
	\begin{equation}
	\limsup_{n \to \infty} n^{-1/2} \ell(X_{n f} \cap B) \leq 2 \Big(\sup_{B} f\Big)^{\frac{1}{2}} |B|^{1/2} 
	\end{equation}
	and
	\begin{equation}
	\liminf_{n \to \infty} n^{-1/2} \ell(X_{n f} \cap B) \geq 2 \left(\inf_{B} f \right)^{\frac{1}{2}} |B|^{1/2}.
	\end{equation}
	
\end{prop}
\begin{proof}[Sketch of proof].
	We start with the case $f \equiv 1$. By the subadditive ergodic theorem \cite{hammersley1972few}, there is a constant $c_{2} \geq 0$ such that, for each $x \in \Q^2$, on an event of probability 1, $\lim n^{-1/2} \ell(X_n \cap (x + [0,1]^2)) = c_2$.  It is known that $c_{2} = 2$; see, for instance, \cite{aldous_diaconis}. In particular, by scaling and taking the intersection over a countable number of probability 1 events, 
	for each $x, x' \in \Q^2$
	\[
	\lim_{n \to \infty} n^{-1/2} \ell(X_n \cap (x + R_{x'})) = 2 |R_{x'}|^{1/2},
	\]
	where $R_{x'} = [0,x']$ (recall from \eqref{eq:notation-for-squares} that this denotes an axis-aligned square), on an event of probability 1. By approximation, this implies 
	\[
	\lim_{n \to \infty} n^{-1/2} \ell(X_n \cap B) = 2 |B|^{1/2},
	\]
	for any $B \in \mathcal{B}$ on an event of probability 1. The extension to arbitrary $f$ uses the standard coupling of Poisson processes \cite{kingman1992poisson}. 
\end{proof}
In the next section, we will establish the subsolution property of the limiting height function by employing an elegant argument of Calder \cite{calder2016direct}.  In order to do so, it is necessary to observe that the asymptotics of the longest chain from the previous result are unchanged if a cube is replaced by a suitable simplex.  Specifically, for $v \in (0, \infty)^2$, denote the simplex 
\begin{equation}
S_{v} := \{ x \in (-\infty,0]^2 \, \mid \, 1 + \langle x , v \rangle \geq 0\}.
\end{equation}
The next result shows that the asymptotics of the longest chain in $S_{v}$ are consistent with the result for rectangles.

\begin{prop}[\cite{calder2015pde, calderminicourse}] \label{prop:nds-consistency}
	Fix a non-negative $f \in L^{\infty}_{\text{loc}}(\mathbb{R}^{2})$ and, given $n \in \mathbb{N}$, let $X_{nf}$ be a Poisson process in $\mathbb{R}^{2}$ of intensity $nf$.  With probability one, for any $v \in (0,\infty)^2$ and $x \in \R^2$,
	\begin{equation}
	\limsup_{n \to \infty} \frac{\ell((x + S_v) \cap X_{nf})}{n^{1/2}}  \leq \left( \frac{\sup_{x + S_v} f}{v_1 v_2}\right)^{1/2}.
	\end{equation}
\end{prop}

\subsection{Isomorphism between versions of nondominated sorting} \label{sec:isomorphism}
We note a useful change of variables which will allow us to translate between 
$Q$-nondominated sorting and standard nondominated sorting. This change of variables immediately leads to 
versions of Propositions \ref{prop:longest-chain-convergence} and \ref{prop:nds-consistency} for $Q$-nondominated sorting.

Let $Q \subseteq \R^2$ be a proper cone
and let $v,w \in \mathbb{R}^{2}$ denote its extremal directions, \ie, 
\[
Q = \{ aw + bv \, \mid \, a,b \geq 0 \}.
\]
As $Q$ is proper, $\{v,w\}$ form a basis of $\R^2$ and we may define a linear bijection $L_Q:\R^2 \to \R^2$ by 
\begin{equation} \label{eq:q-linear-map}
L_Q(aw + bv) = (a,b).
\end{equation}
Note that $L_Q$ preserves the order:
\[
L_Q(x) \leq L_Q(y) \iff x \leq_{Q} y
\]
for $x, y \in \R^2$. Therefore, given a finite set $A \subseteq \R^2$,  
\begin{equation}
x \in \mathcal{D}^{Q}(A) \iff L_Q(x) \in \mathcal{D}(L(A))
\end{equation}
These observations imply the following. 
\begin{lemma} \label{lemma:nds-change-of-variables}
	If $X_{g}$ is a Poisson point process of intensity $g \in C(\R^2)$ then
	\[
	\ell^{Q}(X_g \cap A) = \ell(L_Q(X_{g} \cap A)) \overset{\mathcal{D}}{=} \ell(X_{g'} \cap L_Q(A))
	\] 
	for all finite subsets $A$ of $\R^2$ where $g' = |v \times w| g$.
\end{lemma}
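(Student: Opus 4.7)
The plan is to establish the two equalities in sequence, treating the first (deterministic) and second (distributional) separately.

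For the identity $\ell^Q(X_g \cap A) = \ell(L_Q(X_g \cap A))$, I would observe that $L_Q$ is an order isomorphism between $(\R^{2},\leq_Q)$ and $(\R^{2},\leq)$. This is immediate from the earlier observation in the section that $L_Q(x) \leq L_Q(y) \iff x \leq_Q y$. Since the length of the longest chain in a finite set depends only on the induced partial order, any such isomorphism sends $Q$-chains bijectively onto standard chains of equal length. Applied to the almost surely finite set $X_g \cap A$, this yields the first equality pointwise.

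For the distributional equality $\ell(L_Q(X_g \cap A)) \overset{\mathcal{D}}{=} \ell(X_{g'} \cap L_Q(A))$, I would invoke the mapping theorem for Poisson point processes. Since $L_Q$ is a linear bijection, the image $L_Q(X_g)$ is again a Poisson process, and its intensity measure is the pushforward of $g(x)\,dx$ by $L_Q$. A change of variables $x = L_Q^{-1}(y)$ produces a Jacobian factor $|\det L_Q^{-1}|$, and because the columns of $L_Q^{-1}$ are exactly the extremal directions $w$ and $v$, we have $|\det L_Q^{-1}| = |v \times w|$. Hence $L_Q(X_g) \overset{\mathcal{D}}{=} X_{g'}$ with $g'(y) = |v \times w|\, g(L_Q^{-1}(y))$, matching the notation of the lemma. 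Finally, $L_Q(X_g \cap A) = L_Q(X_g) \cap L_Q(A)$ by bijectivity of $L_Q$, so the corresponding longest-chain statistics agree in distribution.

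There is no real technical obstacle here, as both steps reduce to standard observations: order isomorphisms preserve chain lengths, and Poisson processes transform predictably under bijective affine maps. The only minor subtlety worth flagging is the slight notational elision in the statement $g' = |v \times w|\, g$: strictly, the density of the pushforward process is $|v \times w|\, g \circ L_Q^{-1}$, and the simplified form should be read with this composition understood (harmless in the applications where the lemma is used, since one only cares about the distribution of the point configuration inside $L_Q(A)$).
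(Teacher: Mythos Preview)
Your proposal is correct and matches the paper's approach: the paper does not give an explicit proof, stating only ``These observations imply the following'' after recording that $L_Q$ is an order isomorphism, so your fleshed-out argument via the order isomorphism and the Poisson mapping theorem is exactly what is intended. Your remark about the notational shorthand $g' = |v \times w|\,g$ (really $|v \times w|\,g \circ L_Q^{-1}$) is a fair observation and does not affect the applications.
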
 

It will be useful to know later that $L_{Q}$ has operator norm given by 
	\begin{equation}
		\|L_{Q}\| = (1 - |\langle v,w \rangle|)^{-1/2}. \label{eq:operator-norm}
	\end{equation}  
Indeed, given $a,b \in \mathbb{R}$, Young's inequality implies that
	\begin{equation*}
		\|aw + bv\|^{2} = a^{2} + b^{2} + 2 ab \langle v, w \rangle \geq (1 - |\langle v, w \rangle|) \|L_{Q}(aw + bv)\|^{2}
	\end{equation*}
with equality for well-chosen $a,b$.  

\subsection{Interpretation as Pareto Hull Peeling}
\begin{figure}
	\includegraphics[width=0.35\textwidth]{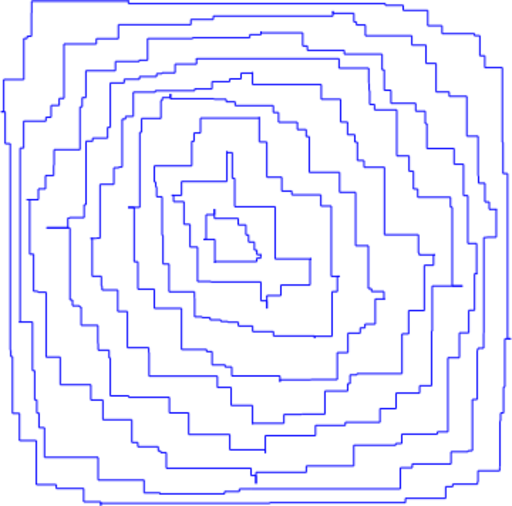} \qquad
	\includegraphics[width=0.35\textwidth]{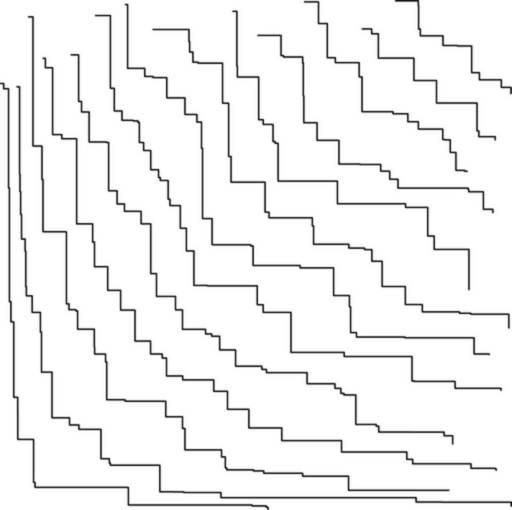} \\
	\includegraphics[width=0.3\textwidth]{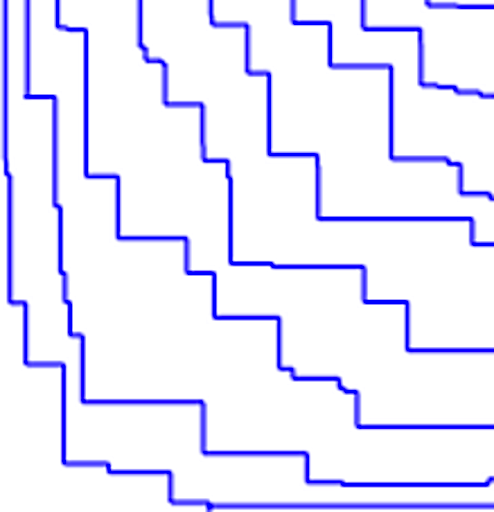} \qquad
	\includegraphics[width=0.3\textwidth]{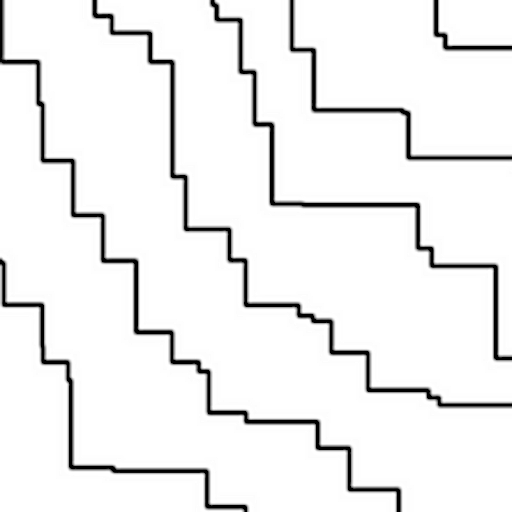} 
	\caption{On the left is $\ell_1$-Pareto peeling and on the right non-dominated sorting of 1000 Poisson random points. 
		The bottom row is a zoomed in piece of the top row, meant to emphasize the similarity of the local structure. }
\end{figure}

 Nondominated sorting and its scaling limit can be interpreted as special solutions of Pareto hull peeling and the PDE \eqref{eq:eikonal}.  This connection plays a fundamental role in the results that follow so it will be sketched here.  

First, let $X$ be a unit intensity Poisson point process in $[0,\infty)^{2}$ and let $\varphi$ be the $\ell^{1}$ norm.  Let $u^{(N)}$ be the height function of Pareto hull peeling applied to the restriction $X^{(N)} = X \cap [0,N]^{2}$.  The dynamic programming principle (Proposition \ref{prop:dpp}) implies that $u_{1}^{(N)}$ satisfies 
	\begin{equation*}
		u_{1}^{(N)}(x) = \min_{p \in \mathcal{N}^{*}} \sup_{y \in x + \inte(Q_{p})} u_{1}^{(N)}(y) + 1_{X}(y),
	\end{equation*}
where, in this case, $\mathcal{N}^{*} = \{(1,1),(1,-1),(-1,1),(-1,-1)\}$ and $Q_{p} = \{x \in \mathbb{R}^{2} \, \mid \, p_{1} x_{1} \geq 0, \, \, p_{2} x_{2} \geq 0\}$. 

As was observed by Calder \cite{calderminicourse}, when $N \to \infty$, $u_{1}^{(N)}$ converges to a function $u_{1} : [0,\infty)^{2} \to [0,\infty)$, and, due to the geometry of the domain and the properties of the Poisson process $X$, the dynamic programming principle simplifies
	\begin{equation*}
		u_{1}(x) = \sup_{y \in x - (0,\infty)^{2}} u_{1}(y) + 1_{X}(y).
	\end{equation*}
Thus, since this is nothing but the dynamic programming principle \eqref{eq:dpp-nds}, a straightforward comparison argument implies that the limit $u_{1}$ is precisely the depth function $s_{X}$.  In this way, nondominated sorting can be understood as the infinite volume limit of Pareto hull peeling in the quadrant $[0,\infty)^{2}$.

Next, upon rescaling, $u_{1}$ converges to the solution of a PDE.  Precisely, let $X_{n} = n^{-\frac{1}{2}} X$ be the rescaled point process, which is Poisson of intensity $n$, and let $u_{n}(x) = u_{1}(\sqrt{n} x)$, which is readily shown to be the depth function $s_{X_{n}}$ of $X_{n}$.  The results of \cite{calder2014continuum,calder2014hamilton,calder2015pde,calder2017numerical} therefore imply that, after normalizing by $\sqrt{n}$, 
	\begin{equation*}
		n^{-\frac{1}{2}} u_{n} \to \bar{u} \quad \text{with probability one,}
	\end{equation*}
where $\bar{u}$ is the unique solution of the Hamilton-Jacobi equation
	\begin{equation*}
		\left\{ \begin{array}{r l}
			\bar{u}_{x_{1}} \bar{u}_{x_{2}} = 1 & \text{in} \, \, (0,\infty)^{2}, \\
			\bar{u} = 0 & \text{on} \, \, \partial [0,\infty)^{2}, \\
			\bar{u}_{x_{1}} \geq 0, \, \, \bar{u}_{x_{2}} \geq 0 & \text{in} \, \, (0,\infty)^{2},
		\end{array} \right.
	\end{equation*}
where the last condition is interpreted to mean that $\bar{u}$ is nondecreasing in each argument.  In view of the central equation of interest in this work \eqref{eq:eikonal} and the identity $\bar{H}_{\varphi}(p') = |p_{1}'p_{2}'|$ that holds when $\varphi$ is the $\ell^{1}$ norm, it is natural to recast this equation for $\bar{u}$ in the form
	\begin{equation} \label{eq:special_corner_solution}
		\left\{ \begin{array}{r l}
			\bar{H}_{\varphi}(D\bar{u}) = 1 & \text{in} \, \, (0,\infty)^{2}, \\
			\bar{u} = 0 & \text{on} \, \, \partial [0,\infty)^{2}.
		\end{array} \right.
	\end{equation}
Indeed, it is straightforward to show that \eqref{eq:special_corner_solution} also holds; and $\bar{u}$ is the unique viscosity solution that is bounded from below. This means $\bar{u}$ is effectively a special solution of \eqref{eq:eikonal}.  Further, since $\bar{u}$ is given explicitly by $\bar{u}(x) = 2|x_{1} x_{2}|^{\frac{1}{2}}$, this function gives rise to a self-similar solution of the corresponding geometric flow \eqref{eq:normal-velocity}.

More generally, if $\varphi$ is any norm in $\mathbb{R}^{2}$, $p \in \mathcal{N}^{*}$, and $f$ is some bounded, positive continuous function in $Q_{p}$, then $Q_{p}$-nondominated sorting as defined above can be interpreted as Pareto hull peeling in the cone $Q_{p}$ and its continuum limit $\bar{u}$ solves the equation
	\begin{equation}
		\left\{\begin{array}{r l}
			\frac{\langle D\bar{u}, v_{p} \rangle \langle D\bar{u},w_{p} \rangle}{|v_{p} \times w_{p}|} = \bar{H}_{\varphi}(D\bar{u}) = f(x) & \text{in} \, \, Q_{p},\\
			\bar{u} = 0 & \text{on} \, \, \partial Q_{p}.
		\end{array} \right.
	\end{equation}


\section{Viscosity solutions and basic estimates} \label{sec:viscosity-solutions}

In this section, we set up the proof of our main result, Theorem \ref{theorem:pareto-convergence}.  We begin by stating the intermediate results that will be used in the proof and showing how they imply the theorem. The remainder of the section establishes $L^{\infty}$ and boundary H\"{o}lder estimates for the rescaled height functions.

The key compatibility assumption is defined in Section \ref{subsec:domain-consistency}.

\subsection{Preliminaries}  Before delving into the proof of Theorem \ref{theorem:pareto-convergence}, we start with some preliminary results that are fundamental in what follows.  

The next result is the basic link between Pareto hull peeling and $Q$-nondominated sorting as defined in Section \ref{sec:q-nds}.  

	\begin{prop} \label{prop:compare_to_nds} Given a finite set $A \subseteq \mathbb{R}^{2}$ and a $p \in \mathcal{N}^{*}$, if $u_{A}$ is the height function of Pareto hull peeling (see \eqref{eq:height}) and $s_{A}^{Q_{p}}$ is the depth function of $Q_{p}$-nondominated sorting, then
		\begin{equation*}
			u_{A} \leq s_{A}^{Q_{p}} \quad \text{in} \, \, \mathbb{R}^{2}.
		\end{equation*}
	\end{prop}  
	
		\begin{proof}  Note that it suffices to prove $\{s_{A} \leq k\} \subseteq \{u_{A} \leq k\}$ for each $k \in \mathbb{N} \cup \{0\}$.  We argue by induction.  To start with, if $x \in \{s_{A} \leq 0\} = \{s_{A} = 0\}$, then \eqref{eq:nds-dpp} implies that
			\begin{equation*}
				0 = s_{A}(x) = \sup \left\{ s_{A}(y) + 1_{A}(y) \, \mid \, y \in x - \inte(Q_{p}) \right\}.
			\end{equation*}
		It follows that $1_{A} = 0$ in $x - \inte(Q_{p})$.  Hence \eqref{eq:negation-thing} implies that $A \cap (x + \inte(Q_{-p})) = A \cap (x - \inte(Q_{p})) = \emptyset$.  Therefore, by Theorem \ref{thm:cone_efficient} and the definition of $u_{A}$, we must have $u_{A}(x) = 0$.  
		
		Next, suppose that $\{s_{A} \leq k\} \subseteq \{u_{A} \leq k\}$ for some given $k \in \mathbb{N} \cup \{0\}$.  We claim that the inclusion $\{s_{A} \leq k + 1\} \subseteq \{u_{A} \leq k + 1\}$ also holds.  Suppose that $x \in \{s_{A} \leq k + 1\}$.  We need to show that $u_{A}(x) \leq k + 1$.  If $s_{A}(x) \leq k$, then $u_{A}(x) \leq k$ by hypothesis; thus, let us assume $s_{A}(x) = k + 1$.  If $u_{A}(x) = 0$, then there is nothing to prove.  Otherwise, by the definition of $u_{A}$ and Theorem \ref{thm:cone_efficient},  there is a $y \in A \cap (x + \inte(Q_{-p}))$ such that
			\begin{equation*}
				u_{A}(x) = u_{A}(y) + 1.
			\end{equation*}
		By \eqref{eq:nds-dpp}, $s_{A}(x) \geq s_{A}(y) + 1$.  Thus, $s_{A}(y) \leq k$ and the inductive hypothesis implies that $u_{A}(y) \leq k$.  We conclude that $u_{A}(x) \leq k + 1$ by the choice of $y$.\end{proof}  

The next result explains the Pareto efficiency assumption on the domain $\mathrm{U}$.  Put simply, when $\mathrm{U}$ is Pareto efficient, the height function $u_{n}$ vanishes outside of $\mathrm{U}$, and hence it is reasonable to expect that the limit will be described by a PDE like \eqref{eq:eikonal}.

	\begin{prop} \label{prop:basic_boundary_condition}  Suppose that $\mathrm{U}$ is a bounded, open, Pareto efficient set in $\mathbb{R}^{2}$.  Given any finite $A \subseteq \bar{\mathrm{U}}$, the height function $u_{A}$ satisfies
		\begin{equation*}
			u_{A} = 0 \quad \text{in} \, \, \mathbb{R}^{2} \setminus \mathrm{U}.
		\end{equation*} \end{prop}
		
			\begin{proof}  Since $A \subseteq \bar{\mathrm{U}}$, the monotonicity of the Pareto hull operation implies that $\inte(E_{1}(A)) = \inte(\mathcal{P}(A)) \subseteq \inte(\mathcal{P}(\bar{U}))$.  Thus, by the Pareto efficiency of $\mathrm{U}$, $E_{1}(A) \subseteq \inte(\mathcal{P}(\bar{\mathrm{U}})) = \mathrm{U}$.  We conclude upon observing that $E_{1}(A) = \{u_{A} \geq 1\} = \{u_{A} > 0\}$.  \end{proof}    
			
The previous proposition shows that Pareto efficiency provides a natural assumption under which the Pareto peeling process of a set $A \subseteq \mathrm{U}$ is confined to $\mathrm{U}$.  

\begin{figure}
	\includegraphics[width=0.25\textwidth]{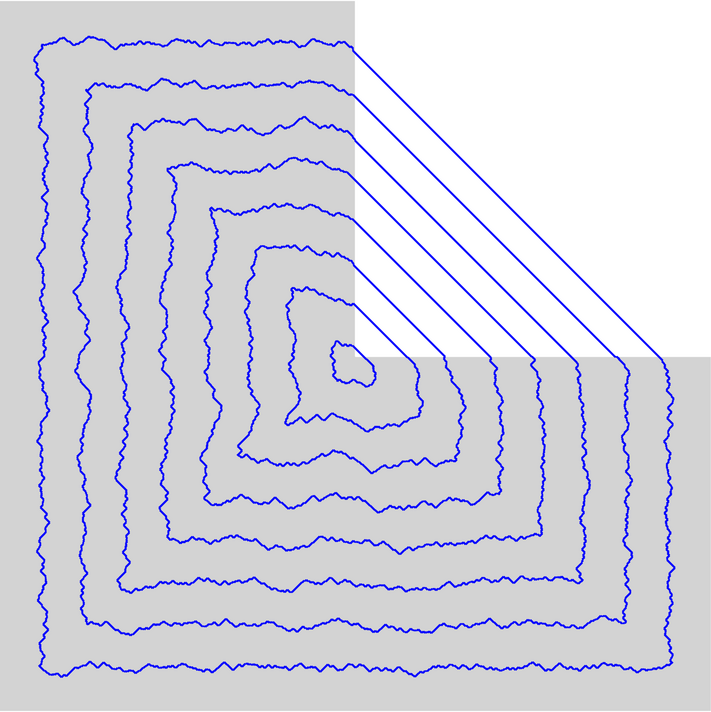} \qquad
	\includegraphics[width=0.25\textwidth]{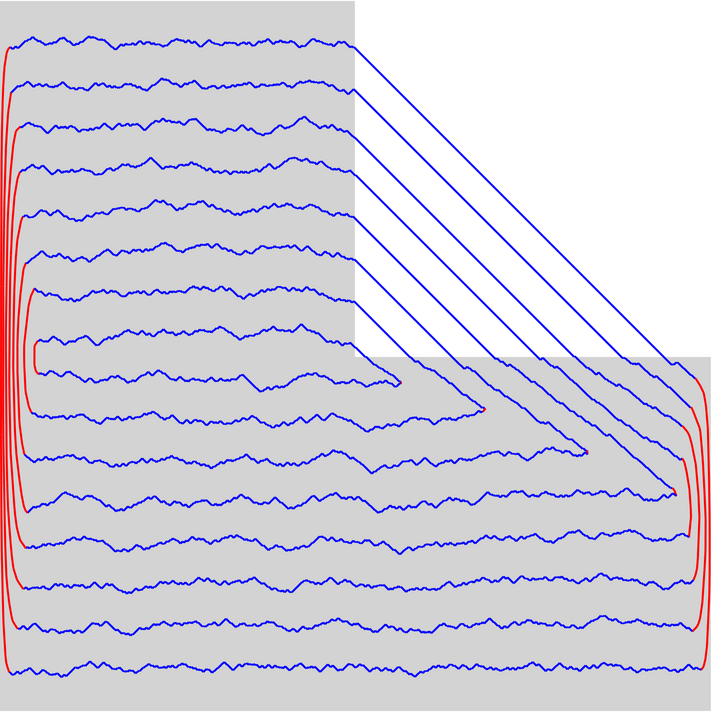} \qquad
	\includegraphics[width=0.25\textwidth]{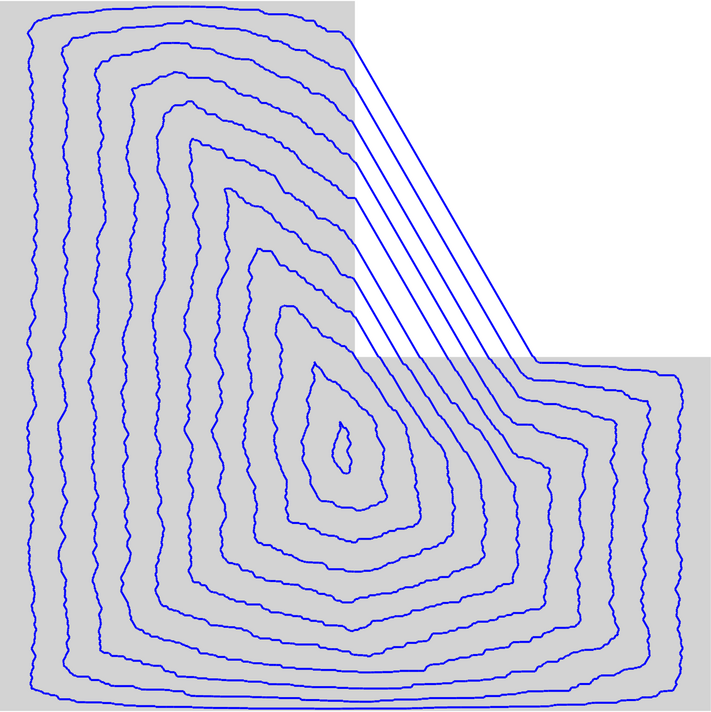}
	
	\caption{Pareto peeling when the domain $\mathrm{U}$ (shaded) is not Pareto efficient.  The pictures correspond to peeling with respect to the norms in Figure \ref{fig:norm-balls}.  Notice that Proposition \ref{prop:basic_boundary_condition} fails to hold: some peels exit the domain.} \label{fig:not-efficient}
\end{figure}

\begin{remark} \label{remark:non-efficient} When $\mathrm{U}$ is \emph{not} Pareto efficient, simulations show that the peels may exit $\mathrm{U}$; see Figure \ref{fig:not-efficient}.  Hence, in this case, we expect it will be more fruitful to treat the data $X_{nf}$ as a Poisson process in $\inte(\mathcal{P}(\bar{U}))$ with intensity $n \tilde{f}$, where $\tilde{f} = f$ in $U$ and $\tilde{f} = 0$, elsewhere.  By analogy with Theorem \ref{theorem:pareto-convergence}, we expect that the rescaled height functions converge in this setting to a solution $\bar{u}$ of the PDE
	\begin{equation*}
		\left\{ \begin{array}{r l}
			\bar{H}_{\varphi}(D\bar{u}) = f & \text{in} \, \, \mathrm{U}, \\
			\bar{H}_{\varphi}(D\bar{u}) = 0 & \text{in} \, \, \inte(\mathcal{P}(\bar{U})) \setminus \mathrm{U}, \\
			\bar{u} = 0 & \text{on} \, \, \partial \mathcal{P}(\bar{U}).
		\end{array} \right.
	\end{equation*}
This does not fit into the framework of Theorem \ref{theorem:pareto-convergence} since the new intensity $\tilde{f}$ is not positive everywhere, an assumption we do not yet know how to relax.  Further, while the enlarged domain $\inte(\mathcal{P}(\bar{U}))$ is Pareto efficient, it need not satisfy the compatibility assumption. \end{remark}

In fact, convex domains are always Pareto efficient. 
\begin{lemma} \label{lemma:convexity-and-efficient}
	If $\mathrm{U}$ is an open, bounded convex set in $\mathbb{R}^{2}$, then it is Pareto efficient. 
\end{lemma}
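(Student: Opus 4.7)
The plan is to apply Corollary \ref{cor:-constrained-convex-hull}, which characterizes $\inte(\mathcal{P}(\bar{\mathrm{U}}))$ as the set of $x$ satisfying (i) $\bar{\mathrm{U}} \cap (x + \inte(Q_p)) \neq \emptyset$ for all $p \in \mathcal{N}^*$ and (ii) $x \in \inte(\conv(\bar{\mathrm{U}}))$. The key observation is that, since $\mathrm{U}$ is open and convex, two standard facts from convex analysis hold: $\conv(\bar{\mathrm{U}}) = \bar{\mathrm{U}}$ and $\inte(\bar{\mathrm{U}}) = \mathrm{U}$. The latter fact — that an open convex set coincides with the interior of its closure — is the only nontrivial ingredient, and I would simply cite Rockafellar (already referenced in the paper) for it.

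With these identifications in hand, the inclusion $\inte(\mathcal{P}(\bar{\mathrm{U}})) \subseteq \mathrm{U}$ is immediate: any $x \in \inte(\mathcal{P}(\bar{\mathrm{U}}))$ satisfies condition (ii), so $x \in \inte(\conv(\bar{\mathrm{U}})) = \inte(\bar{\mathrm{U}}) = \mathrm{U}$.

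For the reverse inclusion $\mathrm{U} \subseteq \inte(\mathcal{P}(\bar{\mathrm{U}}))$, I would take $x \in \mathrm{U}$ and verify (i) and (ii) of the corollary. Condition (ii) is automatic from $x \in \mathrm{U} = \inte(\conv(\bar{\mathrm{U}}))$. For condition (i), fix any $p \in \mathcal{N}^*$. Since $\mathcal{Q}_p$ has nonempty interior (being generated by the linearly independent pair $v_p, w_p$), we may pick some $v \in \inte(Q_p)$; then $x + \varepsilon v \in \mathrm{U} \subseteq \bar{\mathrm{U}}$ for all sufficiently small $\varepsilon > 0$ because $\mathrm{U}$ is open, which exhibits a point of $\bar{\mathrm{U}} \cap (x + \inte(Q_p))$.

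There is no serious obstacle — the lemma is essentially bookkeeping on top of Corollary \ref{cor:-constrained-convex-hull}. The only thing worth flagging is the case $\mathcal{N}^* = \emptyset$, which occurs when $\{\varphi \leq 1\}$ is strictly convex; in that regime condition (i) is vacuous and the assertion reduces to the statement $\inte(\conv(\bar{\mathrm{U}})) = \mathrm{U}$, consistent with the fact that the Pareto hull coincides with the convex hull.
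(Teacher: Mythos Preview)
Your proof is correct but takes a different route from the paper's. The paper argues at the level of sets rather than points: it notes the trivial inclusion $\bar{\mathrm{U}} \subseteq \mathcal{P}(\bar{\mathrm{U}})$, then invokes an external result from Durier--Michelot (\cite[Corollary 4.1]{durier1986sets}) stating that $\mathcal{P}(A) \subseteq A$ whenever $A$ is convex and compact, obtaining $\mathcal{P}(\bar{\mathrm{U}}) = \bar{\mathrm{U}}$; taking interiors and using $\inte(\bar{\mathrm{U}}) = \mathrm{U}$ finishes. Your argument instead stays within the paper, verifying the two conditions of Corollary~\ref{cor:-constrained-convex-hull} pointwise. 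The payoff of your approach is self-containment---you avoid a second external citation---while the paper's version is terser and yields the slightly stronger identity $\mathcal{P}(\bar{\mathrm{U}}) = \bar{\mathrm{U}}$, not just equality of interiors. One cosmetic slip: you write ``$\mathcal{Q}_p$ has nonempty interior'' where you mean $Q_p$; also note that the forward inclusion $\mathrm{U} \subseteq \inte(\mathcal{P}(\bar{\mathrm{U}}))$ follows immediately from $\bar{\mathrm{U}} \subseteq \mathcal{P}(\bar{\mathrm{U}})$ and openness of $\mathrm{U}$, so the cone condition need not be checked separately.
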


\begin{proof}  By the definition of Pareto hull, $\bar{\mathrm{U}} \subseteq \mathcal{P}(\bar{\mathrm{U}})$ always holds.  Since $\bar{\mathrm{U}}$ is convex and compact, we can apply \cite[Corollary 4.1]{durier1986sets} to deduce that $\mathcal{P}(\bar{\mathrm{U}}) \subseteq \bar{\mathrm{U}}$.  This proves that $\bar{\mathrm{U}} = \mathcal{P}(\bar{\mathrm{U}})$.  Finally, since $\mathrm{U}$ is open and convex, $\inte(\bar{\mathrm{U}}) = \mathrm{U}$, hence $\mathrm{U} = \inte(\mathcal{P}(\bar{\mathrm{U}}))$ follows. \end{proof}

If $\{\varphi \leq 1\}$ is not strictly convex, then there are many non-convex Pareto efficient sets.  See Figure \ref{fig:non-convex-example} for an illustration of such a set.

\subsection{Proof of Theorem \ref{theorem:pareto-convergence}}  We follow what is now a classical approach in the viscosity solutions literature.  Recall that our interest is in the limit of the rescaled height functions $(\bar{u}_{n})_{n \in \mathbb{N}}$ defined through \eqref{eq:height} with $\bar{u}_{n} := n^{-\frac{1}{2}} u_{n}$ and $u_{n} := u_{X_{nf}}$ for some Poisson process $X_{nf}$ in $\mathrm{U}$ of intensity $nf$.  We begin the proof by defining so-called upper and lower half-relaxed limits $u^{*}$ and $u_{*}$ in $\bar{\mathrm{U}}$ by 
\begin{align}
u^{*}(x) &= \lim_{\delta \to 0^{+}} \sup \left\{ \bar{u}_{n}(y) \, \mid \, \|y - x\| + n^{-1} \leq \delta \right\},  \label{eq:upper-half-relaxed} \\
u_{*}(x) &= \lim_{\delta \to 0^{+}} \inf \left\{ \bar{u}_n(y) \, \mid \, \|y - x\| + n^{-1} \leq \delta \right\}. \label{eq:lower-half-relaxed}
\end{align}

To prove our main result, we will argue that $u^{*}$ and $u_{*}$ are, respectively, viscosity sub- and supersolutions of the Hamilton-Jacobi equation \eqref{eq:eikonal} and apply the comparison principle to conclude that $u^{*} = u_{*}$.

First, let us recall the relevant definitions from the theory of viscosity solutions.  In the next definition, $\USC(\mathcal{O})$ (resp.\ $\LSC(\mathcal{O})$) denotes the set of functions that are 
upper (resp.\ lower) semicontinuous at all points in $\mathcal{O}$. 

\begin{definition} \label{def:interior} (i) We say that $w \in\USC(\mathrm{U})$ is a \emph{viscosity subsolution} of the equation $\bar{H}_{\varphi}(D\bar{u}) = f$ in $\mathrm{U}$ if for each $x_{0} \in \mathrm{U}$ and each smooth function $\psi$ defined in a neighborhood of $x_{0}$, the following statement holds: if there is an $r > 0$ such that $\psi \geq w$ in $B(x_{0},r)$ and $\psi(x_{0}) = w(x_{0})$, then
	\begin{equation*}
		\bar{H}_{\varphi}(D\psi(x_{0})) \leq f(x_{0}).
	\end{equation*}
We abbreviate this by writing $\bar{H}_{\varphi}(Dw) \leq f$ in $\mathrm{U}$.
	
(ii) We say that $v \in\LSC(\mathrm{U})$ is a \emph{viscosity supersolution} of the equation $\bar{H}_{\varphi}(D\bar{u}) = f$ in $\mathrm{U}$ if for each $x_{0} \in \mathrm{U}$ and each smooth function $\psi$ defined in a neighborhood of $x_{0}$, the following statement holds: if there is an $r > 0$ such that $\psi \leq v$ in $B(x_{0},r)$ and $\psi(x_{0}) = v(x_{0})$, then
	\begin{equation*}
		\bar{H}_{\varphi}(D\psi(x_{0})) \geq f(x_{0}).
	\end{equation*}
We abbreviate this by writing $\bar{H}_{\varphi}(Dv) \geq f$ in $\mathrm{U}$.

(iii) A function $u \in C(\mathrm{U})$ is a \emph{viscosity solution} of $\bar{H}_{\varphi}(D\bar{u}) = f$ in $\mathrm{U}$ if it is both a viscosity sub- and supersolution. \end{definition}

As is customary in the viscosity solutions literature, we will abbreviate the condition in (i) and (ii) by saying ``$\psi$ touches $w$ from above at $x_{0}$" and ``$\psi$ touches $v$ from below at $x_{0}$," respectively.  

To prove that $u^{*}$ and $u_{*}$ are viscosity sub- and supersolutions, we start with an $L^{\infty}$ estimate.
\begin{lemma} \label{lemma:l-inf-estimate}
	If $\varphi$ satisfies \eqref{eq:linear-assumption} (that is, $\mathcal{N}^{*}$ is nonempty) and $\mathrm{U}$ is bounded, then there is a constant $0 < C := C_{\varphi, f, \mathrm{U}} < \infty$ so that on an event of probability 1
	\[
	\limsup_{n \to \infty} \sup_{\R^2}  \bar{u}_n \leq C.
	\]
\end{lemma}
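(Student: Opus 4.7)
The strategy is to dominate $\bar{u}_n$ pointwise by the depth function of $Q_p$-nondominated sorting for some $p \in \mathcal{N}^*$, then reduce that to the longest chain in standard nondominated sorting via the linear change of variables from Section~\ref{sec:isomorphism}, where the asymptotics of Proposition~\ref{prop:longest-chain-convergence} apply.

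First, since $\mathcal{N}^*$ is nonempty by \eqref{eq:facets_characterize}, fix some $p \in \mathcal{N}^*$. Proposition~\ref{prop:compare_to_nds} gives $u_n \leq s^{Q_p}_{X_{nf}}$ pointwise on $\mathbb{R}^2$, and the chain representation \eqref{eq:q-longest-chain} yields the crude global bound
\begin{equation*}
\sup_{x \in \mathbb{R}^{2}} u_{n}(x) \;\leq\; \sup_{x \in \mathbb{R}^{2}} s^{Q_{p}}_{X_{nf}}(x) \;=\; \ell^{Q_{p}}(X_{nf}).
\end{equation*}
Since $X_{nf}$ is supported in $\bar{\mathrm{U}}$, this in turn equals $\ell^{Q_{p}}(X_{nf} \cap \bar{\mathrm{U}})$.

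Next I would apply the change of variables. Let $L_{Q_p}:\mathbb{R}^2 \to \mathbb{R}^2$ be the linear bijection from \eqref{eq:q-linear-map}. Since $\bar{\mathrm{U}}$ is bounded and $L_{Q_p}$ is linear, $L_{Q_p}(\bar{\mathrm{U}})$ is contained in some bounded axis-aligned rectangle $B \in \mathcal{B}$ (whose side lengths depend only on $\mathrm{U}$ and the basis $\{v_p, w_p\}$). By Lemma~\ref{lemma:nds-change-of-variables}, we have the distributional identity
\begin{equation*}
\ell^{Q_{p}}(X_{nf} \cap \bar{\mathrm{U}}) \;\overset{\mathcal{D}}{=}\; \ell(Y_{ng} \cap B),
\end{equation*}
where $Y_{ng}$ is a Poisson process with bounded intensity $g := |v_p \times w_p| \, f \circ L_{Q_p}^{-1}$ supported in $L_{Q_p}(\bar{\mathrm{U}}) \subseteq B$. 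Crucially, since $L_{Q_p}$ is linear and bijective, we can realize $Y_{ng}$ as the pushforward $L_{Q_p}(X_{nf})$ on the same probability space, so the distributional equality actually becomes an almost sure equality (for all $n$ simultaneously under this coupling).

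Proposition~\ref{prop:longest-chain-convergence} then applies directly to $Y_{ng}$ on the bounded rectangle $B$: on an event of probability one,
\begin{equation*}
\limsup_{n \to \infty} n^{-1/2} \ell(Y_{ng} \cap B) \;\leq\; 2 \bigl(\sup_{B} g\bigr)^{1/2} |B|^{1/2} \;\leq\; 2 \bigl(|v_p \times w_p| \, \|f\|_{\infty}\bigr)^{1/2} |B|^{1/2} \;=:\; C,
\end{equation*}
and $C$ depends only on $\varphi$ (through $p,v_p,w_p$), $f$, and $\mathrm{U}$ (through $B$). Chaining the inequalities gives $\limsup_n \sup_{\mathbb{R}^2} \bar{u}_n \leq C$ on an event of probability one, as desired.

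I do not anticipate a serious obstacle here; the only subtle point is that Proposition~\ref{prop:longest-chain-convergence} is stated as an almost sure asymptotic statement rather than a statement about each $n$ separately, so one must be careful to couple $X_{nf}$ and the image process $Y_{ng}$ on a common probability space via the linear pushforward. Once that coupling is fixed, the argument is a one-line chain of inequalities from the pieces already assembled in Sections~\ref{sec:pareto-hulls} and \ref{sec:q-nds}.
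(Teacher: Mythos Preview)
Your proposal is correct and follows essentially the same route as the paper: fix $p\in\mathcal{N}^*$, dominate $u_n$ by $s^{Q_p}_{X_{nf}}$ via Proposition~\ref{prop:compare_to_nds}, bound that by the $Q_p$-longest chain in a rhombus containing $\mathrm{U}$, and conclude via Lemma~\ref{lemma:nds-change-of-variables} and Proposition~\ref{prop:longest-chain-convergence}. Your remark about the coupling is well taken; the paper simply notes that $L_{Q_p}$ maps $X_{nf}$ to a Poisson process on the same probability space, so the distributional equality in Lemma~\ref{lemma:nds-change-of-variables} is in fact an almost sure one here.
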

This will be proved below by comparing to nondominated sorting.

Next, we show that the boundary behavior is controlled provided $\mathrm{U}$ is compatible with $\varphi$.

\begin{lemma} \label{lemma:boundary-continuity}
	If $\mathrm{U}$ is a bounded, open, and Pareto efficient set compatible with $\varphi$ (see Definition \ref{def:compatible} below), then, on an event of probability 1, for each $x_{0} \in \partial \mathrm{U}$, there is a constant $A := A_{\varphi,\mathrm{U},x_{0}} > 0$ such that, for all $n$ sufficiently large and all $\delta \in (0,1)$,
	\begin{equation} \label{eq:holder_estimate}
	\sup_{B(x_0, \delta)} u_{n}\leq A\left(\sup_{\mathrm{U}} f \right)^{\frac{1}{2}} \sqrt{\delta n}.
	\end{equation}
In particular, $u_{*} = u^{*} = 0$ on $\partial \mathrm{U}$ almost surely.
\end{lemma}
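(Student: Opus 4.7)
The plan is to bound $u_n$ pointwise from above by a suitable $Q_p$-nondominated sorting depth function, then exploit the compatibility hypothesis to restrict the resulting longest-chain problem to a thin axis-aligned rectangle of area of order $\delta$, at which point Proposition \ref{prop:longest-chain-convergence} delivers the $\sqrt{\delta n}$ scaling in \eqref{eq:holder_estimate}. Fix $x_0 \in \partial \mathrm{U}$. The compatibility condition (Definition \ref{def:compatible}) is designed exactly so that there exist $p = p(x_0) \in \mathcal{N}^*$ and $C_0 = C_0(x_0) > 0$ with the property that, for every $\delta \in (0,1)$, the union $\bigcup_{x \in B(x_0, \delta) \cap \bar{\mathrm{U}}} L_{Q_p}\bigl((x - Q_p) \cap \bar{\mathrm{U}}\bigr)$ is contained in a single axis-aligned rectangle $R_\delta \in \mathcal{B}$ with $|R_\delta| \leq C_0\, \delta$. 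Here $L_{Q_p}$ is the linear bijection of Section \ref{sec:isomorphism}; geometrically, near a boundary point the right flat cone $Q_p$ orients $\bar{\mathrm{U}}$ on one side of $x_0$ in the $\leq_{Q_p}$-order, turning $(x - Q_p) \cap \bar{\mathrm{U}}$ into a thin slab of width $O(\delta)$.

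Since $X_{nf} \subseteq \bar{\mathrm{U}}$ and since Proposition \ref{prop:basic_boundary_condition} gives $u_n \equiv 0$ outside $\bar{\mathrm{U}}$, combining Proposition \ref{prop:compare_to_nds}, the representation \eqref{eq:q-longest-chain}, and Lemma \ref{lemma:nds-change-of-variables} yields, for each $x \in B(x_0, \delta) \cap \bar{\mathrm{U}}$,
\[
u_n(x) \leq s^{Q_p}_{X_{nf}}(x) = \ell\bigl(L_{Q_p}(X_{nf}) \cap L_{Q_p}\bigl((x - Q_p) \cap \bar{\mathrm{U}}\bigr)\bigr) \leq \ell(Y_n \cap R_\delta),
\]
where $Y_n := L_{Q_p}(X_{nf})$ is a Poisson process whose intensity is bounded by $|v_p \times w_p|^{-1} n \sup_\mathrm{U} f$. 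Hence $\sup_{B(x_0, \delta)} u_n \leq \ell(Y_n \cap R_\delta)$.

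Now apply Proposition \ref{prop:longest-chain-convergence} to $Y_n$ on the full-probability event on which the longest-chain asymptotics hold simultaneously for all rectangles in $\mathcal{B}$. Uniformity in the continuous parameter $\delta \in (0,1)$ is obtained by choosing a countable sequence $\delta_k \downarrow 0$ and, for $\delta \in (\delta_{k+1}, \delta_k]$, using the inclusion $R_\delta \subseteq R_{\delta_k}$. For all $n$ sufficiently large and all $\delta \in (0,1)$, this produces \eqref{eq:holder_estimate} with $A = A_{\varphi, \mathrm{U}, x_0}$ absorbing $C_0$, $|v_p \times w_p|$, and the constant $2$ from the proposition. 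Dividing \eqref{eq:holder_estimate} by $\sqrt{n}$ gives $\bar{u}_n(y) \leq A\,(\sup_\mathrm{U} f)^{1/2}\sqrt{\delta}$ for $y \in B(x_0, \delta)$ and large $n$, and plugging into \eqref{eq:upper-half-relaxed}--\eqref{eq:lower-half-relaxed} and sending $\delta \to 0$ forces $u^*(x_0) = u_*(x_0) = 0$.

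The only nontrivial step is the geometric setup: unpacking the compatibility hypothesis to obtain the rectangle $R_\delta$ with area $O(\delta)$. Without compatibility, the set $(x - Q_p) \cap \bar{\mathrm{U}}$ need not degenerate to a thin slab as $x \to x_0$ for any choice of $p$, and Corollary \ref{cor:compatible-counterexample} shows that this failure can cause $u^*(x_0) = 0$ to break outright. The remaining steps are bookkeeping built on the earlier propositions; notably, the $L^\infty$ bound of Lemma \ref{lemma:l-inf-estimate} is not used here, since the localization provided by compatibility is itself what produces boundary continuity.
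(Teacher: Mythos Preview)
There is a genuine gap. The geometric assertion you treat as the content of compatibility---that $\bigcup_{x \in B(x_{0},\delta) \cap \bar{\mathrm{U}}} L_{Q_{p}}\bigl((x - Q_{p}) \cap \bar{\mathrm{U}}\bigr)$ sits inside a \emph{single} coordinate rectangle $R_{\delta}$ of area $O(\delta)$---is false in general, and Definition~\ref{def:compatible} does not say this. Compatibility only gives a $p \in \mathcal{N}^{*}$ with $(x_{0} + Q_{p}) \cap \mathrm{U} = \emptyset$. Take $Q_{p} = [0,\infty)^{2}$ and $x_{0} = 0$: then $\mathrm{U} \subseteq \{y_{1} \leq 0\} \cup \{y_{2} \leq 0\}$, and for $x \in B(0,\delta)$ the set $(x + Q_{p}) \cap \bar{\mathrm{U}}$ is an $L$-shaped region contained in the union of two thin rectangles $K_{\delta} = [-\delta,0] \times [-\delta,D]$ and $K_{\delta}' = [-\delta,D] \times [-\delta,0]$, each of area $O(\delta)$, but whose bounding rectangle has area $(D+\delta)^{2} = O(1)$. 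Applying Proposition~\ref{prop:longest-chain-convergence} to that bounding rectangle yields only $u_{n} = O(\sqrt{n})$, not $O(\sqrt{\delta n})$. (There is also a sign slip: if $Q_{p}$ is the supporting cone, the relevant depth function is $s^{-Q_{p}}_{X_{nf}}$, which looks into $x + Q_{p}$, not $x - Q_{p}$; the latter can contain essentially all of $\bar{\mathrm{U}}$.)

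The missing idea, which the paper supplies, is that any $(-Q_{p})$-monotone chain in the $L$-shape $K_{\delta} \cup K_{\delta}'$ must lie entirely in $K_{\delta}$ or entirely in $K_{\delta}'$ (once a chain leaves one strip it cannot re-enter the other while remaining monotone). This lets one bound $\ell^{-Q_{p}}(X_{nf} \cap (K_{\delta} \cup K_{\delta}'))$ by $\max\{\ell^{-Q_{p}}(X_{nf} \cap K_{\delta}),\, \ell^{-Q_{p}}(X_{nf} \cap K_{\delta}')\}$, and now each piece is a genuine rectangle of area $O(\delta)$ to which Proposition~\ref{prop:longest-chain-convergence} applies. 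Without this splitting argument your bound does not close; the ``bookkeeping'' you defer is exactly where the proof lives.
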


The lemma enables us to prove that the boundary condition in \eqref{eq:eikonal} is satisfied.  Its proof is the only place in the paper where the compatibility assumption is used.

Appealing again to nondominated sorting, we show that $u^{*}$ is a subsolution.

	\begin{prop} \label{prop:subsolution} On an event of probability 1, the upper half-relaxed limit $u^{*}$ defined by \eqref{eq:upper-half-relaxed} satisfies $\bar{H}_{\varphi}(Du^*) \leq f$ in $\mathrm{U}$.
	\end{prop}  
	
Finally, in the core of the paper, we show that $u_{*}$ is a supersolution.

\begin{prop}\label{prop:supersolution}
		If $\varphi$ satisfies \eqref{eq:linear-assumption} then, on an event of probability 1, the lower half-relaxed limit $u_{*}$ defined by \eqref{eq:upper-half-relaxed} satisfies $\bar{H}_{\varphi}(Du_{*}) \geq f$ in $\mathrm{U}$.
	
	
\end{prop}

The remainder of the paper is devoted to the proof of these results.  For the sake of completeness, we show how they imply Theorem \ref{theorem:pareto-convergence}.  The main step involves invoking the comparison principle.  Let us recall that result and briefly sketch its proof.

\begin{lemma} \label{lemma:comparison-principle}
Suppose $f > 0$ in $\mathrm{U}$, an open, bounded set in $\R^2$.
If $w \in \USC(\bar{\mathrm{U}})$ satisfies $\bar{H}_{\varphi}(Dw) \leq f$ in $\mathrm{U}$; $v \in \LSC(\bar{\mathrm{U}})$ satisfies
$\bar{H}_{\varphi}(Dv) \geq f$ in $\mathrm{U}$; and $v \geq w$ on $\partial \mathrm{U}$, then 
$v \geq w$ in $\mathrm{U}$.
\end{lemma}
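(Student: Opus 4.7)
The plan is to run the standard Crandall--Ishii--Lions doubling-of-variables argument, combined with a small perturbation of $w$ that turns the subsolution inequality into a strict one. The key algebraic fact I would exploit is that $\bar{H}_{\varphi}$ is $2$-homogeneous: from \eqref{eq:effective-hamiltonian}, each term $\langle \xi, v_{p} \rangle \langle \xi, w_{p} \rangle$ is a quadratic form in $\xi$, so $\bar{H}_{\varphi}(\lambda \xi) = \lambda^{2} \bar{H}_{\varphi}(\xi)$ for all $\lambda \in \mathbb{R}$. Combined with the pointwise positivity of $f$, this will allow the comparison to close even though $\bar{H}_{\varphi}$ is neither convex nor coercive.

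For each $\lambda \in (0,1)$, I would set $C := \inf_{\bar{\mathrm{U}}} v$, which is finite because $v \in \LSC(\bar{\mathrm{U}})$ is bounded below on the compact set $\bar{\mathrm{U}}$, and define
\[
w^{\lambda}(x) := \lambda w(x) + (1-\lambda) C.
\]
Since $Dw^{\lambda} = \lambda Dw$ in the viscosity sense, the $2$-homogeneity of $\bar{H}_{\varphi}$ immediately yields that $w^{\lambda}$ is a viscosity subsolution of $\bar{H}_{\varphi}(Du) = \lambda^{2} f$ in $\mathrm{U}$. On $\partial \mathrm{U}$, the hypothesis $w \leq v$ together with $C \leq v$ gives $w^{\lambda} \leq \lambda v + (1-\lambda) v = v$. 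It will therefore suffice to prove $w^{\lambda} \leq v$ throughout $\bar{\mathrm{U}}$ for every such $\lambda$, since sending $\lambda \to 1$ then recovers $w \leq v$.

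To establish $w^{\lambda} \leq v$, I would argue by contradiction. Suppose $M := \max_{\bar{\mathrm{U}}}(w^{\lambda} - v) > 0$ and consider, for small $\epsilon > 0$, the penalized functional
\[
\Phi_{\epsilon}(x,y) := w^{\lambda}(x) - v(y) - \frac{\|x-y\|^{2}}{2\epsilon} \quad \text{on} \, \, \bar{\mathrm{U}} \times \bar{\mathrm{U}}.
\]
Upper/lower semicontinuity and compactness produce a maximizer $(x_{\epsilon}, y_{\epsilon})$. The usual Crandall--Ishii--Lions estimates imply $\|x_{\epsilon} - y_{\epsilon}\|^{2}/\epsilon \to 0$ and, along a subsequence, $x_{\epsilon}, y_{\epsilon} \to x^{*}$ with $w^{\lambda}(x^{*}) - v(x^{*}) = M$. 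Since $M > 0$ while $w^{\lambda} \leq v$ on $\partial \mathrm{U}$, the limit point $x^{*}$ lies in the open set $\mathrm{U}$, and hence $x_{\epsilon}, y_{\epsilon} \in \mathrm{U}$ for all sufficiently small $\epsilon$.

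Setting $p_{\epsilon} := (x_{\epsilon} - y_{\epsilon})/\epsilon$, the smooth quadratic penalty touches $w^{\lambda}$ from above at $x_{\epsilon}$ and $v$ from below at $y_{\epsilon}$, in each case with gradient $p_{\epsilon}$. The viscosity inequalities of Definition \ref{def:interior} then give
\[
\bar{H}_{\varphi}(p_{\epsilon}) \leq \lambda^{2} f(x_{\epsilon}) \quad \text{and} \quad \bar{H}_{\varphi}(p_{\epsilon}) \geq f(y_{\epsilon}),
\]
so that $f(y_{\epsilon}) \leq \lambda^{2} f(x_{\epsilon})$. Sending $\epsilon \to 0$ and using continuity of $f$ yields $(1 - \lambda^{2}) f(x^{*}) \leq 0$, which contradicts $\lambda < 1$ and $f(x^{*}) > 0$. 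No step strikes me as a serious obstacle; the only delicate point is that $\bar{H}_{\varphi}$ is non-convex and non-coercive, but the $2$-homogeneity together with the strict positivity of $f$ bypasses both issues in one stroke by converting the rescaling $w \rightsquigarrow w^{\lambda}$ into a strict subsolution.
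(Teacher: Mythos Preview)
Your proof is correct and follows essentially the same approach as the paper: exploit the $2$-homogeneity of $\bar{H}_{\varphi}$ to replace $w$ by a rescaled version that is a \emph{strict} subsolution, then invoke strict comparison and send the scaling parameter back to $1$. The only differences are cosmetic: the paper rescales via $w^{\epsilon} = (1-\epsilon)w$ and simply cites the strict comparison result from the Crandall--Ishii--Lions user's guide, whereas you add an extra additive constant $(1-\lambda)C$ to control the boundary values directly and then spell out the doubling-of-variables argument in full.
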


\begin{proof}
For each $\epsilon \in (0,1)$, if we define $w^{\epsilon} = (1-\epsilon) w$, then 
$\bar{H}_{\varphi}(Dw^{\epsilon}) = (1-\epsilon)^2 \bar{H}_{\varphi}(D w) \leq (1-\epsilon)^2 f$.  Hence $w^{\epsilon}$ is a strict subsolution
and, thus, by strict comparison \cite{crandall1992user}, $\sup_{\bar{\mathrm{U}}} (w^{\epsilon} - v) \leq \sup_{\partial U} (w^{\epsilon} - v)$.  We recover the result upon sending $\epsilon \to 0^{+}$.   
\end{proof}

Combining the elements above, we conclude that $\bar{u}_{n} \to \bar{u}$, where $\bar{u}$ is the unique solution of \eqref{eq:eikonal}.  Before showing this, for the sake of precision, let us first define what we mean by a viscosity solution of the boundary value problem \eqref{eq:eikonal}:

\begin{definition}  We say that a continuous function $u \in C(\bar{\mathrm{U}})$ is a viscosity solution of \eqref{eq:eikonal} if 
	\begin{itemize}
		\item[(i)] $u$ is a viscosity solution of $\bar{H}_{\varphi}(Du) = f$ in $\mathrm{U}$, and
		\item[(ii)] $u = 0$ on $\partial \mathrm{U}$.
	\end{itemize}
\end{definition}  

Notice that Lemma \ref{lemma:comparison-principle} already implies that if \eqref{eq:eikonal} has a viscosity solution, then it is unique.

\begin{proof}[Proof of Theorem \ref{theorem:pareto-convergence}]  A direct argument shows that $u^{*}$ is upper semi-continuous and $u_{*}$, lower semi-continuous (see, \eg, \cite[Chapter 5]{bardi_capuzzo-dolcetta}).  Lemma \ref{lemma:l-inf-estimate} shows they are, in fact, bounded, hence not identically infinity.  

By Lemma \ref{lemma:boundary-continuity}, $u_{*} = u^{*} = 0$ on $\partial \mathrm{U}$.  Together with Propositions \ref{prop:subsolution} and \ref{prop:supersolution}, this implies $u^{*}$ and $u_{*}$ satisfy the hypotheses of the comparison principle.  In particular, $u^{*} \leq u_{*}$ in $\mathrm{U}$.  At the same time, the definitions \eqref{eq:upper-half-relaxed} and \eqref{eq:lower-half-relaxed} directly give $u^{*} \geq u_{*}$.  Therefore, $u^{*} = u_{*}$ in $\bar{\mathrm{U}}$ and this function, call it $\bar{u}$, is the unique viscosity solution of \eqref{eq:eikonal}.  

Finally, observe that, in general, the identity $u^{*} = u_{*}$ holds if and only if the sequence $(\bar{u}_{n})_{n \in \N}$ converges uniformly in $\bar{U}$ as $n \to \infty$.  Hence $\bar{u}_{n} \to \bar{u}$.  \end{proof}

\subsection{Global upper bound}
As an immediate application of the comparison with nondominated sorting, we prove an $L^{\infty}$ estimate.

\begin{proof}[Proof of Lemma \ref{lemma:l-inf-estimate}]
By Proposition \ref{prop:compare_to_nds}, \eqref{eq:q-longest-chain}, and our assumption that $\mathcal{N}^*$ is nonempty, 
there is a $Q_p$ so that for all $x \in \R^2$, 
\[
u_n(x) \leq s^{Q_{p}}_{X_{nf}}(x) \leq \ell^{Q_p}(X_{n f} \cap \mathrm{U}).
\]
At the same time, we can fix a rhombus $B$ with sides parallel to $v_{p}$ and $w_{p}$, that is, a set of the form $B = L_{Q_{p}}^{-1}(B')$ for some coordinate rectangle $B'$, such that $B \supseteq \mathrm{U}$.  This gives $\bar{u}_n(x) \leq n^{-1/2} \ell^{Q_{p}}(X_{nf} \cap B)$.  We conclude by Proposition \ref{prop:longest-chain-convergence} and Lemma \ref{lemma:nds-change-of-variables}.
\end{proof}

\subsection{Compatibility and boundary H\"{o}lder estimate} \label{subsec:domain-consistency}
In this section, we prove a boundary H\"{o}lder estimate on open Pareto efficient domains that are compatible with the given norm. 
Recall that a bounded open set $\mathrm{U}$ is Pareto efficient if $\inte(\mathcal{P}(\bar{\mathrm{U}})) = \mathrm{U}$.  

%
%

Let $\mathrm{U}$ be an open, bounded, and Pareto efficient set. We say that a convex cone $Q$ {\it supports} 
$\mathrm{U}$ at $x_0$ if $x_0 \in (x_0 + Q) \cap \bar{\mathrm{U}} $ and  $(x_0 + Q) \cap \bar{\mathrm{U}} \subseteq \partial \mathrm{U}$. The set of all such convex cones supporting $\mathrm{U}$ at $x_0 \in \partial \mathrm{U}$ will be denoted by $\mathcal{C}(\mathrm{U},x_0)$. 
Note that since $\mathrm{U}$ is Pareto efficient and open this set is non-empty for each such $x_0$. 

\begin{definition} \label{def:compatible} We say a Pareto efficient set $\mathrm{U} \subseteq \R^2$ is {\it compatible} with $\varphi$ if for each $x_0 \in \partial U$, there is a convex cone $\tilde{Q} \in \mathcal{C}(\mathrm{U}, x_0)$ and a $p \in \mathcal{N}^{*}$ such that $Q_p \subseteq \tilde{Q}$. \end{definition}

Equivalently, $\mathrm{U}$ is compatible with $\varphi$ if and only if $\mathcal{C}(\mathrm{U},x_{0}) \cap \{\mathcal{Q}_{p} \, \mid \, p \in \mathcal{N}^{*}\} \neq \emptyset$ for each $x_{0} \in \partial \mathrm{U}$. 

Shortly we will show that the limiting height function equals zero on the boundary provided the compatibility condition holds.  In the next subsection, we will show that this need not be the case when compatibility fails.  

That still leaves the question under what conditions a Pareto efficient domain $\mathrm{U}$ will be compatible with a given norm $\varphi$.  In the next result, we show that the compatibility condition is superfluous whenever $\varphi$ is polyhedral.

\begin{prop} \label{prop:polyhedral-compatibility}If $\varphi$ is a polyhedral norm in $\mathbb{R}^{2}$ and $\mathrm{U} \subseteq \mathbb{R}^{2}$ is a bounded, open, Pareto efficient set, then $\mathrm{U}$ is compatible with $\varphi$.  \end{prop}  

%
%

\begin{proof}  Suppose that $x_{0} \in \partial \mathrm{U}$.  We need to show that $\mathcal{C}(\mathrm{U},x_{0}) \cap \{Q_{p} \, \mid \, p \in \mathcal{N}^{*}\} \neq \emptyset$.  Since $\mathrm{U}$ is Pareto efficient, we know that $x_{0} \notin \inte(\mathcal{P}(\bar{\mathrm{U}}))$.  Accordingly, Theorem \ref{thm:cone_efficient} implies that there is a $p \in \mathcal{N}^{*}$ such that $\bar{\mathrm{U}} \cap (x_{0} + \inte(Q_{p})) = \emptyset$.  This implies that $(x_{0} + Q_{p}) \cap \bar{\mathrm{U}} \subseteq \partial \bar{\mathrm{U}}$, hence $Q_{p} \in \mathcal{C}(\mathrm{U},x_{0})$.   \end{proof}

Finally, we show that compatible domains satisfy the H\"{o}lder estimate \eqref{eq:holder_estimate}.  The proof below is based on the approach in \cite[Lemma 1]{calder2016direct}.

\begin{figure}
	\begin{tikzpicture}[scale=1.2]

\filldraw[fill = lightgray, very thick] (-2,-2) --(-2,2) -- (0,2) -- (0,0) -- (2,0) -- (2,-2) -- (-2,-2);
\draw (-1.5,-1) node {$\mathrm{U}$};

\draw[anchor=north east] (0.05,0) node {$x_0$};
\filldraw[fill=black] (0,0) circle (0.05);

\filldraw[style = dashed, fill = darkgray, fill opacity = 0.2, draw = black] (-0.5,-0.5) -- (0,-0.5) -- (0,2) -- (-0.5,2) -- (-0.5,-0.5);
\draw[anchor=east] (0.1,1) node {$K_{\delta}$};


\filldraw[style = dashed, fill = darkgray, fill opacity = 0.2, draw = black] (-0.5,0) -- (2,0) -- (2,-0.5) -- (-0.5,-0.5) -- (-0.5,0);
\draw[anchor=north] (1,0) node {$K_{\delta}'$};

\path[draw, ->] (0,0) -- (0,2.25);
\path[draw, ->] (0,0) -- (2.25,0);
\draw (1,1) node {$Q$};

\end{tikzpicture}
	\caption{Construction in the proof of Lemma \ref{lemma:boundary-continuity}. 
	} \label{fig:supporting-cone-cone}
\end{figure}

\begin{proof}[Proof of Lemma \ref{lemma:boundary-continuity}]
Since $\mathrm{U}$ is Pareto efficient, Proposition \ref{prop:basic_boundary_condition} implies that
\begin{equation}
u_n(x) = 0 \mbox{ for all $x \not \in \mathrm{U}$}.
\end{equation}
Let $x_0 \in \partial \mathrm{U}$ be given. Since $\mathrm{U}$ is compatible with $\varphi$, we can fix a $\tilde{Q} \in \mathcal{C}(\mathrm{U}, x_{0})$ and a $p \in \mathcal{N}^{*}$ such that $Q_{p} \subseteq \tilde{Q}$.  Notice that the inclusion $Q_{p} \subseteq \tilde{Q}$ implies that $Q_{p} \in \mathcal{C}(\mathrm{U},x_{0})$.  Therefore, without loss of generality, we can set $Q := Q_p = \tilde{Q}$.

\textbf{Step 1: Case $Q = [0,\infty)^{2}$}  

To simplify the argument, we first treat the case when $Q = Q_{p} = [0,\infty)^{2}$.  Up to an irrelevant translation, there is no loss of generality assuming that $x_{0} = 0$, which simplifies the arguments that follow.  

Define $D_{1},D_{2} \geq 0$ by 
	\begin{align*}
		D_{1} &= \inf \left\{ s > 0 \, \mid \, [s,\infty) \times [-\delta,0] \subseteq \mathbb{R}^{2} \setminus \mathrm{U} \right\}, \\
		D_{2} &= \inf \left\{ s > 0 \, \mid \, [-\delta,0] \times [s,\infty) \subseteq \mathbb{R}^{2} \setminus \mathrm{U} \right\},
	\end{align*} 
and let $D = \max(D_{1},D_{2})$.  
Consider the two rectangles, $K_{\delta}, K_{\delta}' \subseteq \mathbb{R}^{2}$ given by
	\begin{align*}
		K_{\delta} &= \{ x \in \mathbb{R}^{2} \, \mid \, - \delta \leq x_{1} \leq 0, \, \, -\delta \leq x_{2} \leq D\}, \\
		K_{\delta}' &= \{x \in \mathbb{R}^{2} \, \mid \, -\delta \leq x_{1} \leq D, \, \, -\delta \leq x_{2} \leq 0\}. 
	\end{align*}
Notice that $B(0,\delta) \subseteq (-\delta,-\delta) + \inte(Q)$ and, in particular, from the inclusion $Q \in \mathcal{C}(0,\mathrm{U})$,
	\begin{equation} \label{eq:key_inclusion_boundary_estimate}
		B(0,\delta) \cap \mathrm{U} \subseteq ((-\delta,-\delta) + \inte(Q)) \cap \mathrm{U} \subseteq K_{\delta} \cup K_{\delta}'.
	\end{equation}
See Figure \ref{fig:supporting-cone-cone}.

Write $P_{\delta} := K_{\delta} \cup K'_{\delta}$. 
We claim that 
\begin{equation}  \label{eq:sorting_boundary_bound}
\sup_{x \in P_{\delta}} u_n(x) \leq \ell^{-Q}(X_{nf} \cap P_{\delta}),
\end{equation}
where, for an arbitrary $A \subseteq \mathbb{R}^{2}$, $\ell^{-Q}(X_{nf} \cap A)$ denotes the $-Q$-longest chain in $X_{n f} \cap P_{\delta}$ as in Section \ref{sec:q-nds}.  Indeed, by Proposition \ref{prop:compare_to_nds} and \eqref{eq:q-longest-chain}, 
	\begin{equation*}
		\sup_{x \in P_{\delta}} u_{n}(x) \leq \sup_{x \in P_{\delta}} s^{-Q}_{X_{nf}}(x) = \sup_{x \in P_{\delta}} \ell^{-Q}((x + \inte(Q)) \cap X_{nf}).
	\end{equation*}
Notice that, by \eqref{eq:key_inclusion_boundary_estimate}, if $y \in (x + \inte(Q)) \cap \mathrm{U}$, then $y \in P_{\delta}$.  Thus, for each $x \in P_{\delta}$, we have
	\begin{equation*}
		\ell^{-Q}((x + \inte(Q)) \cap X_{nf}) = \ell^{-Q}((x + \inte(Q)) \cap X_{nf} \cap P_{\delta})
	\end{equation*} 
and \eqref{eq:sorting_boundary_bound} follows.

Finally, observe that if $\{x_{1},\dots,x_{m}\} \subseteq P_{\delta}$ is an increasing $-Q$-chain, that is, $x_{1} \leq_{-Q} \dots \leq_{-Q} x_{m}$, then either $\{x_{1},\dots,x_{m}\} \subseteq K_{\delta}$ or $\{x_{1},\dots,x_{m}\} \subseteq K_{\delta}'$ depending on whether $x_{1} \in K_{\delta}$ or $x_{1} \in K_{\delta}'$.  It follows that
	\begin{equation*}
		\sup_{x \in P_{\delta}} u_n(x) \leq \ell^{-Q}(X_{nf} \cap P_{\delta}) \leq \max\{\ell^{-Q}(X_{nf} \cap K_{\delta}), \ell^{-Q}(X_{nf} \cap K_{\delta}')\}.
	\end{equation*}
Therefore, by the asymptotics of the longest chain (Proposition \ref{prop:longest-chain-convergence}), with probability 1,
	\begin{equation} \label{eq:rectilinear_boundary_estimate}
		\limsup_{n \to \infty} \sup_{x \in B(0,\delta) \cap \mathrm{U}} \bar{u}_{n}(x) \leq \limsup_{n \to \infty} \sup_{x \in P_{\delta}} \bar{u}_{n}(x) \leq 2 \left( \sup_{P_{\delta}} f \right)^{\frac{1}{2}} \sqrt{(D + \delta) \delta}.
	\end{equation}
	
\textbf{Step 2: General case}  

Finally, we treat the general case.  Recall that $Q = Q_{p}$ for some $p \in \mathcal{N}^{*}$ and that $Q$ is a proper cone.  Let $\mathcal{B}(p)$ be the collection of all rhombuses in $\mathbb{R}^{2}$ with sides parallel to $v_{p}$ and $w_{p}$.  More precisely, in terms of the linear transformation $L_{Q}$ from Section \ref{sec:isomorphism},
	\begin{equation*}
		\mathcal{B}(p) = \{L_{Q}^{-1}(B') \, \mid \, B' = [a,b] \times [c,d] \, \, \text{for some} \, \, a < b \mbox{ and } c < d \in \mathbb{R}\}.
	\end{equation*}
Combining Proposition \ref{prop:longest-chain-convergence} and Lemma \ref{lemma:nds-change-of-variables}, we deduce that there is an event $\Omega(p)$ of probability 1 such that, on $\Omega(p)$, for any $B \in \mathcal{B}(p)$, we have
	\begin{equation*}
		\limsup_{n \to \infty} n^{-1/2} \ell^{-Q}(X_{nf} \cap B) \leq 2 |v_{p} \times w_{p}|^{\frac{1}{2}} \left(\sup_{B} f\right)^{\frac{1}{2}} |B|^{\frac{1}{2}}.
	\end{equation*}  
Note that the event $\Omega(p)$ does not depend on choice of boundary point $x_{0}$, hence in what follows there will be no risk of generating non-measurable sets through uncountable intersections.
	
Let $\mathrm{U}'$ be the transformed domain $U' = L_{Q}(-x_{0} + \mathrm{U})$, $X' = L_{Q}(-x_{0} + X_{nf})$ be the transformed process, and $f_{Q}(x') = f(L_{Q}^{-1}(x') + x_{0})$.  By Lemma \ref{lemma:affine-invariance}, the function $v_{n}(x') = u_{n}(L_{Q}^{-1}(x') + x_{0})$ is the height function associated with $X'$ in $\mathrm{U}'$.  Note that $X'$ is a Poisson process of intensity $n |v_{p} \times w_{p}| f_{Q}$ in $\mathrm{U}'$ and $0 \in \partial \mathrm{U}'$.  Thus, after changing variables in \eqref{eq:sorting_boundary_bound}, we obtain, on the event $\Omega(p)$, the following estimate:
	\begin{equation*}
		\limsup_{n \to \infty} \sup_{x \in (x_{0} + L_{Q}^{-1}(B(0,\delta))) \cap \mathrm{U}} \bar{u}_{n}(x) \leq 2 \sqrt{(D + \delta) |v_{p} \times w_{p}|} \left(\sup_{L_{Q}^{-1}(P_{\delta})} f \right)^{\frac{1}{2}} \sqrt{\delta},
	\end{equation*}
where, by construction, $D = \max(D_{1},D_{2})$ with $D_{1}$ and $D_{2}$ given by
	\begin{align*}
		D_{1} &= \inf \left\{s \geq 0 \, \mid \, x_{0} + L_{Q}^{-1}([s,\infty) \times [-\delta,0]) \subseteq \mathrm{U} \right\}, \\
		D_{2} &= \sup \left\{ s \geq 0 \, \mid \, x_{0} + L_{Q}^{-1}([-\delta,0] \times [s,\infty)) \subseteq \mathrm{U} \right\}.
	\end{align*}
At the same time, from the estimate \eqref{eq:operator-norm} on $\|L_{Q}\|$, if we set $\delta' = \delta \sqrt{1 - |\langle v_{p}, w_{p} \rangle|}$, then $B(0,\delta') \subseteq L_{Q}^{-1}(B(0,\delta))$.  Therefore, in $\Omega(p)$, we have, for any $\delta' > 0$,
	\begin{equation*}
		\limsup_{n \to \infty} \sup_{x \in B(x_{0},\delta')} u_{n}(x) \leq 2 \sqrt{\underline{A}[D + (1 - |\langle v_{p}, w_{p} \rangle|)^{-1/2} \delta']} \left(\sup_{\mathrm{U}} f \right)^{\frac{1}{2}} \sqrt{\delta'},
	\end{equation*}
where $\underline{A} > 0$ is determined by
	\begin{equation*}
		\underline{A} = \sup \left\{ \frac{|\sin(\theta)|}{\sqrt{1 - |\cos(\theta)|}} \, \mid \, \theta \in (0,\pi) \right\}.
	\end{equation*}
	
\textbf{Step 3: Boundary condition}  

Finally, we claim that $u_{*} = u^{*} = 0$ on $\partial \mathrm{U}$ almost surely.  Indeed, consider the the event $\cap_{p \in \mathcal{N}^{*}} \Omega(p)$, which has full probability since $\mathcal{N}^{*}$ is countable.  Invoking the definition \eqref{eq:upper-half-relaxed} of $u^{*}$, we find, for any $x_{0} \in \partial \mathrm{U}$,
	\begin{equation*}
		0 \leq u_{*}(x_{0}) \leq u^{*}(x_{0}) \leq \lim_{\delta \to 0^{+}} C \left( \sup_{\mathrm{U}} f \right)^{\frac{1}{2}} \sqrt{\delta} = 0.
	\end{equation*}	
\end{proof}

\subsection{Convex hull peeling}  We now show that when $\{\varphi \leq 1\}$ is strictly convex, or, equivalently, $\mathcal{N}^{*} = \emptyset$, scaling by $n^{-\frac{1}{2}}$ results in a trivial limit.

\begin{proof}[Proof of Corollary \ref{cor:convex-peeling}]  In this case, Proposition \ref{prop:supersolution} implies $\bar{H}_{\varphi}(Du_{*}) \geq f$ in $\mathrm{U}$, where $\bar{H}_{\varphi} \equiv 0$.  Hence, since $f$ is positive, no smooth function can touch $u_{*}$ from below.  We claim this implies $u_{*} \equiv \infty$ in $\mathrm{U}$.  

Indeed, suppose there were an $x_{0} \in \mathrm{U}$ for which $u_{*}(x_{0}) < \infty$.  Given $\delta > 0$, the function $x \mapsto u_{*}(x) + \frac{\|x - x_{0}\|^{2}}{2 \delta}$ achieves its minimum at some point $x_{\delta} \in \bar{\mathrm{U}}$ by lower semi-continuity.    Since $u_{*}(x_{0}) \geq u_{*}(x_{\delta}) + \frac{\|x_{\delta} - x_{0}\|^{2}}{2 \delta}$, we know that $x_{\delta} \to x_{0}$ and $u_{*}(x_{\delta}) < \infty$.  Hence, for small enough $\delta$, $u_{*}$ is touched from below by the smooth function $u_{*}(x_{\delta}) - \frac{\|x - x_{0}\|^{2}}{2 \delta}$ at $x_{\delta} \in \mathrm{U}$, contradicting our previous deduction.

From the identity $u_{*} \equiv \infty$, one readily deduces that $\bar{u}_{n} \to \infty$ locally uniformly in $\mathrm{U}$.  \end{proof}  

\subsection{Counterexample when compatibility fails} \label{sec:counterexample}
\begin{figure}[t]
	\includegraphics[width=0.3\textwidth]{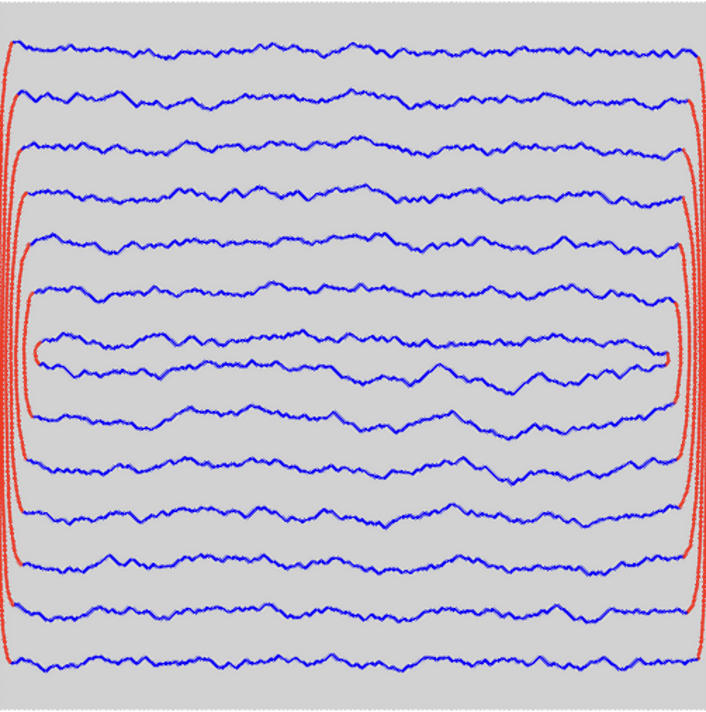} \qquad
	\includegraphics[width=0.3\textwidth]{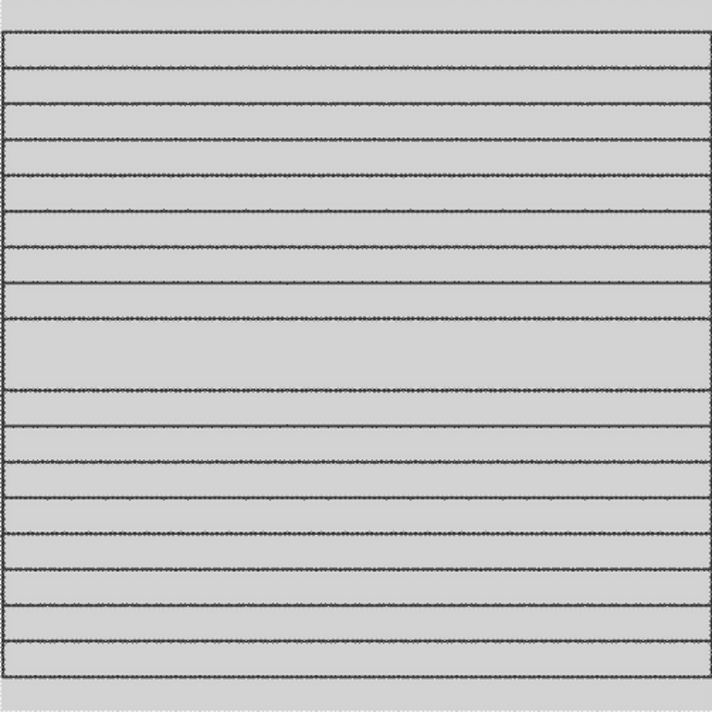} 
	\caption{Pareto peeling of a homogeneous Poisson cloud in a square domain $\mathrm{U} = (-1,1)^2$ with respect to the norm $\varphi(x) = \max(\sqrt{2} |x_1|, \|x\|_2)$. On the left is a simulation and on the right are level sets of the minimal supersolution given by $u(x_1, x_2) =  \sqrt{2}(1 - |x_2|)$.} \label{fig:simulation-counter-example}
\end{figure}

 In the next result, we prove that the Hamilton-Jacobi equation \eqref{eq:eikonal} in a Pareto efficient domain does not always have a classical viscosity solution.  As we will show, this implies that the height functions do not converge uniformly in $\bar{\mathrm{U}}$ in general without the compatibility assumption.  This does not rule out the possibility that they converge locally uniformly in $\mathrm{U}$ to the minimal supersolution; see Figure \ref{fig:simulation-counter-example}. 

\begin{prop} \label{prop:counterexample}  If $\mathrm{U} = (-1,1)^{2}$, $\varphi(x) = \max ( \sqrt{2}|x_{1}|, \|x\|_2)$, and $f \equiv 1$, then $\mathrm{U}$ is not compatible with $\varphi$ and \eqref{eq:eikonal} has no viscosity solution.  \end{prop}  

The unit ball of the norm $\varphi$ above is pictured in Figure \ref{fig:norm-balls}.

\begin{proof}  First of all, $\mathrm{U}$ is not compatible with $\varphi$ since $\mathcal{C}(U,(-1,0)) = H_q$
	for $ q=(-1,0)$. 
	
	With the given norm $\varphi$, we have 
	\begin{equation*}
\bar{H}_{\varphi}(p') = \max \left(0, \frac{-p_{1}'^2 + p_{2}'^2}{2}\right).
	\end{equation*}
Thus if we define $\{\tilde{H}_{N}\}_{N \in \N}$ by 
	\begin{equation*}
		\tilde{H}_{N}(p') = \max\left(\frac{2}{N}|p_{1}'|, 2|p_{2}'|\right),
	\end{equation*}
then 
	\begin{equation} \label{eq:subsolution}
		\bigcup_{N = 1}^{\infty} \{\tilde{H}_{N} \leq 1\} \subseteq \{\bar{H}_{\varphi} < 1\}.
	\end{equation}
See Figure \ref{fig:hyperbola-counterexample} for a depiction of this; the key point is that the rectangle can grow arbitrarily long.  We will use $\{\tilde{H}_{N}\}$ to build subsolutions that are nonzero near $(-1,0)$.

Let us argue by contradiction.  Suppose $u \in C(\bar{\mathrm{U}})$ is a viscosity solution of \eqref{eq:eikonal}.   By \eqref{eq:subsolution}, if we let $(u_{N})_{N \in \N} \subseteq C(\bar{\mathrm{U}})$ denote the solutions of the Eikonal equations
	\begin{equation*}
		\left\{ \begin{array}{r l}
				\tilde{H}_{N}(Du_{N}) = 1 & \text{in} \, \, (-1,1)^{2}, \\
				u_{N} = 0 & \text{on} \, \, \partial [-1,1]^{2},
			\end{array} \right.
	\end{equation*}
then $\bar{H}_{\varphi}(Du_{N}) \leq 1$ in $(-1,1)^{2}$ independent of $N$.  Thus, $\sup_{N} u_{N} \leq u$ by Lemma \ref{lemma:comparison-principle}.

At the same time, notice that if we rescale by setting $v_{N}(y) = u_{N}(N^{-1}y_{1},y_{2})$, then $(v_{N})_{N \in \N}$ are viscosity solutions of the problems
	\begin{equation*}
		\left\{\begin{array}{r l}
				\tilde{H}_{1}(Dv_{N}) = 1 & \text{in} \, \, (-N,N) \times (-1,1), \\
				v_{N} = 0 & \text{on} \, \, \partial [-N,N] \times [-1,1].
			\end{array} \right.
	\end{equation*}
These have explicit representations as the distance functions to the boundary with respect to the dual norm $\tilde{H}_{1}^{*}$, that is,
	\begin{equation*}
		v_{N}(y) = \inf \left\{ \tilde{H}_{1}^{*}(y - z) \, \mid \, z \in \partial [-N,N] \times [-1,1] \right\},
	\end{equation*}
see, \eg, \cite{hjbook} for the proof of this.
A well-known computation shows that $\tilde{H}_{1}^{*}(q') = \frac{1}{2}(|q_{1}'| + |q_{2}'|)$ and, thus,
	\begin{equation*}
		v_{N}(-(N-1),0) = \tilde{H}_{1}^{*}(1,0) = \frac{1}{2}.
	\end{equation*}

Scaling back, we have $u_{N}(-1 + N^{-1},0) = \frac{1}{2}$ and, thus,
	\begin{equation*}
		u(-1 + N^{-1},0) \geq u_{N}(-1 + N^{-1},0) = \frac{1}{2}.
	\end{equation*}
Since $u(-1,0) = 0$, this contradicts the continuity of $u$.  \end{proof}

\begin{figure} 
	\begin{center}
		\begin{tikzpicture}

\pgfmathsetmacro{\e}{1.44022}   
\pgfmathsetmacro{\a}{1}
\pgfmathsetmacro{\b}{(\a*sqrt((\e)^2-1)} 
\draw[color=red, very thick] plot[domain=-2:2] ({\b*sinh(\x)}, {\a*cosh(\x)-0.7});
\draw[color=red, very thick] plot[domain=-2:2] ({\b*sinh(\x)}, {-\a*cosh(\x)+0.7});

\filldraw[fill = blue, opacity = 0.25] (-3.5,-0.125) rectangle (3.5,0.125);

\end{tikzpicture}
	\end{center}
	\caption{The red curves comprise the set $\{\bar{H}_{\varphi} = 1\}$, while the blue rectangle is $\{\tilde{H}_{N} \leq 1\}$.} \label{fig:hyperbola-counterexample}
\end{figure}
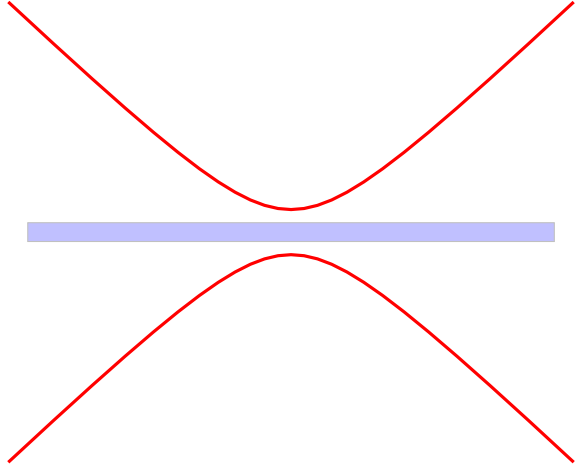

For the sake of completeness, we show how the nonexistence of viscosity solutions implies the failure of uniform convergence of the height functions.

\begin{proof}[Proof of Corollary \ref{cor:compatible-counterexample}]  Let $\mathrm{U}$, $\varphi$, and $f$ be as in the previous proposition.  If the rescaled height functions converged uniformly to some continuous function, then this would be a viscosity solution of \eqref{eq:eikonal} by Proposition \ref{prop:subsolution} and \ref{prop:supersolution}, which was just shown to be impossible.  \end{proof} 

\subsection{Level Set PDE}  We conclude the section by proving that the Pareto peels themselves converge to surfaces moving with normal velocity given by \eqref{eq:normal-velocity}.  

\begin{proof}[Proof of Corollary \ref{cor:level-set}]  To start with, we need to justify the claim that the sets $(E_{t})_{t \geq 0}$ given by \eqref{eq:geometric-flow} are, in fact, the unique generalized level set evolution associated with \eqref{eq:normal-velocity}.  That they are a generalized level set evolution in the sense of \cite{barles_souganidis_fronts} follows from the fact that the function $v$ defined by \eqref{eq:arrival-time} is a viscosity solution of \eqref{eq:level-set}.  By Proposition 2.2 in \cite{barles_souganidis_fronts}, the evolution is unique if and only if there is no fattening.  In other words, we need to prove that $\partial E_{t}$ has empty interior for all $t \geq 0$. 

This part follows readily from the equation solved by $u$.  Recall that $\partial E_{t} = \{u = t\}$.  If $B(x_{0},r) \subseteq \{u = t\}$ for some $x_{0} \in \mathrm{U}$ and $r > 0$, then the constant function $t$ touches $u$ below at $x_{0}$.  This implies $0 = \bar{H}_{\varphi}(0) \geq f(x_{0})$, which is absurd since $f$ is assumed to be positive in $\mathrm{U}$.  Hence $\partial E_{t}$ has empty interior as claimed.

Finally, the convergence $\bar{E}^{(n)}_{\lfloor n^{\frac{1}{2}} t \rfloor} \to \bar{E}_{t}$ in the Hausdorff distance for any fixed $t > 0$ follows immediately from the fact that $\partial E_{t}$ has empty interior and $\bar{u}_{n} \to u$ uniformly in $\bar{\mathrm{U}}$.    
\end{proof}

\section{Subsolution proof} \label{sec:subsolution}
	In this section we show, essentially citing a result of \cite{calder2016direct}, that the upper half-relaxed limit $u^{*}$ is a subsolution. This is more straightforward than proving $u_{*}$ is a supersolution. Indeed, the dynamic programming principle (Proposition \ref{prop:dpp}) implies that the height function $u_{n}$ is, in a sense that can be made precise, a subsolution of $Q_{p}$-nondominated sorting for any $p \in \mathcal{N}^{*}$.  Accordingly, as the argument here demonstrates, it suffices to invoke the scaling limit result for nondominated sorting \cite{calder2016direct}.
	
	\begin{proof}[Proof of Proposition \ref{prop:subsolution}]  Suppose that $\psi$ is a smooth function in $\mathrm{U}$, and $u^{*} - \psi$ has a strict local maximum at 0 and $u^*(0)=\psi(0)$.  Assume that $\langle D \psi(0), v_p \rangle \langle D \psi(0), w_p \rangle > 0$ for some $p \in \mathcal{N}^*$, otherwise the claim is immediate as $f \geq 0$.  By possibly reflecting (using symmetry of the norm), we may further suppose $\min(\langle D \psi(0), v_p \rangle, \langle D \psi(0), w_p \rangle) > 0$. 
		
		After this, the proof follows the nondominated sorting subsolution direction of Calder \cite{calder2016direct, calderminicourse} closely --- the only change is that 
		nondominated sorting is replaced by $Q_p$-nondominated sorting. 
		We reproduce the argument for the reader's convenience. 
		
		By varying $\psi$ away from 0, assume $\psi$ is strictly increasing
		with respect to the partial order induced by $Q_p$ and 
		$u^*(z) \leq \psi(z)$ for all $z \in -Q_p$. Let $\epsilon > 0$, $v \in Q_p$ 
		and set
		\begin{equation}
		A = \{ x \in -Q_p \, \mid \, \psi(x) \geq \psi(-\epsilon v) - \epsilon^2\}
		\end{equation}
		and
		\begin{equation}
		A_n = \{ x \in -Q_p \, \mid \, u_n(x) \geq n^{1/2} \psi(-\epsilon v)\}. 
		\end{equation}
		Note that since $\psi$ is smooth and $Q_p$-strictly increasing, $A \subseteq B(0, C \epsilon)$. 		Also, $A_n \subseteq A$ for all $n$ sufficiently large. 
		
		By the dynamic programming principle, Proposition \ref{prop:dpp}, 
		\begin{equation} \label{eq:nds-upper-bound}
		u_n(0) \leq n^{1/2} \psi(-\epsilon v) + \ell^{Q_p}(X_{n f} \cap A_n). 
		\end{equation}
		The construction of $\psi$ together with \eqref{eq:nds-upper-bound} then implies 
		\begin{equation} \label{eq:subsolution-compare}
		\langle \epsilon D \psi(0),   v \rangle - C \epsilon ^2  \leq \limsup_{n \to \infty} n^{-1/2} \ell^{Q_p}(X_{n f} \cap A).
		\end{equation}
		Also, by Taylor's formula in $A$, we have for any $x \in A$ and $y \in Q_p$
		with $|y| \leq \epsilon^2$ that 
		\[
		1 + \langle x-y , q  \rangle \geq 0
		\]
		where 
		\[
		q = \frac{ D \psi(0)}{ \langle \epsilon D \psi(0) , v \rangle + C \epsilon^2}.
		\]
		This shows $A \subseteq S_q := \{ x \in -Q_p \, \mid \, 1 + \langle x ,q \rangle \geq 0\}$, the simplex with sides
		parallel to $v_p$ and $w_p$. By \eqref{eq:subsolution-compare} and Proposition \ref{prop:nds-consistency}
		together with Lemma \ref{lemma:nds-change-of-variables}, 
		\begin{equation} \label{eq:simplex-change-of-variables}
		\langle \epsilon D \psi(0), v \rangle - C \epsilon^2 \leq |w_p \times v_p|^{1/2} \left(\frac{\sup_{y + S_q} f}{ \langle q , v_p \rangle \langle q , w_p \rangle} \right)^{1/2}
		\end{equation}
		which implies 
		\[
		\frac{\langle \epsilon D \psi(0), v \rangle - C \epsilon^2}{\langle \epsilon D \psi(0), v \rangle + C \epsilon^2} \leq |w_p \times v_p|^{1/2} \left(\frac{\sup_{y + S_q} f}{\langle D \psi(0), v_p \rangle \langle D \psi(0), w_p \rangle} \right)^{1/2}
		\]
		and by setting $v = D\psi(0)$ (so that $\langle D\psi(0), v \rangle \neq 0$), sending $\epsilon \to 0^+$, and invoking the continuity of $f$, we find
		\[
		 \frac{\langle D \psi(0), v_p \rangle \langle D \psi(0), w_p \rangle}{|w_p \times v_p|} \leq f(0),
		\]
		concluding the proof.			
	\end{proof}

\section{Conditional supersolution proof} \label{sec:supersolution}
In this section, we present a ``conditional" proof that the lower half-relaxed limit $u_{*}$ is a supersolution of \eqref{eq:eikonal}.  More precisely, we prove Proposition \ref{prop:supersolution} conditional on the assumption that the gradient points in a nondegenerate direction, \ie, $\bar{H}_{\varphi}(D\psi(x_{0})) > 0$.  In Section \ref{sec:degenerate-directions}, we prove that this is indeed the case.  The result is stated next; the remainder of the section is devoted to its proof.

\begin{prop}  \label{prop:conditional-supersolution}
	On an event of probability 1, if $x_{0} \in \mathrm{U}$, $\psi$ is a smooth function touching $u_{*}$ from below at $x_{0}$, and 
	\begin{equation} \label{eq:supersolution-gradient-assumption}
	\bar{H}_{\varphi}(D\psi(x_{0})) > 0,
	\end{equation}
	then
	\begin{equation*}
	 \bar{H}_{\varphi}(D\psi(x_{0})) \geq f(x_0).
	\end{equation*}
\end{prop}  

To simplify the proofs, we will frequently change variables so that $Q_{p} = [0,\infty)^{2}$.  This can be done using the change of coordinates $T(a w_{p} + bv_{p}) = (a,b)$.  By affine invariance (Section \ref{sec:affine-invariance}), this is no loss of generality so long as the proper accounting is carried out.

	\subsection{Legendre transform}  Once again, convex duality will play a role.  In particular, to connect the asymptotics of the longest chain to the Hamilton-Jacobi PDE, we will use the following result. 

\begin{lemma} \label{lemma:legendre-convex}  The function $h : \R^{2} \to (-\infty,0] \cup \{\infty\}$ given by 
	\begin{equation*}
	h(a,b) = \left\{ \begin{array}{r l}
	-\sqrt{ab}, & \text{if} \, \, a, b \in [0,\infty)^{2}, \\
	+ \infty, & \text{otherwise,}
	\end{array} \right.
	\end{equation*}
	is convex.  Its Legendre transform $h^{*} : \R^{2} \to \R \cup \{\infty\}$ is given by 
	\begin{equation*}
	h^{*}(c,d) = \left\{ \begin{array}{r l}
	0, & \text{if} \, \, c,d \in (-\infty,0]^{2}, \, \, |cd| \geq \frac{1}{4}, \\
	+\infty, & \text{otherwise.}
	\end{array} \right.
	\end{equation*}
\end{lemma}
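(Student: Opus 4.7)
The plan is to handle the two claims in turn. For convexity of $h$, I would use the dual representation
$$\sqrt{ab} = \inf_{\lambda > 0} \tfrac{1}{2}\bigl(\lambda a + \lambda^{-1} b\bigr), \qquad a,b \geq 0,$$
which is immediate from AM-GM with equality at $\lambda = \sqrt{b/a}$. As an infimum of affine functions of $(a,b)$, $\sqrt{ab}$ is concave on $[0,\infty)^2$, so $-\sqrt{ab}$ is convex there. Since the effective domain $[0,\infty)^2$ is itself convex and $h \equiv +\infty$ off it, $h$ is convex on all of $\R^2$.

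For the Legendre transform, I would start from $h^*(c,d) = \sup_{a,b \geq 0}\{ca + db + \sqrt{ab}\}$. If $c > 0$ (or $d > 0$), sending $a \to \infty$ with $b = 0$ (or vice versa) makes the sup $+\infty$, so one may assume $c,d \leq 0$ and write $c = -|c|$, $d = -|d|$. Applying the dual representation in the form $\sqrt{ab} \leq \frac{1}{2}(\lambda a + \lambda^{-1} b)$ gives, for every $\lambda > 0$,
$$\sqrt{ab} - |c|a - |d|b \;\leq\; \bigl(\tfrac{\lambda}{2} - |c|\bigr) a + \bigl(\tfrac{1}{2\lambda} - |d|\bigr) b,$$
whose right-hand side is nonpositive on $[0,\infty)^2$ precisely when $\lambda \in [1/(2|d|),\, 2|c|]$. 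This interval is nonempty exactly when $|cd| \geq 1/4$, in which case $h^*(c,d) \leq 0$; the reverse inequality is trivial from $a = b = 0$.

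It remains to treat $c, d \leq 0$ with $|cd| < 1/4$. If $c = 0$ or $d = 0$, pushing one variable to $\infty$ with the other small handles things. Otherwise, the substitution $a = t/(2|c|)$, $b = t/(2|d|)$ collapses the expression to $t\bigl(\tfrac{1}{2\sqrt{|cd|}} - 1\bigr)$, whose coefficient is strictly positive under $|cd| < 1/4$, so sending $t \to \infty$ yields $+\infty$. The only subtle point is matching the boundary threshold $|cd| = 1/4$ to the finite side of the dichotomy, which is essentially forced by the requirement that both constraints on $\lambda$ hold simultaneously; everything else is routine AM-GM bookkeeping.
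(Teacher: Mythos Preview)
Your proof is correct. The paper itself does not supply a proof of this lemma, instead leaving it ``to the interested reader,'' so there is no approach to compare against; your AM--GM-based argument (using $\sqrt{ab} = \inf_{\lambda>0}\tfrac{1}{2}(\lambda a + \lambda^{-1}b)$ for concavity and then a straightforward case analysis for $h^{*}$) is a clean and standard way to fill the gap.
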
  

Recall that the Legendre transform is defined by
\begin{equation} \label{eq:legendre-rep}
\begin{aligned}
h^{*}(c,d) &= \sup \left\{ ca + db - p(a,b) \, \mid \, (a,b) \in \R^{2} \right\}  \\
&= \sup \left\{ ca + db + \sqrt{ab} \, \mid \, a,b \geq 0 \right\}.
\end{aligned}
\end{equation}
The proof of Lemma \ref{lemma:legendre-convex} is left to the interested reader.

\subsection{Forcing a peeling direction}
\begin{figure}
	\includegraphics[scale=0.25]{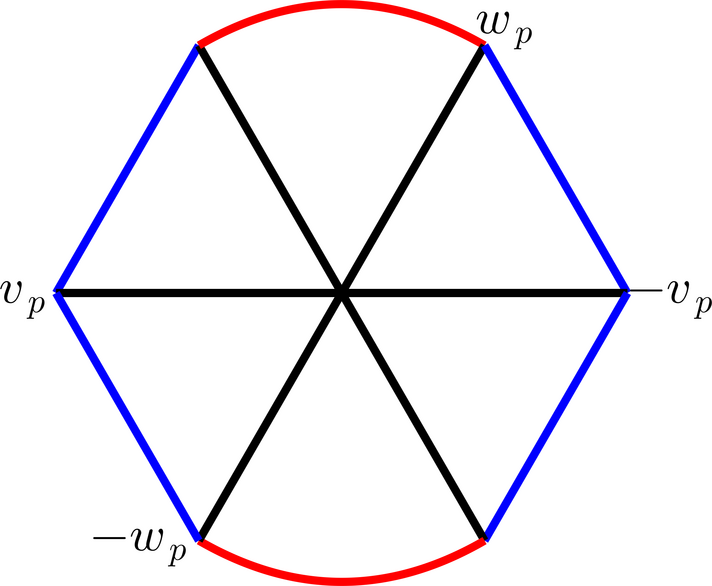} \qquad
		\includegraphics[scale=0.25]{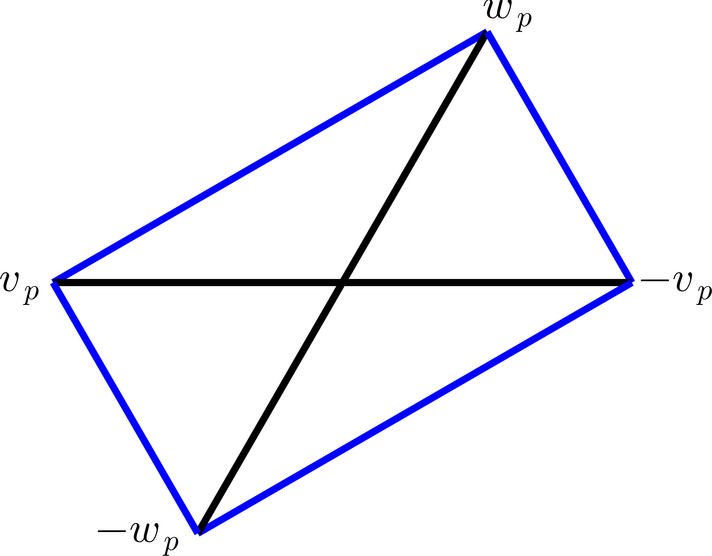}
	\caption{A norm ball and its $Q_p$-flattened norm ball.} \label{fig:flattened-ball}
\end{figure}

We introduce a certain `flattened' norm which we use to control the growth of general Pareto peeling from below.

For $p \in \mathcal{N}^*$, let the {\it $Q_p$-flattened norm} $\varphi_{Q_p}$ of $\varphi$ be the
norm with unit ball the parallelogram, $\conv(v_p, w_p, -w_p, -v_p)$. See Figure \ref{fig:flattened-ball} for an example. The $Q_p$-flattened Pareto hull is 
\begin{equation} \label{eq:flattened-hull}
\mathcal{P}_{Q_p}(A) = \{ x \in \R^2 \, \mid \, \forall y \not = x  \mbox{ there exists $a \in A$  with $\varphi_{Q_p}(a-x) < \varphi_{Q_p}(a-y)$} \}.
\end{equation}

We use the dynamic programming principle to compare Pareto hulls with their flattened hulls.
\begin{lemma}\label{lemma:cone-reduction}
	(i) Let $A$ be a finite set of points in $\R^2$, then for all $p \in \mathcal{N}^*$, 
	\begin{align*}
	x \in \inte(\mathcal{P}_{Q_p}(A)) \implies x \in \inte(\mathcal{P}(A)).
	\end{align*}
	
(ii)  Fix $x \in \R^{2}$ and $p \in \mathcal{N}^{*}$.  If there are points $y_{1}^{+}, y_{1}^{-}, y_{2}^{+}, y_{2}^{-}  \in \R^2$ such that
		\begin{equation} \label{eq:polyhedral_cone_condition}
	y_1^{\pm} \in (x \pm \inte(Q_p)) \cap A \quad \mbox{and} \quad 	 y_2^{\pm} \in (x \pm \inte(\mathcal{Q}_p)) \cap A,
		\end{equation}
	then $x \in \inte(\mathcal{P}(A))$. 
\end{lemma}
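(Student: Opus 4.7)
My plan is to prove (ii) first and obtain (i) as an immediate corollary. Indeed, the flattened norm $\varphi_{Q_p}$ is polyhedral with parallelogram unit ball $\conv(v_p, w_p, -v_p, -w_p)$, and a direct check shows that its flat cones are exactly $\{\pm Q_p, \pm \mathcal{Q}_p\}$. The polyhedral case of Theorem \ref{thm:cone_efficient} then says that $x \in \inte(\mathcal{P}_{Q_p}(A))$ is equivalent to the existence of points $y_1^{\pm}, y_2^{\pm} \in A$ satisfying \eqref{eq:polyhedral_cone_condition}, so (i) reduces to (ii).

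For (ii), I will apply Corollary \ref{cor:-constrained-convex-hull}, which reduces the problem to verifying two conditions: (a) $A \cap (x + \inte(Q_{p'})) \neq \emptyset$ for every $p' \in \mathcal{N}^*$ (not only for $p$), and (b) $x \in \inte(\conv(A))$. Condition (b) follows because $\{\pm Q_p, \pm \mathcal{Q}_p\}$ tile $\R^2$ into four closed wedges: every open halfspace through $x$ contains one of the four open wedges (centered at $x$) entirely, and hence contains at least one of $y_1^{\pm}, y_2^{\pm}$. The standard halfspace characterization of the interior of a convex hull then yields $x \in \inte(\conv\{y_1^{\pm}, y_2^{\pm}\}) \subseteq \inte(\conv(A))$.

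The cone conditions in (a) are the main content. The cases $p' = \pm p$ are immediate, with $y_1^{\pm}$ as witnesses. For $p' \neq \pm p$, disjointness of the facet cones $\{\mathcal{Q}_q\}_{q \in \mathcal{N}^*}$ (since they arise from disjoint boundary facets of $\{\varphi = 1\}$) together with the identity $\mathcal{Q}_{-p} = -\mathcal{Q}_p$ show that $\inte(\mathcal{Q}_{p'})$ is open, connected, and disjoint from $\inte(\mathcal{Q}_p) \cup \inte(-\mathcal{Q}_p)$; combined with the tiling, this forces $\mathcal{Q}_{p'} \subseteq Q_p$ or $\mathcal{Q}_{p'} \subseteq -Q_p$. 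I then claim that the former case yields $\inte(-\mathcal{Q}_p) \subseteq \inte(Q_{p'})$ (witness $y_2^-$) and the latter yields $\inte(\mathcal{Q}_p) \subseteq \inte(Q_{p'})$ (witness $y_2^+$). This is pure angular bookkeeping on $S^1$: parametrizing $\mathcal{Q}_{p'}$ as the counterclockwise arc between the angles of $v_{p'}$ and $-w_{p'}$, the cone $Q_{p'} = \cone(w_{p'}, v_{p'})$ is the counterclockwise arc obtained by reflecting $-w_{p'}$ through the origin to $w_{p'}$, and the hypothesis $\mathcal{Q}_{p'} \subseteq \pm Q_p$ translates into angular inequalities that place $\mp \mathcal{Q}_p$ inside this reflected arc.

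The main obstacle is the circle arithmetic in this last step. The inclusion is clean generically, but the degenerate situations where two adjacent facets of $\{\varphi = 1\}$ meet at a common vertex force $\mathcal{Q}_{p'}$ to share a boundary ray with $\pm Q_p$, which threatens interior (as opposed to closure) inclusion. These cases are handled by observing that coincidence of endpoints does not obstruct inclusion of open arcs, since open intervals do not contain their endpoints.
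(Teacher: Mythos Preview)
Your proof is correct and follows essentially the same geometric argument as the paper: both show that for $p' \neq \pm p$, disjointness of the facet cones forces $\mathcal{Q}_{p'} \subseteq Q_p$ or $\mathcal{Q}_{p'} \subseteq -Q_p$, whence one of $y_2^{\mp}$ lies in $x + \inte(Q_{p'})$. The only difference is organizational---you prove (ii) first and deduce (i), whereas the paper does the reverse, and you verify the convex-hull condition directly via the four-wedge tiling rather than invoking Corollary~\ref{cor:-constrained-convex-hull} for the flattened norm.
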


\begin{proof} \textbf{(i):} Let $p \in \mathcal{N}^*$ be given and let $x \in \inte(\mathcal{P}_{Q_p}(A))$. By Corollary \ref{cor:-constrained-convex-hull}, we know that the inclusion $x \in \inte(\conv(A))$ follows from $x \in \inte(\mathcal{P}_{Q_p}(A))$. It remains to check the cone condition. 
	
	By construction of $\varphi_{Q_p}$ and Theorem \ref{thm:cone_efficient}, there are points $\{y_1^{\pm}, y_{2}^{\pm}\}$ for which \eqref{eq:polyhedral_cone_condition} holds. Now, let $p' \in \mathcal{N}^*$. If $p' \in \{p,-p\}$, then $\{y_{1}^{+},y_{1}^{-}\} \cap (x + \inte(Q_{p'})) \neq \emptyset$ follows directly.  Otherwise, by definition,  $\inte(\mathcal{Q}_p)$, $-\inte(\mathcal{Q}_{p})$, and $\inte(\mathcal{Q}_{p'})$ are pairwise disjoint.
	Since $\mathbb{R}^{2} = Q_{p} \cup (-Q_{p}) \cup \mathcal{Q}_{p} \cup (-\mathcal{Q}_{p})$, it follows that either $\mathcal{Q}_{p'} \subseteq  Q_p$ or $\mathcal{Q}_{p'} \subseteq - Q_{p}$.  In the first case, this implies $ \mathcal{Q}_p \subseteq Q_{p'}$ so $y_2^{+} \in (x + \inte(Q_p'))$.  Similarly, in the second case, $\mathcal{Q}_{p} \subseteq - Q_{p'}$ holds and so $y_{2}^{-} \in (x - \inte(Q_{p'})$.   

\textbf{(ii):}  Combining \eqref{eq:polyhedral_cone_condition} with the second part of Theorem \ref{thm:cone_efficient}, we deduce that we must have $x \in \inte(\mathcal{P}_{Q_{p}}(A))$.  Therefore, by (i), $x \in \inte(\mathcal{P}(A))$.  \end{proof}

We next transfer this lower bound to the height function. 
To avoid introducing additional notation, we suppose without loss of generality the given cone is a quadrant.
\begin{lemma} \label{lemma:force-peel}
	Fix $A \subseteq \R^2$ finite, $x \in \R^2$ and $p \in \mathcal{N}^*$, and let $u_{A}$ be the height function of Pareto hull peeling (see \eqref{eq:height}).
	Suppose $Q_p = [0, \infty)^2$.
	Let $R_z = [z,x]$ for $z \leq x$. 
	If z is such that there are points 
	\[
	y_1 \in \bigcap_{z' \in R_z} (z' + \inte(Q_p))  \quad \mbox{and} \quad y_2^{\pm} \in \bigcap_{z' \in R_z} (z' \pm \inte(\mathcal{Q}_p))  
	\]	
	where
	\begin{equation} \label{eq:cone-peeling-upper-bound}
	\min(u_A(y_1), u_A(y_2^{\pm})) \geq u_A(x)+1,
	\end{equation}
	then 
	\[
	u_A(z) + \ell(R_z \cap A) \leq u_A(x).
	\]

\end{lemma}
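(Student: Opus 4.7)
The plan is to prove the inequality by chain induction along a longest chain in $R_{z} \cap A$. Set $a_{0} := z$ and $a_{k+1} := x$, where $k := \ell(R_{z} \cap A)$, and choose a longest chain $a_{1} \leq_{Q_{p}} a_{2} \leq_{Q_{p}} \cdots \leq_{Q_{p}} a_{k}$ in $R_{z} \cap A$; after a small geometric argument handling ties (or by passing to a generic perturbation), one may assume each $a_{i+1} - a_{i} \in \inte(Q_{p})$. I aim to show, for each $i \in \{0, 1, \ldots, k\}$, the one-step inequality
\begin{equation*}
u_{A}(a_{i+1}) \;\geq\; u_{A}(a_{i}) + 1_{A}(a_{i}).
\end{equation*}
Since $a_{1}, \ldots, a_{k} \in A$, summing along the chain gives $u_{A}(x) \geq u_{A}(z) + k$, which is the desired bound.

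For the inductive step, fix $i$, set $L := u_{A}(a_{i}) + 1_{A}(a_{i})$, and apply Lemma \ref{lemma:cone-reduction}(ii) at $a_{i+1}$ with $A' := A \cap \inte(E_{L-1}(A))$; this reduces the claim $u_{A}(a_{i+1}) \geq L$ to exhibiting $A'$ points in each of the four cones $a_{i+1} \pm \inte(Q_{p})$ and $a_{i+1} \pm \inte(\mathcal{Q}_{p})$. The $-Q_{p}$ cone is covered by $a_{i}$ itself when $i \geq 1$: the strict chain condition places $a_{i}$ in $a_{i+1} - \inte(Q_{p})$, and $u_{A}(a_{i}) = L - 1$ ensures $a_{i} \in A'$. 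In the initial case $i = 0$ with $z \notin A$, I would instead invoke Theorem \ref{thm:cone_efficient} at $z$ (using $u_{A}(z) \geq L - 1$) to extract an $A$ point strictly below $z$ at peel-depth at least $L-1$, which sits automatically strictly below $a_{1}$.

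The main obstacle is covering the remaining three cones around $a_{i+1}$, since the hypothesized points $y_{1}, y_{2}^{\pm}$ need not lie in $A$. I would address this by invoking Theorem \ref{thm:cone_efficient} at each $y_{j}$: because $u_{A}(y_{j}) \geq u_{A}(x) + 1 \geq L$, the set $A'$ contains a point in every cone direction around $y_{j}$. The critical geometric ingredient is the full $\bigcap_{z' \in R_{z}}$ structure of the hypothesis, which places each $y_{j}$ strictly beyond the bounding box of $R_{z}$ in the appropriate direction; consequently, the cone around $y_{j}$ pointing back toward $R_{z}$ is contained in the matching cone around $a_{i+1}$, so any $A'$ support furnished by Theorem \ref{thm:cone_efficient} at $y_{j}$ in that direction automatically falls in the correct cone of $a_{i+1}$. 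I anticipate this containment-of-cones step to be the subtlest part of the argument, and the place where the full $\bigcap_{z' \in R_{z}}$ form of the hypothesis (rather than a weaker single-point version) is essential.
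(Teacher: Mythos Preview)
Your chain-induction skeleton matches the paper's, but the inductive step has two gaps.

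First, the one-step target $u_A(a_{i+1}) \geq u_A(a_i) + 1_A(a_i)$ is too strong. Your plan needs $u_A(y_j) \geq L = u_A(a_i) + 1_A(a_i)$ in order to place $y_j$ in $\inte(E_L)$ and invoke Theorem~\ref{thm:cone_efficient} there, but the hypothesis only gives $u_A(y_j) \geq u_A(x)+1$, and nothing rules out $u_A(a_i) > u_A(x)$ for some $i$ along the chain (the height function is not $Q_p$-monotone). The paper proves instead
\[
u_A(a_{i+1}) \;\geq\; \min\bigl(u_A(a_i)+1,\; u_A(x)+1\bigr),
\]
which iterates to $u_A(x) \geq \min(u_A(z)+k,\, u_A(x)+1)$ and hence $u_A(x) \geq u_A(z)+k$.

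Second, even when $u_A(y_j) \geq L$, your route through Lemma~\ref{lemma:cone-reduction}(ii) asks for actual $A'$ points in $a_{i+1} \pm \inte(\mathcal{Q}_p)$, and you propose to find them via Theorem~\ref{thm:cone_efficient} at $y_2^{\pm}$. But that theorem only produces points in cones $y_2^{\pm} + \inte(Q)$ with $Q \in \{Q_{p'}\}_{p'} \cup \{H_{q^\perp}\}_q$, and in general no such $Q$ is contained in $\mathcal{Q}_p$ (for instance when $\mathcal{N}^* = \{p,-p\}$, the only flat cones are $\pm Q_p$, neither inside $\mathcal{Q}_p$). The cone-containment you anticipate does not hold.

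Both issues vanish once you bypass Lemma~\ref{lemma:cone-reduction}(ii) and use the DPP directly, as the paper does. From $u_A(a_{i+1}) = \inf_{Q \in \mathcal{L}} \sup_{y \in a_{i+1}+\inte(Q)}(u_A(y)+1_A(y))$ and the structural fact (implicit in the proof of Lemma~\ref{lemma:cone-reduction}) that every $Q \in \mathcal{L}$ based at $a_{i+1}$ contains at least one of $a_i,\, y_1,\, y_2^+,\, y_2^-$, the paper's one-step bound drops out immediately; the $y_j$ never need to lie in $A$.
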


\begin{proof}
	Let $x_1, \ldots, x_n$ denote a (possibly empty) longest chain in $A \cap R_z$, $x_1 \leq \cdots \leq x_n$
	and set $x_{n+1} = x$ and $x_0 = z$. 
	By \eqref{eq:cone-peeling-upper-bound}, Lemma \ref{lemma:cone-reduction}, and the dynamic programming principle, for all $0 \leq i \leq n $ 
	\[
	u_A(x_{i+1}) \geq \min(u_A(x_i) + 1, u_A(x)+1).
	\]
	Thus, by summing 
	\[
	u_A(x_{i+1}) \geq \min(u_A(z) + i, u_A(x)+1),
	\]
	and so $u_A(x) \geq \min(u_A(z) + n, u_A(x) + 1)$. Since the other inequality is impossible, $u_A(x) \geq u_A(z) + n = u_A(z) + \ell(R_z)$. 
\end{proof}

\subsection{Conditional supersolution proof}
We finally give the proof of Proposition \ref{prop:conditional-supersolution}.

Since the supersolution proof in \cite{calder2016direct} does not appear to apply directly in our setting, we resort to a different approach in the argument that follows.  Specifically, instead of studying the longest chain in a simplex, we use the convex function $h^{*}$ of Lemma \ref{lemma:legendre-convex} to relate the asymptotics of the $Q$-longest chain to the PDE.  

\begin{proof}[Proof of Proposition \ref{prop:conditional-supersolution}]  Suppose that $\psi$ is a smooth function in $\mathrm{U}$, $x_{0} \in \mathrm{U}$, and $u_{*} - \psi$ has a strict local minimum at $x_{0}$ and $u_*(x_0) = \psi(x_0)$. By Corollary \ref{cor:affine_invariance} and \eqref{eq:supersolution-gradient-assumption}, we may make a change of variables so that there is $p \in \mathcal{N}^*$ with $Q_p = [0,\infty)^2$ and $\min(\psi_{x_{1}}(x_{0}),\psi_{x_{2}}(x_{0})) > 0$.

Note that this transformation turns the Poisson process of intensity $n f$ to a Poisson process
of intensity $n f | v_p \times w_p|$. Also note that since $Q_p = [0,\infty)^2$, 
by our convention $w_p \times v_p > 0$,  $\pm \mathcal{Q}_p = \pm ((-\infty,0] \times [0,\infty)])$.

	Since $\psi$ touches $u_{*}$ from below at $x_{0}$, we know that there is a $r > 0$ such that $\{\psi > u_*(x_{0})\} \cap B(x_{0},r) \subseteq \{u_{*} > u_*(x_{0})\} \cap B(x_{0},r)$.  Accordingly, since $\min(\psi_{x_{1}}(x_{0}),\psi_{x_{2}}(x_{0})) > 0$, we can find $\delta > 0$ and 
	points $y_1, y_2^{\pm} \in B(x_0, r)$ so that
	\begin{equation} \label{eq:lower-bound-super-solution-proof1}
	\begin{aligned}
	\min(u_{*}(y_1), u_*(y_2^{\pm})) &\geq	\min( \psi(y_1), \psi(y_2^{\pm}) ) \geq u_*(x_{0}) + \delta, \\
	y_1 \in x_0 + \inte(Q_p) \quad &\mbox{and} \quad y_2^{\pm} \in x_0 \pm \inte(\mathcal{Q}_p).
	\end{aligned}
	\end{equation}
	In fact, \eqref{eq:lower-bound-super-solution-proof1} 
	can be quantified. For a set $\mathrm{C} \subseteq \R^2$ and $\delta' > 0$,  let
	\[
	\mathrm{C}^{\delta'} = \{ x \in \mathrm{C} \, \mid \, x + B(0, \delta') \subseteq \mathrm{C}\}
	\]
	be a strict subset of $\inte(\mathrm{C})$. By the definition of $u_{*}$, we can fix $(x_{0}^{(n)})_{n \in \N} \subseteq \mathrm{U}$ such that \begin{equation*}
	\lim_{n \to \infty} x_{0}^{(n)} = x_{0}, \quad \lim_{n \to \infty} n^{-\frac{1}{2}} u_n(x_{0}^{(n)}) = u_{*}(x_{0}).
	\end{equation*}
	Thus, there is $\delta' > 0$ and a (random) $N \in \N$ such that, for each $n \geq N$,
	\begin{equation} \label{eq:need-this-to-force-peels}
	\min( u_{n}(y_1), u_n(y_2^{\pm})) \geq u_n(x_{0}^{(n)}) + 1
	\end{equation}
	and
	\begin{equation}\label{eq:strict-inclusion}
	y_1 \in x_0 + (Q_p)^{\delta'} \quad \mbox{and} \quad y_2^{\pm} \in x_0 \pm (\mathcal{Q}_p)^{\delta'}.
	\end{equation}

	Let $z = -(a,b) \leq 0$. Observe that \eqref{eq:strict-inclusion} and \eqref{eq:need-this-to-force-peels}
	imply there is a $\zeta_{0}(z,\delta') > 0$ small such that  if $n \geq N$ and $\zeta \in (0,\zeta_{0}(z,\delta'))$, then the point $x_{*}^{(n)} = x_{0}^{(n)} - \zeta z$ satisfies the hypotheses of Lemma \ref{lemma:force-peel}.  Hence, by Lemma \ref{lemma:force-peel}, if we set $R^{(n)}_{\zeta} = x_0^{(n)} + [\zeta z, 0]$, then
	\begin{equation*}
	n^{-\frac{1}{2}} u^{(n)}(x_{0}^{(n)} + \zeta z) + n^{-\frac{1}{2}} \ell_{n}(R_{\zeta}^{(n)}) \leq n^{-\frac{1}{2}} u^{(n)}(x_{0}^{(n)}),
	\end{equation*}
	where $\ell_n(\mathrm{B})$ denotes the length of the longest chain in $\mathrm{B} \cap X_{|w_p \times v_p| n f}$. 
		Sending $n \to \infty$ (and recalling $z = -(a,b)$), we find
	\begin{align*}
	\lim_{n \to \infty} n^{-\frac{1}{2}} \ell_{n}(R^{(n)}_{\zeta}) &\geq 2 \zeta  \left( \inf_{R_{\zeta}^{(n)}} f \right)^{\frac{1}{2}} \sqrt{a b | v_p \times w_p|} \\
	&=: c_{f, p, \zeta}  \zeta  \sqrt{a b},
	\end{align*} 
	thus,
	\begin{equation} \label{eq:height-function-lower-bound}
	u_{*}(x_{0} - \zeta  z) + c_{f,p, \zeta}   \zeta   \sqrt{ab} \leq u_{*}(x_{0}). 		
	\end{equation}
	
	Since $\psi$ touches $u_{*}$ from below at $x_{0}$, we can transfer \eqref{eq:height-function-lower-bound} to $\psi$:
	\begin{equation}\label{eq:test-function-lower-bound}
	\psi(x_{0} - \zeta  z ) + c_{f,p, \zeta}  \zeta  \sqrt{ab} \leq \psi(x_{0}).
	\end{equation}
	Dividing by $\zeta$ and taking the limit $\zeta \to 0^{+}$ in \eqref{eq:test-function-lower-bound}, we conclude, by continuity of $f$, that
	\begin{equation} \label{eq:test-function-lower-bound-2}
	\psi_{x_{1}}(x_{0}) a + \psi_{x_{2}}(x_{0}) b \geq \bar{c}_{f,p}   \sqrt{ab}   \quad \text{if} \, \, (a,b) \in (0,\infty)^{2},
	\end{equation}
	where $\bar{c}_{f,p} = 2 \sqrt{ f(x_0) |v_p \times w_p| }$.

	Recalling the Legendre transformation from \eqref{eq:legendre-rep}, \eqref{eq:test-function-lower-bound-2} implies,
	\begin{equation*}
	h^{*}(-\psi_{x_{1}}(x_{0}),-\psi_{x_{2}}(x_{0})) = \sup \left\{ \left( -\frac{\psi_{x_{1}}(x_{0})}{\bar{c}_{f,p}} \right) a + \left(- \frac{\psi_{x_{2}}(x_{0})}{\bar{c}_{f,p}} \right) b + \sqrt{ab} \, \mid \, a, b \geq 0 \right\} \leq 0.
	\end{equation*}
	Therefore, by the explicit representation of $h^{*}$ in Lemma \ref{lemma:legendre-convex},
	\begin{equation*}
	\sqrt{\psi_{x_{1}}(x_{0}) \psi_{x_{2}}(x_{0})} \geq \frac{\bar{c}_{f,p}}{2} = \sqrt{ f(x_0) |v_p \times w_p| }.
	\end{equation*}
\end{proof}

\section{Gradient control}\label{sec:degenerate-directions}

To complete the proof of Theorem \ref{theorem:pareto-convergence}), it only remains to verify the nondegeneracy condition assumed in Proposition \ref{prop:conditional-supersolution}, that is, to check that $\bar{H}_{\varphi}(D\psi(x_{0})) > 0$ at the contact point $x_{0}$.   Once this has been checked, the proof of Proposition \ref{prop:supersolution} will be complete.  Specifically, in this section we prove the following: 
	\begin{prop} \label{prop:degenerate-directions}
 On an event of probability 1, if $x_{0} \in \mathrm{U}$ and $\psi$ is a smooth function touching $u_{*}$ from below at $x_{0}$, then 
	\begin{equation*}
	\bar{H}_{\varphi}(D\psi(x_{0})) > 0.
	\end{equation*}  \end{prop}

We break the proof of Proposition \ref{prop:degenerate-directions} into several pieces.  Note that to verify the inequality $\bar{H}_{\varphi}(D\psi(x_{0})) > 0$, it suffices to prove the following three statements:
	\begin{itemize}
		\item[(i)] $D\psi(x_{0}) \neq 0$,
		\item[(ii)] $\sup_{p \in \mathcal{N}^{*}} \langle D\psi(x_{0}), v_{p} \rangle \langle D\psi(x_{0}),w_{p} \rangle \geq 0$,
		\item[(iii)] If $\langle D\psi(x_{0}),v_{p} \rangle \langle D\psi(x_{0}), w_{p} \rangle \geq 0$ for some $p \in \mathcal{N}^{*}$, then 
			\begin{equation*}
				\langle D\psi(x_{0}), v_{p} \rangle \langle D\psi(x_{0}), w_{p} \rangle > 0.
			\end{equation*}
	\end{itemize}
This follows from the explicit formula \eqref{eq:effective-hamiltonian}.  We will show that (i)--(iii) hold in Lemmas \ref{lemma:degenerate-directions-dual-1}--\ref{lemma:degenerate-directions-nonzero} below.

We begin with (iii), that is, we show that if $D\psi(x_{0}) \neq 0$ is in a dual cone $Q_{p}^{*}$ for some $p$, then it is actually strictly inside, that is, $D\psi(x_{0}) \in \inte(Q_{p}^{*})$.

\begin{lemma} \label{lemma:degenerate-directions-dual-1} On an event of probability 1, if $x_{0} \in \mathrm{U}$, $p \in \mathcal{N}^{*}$, $\psi$ is a smooth function touching $u_{*}$ from below at $x_{0}$, and $D\psi(x_{0}) \neq 0$, then
	\begin{equation*}
	\langle D \psi(x_{0}),v_p \rangle \langle D\psi(x_{0}),w_p \rangle > 0 \quad \text{if} \quad \langle D \psi(x_{0}), v_p \rangle \langle D \psi(x_{0}), w_p \rangle \geq 0.
	\end{equation*}
\end{lemma}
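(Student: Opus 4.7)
The plan is to argue by contradiction, adapting the proof of Proposition \ref{prop:conditional-supersolution} to the degenerate regime. Suppose $D\psi(x_0) \neq 0$ with $\langle D\psi(x_0), v_p\rangle \langle D\psi(x_0), w_p\rangle = 0$. Since $\{v_p, w_p\}$ is a basis of $\R^2$, exactly one of the two inner products vanishes. Using affine invariance (Corollary \ref{cor:affine_invariance}), a linear change of coordinates sending $Q_p$ to $[0,\infty)^2$ combined with \eqref{eq:negation-thing} and a possible swap of $v_p$ and $w_p$ reduces the problem to the normalized setting $w_p = (1,0)$, $v_p = (0,1)$, $Q_p = [0,\infty)^2$, and $D\psi(x_0) = (a, 0)$ with $a > 0$. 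In these coordinates $\psi$ is strictly increasing in the $x_1$-direction at $x_0$ but first-order constant in the $x_2$-direction.

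I would then try to replay the supersolution argument of Proposition \ref{prop:conditional-supersolution}. Its conclusion follows verbatim from the force-peel Lemma \ref{lemma:force-peel} and the $Q_p$-longest-chain asymptotics provided one can locate, for large $n$ and a suitable sequence $x_0^{(n)} \to x_0$, three points $y_1 \in x_0 + \inte(Q_p)$, $y_2^- \in x_0 - \inte(\mathcal{Q}_p)$, and $y_2^+ \in x_0 + \inte(\mathcal{Q}_p)$ at which $u_n$ exceeds $u_n(x_0^{(n)}) + 1$. Once all three are in hand, the Legendre-type inequality of Lemma \ref{lemma:legendre-convex}, as at the end of the proof of Proposition \ref{prop:conditional-supersolution}, would force $\psi_{x_1}(x_0)\,\psi_{x_2}(x_0) \geq f(x_0)\,|v_p \times w_p| > 0$, contradicting $\psi_{x_2}(x_0) = 0$.

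Because $\psi_{x_1}(x_0) > 0$, the points $y_1$ and $y_2^-$ can be supplied by the same $\psi$-comparison argument used in Proposition \ref{prop:conditional-supersolution}, since $\psi$ strictly exceeds $\psi(x_0) + \delta$ on a suitable cone in each of the upper-right and lower-right quadrants.  The point $y_2^+$ lies in the upper-left quadrant $x_0 + (-\infty,0) \times (0,\infty)$, on which $\psi < \psi(x_0)$; the naive bound $u_n(y_2^+) \geq \sqrt n\, \psi(y_2^+) + o(\sqrt n)$ falls short of $u_n(x_0^{(n)}) + 1$ by an amount of order $\sqrt n$ and cannot by itself produce a forcing point. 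The crucial new input is therefore a growth lemma: on an event of probability one, for each sufficiently small $r > 0$ there exists $y_2^+ \in x_0 + \inte(\mathcal{Q}_p) \cap B(x_0, r)$ with $u_*(y_2^+) \geq u_*(x_0) + \varepsilon(r)$ for some $\varepsilon(r) > 0$ independent of $\psi$. Once such an excess is available, the definition of $u_*$ yields the required $y_2^+$ with $u_n(y_2^+) \geq u_n(x_0^{(n)}) + 1$ for all large $n$, completing the force-peel setup.

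The main obstacle is establishing this growth lemma. It corresponds to the ``corner'' phenomenon highlighted in the introduction (see Remark \ref{remark:corners}) and, unlike the ingredients of Proposition \ref{prop:conditional-supersolution}, it cannot be reduced to $Q_p$-nondominated-sorting asymptotics alone; its proof must genuinely use the interaction between several cones in the full DPP of Proposition \ref{prop:dpp}. Heuristically, the Poisson fluctuations together with the geometry of Pareto peel boundaries near a degenerate gradient direction force the peels to curve nontrivially into the upper-left quadrant, so that $u_*$ must strictly exceed $u_*(x_0)$ somewhere there. I would make this quantitative by propagating the excess $u_*(y_1) > u_*(x_0)$ already available at the upper-right point through intermediate points via the cone and halfspace DPPs, combined with longest-chain lower bounds in rhombuses oriented along the facet directions of $\{\varphi \leq 1\}$. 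Once the growth lemma is in place, the Legendre-transform argument described above closes the proof.
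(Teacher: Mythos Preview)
Your reduction to coordinates with $Q_p=[0,\infty)^2$ and $D\psi(x_0)=(a,0)$, $a>0$, is the right starting move, and you correctly isolate the difficulty: the forcing point $y_2^+\in x_0+\inte(\mathcal{Q}_p)$ required by Lemma~\ref{lemma:force-peel} lies in a region where $\psi<\psi(x_0)$, so it cannot be produced by the comparison $u_*\ge\psi$. But your proposed remedy---an auxiliary growth lemma asserting that $u_*$ strictly exceeds $u_*(x_0)$ somewhere in $x_0+\inte(\mathcal{Q}_p)\cap B(x_0,r)$---is precisely where the proposal stalls. You do not prove it; the heuristic about ``propagating the excess $u_*(y_1)>u_*(x_0)$ through intermediate points via the cone and halfspace DPPs'' is not an argument, and it is not clear the statement is even true at this level of generality (nothing so far rules out $u_*$ being locally of the form $u_*(x_0)+a(x_1-(x_0)_1)_+ + \text{something vanishing along }\{x_1=(x_0)_1\}$). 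In effect you have relocated the entire content of the lemma into an unproved black box.

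The paper sidesteps this difficulty rather than confronting it. Instead of trying to manufacture a third forcing point in the bad quadrant, it proves a \emph{planar growth lemma} (Lemma~\ref{lemma:unidirectional-growth}) that needs only the two forcing points you already have---one in $x_0+\inte(Q_p)$ and one in $x_0-\inte(\mathcal{Q}_p)$, both to the right where $\psi>\psi(x_0)$---together with a lower bound $s$ on $u_n$ over a thin rectangle $x_0+[-a,0]\times[-b,b]$. The conclusion is
\[
u_n(x_0)\ \ge\ \min\bigl(u_n(y^{+,+}),\,u_n(y^{+,-}),\,s+\rho(f(x_0))\sqrt{abn}\bigr).
\]
The contradiction then comes from a scaling trick: take $a=\epsilon^2$ and $b=\epsilon$. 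Since $\psi_{x_2}(x_0)=0$, the Taylor error on this rectangle is only $O(\epsilon^2)$ (both the horizontal displacement $\epsilon^2$ and the second-order vertical term $\epsilon^2$ contribute at this order), so $s\ge (t-c\epsilon^2)\sqrt n$; meanwhile the growth term is $\rho\sqrt{ab\,n}=\rho\,\epsilon^{3/2}\sqrt n$, which dominates $\epsilon^2$. Passing to the limit yields $t\ge\min(t+c\epsilon^2,\,t-c\epsilon^2+C\epsilon^{3/2})$, absurd for small $\epsilon$. No Legendre transform and no upper-left forcing point are needed.
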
  

Next, we prove (ii): if $D\psi(x_{0})$ is non-zero, then it is certainly dual to one of the flat cones.

\begin{lemma} \label{lemma:degenerate-directions-dual-2} On an event of probability 1, if $x_{0} \in U$, $\psi$ is a smooth function touching $u_{*}$ from below at $x_{0}$, and $D \psi(x_{0}) \neq 0$, then
	\begin{equation*}
	\sup_{p \in \mathcal{N}^*} \langle D \psi(x_{0}), v_p \rangle   \langle D \psi(x_{0}), w_p \rangle \geq 0.
	\end{equation*}
\end{lemma}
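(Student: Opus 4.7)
I argue by contradiction. Suppose $\psi$ is smooth, $\psi \leq u_*$ near $x_0$ with equality at $x_0$, $\xi := D\psi(x_0) \neq 0$, and $\langle \xi, v_p \rangle \langle \xi, w_p \rangle < 0$ for every $p \in \mathcal{N}^*$ (in particular, the contradiction hypothesis is that the supremum is strictly negative, which forces strict negativity at every $p$). Combining Proposition \ref{prop:key-duality} with the tiling \eqref{eq:decomposition_round_flat}, the hypothesis forces $\xi^{\perp}$ into the open round set $\mathbb{R}^{2} \setminus \bigcup_{p} \mathcal{Q}_{p} \subseteq \cone(\mathcal{E})$; equivalently, every facet cone $\mathcal{Q}_{p}$ lies strictly inside one of the two open half-planes bounded by the line through $\xi^{\perp}$. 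This partitions $\mathcal{N}^{*} = \mathcal{N}^{+} \sqcup \mathcal{N}^{-}$ by the containing half-plane, and, since $\mathcal{Q}_{-p} = -\mathcal{Q}_{p}$ by \eqref{eq:negation-thing}, the map $p \mapsto -p$ swaps $\mathcal{N}^{+}$ and $\mathcal{N}^{-}$, so both classes are nonempty because $\mathcal{N}^{*} \neq \emptyset$ by \eqref{eq:linear-assumption}.

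The next step is to produce witnesses: by smoothness of $\psi$ together with $\psi \leq u_{*}$ near $x_{0}$, for any direction $h$ with $\langle \xi, h\rangle > 0$ and any small $t > 0$, one has $u_{*}(x_{0} + th) > u_{*}(x_{0})$. Since $\xi \notin \pm Q_{p}^{*}$ for each $p$, both $\pm Q_{p}$ contain such directions; moreover, for $p \in \mathcal{N}^{+}$, every direction in $\inte(\mathcal{Q}_{p})$ is $\psi$-increasing while every direction in $-\inte(\mathcal{Q}_{p})$ is $\psi$-decreasing, and vice versa for $p \in \mathcal{N}^{-}$. Consequently, for no single $p$ can one locate all three witnesses $y_{1} \in \inte(Q_{p})$ and $y_{2}^{\pm} \in \pm \inte(\mathcal{Q}_{p})$ with $u_{*}$ strictly exceeding $u_{*}(x_{0})$ that the forcing argument of Lemma \ref{lemma:force-peel} demands: one of $y_{2}^{\pm}$ is always trapped in the $\psi$-decreasing half-plane. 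This is precisely the degeneracy that makes the conditional supersolution proof (Proposition \ref{prop:conditional-supersolution}) inapplicable.

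The core task --- and the main obstacle --- is to bypass this shortage by modifying the forcing machinery so that it draws witnesses from multiple flat cones simultaneously; this corresponds to the ``multiple cones'' phenomenon and the ``randomness forces corners in degenerate directions'' intuition described in the introduction. The plan is to fix $p_{+} \in \mathcal{N}^{+}$, choose a rectangle $R$ centered at $x_{0}$ with sides parallel to $v_{p_{+}}$ and $w_{p_{+}}$, and run a chain-forcing induction along a longest Poisson chain in $R \cap X_{nf}$ adapted from Lemma \ref{lemma:force-peel}. At each inductive step the missing witness in $-\inte(\mathcal{Q}_{p_{+}})$ is replaced by invoking Theorem \ref{thm:cone_efficient} with an auxiliary cone $Q_{p'}$ for a suitably chosen $p' \in \mathcal{N}^{-}$ whose $\psi$-increasing structure covers the direction where the original witness is unavailable. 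Once this multi-cone chaining is justified, combining it with the longest-chain asymptotics of Proposition \ref{prop:longest-chain-convergence} through the change of variables in Lemma \ref{lemma:nds-change-of-variables} forces
\[
\liminf_{n \to \infty} n^{-1/2} u_{n}(x_{0}) > \psi(x_{0}),
\]
contradicting $u_{*}(x_{0}) = \psi(x_{0})$ and completing the proof. The hardest part is verifying that the alternating-cone induction actually propagates through successive Pareto peels in the absence of any single ``good'' $p$; this is where the geometric argument sketched in the introduction about corners of the height function graph enters in an essential way.
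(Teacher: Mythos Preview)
Your setup is correct---the contradiction hypothesis places $\xi^{\perp}$ in $\cone(\tilde{\mathcal{E}})$ with $\tilde{\mathcal{E}} = \mathcal{E} \setminus \bigcup_{p}\mathcal{Q}_{p}$, and you rightly note that no single $p$ admits the three-witness configuration of Lemma~\ref{lemma:force-peel} with all witnesses in the $\psi$-increasing half-plane---but the proposal stops where the real work begins, and the one concrete mechanism you name is wrong. The phrase ``replace the missing witness by invoking Theorem~\ref{thm:cone_efficient} with an auxiliary cone $Q_{p'}$'' misreads that theorem: it certifies $x\in\inte(\mathcal{P}(A))$ only when $A$ meets \emph{every} $x+\inte(Q_{p})$ and every half-space $x+\inte(H_{q^{\perp}})$, so exhibiting witnesses for two chosen cones $Q_{p_{+}},Q_{p'}$ proves nothing. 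What is actually required, and what you concede you have not supplied, is a geometric lemma guaranteeing that a specific four-point configuration has the origin in the interior of its Pareto hull for the given norm $\varphi$. The paper provides this as Proposition~\ref{prop:special-pareto}: for each $q\in\tilde{\mathcal{E}}$ there is a $\zeta(q)>0$ such that points $x^{+,+},x^{-,+}$ in the narrow sector $A_{\zeta(q)}(q)$ on the $q^{\perp}$-positive side, together with $x^{+,-},x^{-,-}$ in the two lower quadrants determined by any $q'$ near $q$, always satisfy $0\in\inte(\mathcal{P}(\{x^{\pm,\pm}\}))$. Its proof is a case analysis over \emph{all} $p\in\mathcal{N}^{*}$ using the dual-cone identities of Section~\ref{sec:duality} (Propositions~\ref{P: basic generality}--\ref{P: convexity thing}), and it is not short.

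With that proposition in hand, the paper runs a planar growth argument (Lemmas~\ref{lemma:special-planar-growth} and \ref{lemma:reduced-planar-growth}) in a thin rectangle $D^{q}_{-}[\epsilon^{2},\mu\epsilon]$ aligned with $q$ and $q^{\perp}$, deriving the contradiction exactly as in Lemma~\ref{lemma:degenerate-directions-dual-1} by comparing an $O(\epsilon^{2})$ Taylor loss against an $\epsilon^{3/2}$ growth gain. Your proposed frame along $v_{p_{+}},w_{p_{+}}$ is misaligned with the $\psi$-flat direction $\xi^{\perp}$ (which by hypothesis lies in no $\partial\mathcal{Q}_{p}$), so even if a working Pareto-hull lemma were supplied you would not directly obtain that scaling from a longest-chain argument in that frame.
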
  
Finally, we prove (i): $D\psi(x_{0})$ is always non-zero.

\begin{lemma} \label{lemma:degenerate-directions-nonzero} On an event of probability 1, if $x_{0} \in U$ and $\psi$ is a smooth function touching $u_{*}$  from below at $x_{0}$, then $D\psi(x_{0}) \neq 0$.  \end{lemma}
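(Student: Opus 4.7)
The plan is to argue by contradiction, combining the forcing lemma (Lemma~\ref{lemma:force-peel}) with the Poisson longest-chain asymptotics (Proposition~\ref{prop:longest-chain-convergence}) to derive a square-root upper bound on $u_*$ near $x_0$ that is incompatible with the quadratic lower bound $u_*(x_0 - h) \geq \psi(x_0 - h) = u_*(x_0) + O(|h|^2)$ forced by $D\psi(x_0) = 0$.

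Concretely, I would first pick $p_0 \in \mathcal{N}^{*}$ (nonempty by \eqref{eq:linear-assumption}) and apply the affine invariance Corollary~\ref{cor:affine_invariance} to reduce to coordinates in which $Q_{p_0} = [0,\infty)^2$ and $\mathcal{Q}_{p_0} = (-\infty, 0] \times [0,\infty)$.  Take $x_0^{(n)} \to x_0$ with $\bar{u}_n(x_0^{(n)}) \to u_*(x_0)$.  For small $h = (h_1, h_2) \in (0,\infty)^2$, set $R^{(n)} = [x_0^{(n)} - h, x_0^{(n)}]$ and apply Lemma~\ref{lemma:force-peel}: provided its forcing hypothesis is verified along a subsequence, the lemma delivers $u_n(x_0^{(n)} - h) + \ell(R^{(n)} \cap X_{nf}) \leq u_n(x_0^{(n)})$.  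By Proposition~\ref{prop:longest-chain-convergence} and Lemma~\ref{lemma:nds-change-of-variables}, $n^{-1/2} \ell(R^{(n)} \cap X_{nf}) \to 2\sqrt{|v_{p_0} \times w_{p_0}|\, f(x_0)\, h_1 h_2}$ almost surely, so passing to $\liminf_{n \to \infty}$ gives
\begin{equation*}
u_*(x_0 - h) \leq u_*(x_0) - 2\sqrt{|v_{p_0} \times w_{p_0}|\, f(x_0)\, h_1 h_2}.
\end{equation*}
Specializing to $h_1 = h_2 = t > 0$, dividing by $t$, and sending $t \to 0^{+}$ then produces $0 \leq -c\sqrt{f(x_0)}$, contradicting $f(x_0) > 0$.

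The technical heart of the proof is verifying the forcing hypothesis of Lemma~\ref{lemma:force-peel}: for each large $n$ one must exhibit Poisson points $y_1 \in x_0^{(n)} + \inte(Q_{p_0})$ and $y_2^{\pm} \in x_0^{(n)} \pm \inte(\mathcal{Q}_{p_0})$ with $u_n(y_i) \geq u_n(x_0^{(n)}) + 1$.  Since $x_0^{(n)} \in \inte(E_{u_n(x_0^{(n)})}) \setminus \inte(E_{u_n(x_0^{(n)}) + 1})$, Theorem~\ref{thm:cone_efficient} identifies a single ``bad'' cone $Q^{(n)} \in \mathcal{L}$ in which no Poisson point of the $u_n(x_0^{(n)})$-th layer lies, and pigeonholing (using countability of $\mathcal{L}$) stabilizes this bad cone along a subsequence to some fixed $Q^{*} \in \mathcal{L}$.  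The crux is then a geometric selection: show that $p_0 \in \mathcal{N}^{*}$ can be chosen so that $Q^{*} \notin \{Q_{p_0}, +\mathcal{Q}_{p_0}, -\mathcal{Q}_{p_0}\}$.  Once this is arranged the three required cones are ``good,'' and the forcing hypothesis follows after a short iteration of Theorem~\ref{thm:cone_efficient} through a bounded number of peels to upgrade ``deeper Poisson points exist in the cone'' to ``points with $u_n \geq u_n(x_0^{(n)}) + 1$ exist in the cone.''  This cone-avoidance selection, together with the bookkeeping across the subsequence, is where I expect the main technical difficulty to lie.
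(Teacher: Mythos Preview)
Your proposal has a genuine gap at exactly the point you flag as the crux. The forcing hypothesis of Lemma~\ref{lemma:force-peel} asks for points $y_1, y_2^{\pm}$ with $u_n(y_i) \geq u_n(x_0^{(n)}) + 1$, but when $D\psi(x_0) = 0$ the test function gives no direction in which $u_*$ (hence $u_n$) is guaranteed to \emph{exceed} its value at $x_0$; it only gives the two-sided bound $u_*(x_0 - h) \geq u_*(x_0) - O(|h|^2)$. Your DPP-based workaround does not close this gap: (i) the family $\mathcal{L}$ is uncountable whenever $\mathcal{E}$ is, so the pigeonholing step fails for general norms; (ii) Theorem~\ref{thm:cone_efficient} guarantees at least one bad cone, not exactly one, and for the remaining cones one only knows they meet $A_{j-1}$ (points of depth $\geq j-1$), not depth $\geq j+1$; (iii) the cones $\pm\mathcal{Q}_{p_0}$ in which $y_2^{\pm}$ must lie are not elements of $\mathcal{L}$ in general, so knowing the bad cone in $\mathcal{L}$ does not locate them. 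The ``short iteration'' you propose to upgrade depth $j-1$ to $j+1$ can fail outright: for instance, if $A_j$ consists of a single point then $\inte(E_{j+1}) = \emptyset$, so no point of depth $\geq j+1$ exists anywhere.

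The paper's proof sidesteps all of this by using the box growth lemma (Lemma~\ref{lemma:box-growth}) in place of Lemma~\ref{lemma:force-peel}. The crucial difference is that Lemma~\ref{lemma:box-growth} imposes \emph{no} a priori inequality $u_n(y^{\pm,\pm}) > u_n(x_0)$ at the corner points; its conclusion is simply $u_n(x_0) \geq \min_{\pm,\pm} u_n(y^{\pm,\pm}) + \rho(f(x_0))\, a \sqrt{n}$. The growth term $\rho a\sqrt{n}$ is supplied by \emph{random} Poisson points sitting in a chain of concentric diagonal boxes of side $n^{-1/2}$: by the law of large numbers, order $a\sqrt{n}$ of these quadruples are occupied, and each occupied quadruple forces the next one inward via Lemma~\ref{lemma:cone-reduction} and the DPP. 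One then only needs the soft bound $u_n(y^{\pm,\pm}) \geq (t - O(\epsilon^2))\sqrt{n}$, which follows directly from $u_* \geq \psi$ and the second-order Taylor estimate, to conclude $u_*(x_0) \geq t + \rho\epsilon - O(\epsilon^2) > t$ for small $\epsilon$. In short, the missing idea is to let the randomness build the chain of forcing points internally, rather than trying to locate deterministic forcing points from the structure of $u_n$ at $x_0^{(n)}$.
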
  

The lemmas above explain the discussion in the introduction concerning corners in the graph of $\bar{u}$.  This is fleshed out in the next remark.

\begin{remark} \label{remark:corners}As mentioned already in the introduction, the lemmas above imply that $\bar{u}$ is never smooth, but, instead, its graph necessarily has corners in degenerate directions.  More precisely, if $x_{0} \in \mathrm{U}$ is a point where the gradient $D\bar{u}(x_{0})$ exists, then Lemmas \ref{lemma:degenerate-directions-dual-1}- \ref{lemma:degenerate-directions-nonzero} imply that there is a $p \in \mathcal{N}^{*}$ such that 
	\begin{equation*}
		\frac{\langle D\bar{u}(x_{0}), v_{p} \rangle \langle D\bar{u}(x_{0}), w_{p} \rangle}{|v_{p} \times w_{p}|} = \bar{H}_{\varphi}(D\bar{u}(x_{0})) = 1.
	\end{equation*}
In particular, by Proposition \ref{prop:noncoercive}, this implies that the gradient, where it exists, is constrained to be everywhere nonzero $D\bar{u} \neq 0$ and 
	\begin{equation} \label{eq:nondegenerate}
		D\bar{u}^{\perp} \notin \mathcal{E} \cup \bigcup_{p \in \mathcal{N}^{*}} \partial \mathcal{Q}_{p}.
	\end{equation}
This means that $\bar{u}$ is not smooth since, for example, the gradient of a smooth function necessarily vanishes at its global maximum.   (Here, to apply the lemmas, we use the well-known fact that if $D\bar{u}(x_{0})$ exists, then $\bar{H}_{\varphi}(D\bar{u}(x_{0})) = 1$; see \cite[Chapter 2]{bardi_capuzzo-dolcetta} or \cite[Chapter 1, Section 2]{tran}.)

More generally, if $\xi$ points in a degenerate direction (\ie, if $\xi^{\perp}$ belongs to the set in the right-hand side of \eqref{eq:nondegenerate}), then the function $x \mapsto \bar{u}(x) - \langle \xi,x \rangle$ is not differentiable at its global maximum, or at any of its local maxima.  Thus, at such points, the graph of $\bar{u}$ has a corner.

The constraints on the gradient are consistent with the appearance of corners in the level sets of the simulated height functions; see, for instance, Figures \ref{fig:pareto-peeling}, \ref{fig:non-convex-example}, and \ref{fig:not-efficient} above.  Note that this is more noticeable when the set $\mathcal{E}$ is large, as, for instance, when $\varphi(x) = \max\{|x_{1} - x_{2}|,\|x\|\}$, since then the gradient constraints are more severe, see, \eg, the right-most image in Figure \ref{fig:non-convex-example}.  The analysis of corners in the level sets of $\bar{u}$ (not just the graph) is an interesting direction for future work. \end{remark}

The proofs of Lemmas \ref{lemma:degenerate-directions-dual-1}, \ref{lemma:degenerate-directions-dual-2},  and \ref{lemma:degenerate-directions-nonzero} will be based on certain growth lemmas, which are stated and proved in the sections that follow.  As in Section \ref{sec:affine-invariance}, we will change variables so that $Q_{p} = [0,\infty)^{2}$ where it simplifies the exposition.


\subsection{Heuristics} Here is a heuristic argument to motivate Lemma \ref{lemma:degenerate-directions-dual-1}.

For clarity, we restrict attention to the case when $\varphi$ equals the $\ell^{1}$ norm.  Recall that, in this case, $\mathcal{N}^{*} = \{(1,1),(1,-1),(-1,1),(-1,-1)\}$ and the cones $\{Q_{p}\}_{p \in \mathcal{N}^{*}}$ are the quadrants
	\begin{equation} \label{eq:l1-cones}
		Q_{p} = \{x = (x_{1},x_{2}) \in \mathbb{R}^{2} \, \mid \, x_{1} p_{1} \geq 0, \, \, x_{2} p_{2} \geq 0\}.
	\end{equation}
Furthermore, by the second statement in Proposition \ref{prop:dpp}, the dynamic programming principle for the height function $u_{n}$ takes the following form:
	\begin{equation} \label{eq:l1-dpp}
		u_{n}(x) = \min_{p \in \mathcal{N}^{*}} \sup_{y \in x + \inte(Q_{p})} u_{n}(y) + 1_{X_{n}}(y).
	\end{equation}

First, consider Lemma \ref{lemma:degenerate-directions-dual-1}.  Let us suppose that the convergence $\bar{u}_{n} \to \bar{u}$ is already known and argue formally that the lemma applies to $\bar{u}$.  Actually, rather than considering Lemma \ref{lemma:degenerate-directions-dual-1} in its full generality, let us treat a weaker statement:
	\begin{gather*}
		\text{if} \quad \bar{u}_{n} \to \bar{u} \, \, \text{uniformly in} \, \, \bar{\mathrm{U}} \quad \text{and} \quad x_{0} \, \, \text{is a local maximum of} \, \, \bar{u}, \\
		\text{then} \, \, \bar{u} \, \, \text{is not differentiable at} \, \, x_{0}.
	\end{gather*}

To see why this holds, we argue by contradiction.  If $\bar{u}$ is differentiable at $x_{0}$, then 
	\begin{equation*}
		\bar{u}(x) = \bar{u}(x_{0}) + o(\|x - x_{0}\|) \quad \text{as} \quad x \to x_{0}.
	\end{equation*}
Therefore, 
	\begin{equation*}
		\min\{\bar{u}(x) \, \mid \, x \in x_{0} + [-n^{-\frac{1}{2}},n^{-\frac{1}{2}}]^{2}\} \leq \bar{u}(x_{0}) + o(n^{-\frac{1}{2}}) \quad \text{as} \quad n \to \infty.
	\end{equation*}
Accordingly, at least formally, we can write
	\begin{equation} \label{eq:silly-upper-bound}
		\min\{\bar{u}_{n}(x) \, \mid \, x \in x_{0} + [-n^{-\frac{1}{2}},n^{-\frac{1}{2}}]^{2}\} \leq \bar{u}_{n}(x_{0}) + o(n^{-\frac{1}{2}}) \quad \text{as} \quad n \to \infty.
	\end{equation}
	
On the other hand, let us subdivide the box $x_{0} + [-n^{-\frac{1}{2}},n^{-\frac{1}{2}}]$ into four smaller boxes as follows:
	\begin{equation*}
		x_{0} + [-n^{-\frac{1}{2}},n^{-\frac{1}{2}}]^{2} = \bigcup_{p \in \mathcal{N}^{*}} (x_{0} + Q_{p} \cap [-n^{-\frac{1}{2}},n^{-\frac{1}{2}}]^{2}).
	\end{equation*}
Let $E_{n}$ be the event that $X_{n}$ contains a point in each of the smaller boxes, \ie,
	\begin{equation*}
		E_{n} = \bigcap_{p \in \mathcal{N}^{*}} \{X_{n} \cap (x_{0} + \inte(Q_{p}) \cap [-n^{-\frac{1}{2}},n^{-\frac{1}{2}}]^{2}) \neq \emptyset\}.
	\end{equation*} 
Since $X_{n}$ is a Poisson process of intensity $n f$ and each of the four boxes has area $n^{-1}$, $E_{n}$ has probability of order one:
	\begin{equation*}
		\mathbb{P}(E_{n}) \approx f(x_{0}).
	\end{equation*}
Furthermore, due to \eqref{eq:l1-cones} and the dynamic programming principle \eqref{eq:l1-dpp},
	\begin{equation*}
		\bar{u}_{n}(x_{0}) \geq \min\{\bar{u}_{n}(x) \, \mid \, x \in x_{0} + [-n^{-\frac{1}{2}},n^{-\frac{1}{2}}]^{2}\} + n^{-\frac{1}{2}} \quad \text{on the event} \, \, E_{n}.
	\end{equation*}
Therefore, due to averaging (this part needs justification), there is a universal constant $\rho(f(x_{0})) > 0$ such that, with probability one, for all $n$ large enough,
	\begin{equation*}
		\bar{u}_{n}(x_{0}) \geq \min\{\bar{u}_{n}(x) \, \mid \, x \in x_{0} + [-n^{-\frac{1}{2}},n^{-\frac{1}{2}}]^{2}\} + \rho (f(x_{0})) n^{-\frac{1}{2}}.
	\end{equation*}
This contradicts \eqref{eq:silly-upper-bound}, completing the formal proof.

\begin{remark} Note that the heuristic proof above boils down to a ``growth lemma:" we argue that, close to a local maximum, $\bar{u}$ grows in a manner that is inconsistent with differentiability.  (At the level of the PDE, this can be seen, say, for the $\ell^{1}$ norm, by observing that the function $u(x) = -\|x\|_{1}$ is a solution of $\bar{H}_{\varphi}(Du) = 1$ and applying a comparison argument.) \end{remark}

\begin{remark} Similar reasoning can be used to motivate Lemma \ref{lemma:degenerate-directions-nonzero}, a fact that may be worth keeping in mind while reading its proof. \end{remark}

\subsection{Box growth} \label{sec:box_growth}
We start with a fundamental growth estimate for the height function in a square.  In addition to proving that the height function of $n$ random points in a square is at least order $\sqrt{n}$ at the center, the estimate implies that the limiting height function is not differentiable at any of its local maxima, making rigorous the previous heuristic. 

\begin{lemma} \label{lemma:box-growth}
	Suppose $Q_p = [0,\infty)^2$ for some $p \in \mathcal{N}^*$.  There is a function $\rho:(0,\infty) \to (0,\infty)$ so that on an event of probability 1, if $x_{0} \in \Q^{2}$ and $a \in \Q \cap (0,\infty)$ are chosen so that
	\[
	x_{0} + [-a,a]^2  \subseteq \{f > f(x_{0})/2\},
	\]
	and $y^{+,+}, y^{+,-}, y^{-,+}, y^{-,-} \in \mathbb{R}^{2}$ satisfy
	\begin{gather*}
	y^{+,+} \in (x_0 + (a,a)) + \inte(Q_p), \quad y^{+,-} \in  (x_0 + (a,-a)) - \inte(\mathcal{Q}_p), \\
	y^{-,+} \in (x_{0} + (-a,a)) + \inte(\mathcal{Q}_{p}), \quad y^{-,-} \in (x_{0} + (-a,-a)) - \inte(Q_{p}),
	\end{gather*}
	then for all $n$ sufficiently large,
	\[
	u_n(x_{0}) \geq \min (u_n(y^{+,+}), u_n(y^{+,-}), u_{n}(y^{-,+}), u_{n}(y^{-,-})) + \rho(f(x_0))  a  \sqrt{n}.
	\]
\end{lemma}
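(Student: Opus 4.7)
Set $T := \min_{\epsilon, \delta \in \{+,-\}} u_n(y^{\epsilon, \delta})$, $R' := x_0 + [-a, 0]^2$, and $z := x_0 + (-a, -a)$.  My plan is to first establish a height floor $u_n(z) \geq T$ at the lower-left corner of the box using the four anchors, and then to invoke Lemma \ref{lemma:force-peel} along $R'$ to propagate this floor to $x_0$ with an additive gain of $\rho(f(x_0)) a \sqrt{n}$ coming from the $Q_p$-longest chain in $R'$.

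To get the floor, I will observe that the four cone conditions in the hypotheses translate to $y^{+,+} \in z + \inte(Q_p)$, $y^{-,+} \in z + \inte(\mathcal{Q}_p)$, $y^{+,-} \in z - \inte(\mathcal{Q}_p)$, and $y^{-,-} \in z - \inte(Q_p)$.  Since each $u_n(y^{\epsilon, \delta}) \geq T$, Theorem \ref{thm:cone_efficient} applied at each anchor produces a Poisson proxy $\tilde{y}^{\epsilon, \delta} \in X_{nf} \cap \inte(E_{T-1}(X_{nf}))$ in the same cone direction relative to that anchor, and hence (by convexity of the cone) also relative to $z$; feeding the four proxies into Lemma \ref{lemma:cone-reduction}(ii) at $z$ then yields $z \in \inte(E_T(X_{nf}))$, i.e., $u_n(z) \geq T$.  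For the chain bound, Proposition \ref{prop:longest-chain-convergence} together with Lemma \ref{lemma:nds-change-of-variables} (trivial here since $Q_p = [0, \infty)^2$) and $f \geq f(x_0)/2$ on the box will yield a probability-one event --- uniform in rational $(x_0, a)$ by countability --- on which $\ell^{Q_p}(X_{nf} \cap R') \geq \rho(f(x_0)) a \sqrt{n}$ for all $n$ large, with a suitable $\rho(\cdot) > 0$.

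The propagation step applies Lemma \ref{lemma:force-peel} with $x := x_0$, $R_z := R'$, and the three anchors $(y_1, y_2^+, y_2^-) := (y^{+,+}, y^{-,+}, y^{+,-})$, whose cone conditions $y_1 \in \bigcap_{z' \in R'}(z' + \inte(Q_p))$ and $y_2^\pm \in \bigcap_{z' \in R'}(z' \pm \inte(\mathcal{Q}_p))$ follow directly from the hypotheses.  When the conditional hypothesis $T \geq u_n(x_0) + 1$ of Lemma \ref{lemma:force-peel} holds, combining the lemma's conclusion with the floor and chain bound yields the target inequality directly.  In the complementary regime $u_n(x_0) \geq T$, I will re-run the same argument on the shifted cloud $A^{(T)} := X_{nf} \cap \inte(E_T(X_{nf}))$, which satisfies $u_n(\cdot) = T + u_{A^{(T)}}(\cdot)$ on $\inte(E_T(X_{nf}))$ and on which the anchors retain their cone-geometric role.

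The principal obstacle will be this bootstrap to $A^{(T)}$, for which I must verify both that the longest $Q_p$-chain in $R' \cap A^{(T)}$ retains its order $\rho(f(x_0)) a \sqrt{n}$ despite the removal of $T$ peeling layers, and that the forcing mechanism of Lemma \ref{lemma:force-peel} transfers to $A^{(T)}$ with the same anchor configuration.  Both hinge on the observation that peeling removes only a thin boundary layer of points in $R'$ relative to the total Poisson mass in its interior, and that the affine cone inclusions defining the anchor conditions are unaffected by restriction to $A^{(T)}$.
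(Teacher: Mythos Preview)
Your plan has a genuine circular gap in the case analysis, and the bootstrap you propose does not resolve it.

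Concretely: Lemma~\ref{lemma:force-peel} requires the anchor heights to exceed $u_n(x_0)+1$.  In your first branch you assume $T\geq u_n(x_0)+1$ and then derive $u_n(x_0)\geq T+\rho a\sqrt{n}>T$, which contradicts the branch hypothesis --- so that branch is vacuous and all the content lies in the second branch.  There, you shift to $A^{(T)}$ and claim the ``anchors retain their cone-geometric role.''  They retain the geometry, but not the height condition: since $u_{A^{(T)}}=u_n-T$ on $\inte(E_T)$ and $T=\min_{\epsilon,\delta}u_n(y^{\epsilon,\delta})$, the minimum of $u_{A^{(T)}}$ over the anchors is exactly $0$, while $u_{A^{(T)}}(x_0)\geq 0$.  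So the hypothesis $\min u_{A^{(T)}}(y^{\epsilon,\delta})\geq u_{A^{(T)}}(x_0)+1$ of Lemma~\ref{lemma:force-peel} fails, and the lemma cannot be reapplied.  Iterating the bootstrap makes no progress for the same reason.  Separately, your assertion that ``peeling removes only a thin boundary layer of points in $R'$'' is unsupported: $T$ can be of order $\sqrt{n}$, and controlling how many Poisson points in $R'$ survive $T$ peels is essentially the growth estimate you are trying to prove.

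The paper's proof avoids this circularity by never invoking Lemma~\ref{lemma:force-peel}.  Instead it places a grid of side $n^{-1/2}$ on $x_0+[-a,a]^2$ and, for each $\ell\in\{1,\dots,\lceil a\sqrt{n}\rceil\}$, considers the four diagonal sub-boxes $R_{(\pm\ell,\pm\ell)}$.  By the law of large numbers, at least $\rho(f(x_0))a\sqrt{n}$ of these indices $\ell$ have Poisson points in all four sub-boxes.  The key unconditional step is that if $x$ is any point and $x^{\pm,\pm}$ are \emph{Poisson} points in $x\pm\inte(Q_p)$ and $x\pm\inte(\mathcal{Q}_p)$, then $u_n(x)\geq\min u_n(x^{\pm,\pm})+1$ (Lemma~\ref{lemma:cone-reduction} plus the DPP).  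Iterating this from the outermost quadruple (bounded below by the anchors $y^{\pm,\pm}$) inward to $x_0$ gives the gain of $\rho a\sqrt{n}$ directly, with no hypothesis on $u_n(x_0)$.  The essential difference from your approach is that the paper builds a chain of \emph{quadruples} of Poisson points along the four diagonals simultaneously, rather than a single $Q_p$-chain in one quadrant; this is what allows each step of the iteration to be unconditional.
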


In the proof of the estimate, we will use the following observation about Poisson processes.

	\begin{prop} \label{prop:LLN}   Fix $n \in \mathbb{N}$, $\gamma > 0$, and $f \in L^{\infty}_{\text{loc}}(\mathbb{R}^{2})$, and let $X_{nf}$ be a Poisson process in $\mathbb{R}^{2}$ of intensity $nf$.  If $R \subseteq \mathbb{R}^{2}$ is a cube of side length $n^{-\frac{1}{2}}$ such that $f(x) \geq \gamma$ for each $x \in R$, then there is a another cube $R' \subseteq R$ such that the random variable $1\{X_{nf} \cap R' \neq \emptyset\}$ is $\mbox{Bernoulli}(p')$ with $p' = 1 - \exp(-\gamma)$.  \end{prop}  

\begin{proof}  Since $R$ is a cube, we can write $R = x_{0} + [-2^{-1}n^{1/2},2^{-1}n^{1/2}]^{2}$ for some $x_{0} \in \mathbb{R}^{2}$.  By hypothesis, we have $\int_{R} f(x) \, dx \geq \gamma n^{-1}$.  Thus, by the intermediate value theorem, there is a $t \in [0,1]$ such that $\int_{R(t)} f(x) \, dx = \gamma n^{-1}$, where $R(t) = x_{0} + [-t2^{-1} n^{-1/2},t2^{-1} n^{-1/2}]^{2}$.  We conclude by letting $R' = R(t)$ and invoking elementary properties of Poisson processes.\end{proof}  

\begin{figure}
	\input{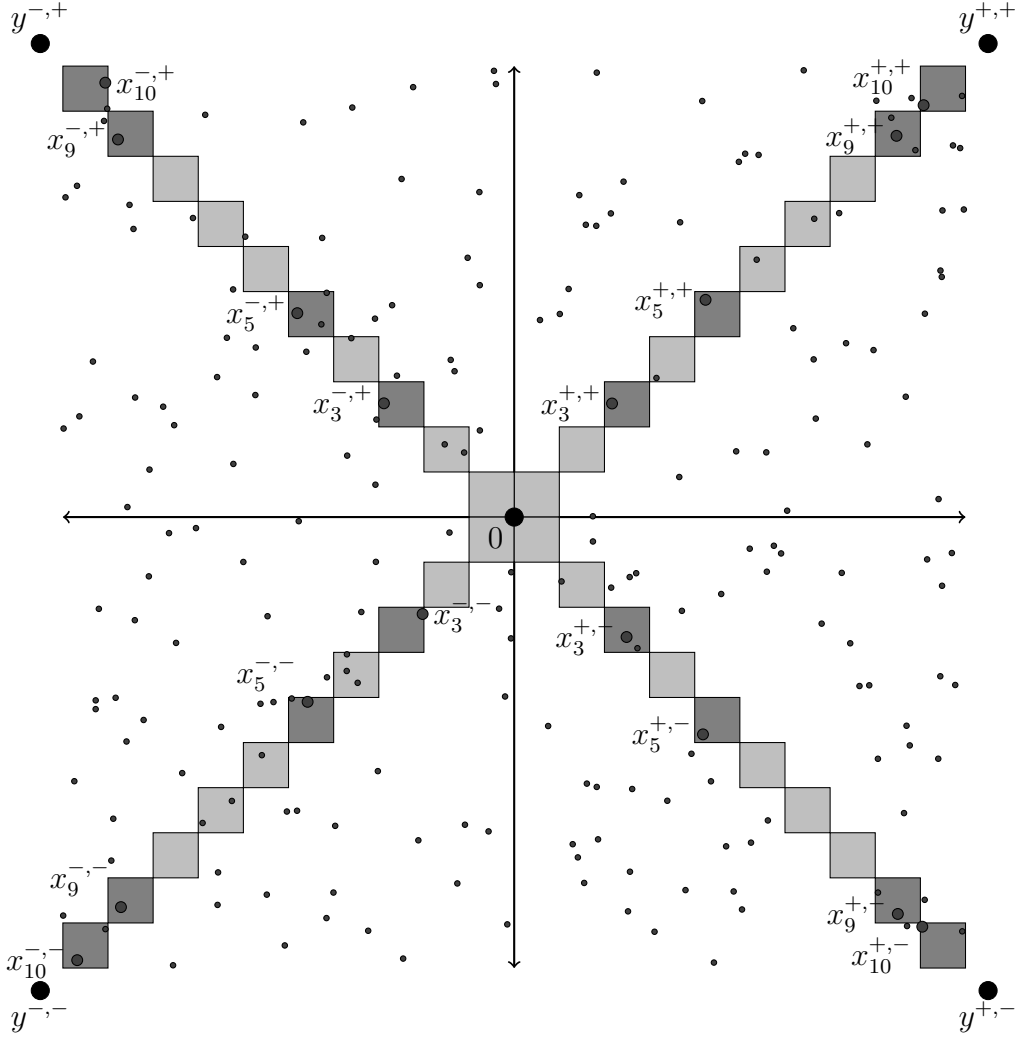}
	\caption{Construction in the proof of Lemma \ref{lemma:box-growth}.  To obtain a lower bound on $u_{n}(0) - \min(u_{n}(y^{+,+}),u_{n}(y,^{+,-}), u_{n}(y^{-,+}), u_{n}(y^{-,-}))$, we restrict attention to the points of $X_{nf}$ in the diagonal boxes (light gray), each of side length $n^{-\frac{1}{2}}$.  Dark gray boxes correspond to indices $\ell$ for which $A_{\ell} = 1$.  By the law of large numbers, the number of indices grows at rate $\sqrt{n}$, hence so does the desired lower bound.
	} \label{fig:box-growth}
\end{figure}

\begin{proof}[Proof of Lemma \ref{lemma:box-growth}]

		We split the proof into steps. We first identify an event of full probability
		which we then show leads to the desired lower bound. 
	
		Recall that since $Q_p = [0,\infty)^2$, by our convention that $w_p \times v_p > 0$, we have that $\pm \mathcal{Q}_p = \pm (-\infty,0] \times [0,\infty)$.

		{\it Step 1.} 
		Translate so that $x_0 = 0$ and let $\gamma = f(0)/2$. For $n \geq 1$, cover $[-a,a]^2$ by a disjoint grid of identical cubes of side length $n^{-1/2}$ where each such cube, $R_z$, is centered at a point $n^{-1/2} z$ for 
		$z \in \Z^2 \cap [-m,m]^2$ where $m = \lceil a \sqrt{n} \rceil$.
		
		For each $\ell \in \{1, \ldots, m\}$, let 
		\[
		A_{\ell} = \prod 1\{ X_{n f} \cap R_{(\pm \ell,\mp \ell)} \neq \emptyset \},
		\]
		denote the indicator of the event that four `corner' cubes contain a point from the Poisson process. By considering slightly smaller cubes $R'_{(\pm \ell, \mp \ell)} \subseteq R_{(\pm \ell, \mp \ell)}$ as in Proposition \ref{prop:LLN}, we observe that $A_{\ell} \geq A_{\ell}'$, where $\{A_{1}',\dots,A_{m}'\}$ are independent $\mbox{Bernoulli}(p)$ random variables where
		$p$ is independent of $n$: 
		\[
		p^{1/4} = P(\Poisson(\gamma) \geq 1) = 1 - \exp(- \gamma).
		\]
		Therefore, 
		\begin{equation} \label{eq:lower-bound}
		\Gamma_n = \sum_{\ell=1}^m A_{\ell}
		\end{equation}
		dominates a Binomial with mean $m p$. Thus, by, say, the strong law of large numbers, on an event, $\Omega_{x_0}$, of probability one, for all $n$ sufficiently large $\Gamma_n \geq m p/2$. 
		
		{\it Step 2.} 
		We next argue as in Lemma \ref{lemma:force-peel} to transfer the lower bound on $\Gamma_n$
		to the height functions. Let $\ell_1 \geq \cdots \geq \ell_k$ for $k = \Gamma_n$ be a sequence of indices with $A_{\ell_i} = 1$. Observe that by construction 
		\[
		R_{(\pm \ell_i, \pm \ell_i)} \subseteq x \mp \inte(Q_{p}) \quad \mbox{and} \quad R_{(\mp \ell_i, \pm \ell_i)} \subseteq x \mp \inte(\mathcal{Q}_{p})
		\]
		for all $x \in R_{(\pm \ell_i', \pm \ell_i')}$ with $\ell_i' > \ell_i$.
		Hence, since $A_{\ell_{i}} = 1$ for all $i$,
		there exists a list of quadruples of (random) points, $x^{\pm, \mp}_{\ell_i} \in R_{(\pm \ell_i, \mp \ell_i)} \cap X_{n}$
		which, in view of Lemma \ref{lemma:cone-reduction} and dynamic programming, satisfy
		\[
		\min_{\pm, \mp} (u_n(x^{\pm,\mp}_{\ell_{i}})) \geq \min_{\pm, \mp} (u_n(x^{\pm,\mp}_{\ell_{i + 1}})) + 1.
		\]
		 Therefore, by induction,
		 \[
		 u_n(0) \geq \min(u_n(y^{+,+}), u_{n}(y^{+,-}), u_{n}(y^{-,+}), u_{n}(y^{-,-})) + k,
		 \]
		 and
		 \[
		 k \geq mp/2 \geq  c a \sqrt{n} (1- \exp(-C f(0))^4 =: \rho(f(0)) a \sqrt{n}.
		 \]
		 
		{\it Step 3.} Conclude by observing $\Omega = \bigcap_{x_0 \in \Q^2} \Omega_{x_0}$, where $\Omega_{x_0}$ is as in the end of Step 1, has full probability. 		
\end{proof}

\subsection{Planar growth}  \label{sec:planar_growth} Next, we prove a planar growth lemma.  As will become clear shortly, this lower bound establishes that $Du^{\perp} \notin \partial \mathcal{Q}_{p}$ for any $p$, \ie, Lemma \ref{lemma:degenerate-directions-dual-1} holds.  Geometrically, this seems to explain why the level sets in the simulations appear to develop corners in certain directions (cf.\ Figures \ref{fig:pareto-peeling} and \ref{fig:non-convex-example}).

\begin{lemma} \label{lemma:unidirectional-growth}
	Suppose $Q_p = [0,\infty)^2$ for $p \in \mathcal{N}^*$. There is a function $\rho:(0,\infty) \to (0,\infty)$ so that on an event of probability 1, if  $x_{0} \in \Q^{2}$ and $a,b \in \Q \cap (0,\infty)$ are chosen so that 
	\[
	x_{0} + [-a,0] \times [-b,b] \subseteq \{f > f(x_{0})/2\},
	\]
	$y^{+,+}$ and $y^{+,-}$ are any two points in $\mathbb{R}^{2}$ satisfying
	\[
	y^{+,+} \in (x_0 + (0,b) + \inte(Q_p)) \quad \mbox{and} \quad y^{+,-} \in (x_0 + (0,-b) - \inte(\mathcal{Q}_p)),
	\]
	and
	\[
	s := \min \left\{ u_n(z) \, \mid \, z \in x_{0} + [-a,0] \times [-b,b] \right\},
	\]
	then for all $n$ sufficiently large, 
	\begin{equation} \label{eq:growth-estimate-result1}
	u_n(x_{0}) \geq \min \left\{u_n(y^{+,+}), u_n(y^{+,-}), s + \rho(f(x_0))  \sqrt{ a b n} \right\}.
	\end{equation}
\end{lemma}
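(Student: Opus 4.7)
The proof mirrors that of Lemma \ref{lemma:box-growth}, adapted to the anisotropic rectangle $R := x_0 + [-a, 0] \times [-b, b]$ and to the presence of only two external anchors. The new features are: (i) the cells in the staircase have dimensions $\alpha \times \beta$ chosen so that $a/\alpha = b/\beta$, producing $\Theta(\sqrt{abn})$ levels (rather than $\Theta(a\sqrt{n})$); and (ii) only two cells per level suffice, since $y^{+,+}$ and $y^{+,-}$ handle the upper-right and lower-right directions in the hypotheses of Lemma \ref{lemma:cone-reduction}(ii).

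After translating so that $x_0 = 0$, fix a small constant $c_1 > 0$, set $\alpha = c_1\sqrt{a/(bn)}$ and $\beta = c_1\sqrt{b/(an)}$ (so that $\alpha\beta = c_1^2/n$ and $K := a/\alpha = b/\beta = \sqrt{abn}/c_1$), and for each $i \in \{0, 1, \ldots, \lfloor K \rfloor - 1\}$ define
\begin{equation*}
U_i = [-(i+1)\alpha, -i\alpha] \times [i\beta, (i+1)\beta], \qquad L_i = [-(i+1)\alpha, -i\alpha] \times [-(i+1)\beta, -i\beta],
\end{equation*}
both lying in $R \subseteq \{f > f(x_0)/2\}$. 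The key geometric observation, verifiable by direct inspection, is that whenever $i' > i$ and $\zeta \in U_i \cup L_i \cup \{x_0\}$, the cells $U_{i'}$ and $L_{i'}$ lie strictly in $\zeta + \inte(\mathcal{Q}_p)$ and $\zeta - \inte(Q_p)$ respectively, while $y^{+,+}$ and $y^{+,-}$ lie strictly in $\zeta + \inte(Q_p)$ and $\zeta - \inte(\mathcal{Q}_p)$. Let $A_i$ be the indicator that $U_i \cap X_{nf}$ and $L_i \cap X_{nf}$ are both nonempty; the $A_i$'s are independent and dominate independent Bernoulli$(p)$ variables with $p := (1 - e^{-c_1^2 f(x_0)/2})^2 > 0$. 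The strong law of large numbers, applied on the full-probability event obtained by intersecting over $x_0 \in \Q^2$ and $a, b \in \Q \cap (0, \infty)$, gives $\Gamma_n := \sum_{i = 0}^{\lfloor K \rfloor - 1} A_i \geq Kp/2 \geq \rho(f(x_0))\sqrt{abn}$ for all sufficiently large $n$.

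Let $0 \leq i_1 < \cdots < i_{\Gamma_n} < \lfloor K \rfloor$ denote the good indices, and pick Poisson witnesses $u_j \in U_{i_j} \cap X_{nf}$ and $\ell_j \in L_{i_j} \cap X_{nf}$. Writing $M_j := \min(u_n(u_j), u_n(\ell_j))$ and $V := \min(u_n(y^{+,+}), u_n(y^{+,-}))$, Lemma \ref{lemma:cone-reduction}(ii) together with Proposition \ref{prop:dpp} yields
\begin{equation*}
M_j \geq \min(V, M_{j+1}) + 1 \quad \text{for } j < \Gamma_n, \qquad u_n(x_0) \geq \min(V, M_1) + 1.
\end{equation*}
Since $u_{\Gamma_n}, \ell_{\Gamma_n} \in R$ gives $M_{\Gamma_n} \geq s$, a downward induction on $j$ produces $u_n(x_0) \geq \min(V, s + \Gamma_n) \geq \min(u_n(y^{+,+}), u_n(y^{+,-}), s + \rho(f(x_0))\sqrt{abn})$. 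The main technical subtlety is that the externals $y^{+,+}$ and $y^{+,-}$ need not lie in $X_{nf}$, while Lemma \ref{lemma:cone-reduction}(ii) requires all four witnesses to be in the set being peeled; this is handled by replacing each external with a nearby Poisson point having the same or higher $u_n$ value, which exists almost surely because the level sets of $u_n$ are open and the Poisson process is dense. A secondary point is the anisotropic cell geometry: balancing $a/\alpha = b/\beta$ is precisely what upgrades the $a\sqrt{n}$ rate of Lemma \ref{lemma:box-growth} to the sharper $\sqrt{abn}$ rate needed here.
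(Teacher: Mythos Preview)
Your argument is correct and follows the same route as the paper: a two-cells-per-level staircase in $[-a,0]\times[-b,b]$, a law-of-large-numbers count of good levels, and an iteration via Lemma~\ref{lemma:cone-reduction} together with the DPP. The paper first applies the area-preserving map $T(v_1,v_2)=(\sqrt{b/a}\,v_1,\sqrt{a/b}\,v_2)$, which sends $[-a,0]\times[-b,b]$ to the square $[-\sqrt{ab},0]\times[-\sqrt{ab},\sqrt{ab}]$, and then uses cells of side $n^{-1/2}$ exactly as in Lemma~\ref{lemma:box-growth}; your anisotropic $\alpha\times\beta$ cells are precisely the $T^{-1}$-images of those, so the two presentations are equivalent.

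Two corrections. First, the recursion should read
\[
M_j \;\geq\; \min\bigl(V,\;M_{j+1}+1\bigr),\qquad u_n(x_0)\;\geq\;\min\bigl(V,\;M_1+1\bigr),
\]
with the ``$+1$'' inside the minimum and attached only to the Poisson witnesses $u_{j+1},\ell_{j+1}$: the externals $y^{+,\pm}$ contribute $u_n(y^{+,\pm})$, not $u_n(y^{+,\pm})+1$, to the DPP lower bound. Your displayed form $M_j\geq\min(V,M_{j+1})+1$ is false when $V<M_{j+1}$. The corrected inequality still yields $u_n(x_0)\geq\min(V,\,s+\Gamma_n)$ by the same induction, so the conclusion is unaffected; this is exactly the form the paper writes.

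Second, the ``technical subtlety'' you flag is a non-issue, and your proposed fix is unnecessary and does not work as stated (for fixed $n$ the cloud $X_{nf}$ is finite, so no ``nearby Poisson point'' need exist). The geometric content of the proof of Lemma~\ref{lemma:cone-reduction}---that one of the four witnesses lies in $x+\inte(Q)$ for every cone or halfspace $Q$ appearing in Proposition~\ref{prop:dpp}---requires nothing about membership in $X_{nf}$. Feeding this directly into the DPP gives $u_n(x)\geq\min_w\bigl(u_n(w)+1_{X_{nf}}(w)\bigr)$ over the four witnesses, and dropping the indicator on the externals produces exactly the corrected recursion above.
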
  

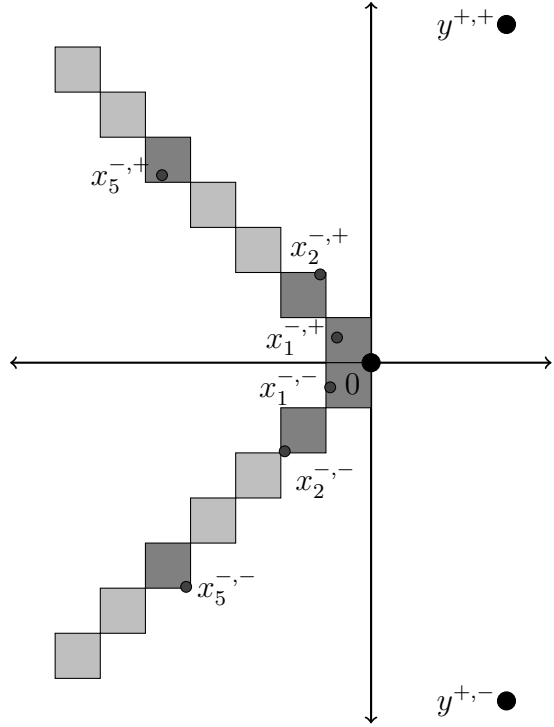
\begin{figure}
	\begin{tikzpicture}[scale=1.2]


\draw[thick,->] (0,0) -- (0,4);
\draw[thick,->] (0,0) -- (0,-4); 
\draw[thick,->] (0,0) -- (-4,0);
\draw[thick,->] (0,0) -- (2,0);

\filldraw[fill = gray] (-0.5,0) rectangle (0,0.5);
\filldraw[fill = gray] (-0.5,-0.5) rectangle (0,0);

\filldraw[fill = gray] (-1,0.5) rectangle (-0.5,1);
\filldraw[fill = gray] (-1,-1) rectangle (-0.5,-0.5); 

\filldraw[fill = lightgray] (-1.5,1) rectangle (-1,1.5);
\filldraw[fill = lightgray] (-1.5,-1.5) rectangle (-1,-1);

\filldraw[fill = lightgray] (-2,1.5) rectangle (-1.5,2);
\filldraw[fill = lightgray] (-2,-2) rectangle (-1.5,-1.5);

\filldraw[fill = gray] (-2.5,2) rectangle (-2,2.5); 
\filldraw[fill = gray] (-2.5,-2.5) rectangle (-2,-2);

\filldraw[fill = lightgray] (-3,2.5) rectangle (-2.5,3);
\filldraw[fill = lightgray] (-3,-3) rectangle (-2.5,-2.5);

\filldraw[fill = lightgray] (-3.5,3) rectangle (-3,3.5); 
\filldraw[fill = lightgray] (-3.5,-3.5) rectangle (-3,-3);


\draw[anchor=east] (-0.3774,0.2792) node {$x_{1}^{-,+}$};
\filldraw[fill = darkgray] (-0.3774,0.2792) circle (0.06);
\draw[anchor=east] (-0.4529,-0.2734) node {$x_{1}^{-,-}$};
\filldraw[fill = darkgray] (-0.4529,-0.2734) circle (0.06);

\draw[anchor=south] (-0.5635,0.9788) node {$x_{2}^{-,+}$};
\filldraw[fill = darkgray] (-0.5635,0.9788) circle (0.06);
\draw[anchor=north west] (-0.9567,-0.9824) node {$x_{2}^{-,-}$};
\filldraw[fill = darkgray] (-0.9567,-0.9824) circle (0.06);

\draw[anchor=east] (-2.3162,2.0788) node {$x_{5}^{-,+}$};
\filldraw[fill = darkgray] (-2.3162,2.0788) circle (0.06);

\draw[anchor= west] (-2.0488,-2.4853) node {$x_{5}^{-,-}$};
\filldraw[fill = darkgray] (-2.0488,-2.4853) circle (0.06);


\draw[anchor=north east] (0,0) node {$0$};
\filldraw[fill=black] (0,0) circle (0.1);

\draw[anchor=east] (1.5,3.75) node {$y^{+,+}$};
\filldraw[fill=black] (1.5,3.75) circle (0.1);

\draw[anchor=east] (1.5,-3.75) node {$y^{+,-}$};
\filldraw[fill=black] (1.5,-3.75) circle (0.1);

\end{tikzpicture}
	\caption{Construction in the proof of Lemma \ref{lemma:unidirectional-growth}.
	} \label{fig:unidirectional-growth}
\end{figure}

\begin{proof}
Define the transformation $T: \R^2 \to \R^2$ by 
\[
T(v) = \sqrt{\frac{b}{a}} v_{1}
+ \sqrt{\frac{a}{b}} v_2
\]	
and observe that $T$ maps $[-a,0] \times [b,b]$ to $[-\sqrt{a b}, 0] \times [-\sqrt{a b}, \sqrt{ a b}]$ and $\det(T) = 1$. After making this transformation, the rest of the proof is almost identical to that of Lemma \ref{lemma:box-growth}, the only change is that the growth bound is in one direction. Thus, we only sketch it. 

{\it Step 1.} Set $d = \sqrt{a b}$, translate so that $x_0 = 0$, and let $\gamma = f(0)/2$. For $n \geq 1$, cover $[-d,0] \times [-d,d]$ by a disjoint grid of identical cubes of side length $n^{-1/2}$ where each, $R_z$, is centered at a point $n^{-1/2} z$ for $z \in \Z^2 \cap ([-m,0] \times [-m, m])$ where $m = \lceil d \sqrt{n} \rceil$.

For each $\ell \in \{1, \ldots, m\}$, let 
\[
A_{\ell} = \prod 1\{ X_{n f} \cap R_{(-\ell,\pm \ell)} \neq \emptyset \},
\]
denote the indicator of the event that two corners contain a point from the Poisson process. By the domination argument in Lemma \ref{lemma:box-growth},
on an event, $\Omega_{x_0}$, of probability one, for all $n$ sufficiently large, $\Gamma_n \geq \rho(f(x_0)) \sqrt{ a b n}$. 

{\it Step 2.} 
Following the argument in Lemma \ref{lemma:box-growth}, there exists a sequence of indices 
$\ell_1 \geq \cdots \geq \ell_{\Gamma_n}$ 
and $x^{-, \pm}_{\ell_i} \in R_{(-\ell_i, \pm \ell_i)} \cap X_{n}$
which satisfy, by dynamic programming, 
\[
\min_{\pm} (u_n(x^{-,\pm}_{\ell_{i}})) \geq \min(u_n(y^{+,+}), u_n(y^{+,-}), \min_{\pm} (u_n(x^{-,\pm}_{\ell_{i + 1}})) + 1).
\]
We iterate to conclude. 
\end{proof}

\subsection{Proof of Lemma \ref{lemma:degenerate-directions-dual-1}}
By an affine transformation, using Corollary \ref{cor:affine_invariance}, we may assume $Q_p = [0, \infty)^2$ so that $\mathcal{Q}_p = (-\infty,0] \times [0,\infty)$ and 
$\psi_{x_{1}}(x_{0}) \psi_{x_{2}}(x_{0}) \geq 0$.  Up to another affine transformation, we can assume that $\min(\psi_{x_{1}}(x_{0}),\psi_{x_{2}}(x_{0})) \geq 0$.

In view of the changes of coordinates we just employed, it remains to show that 
	\begin{equation*}
		\psi_{x_{1}}(x_{0}) \psi_{x_{2}}(x_{0}) > 0
	\end{equation*} 
or, equivalently, that $\min(\psi_{x_{1}}(x_{0}),\psi_{x_{2}}(x_{0})) > 0$.  By symmetry, we only need to prove that $\psi_{x_{2}}(x_{0}) \neq 0$.   

Suppose, for sake of contradiction, that $\psi_{x_2}(x_0) = 0$.  Let $t = u_*(x_0) = \psi(x_0)$.

Since $D \psi(x_0) \not = 0$ by assumption, we must have $\psi_{x_1}(x_0) > 0$. This means, by Taylor approximation, there are positive constants $\epsilon_0 $ and $c$ such that 
\[
\psi(x) \geq t -  c \epsilon^2 \quad \mbox{for $x \in x_0 \pm 2 R_{\epsilon} $}
\]
and $\epsilon \in (0, \epsilon_0)$ where $R_\epsilon = [-\epsilon^2, 0] \times [-\epsilon,\epsilon]$.  Given such $\epsilon$, pick rational points $\{x_i^{\pm}\} \in (x_0 +  \{ R_{\epsilon} \cup -R_{\epsilon}\})$ such that 
\[
x_1^{\pm} \in \bigcap_{x_0' \in B(x_0, \delta)} (x_0' \pm \inte(Q_p)) \quad \mbox{and} \quad x_2^{\pm} \in \bigcap_{x_0' \in B(x_0, \delta)} (x_0' \pm \inte(\mathcal{Q}_p)),
\]
where $\delta =  \min(\epsilon^2/4, \epsilon/4)$.
Also, let 
\[
y_1^+ = x_0 + (2 \epsilon^2, 0) + (0, 4 \epsilon) \quad \mbox{and} \quad y_2^+ = x_0 + (2 \epsilon^2,0) - (0,4 \epsilon) 
\]
and observe that $y_1^+ \in (x_i^{\pm} + (0,\epsilon) + \inte(Q_p))$ and $y_2^+ \in (x_i^{\pm} + (0,-\epsilon) - \inte(\mathcal{Q}_p))$ for $i = 1, 2$.
Furthermore, making $\epsilon_{0}$ smaller if necessary, we have
\[
\min(\psi(y_1^+), \psi(y_2^+)) \geq t + c \epsilon^{2}.
\]
Since $\psi$ touches $u_{*}$ below, we deduce that for all $n$ sufficiently large
\[
\begin{aligned}
\min(u_{n}(y_1^+), u_n(y_2^+)) &\geq (t + c \epsilon^{2})n^{1/2}, \\
u_{n}(x) \geq (t - c \epsilon^{2})n^{1/2} &\quad \mbox{for $x \in \{x_0 \pm 2 R_{\epsilon} \}$}.
\end{aligned}
\]
Therefore, by Lemma \ref{lemma:unidirectional-growth} and Lemma \ref{lemma:cone-reduction}, again taking $\epsilon_0$ smaller if necessary so that $x_0 \pm 2 R_{\epsilon} \subseteq \{f > f(x_0)/2\}$,  
\[
\min_{x_0' \in B(x_0, \delta)} u_n(x_0') \geq \min \left[ (t - c \epsilon^{2}) n^{1/2} + (C  \epsilon^{3/2} \rho(f(x_0)) ) n^{1/2},  (t + c \epsilon^{2}) n^{1/2}\right].
\]
Sending $n \to \infty$, this implies,
\[
t = u_*(x_0) \geq \min(t - c \epsilon^{2} + C  \epsilon^{3/2} \rho(f(x_0)) ,  t + c \epsilon^{2}),
\]
a contradiction for $\epsilon$ sufficiently small as $\rho(f(x_0)) > 0$. 

\subsection{Proof of Lemma \ref{lemma:degenerate-directions-nonzero}}
The argument is similar to the proof of Lemma \ref{lemma:degenerate-directions-dual-1}
so we only sketch it. 

We argue by contradiction. Again, by an affine transformation, using Corollary \ref{cor:affine_invariance}, we may assume $Q_p = [0, \infty)^2$ so that $\mathcal{Q}_p = [0,\infty) \times (-\infty,0]$ and 
$\psi_{x_1}(x_0) = \psi_{x_2}(x_0)  = 0$.   Let $t = u_*(x_0) = \psi(x_0)$. By Taylor approximation, there are positive constants $\epsilon_0 $ and $c$ such that 
\[
\psi(x) \geq t -  c \epsilon^2 \quad \mbox{for $x \in \{x_0 + R_{\epsilon} \}$}
\]
and $\epsilon \in (0, \epsilon_0)$ where $R_\epsilon = [-\epsilon,\epsilon]^2$. Given such $\epsilon$, let
\[
y_1^{\pm} = x_0 \pm (2 \epsilon, 2 \epsilon)  \quad \mbox{and} \quad y_2^{\pm} = x_0 \pm (2 \epsilon, -2 \epsilon) 
\]
and observe that 
\[
y_1^{\pm} \in ( (x_0 \pm (\epsilon,\epsilon)) \pm \inte(Q_p)) \quad \mbox{and} \quad y_2^{\pm} \in ( (x_0 \pm (-\epsilon,\epsilon)) \pm \inte(\mathcal{Q}_p)).
\]

Making $\epsilon$ sufficiently small, exactly as in the proof of Lemma \ref{lemma:degenerate-directions-dual-1}, we deduce that for all $n$ large
\[
u_{n}(x) \geq (t - c \epsilon^{2})n^{1/2} \quad  \mbox{for $x \in x_0 + R_{\epsilon} $},
\]
and hence by Lemma \ref{lemma:box-growth},
\[
\min_{x_0' \in B(x_0, c\epsilon)} u_n(x_0') \geq  (t - c \epsilon^{2}  + C \epsilon \rho( f(x_0) )) n^{1/2},
\]
which leads to a contradiction as in the last proof.

\subsection{Proof of Lemma \ref{lemma:degenerate-directions-dual-2}} We now prove Lemma \ref{lemma:degenerate-directions-dual-2}.  The proof follows a similar strategy to the one employed in Lemma \ref{lemma:degenerate-directions-dual-1}, except the growth lemmas require some additional geometric reasoning.  The difference can be explained by the fact that whereas in Lemma \ref{lemma:degenerate-directions-dual-1}, the tangent vector $D\psi(x_{0})^{\perp}$ belongs to one of the flat cones appearing in the dynamic programming principle, in the present scenario, some work is needed to relate the tangent vector to those cones.

More precisely, to prove Lemma \ref{lemma:degenerate-directions-dual-2}, we argue by contradiction.  Hence we are interested in the case when the gradient of the test function $D\psi(x_{0})$ at the contact point $x_{0}$ is such that
	\begin{equation*}
		\sup_{p \in \mathcal{N}^{*}} \langle D\psi(x_{0}),v_{p} \rangle \langle D\psi(x_{0}), w_{p} \rangle < 0.
	\end{equation*}
In view of Propositions \ref{prop:key-duality} and \ref{prop:noncoercive}, this is equivalent to assuming that
	\begin{equation} \label{eq:tilde-E}
		D\psi(x_{0})^{\perp} \in \cone(\tilde{\mathcal{E}}), \quad \text{where} \quad \tilde{\mathcal{E}} := \mathcal{E} \setminus \bigcup_{p \in \mathcal{N}^{*}} \mathcal{Q}_{p}.
	\end{equation}

\begin{remark} It is worth pointing out at this stage that the set $\tilde{\mathcal{E}}$ is empty if $\varphi$ is polyhedral (see Definition \ref{def:norms}).  Thus, Lemma \ref{lemma:degenerate-directions-dual-2} is vacuously true for polyhedral norms $\varphi$, such as the $\ell^{1}$ norm and the $\ell^{\infty}$ norm, and, therefore, the proof of Theorem \ref{theorem:pareto-convergence} for such norms is already complete at this stage of the paper.  The arguments that remain are the most technical part of the paper, their difficulty stemming from the fact that, in general, the set $\tilde{\mathcal{E}}$ may be very rough (\eg, a Cantor set).   \end{remark}

	To begin, we fix a subset $\mathcal{D} \subseteq \tilde{\mathcal{E}}$ such that
		\begin{equation} \label{eq:dense-set-approximation}
			\mathcal{D} \, \, \text{is a countable, dense subset of} \, \, \tilde{\mathcal{E}}.
		\end{equation}
	Such a set necessarily exists since $\mathbb{R}^{2}$ is a separable metric space.  Since $\tilde{\mathcal{E}}$ can be uncountable in general, to avoid measurability issues we use $\mathcal{D}$ as a countable approximation of $\tilde{\mathcal{E}}$.
			
Like Lemma \ref{lemma:degenerate-directions-dual-1}, the proof of Lemma \ref{lemma:degenerate-directions-dual-2} follows from a growth lemma.  In order to streamline the exposition, we state the main consequence of the growth lemma as a separate result, which is stated next.  In what follows, given a $v \in \mathbb{R}^{2}$, we define cones
\begin{equation} \label{eq:plus-minus-cones}
	\begin{aligned}
		Q_{+,+}(v) &= \{v' \in \mathbb{R}^{2} \, \mid \, \langle v', v \rangle \geq 0, \, \, \langle v', v^{\perp} \rangle \geq 0\}, \\
		Q_{-,+}(v) &= \{v' \in \mathbb{R}^{2} \, \mid \, \langle v', v \rangle \leq 0, \, \, \langle v', v^{\perp} \rangle \geq 0\}, \\
		Q_{+,-}(v) &= \{v' \in \mathbb{R}^{2} \, \mid \, \langle v', v \rangle \geq 0, \, \, \langle v', v^{\perp} \rangle \leq 0\}, \\
		Q_{-,-}(v) &= \{v' \in \mathbb{R}^{2} \, \mid \, \langle v', v \rangle \leq 0, \, \, \langle v', v^{\perp} \rangle \leq 0\}, \\
		A_{\epsilon}(v) &= \left\{v' \in \mathbb{R}^{2} \, \mid \, |\langle v', v^{\perp} \rangle| < \epsilon |\langle v', v \rangle| \right\},
	\end{aligned}
	\end{equation}
and, for $a,b > 0$, we define the rectangle
	\begin{equation*}
		D^{v}_{-}[a,b] = \{v' \in \mathbb{R}^{2} \, \mid \, -a \leq \|v^{\perp}\|^{-1} \langle v', v^{\perp} \rangle \leq 0, \, \, \|v\|^{-1} |\langle v', v \rangle| \leq b\}.
	\end{equation*}

	\begin{lemma} \label{lemma:reduced-planar-growth} There are functions $\zeta : \tilde{\mathcal{E}} \to (0,\infty)$ and $\rho : (0,\infty) \to (0,\infty)$ such that, on an event of probability one, if for some $q \in \tilde{\mathcal{E}}$, $x^{-,+},x_{0},x^{+,+} \in U$, and $a,b > 0$, we have
		\begin{gather*}
			x_{0} + D^{q}_{-}[a,b] \subseteq \{f > f(x_{0})/2\}, \\
			 x_{0} + D^{q}_{-}[a,b] \subseteq x^{+,+} - A_{\zeta(q)}(q) \cap \inte(Q_{+,+}(q)), \\
			 x_{0} + D^{q}_{-}[a,b] \subseteq x^{-,+} - A_{\zeta(q)}(q) \cap \inte(Q_{-,+}(q)),
		\end{gather*}
	and 
		\begin{equation*}
			s := \min \left\{ u_{*}(x) \, \mid \, x \in x_{0} + D^{q}_{-}[a,b]\right\},
		\end{equation*}
	then 
		\begin{equation*}
			u_{*}(x_{0}) \geq \min \left\{ u_{*}(x^{-,+}), u_{*}(x^{+,+}), s + \rho(f(x_{0})) \sqrt{2ab} \right\}.
		\end{equation*}\end{lemma}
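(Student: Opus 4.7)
I would prove this in close analogy with the proof of Lemma \ref{lemma:unidirectional-growth}, tiling $x_0 + D^q_-[a,b]$ with $n^{-1/2}$-cubes arranged along a ``V-shaped stairway'' in the $(q, q^\perp)$-frame. To preserve measurability, first reduce to $q$ ranging over the countable dense subset $\mathcal{D} \subseteq \tilde{\mathcal{E}}$ from \eqref{eq:dense-set-approximation} and $(x_0, a, b)$ rational; the general case follows by continuity, since the hypotheses are open conditions on the cones $A_{\zeta(q)}(q) \cap Q_{\pm,+}(q)$ and the conclusion is stable under small perturbations.

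For fixed $q \in \mathcal{D}$ and rational parameters, begin with a preliminary geometric step: choose $p^* \in \mathcal{N}^*$ so that the extreme rays $v_{p^*}, w_{p^*}$ of $Q_{p^*}$ are bounded away from $\pm q^\perp$. Such a $p^*$ exists because, as in Proposition \ref{prop:hj-finite-set}, only finitely many cones $\mathcal{Q}_p$ have openings $|v_p \times w_p|$ above any fixed threshold, so for each $q \in \tilde{\mathcal{E}}$ (which is not contained in any $\mathcal{Q}_p$) one can extract a quantitative angular gap between $q^\perp$ and the closest ``nondegenerate'' flat cone. Define $\zeta(q) > 0$ smaller than this gap. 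By affine invariance (Corollary \ref{cor:affine_invariance}) transform coordinates so that $Q_{p^*} = [0,\infty)^2$; this turns the Poisson intensity into $n|v_{p^*} \times w_{p^*}|\,f$ and sends $x_0 + D^q_-[a,b]$ to a parallelogram of area proportional to $2ab$, while the anchors $x^{\pm,+}$ remain in narrow wedges around the $v_{p^*}$- and $w_{p^*}$-axes.

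Now cover the (transformed) parallelogram by a disjoint grid of $n^{-1/2}$-cubes, and group them in a V-shaped stairway $\{(R^+_\ell, R^-_\ell)\}_{\ell=1}^m$ with $m := \lceil \sqrt{2abn}\rceil$, where $R^\pm_\ell$ sit at positions $(\pm \ell b/m,\, -\ell a/m)$ relative to $x_0$ in the transformed frame. Set $A_\ell := 1\{X_{nf} \cap R^+_\ell \ne \emptyset \text{ and } X_{nf} \cap R^-_\ell \ne \emptyset\}$. Exactly as in Step 1 of the proof of Lemma \ref{lemma:box-growth} and Proposition \ref{prop:LLN}, the $A_\ell$ dominate an i.i.d.\ Bernoulli sequence with success probability bounded below by a constant depending only on $f(x_0)$, so $\Gamma_n := \sum_\ell A_\ell \geq \rho(f(x_0)) \sqrt{2abn}$ for all large $n$ on a full-measure event. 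Pick witnesses $z^\pm_\ell \in R^\pm_\ell \cap X_{nf}$. The calibration of $\zeta(q)$ guarantees that the four-point condition of Lemma \ref{lemma:cone-reduction}(ii), applied to $p = p^*$, is satisfied at each rung with the anchors $x^{\pm,+}$ supplying the $y_2^\pm$ cone points (via the narrowness $A_{\zeta(q)}(q)$) and the lower pair $(z^+_{\ell-1}, z^-_{\ell-1})$ supplying the $y_1^\pm$ cone points (via the stairway geometry). The dynamic programming principle then forces
\[
\min_{\pm} u_n(z^\pm_\ell) \geq \min\!\left(u_n(x^{+,+}),\, u_n(x^{-,+}),\, \min_{\pm} u_n(z^\pm_{\ell-1}) + 1\right).
\]
Iterating up the stairway gives $u_n(x_0) \geq \min(u_n(x^{+,+}), u_n(x^{-,+}), \min_{X_{nf} \cap (x_0 + D^q_-[a,b])} u_n + \Gamma_n)$, and dividing by $\sqrt n$ and passing to the half-relaxed liminf concludes the proof.

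\textbf{Main obstacle.} The delicate point is the preliminary choice of $p^*$ and $\zeta(q)$: one must verify that a single flat cone $Q_{p^*}$ can simultaneously accommodate the anchors (as $y_2^\pm$) and the stair-step pair (as $y_1^\pm$) in Lemma \ref{lemma:cone-reduction}(ii), uniformly in the rung $\ell$. When $\tilde{\mathcal{E}}$ is rough and $q$ is surrounded by many small flat cones, this compatibility is tight and requires genuine use of the positivity of the angular gap between $q^\perp$ and the nearest nondegenerate $\mathcal{Q}_p$ --- a quantitative refinement of Proposition \ref{prop:hj-finite-set}, and the principal new geometric content beyond Lemmas \ref{lemma:box-growth} and \ref{lemma:unidirectional-growth}.
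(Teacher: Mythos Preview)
Your overall architecture (reduce to a countable $\mathcal{D}$, tile $D^q_-[a,b]$ with $n^{-1/2}$-boxes along a V-shape, iterate the dynamic programming principle rung by rung) matches the paper's approach in Lemma \ref{lemma:special-planar-growth}. The gap is in the crucial inclusion step: you propose to apply Lemma \ref{lemma:cone-reduction}(ii) with a \emph{single} flat cone $Q_{p^*}$, with the anchors $x^{\pm,+}$ supplying the $y_2^{\pm}$ points, i.e.\ $x^{\pm,+}-z_\ell\in\pm\inte(\mathcal{Q}_{p^*})$. But by hypothesis $x^{\pm,+}-z_\ell$ lies in the narrow wedge $A_{\zeta(q)}(q)\cap\inte(Q_{\pm,+}(q))$, hence is approximately a positive (resp.\ negative) multiple of $q$. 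The defining property of $q\in\tilde{\mathcal{E}}$ is precisely that $q\notin\mathcal{Q}_p$ for \emph{any} $p\in\mathcal{N}^*$; so no choice of $p^*$ can put these anchors in $\pm\inte(\mathcal{Q}_{p^*})$. (Your claim that the transformed anchors sit ``in narrow wedges around the $v_{p^*}$- and $w_{p^*}$-axes'' is also off: since $q\in\tilde{\mathcal{E}}$, we have $q\in\inte(Q_{p^*})\cup\inte(-Q_{p^*})$ for every $p^*$, so $q$ is strictly interior, not near $\partial Q_{p^*}$.) Swapping roles does not help either: the anchors can serve as $y_1^{\pm}$, but then the stair points must serve as $y_2^{\pm}$, and both next-rung stair points lie in the half-plane $\{\langle\,\cdot\,,q^{\perp}\rangle<0\}$, so neither can land in $+\inte(\mathcal{Q}_{p^*})$ when $q^{\perp}\in\mathcal{Q}_{p^*}$.

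The paper resolves this by abandoning the single-cone reduction and instead proving Proposition \ref{prop:special-pareto}: for $q\in\tilde{\mathcal{E}}$ there exist $\zeta(q),\bar R(q)>0$ such that whenever four points lie in $A_{\zeta(q)}(q)\cap\inte(Q_{+,+}(q'))$, $A_{\zeta(q)}(q)\cap\inte(Q_{-,+}(q'))$, $\inte(Q_{+,-}(q'))$, $\inte(Q_{-,-}(q'))$ for some $q'\in B(q,\bar R(q))\cap\tilde{\mathcal E}$, the origin is in the interior of their Pareto hull. The proof checks the cone condition \emph{for every} $p\in\mathcal{N}^*$ by a dichotomy: if $\{p,-p\}\cap N(q')=\emptyset$ then $\mathcal{Q}_p$ sits inside one of the four quadrants $Q_{\bullet}(q')$ (Proposition \ref{P: basic generality}), so one of the four points already lies in $\inte(Q_p)$; if $p\in N(q')$, the narrowness $A_{\zeta(q)}(q)$ together with Proposition \ref{prop:epsilon-choice} and Proposition \ref{P: convexity thing} forces $x^{+,+}\in\inte(Q_p)$ and $x^{-,+}\in\inte(Q_{-p})$. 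This case analysis is the new geometric content, and it genuinely cannot be shortcut by a single application of Lemma \ref{lemma:cone-reduction}.
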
  
		
For the reader's convenience, here is how to deduce Lemma \ref{lemma:degenerate-directions-dual-2} from Lemma \ref{lemma:reduced-planar-growth}:

\begin{proof}[Proof of Lemma \ref{lemma:degenerate-directions-dual-2}]    We argue by contradiction, \ie, we assume that 
	\begin{equation*}
		\sup_{p \in \mathcal{N}^{*}} \langle D\psi(x_{0}), v_{p} \rangle \langle D\psi(x_{0}), w_{p} \rangle < 0.
	\end{equation*}
Thus, by Propositions \ref{prop:key-duality} and \ref{prop:noncoercive}, 
	\begin{equation} \label{eq:gradient-part}
		D\psi(x_{0})^{\perp} \in \cone(\tilde{\mathcal{E}}),
	\end{equation}
where $\tilde{\mathcal{E}}$ is the set defined by \eqref{eq:tilde-E}.  

Let $q = - \varphi(D\psi(x_{0})^{\perp})^{-1} D\psi(x_{0})^{\perp}$.  By \eqref{eq:gradient-part}, $q \in \tilde{\mathcal{E}}$ holds.  Observe that, by continuity, there is a small $\mu \in (0,1)$ such that
	\begin{equation*}
		D^{q}[\mu,\mu] \subseteq 2 \|q\|^{-1}\left( q + \frac{\zeta(q)}{2} q^{\perp} \right) - A_{\zeta(q)}(q) \cap \inte(Q_{+,+}(q)).
	\end{equation*}
In particular, this implies that, for any $\epsilon > 0$, we have
	\begin{equation} \label{E: tedious geometric thing 1}
		D^{q}[\mu \epsilon, \mu \epsilon] \subseteq 2\epsilon \|q\|^{-1} \left( q + \frac{\zeta(q)}{2} q^{\perp}\right) - A_{\zeta(q)}(q) \cap \inte(Q_{+,+}(q))
	\end{equation}
and then, by symmetry,
	\begin{equation} \label{E: tedious geometric thing 2}
		D^{q}[\mu\epsilon,\mu\epsilon] \subseteq 2 \epsilon \|q\|^{-1} \left( -q + \frac{\zeta(q)}{2} q^{\perp} \right) - A_{\zeta(q)}(q) \cap \inte(Q_{-,+}(q)).
	\end{equation}
	
Let $\epsilon > 0$.  If we define $x^{+,+}$ and $x^{-,+}$ by
	\begin{align*}
		x^{+,+} = x_{0} + 2\epsilon \|q\|^{-1} \left( q + \frac{\zeta(q)}{2} q^{\perp} \right), \quad x^{-,+} = x_{0} + 2\epsilon \|q\|^{-1} \left( -q + \frac{\zeta(q)}{2} q^{\perp} \right),
	\end{align*}
then we have
	\begin{align*}
		u_{*}(x^{\pm,+}) &\geq \psi(x^{\pm,+}) \geq u_{*}(x_{0}) + \epsilon \zeta(q) \|D\psi(x_{0})\| - C \epsilon^{2} 
	\end{align*}
and
	\begin{align*}
		\min\left\{ u_{*}(x) \, \mid \, x \in x_{0} + D^{q}_{-}[\epsilon^{2},\mu\epsilon] \right\} &\geq \min \left\{ \psi(x) \, \mid \, x \in x_{0} + D^{q}_{-}[\epsilon^{2},\mu \epsilon] \right\} \\
			&\geq u_{*}(x_{0}) - \epsilon^{2} \|D\psi(x_{0})\| - C \epsilon^{2}
	\end{align*}
for some $C > 0$ depending only on $\varphi$ and $\mu$.  By \eqref{E: tedious geometric thing 1} and \eqref{E: tedious geometric thing 2}, we can apply Lemma \ref{lemma:reduced-planar-growth} (with $a = \epsilon^{2}$ and $b = \mu \epsilon$) provided that $\epsilon < \mu$.  Applying the lemma leads us to deduce that
	\begin{align*}
		u_{*}(x_{0}) &\geq \min \biggl\{ u_{*}(x_{0}) + \epsilon \zeta(q) \|D\psi(x_{0})\| - C \epsilon^{2}, \\
		&\qquad \qquad u_{*}(x_{0}) - \epsilon^{2} \|D\psi(x_{0})\| - C \epsilon^{2} + \sqrt{2\mu} \rho(f(x_{0})) \epsilon^{\frac{3}{2}} \biggr\}.
	\end{align*}
As before, this inequality leads to an absurd conclusion in the limit $\epsilon \to 0^{+}$. \end{proof}

The question now is simply how to prove Lemma \ref{lemma:reduced-planar-growth}.  Much of the challenge results from the fact that $\tilde{\mathcal{E}}$ can be uncountable, hence, to avoid measurability issues, we need to be careful to develop constructions that treat multiple directions simultaneously.  Toward that end, we will use an intermediate approximation result.  Before stating it, we need to state a geometric fact used in the proof.

	\begin{prop} \label{prop:special-pareto} There are functions $\zeta,\bar{R} : \tilde{\mathcal{E}} \to (0,\infty)$ with the following property: for any $q \in \tilde{\mathcal{E}}$, if $q' \in B(q,\bar{R}(q)) \cap \tilde{\mathcal{E}}$ and $x^{+,+}, x^{-,+}, x^{+,-}, x^{-,-} \in \mathbb{R}^{2}$ are such that
		\begin{gather}
			x^{+,+} \in A_{\zeta(q)}(q) \cap \inte(Q_{+,+}(q')), \quad x^{-,+} \in A_{\zeta(q)}(q) \cap \inte(Q_{-,+}(q')), \label{E: special-pareto-1}\\
			x^{+,-} \in \inte(Q_{+,-}(q')), \quad x^{-,-} \in \inte(Q_{-,-}(q')), \label{E: special-pareto-2}
		\end{gather}
	then
		\begin{equation*}
			0 \in \inte(\mathcal{P}(\{x^{+,+},x^{-,+},x^{+,-},x^{-,-}\})).
		\end{equation*}
	\end{prop}  	
	
\begin{figure}
	\begin{tikzpicture}[scale=1.9]

\draw[color=blue!80, fill=blue!10] (0,0) -- (1.9696,-0.3473) arc [start angle = -10, end angle = 80, radius = 2] -- (0,0);

\draw[color=blue!80, fill=blue!10] (0,0) -- (0.3472,1.9696) arc[start angle = 80, end angle = 90, radius = 2] -- (0,0);

\draw[color=blue!80, fill=blue!10] (0,0) -- (-1.7321,-1) arc[start angle = 210, end angle = 260, radius = 2] -- (0,0);

\draw[color=blue!80, fill=blue!10] (0,0) -- (-0.34721,-1.9696) arc[start angle = 260, end angle = 350, radius = 2] -- (0,0);

\draw[thick,dotted,->] (0,0) -- (0,2);
\draw[thick,dotted,->] (0,0) -- (0,-2); 
\draw[thick,->] (0,0) -- (-2,0);
\draw[thick,->] (0,0) -- (2,0);

\draw[anchor = west] (2,0) node {$w_{p}$}; 
\draw[anchor = east] (0,2) node {$v_{p}$};
\draw[anchor = east] (-2,0) node {$-w_{p}$}; 
\draw[anchor = east] (0,-2.1) node {$-v_{p}$};

\draw[dashed] (-1,0) -- (0,1);
\draw[dashed] (0,-1) -- (1,0);

\draw[dashed] (0,1) arc[start angle=90, end angle=0, radius=1];
\draw[dashed] (0,-1) arc[start angle=-90, end angle=-180, radius=1];

\draw[anchor = east] (1.1,1.1) node {$Q_{p}$};
\draw[anchor = east] (-1.1, -1.1) node {$-Q_{p}$};
\draw[anchor = east] (1.1,-1.1) node {$-\mathcal{Q}_{p}$};
\draw[anchor = east] (-1.1,1.1) node {$\mathcal{Q}_{p}$};

\draw[color=blue, anchor = east] (1.9,0.3) node {$Q_{+,-}(q')$}; 
\draw[color=blue, anchor = east] (-0.05, 1.6) node {$Q_{+,+}(q') \cap A_{\zeta}(q)$};
\draw[color=blue, anchor = east] (1.3, -0.8) node {$Q_{-,+}(q')$};
\draw[color=blue, anchor = east] (-0.3,-1.4) node {$Q_{-,+}(q') \cap A_{\zeta}(q)$};

\filldraw[black] (0.5,0.8660) circle (0.04);
\draw[anchor = east] (0.5,0.9) node {$q$};
\draw[anchor = north] (0.1,0) node {$0$};
\filldraw[black] (0,0) circle (0.04);

\filldraw[black] (0.1736,0.9848) circle (0.04);
\filldraw[anchor = south] (0.1200,1) node {$q'$};

\draw[thick,dotted,->] (0,0) -- (1.7321,1);
\draw[thick,dotted,->] (0,0) -- (-1.7321,-1);

\draw[thick,blue,->] (0,0) -- (0.3473,1.9696);
\draw[thick,blue,->] (0,0) -- (-0.3473,-1.9696);
\draw[thick,blue,->] (0,0) -- (1.9696,-0.3473);
\draw[thick,blue,->] (0,0) -- (-1.9696,0.3473);


\filldraw[red] (0.1, 1.8) circle (0.04);
\filldraw[red] (1.55, 0.6) circle (0.04);
\filldraw[red] (1.3,-0.5) circle (0.04);
\filldraw[red] (-0.6, -0.7) circle (0.04);

\end{tikzpicture}
	\caption{A depiction of the situation in Proposition \ref{prop:special-pareto} when $\varphi(x) = \max\{|x_{1} - x_{2}|,\|x\|_{2}\}$.  The dotted lines delineate the cone $A_{\zeta}(q)$.  Blue regions are $Q_{+,+}(q') \cap A_{\zeta}(q)$, $Q_{-,+}(q') \cap A_{\zeta}(q)$, $Q_{+,-}(q')$, and $Q_{-,+}(q')$.  The Pareto hull of the red points contains $0$ in its interior.}
	\label{fig:special-pareto}
\end{figure}
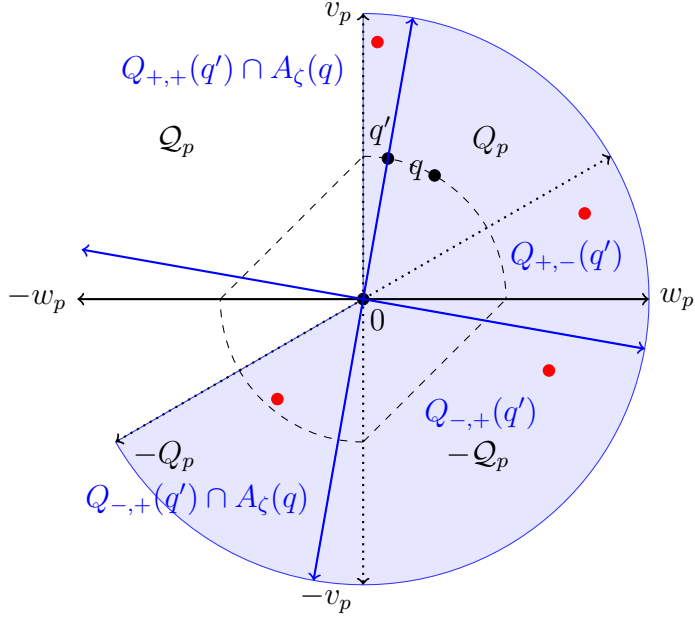
	
Figure \ref{fig:special-pareto} depicts the situation described in Proposition \ref{prop:special-pareto}.  In the figure, the point $q$ is $q = (\cos(\pi/3),\sin(\pi/3))$ and $\zeta = \zeta(q) = \tan(\pi/6)$.  In this particular example, for any $q' \in \inte(A_{\zeta}(q)) \cap Q_{p}$, any set of four points $\{x^{\pm,\mp}\}$ satisfying \eqref{E: special-pareto-1} and \eqref{E: special-pareto-2} will have zero in the interior of its Pareto hull.  In particular, any choice of $\bar{R}(q) > 0$ such that $B(q,\bar{R}(q)) \subseteq \inte(A_{\zeta}(q)) \cap Q_{p}$ suffices. 

Proposition \ref{prop:special-pareto} allows us to prove the next approximation result.

\begin{lemma} \label{lemma:special-planar-growth}
	Let $\zeta, \bar{R} : \tilde{\mathcal{E}} \to (0,\infty)$ be the functions from Proposition \ref{prop:special-pareto}.  There is a function $\rho:(0,\infty) \to (0,\infty)$ such that, on an event of probability 1, if $q \in \tilde{\mathcal{E}}$, $q' \in \mathcal{D}$ (given in \eqref{eq:dense-set-approximation}), $x_{0} \in \mathbb{Q}^{2}$, and $a,b \in \mathbb{Q} \cap (0,\infty)$ are chosen so that
\[
	x_{0} + D^{q'}_{-}[a,b] \subseteq \{f > f(x_{0})/2\}, \quad q' \in B(q,\bar{R}(q));
\]
$x^{+,+}, x^{-,+} \in \mathbb{R}^{2}$ are points such that
	\begin{gather*}
	x_{0} + D^{q'}_{-}[a,b] \subseteq x^{-,+} - A_{\zeta(q)}(q)  \cap \inte(Q_{-,+}(q')), \\
	 x_{0} + D^{q'}_{-}[a,b] \subseteq x^{+,+} - A_{\zeta(q)}(q) \cap \inte(Q_{+,+}(q'));
	\end{gather*}
	and
\[
	s_{n} := \min \left\{ u_n(z) \, \mid \, z \in x_{0} + D^{q'}_{-}[a,b] \right\},
\]
		then for all $n$ sufficiently large, 
	\begin{equation} \label{eq:growth-estimate-result2}
	u_n(x_{0}) \geq \min \left\{u_n(x^{+,+}), u_n(x^{-,+}), s_{n} + \rho(f(x_0)) \sqrt{ 2a b n} \right\}.
	\end{equation}
\end{lemma}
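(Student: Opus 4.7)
The plan is to adapt the proof of Lemma~\ref{lemma:unidirectional-growth} verbatim, swapping the polyhedral peeling mechanism (Lemma~\ref{lemma:cone-reduction}(ii), which at each step fills the four diagonal cones $\pm Q_p,\pm\mathcal Q_p$) for the geometric criterion of Proposition~\ref{prop:special-pareto} (which fills the four cones $Q_{\pm,\pm}(q')$, with the two ``forward'' ones restricted to the slab $A_{\zeta(q)}(q)$). Combined with Theorem~\ref{thm:cone_efficient} and the dynamic programming principle (Proposition~\ref{prop:dpp}), this alternative criterion again forces $u_n$ to jump by one at every step of the chain.

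\textbf{Setup and chain.} First I would apply a unit-Jacobian affine change of variables $T$ that sends $q'^\perp/\|q'^\perp\|\mapsto e_1$ and $q'/\|q'\|\mapsto e_2$, then rescales so that $x_0+D^{q'}_-[a,b]$ becomes the square $T(x_0)+[-\sqrt{ab},0]\times[-\sqrt{ab},\sqrt{ab}]$. By Corollary~\ref{cor:affine_invariance}, Pareto peeling (with respect to the transformed norm $\varphi\circ T^{-1}$) is preserved, and the cones $Q_{\pm,\pm}(q'),A_{\zeta(q)}(q)$ transform accordingly. Partition the square by a grid of cubes of side $n^{-1/2}$ centered at $n^{-1/2}z$ for $z\in\Z^2\cap([-m,0]\times[-m,m])$ with $m=\lceil\sqrt{abn}\,\rceil$, and for each $\ell\in\{1,\dots,m\}$ set
\[
A_\ell=\mathbf 1\{X_{nf}\cap R'_{(-\ell,+\ell)}\neq\emptyset\}\cdot\mathbf 1\{X_{nf}\cap R'_{(-\ell,-\ell)}\neq\emptyset\},
\]
where $R'_{(-\ell,\pm\ell)}\subseteq R_{(-\ell,\pm\ell)}$ are the auxiliary subcubes from Proposition~\ref{prop:LLN}. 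Bernoulli stochastic domination and the strong law of large numbers then yield, on a full-probability event $\Omega_{x_0,q',a,b}$, the estimate $\Gamma_n=\sum_\ell A_\ell\geq\rho(f(x_0))\sqrt{abn}$ for all $n$ sufficiently large, with the constant $\sqrt 2$ from the stated conclusion absorbed into the definition of $\rho$. Enumerating the successful indices $\ell_1\geq\cdots\geq\ell_{\Gamma_n}$ and choosing Poisson points $x^{\pm}_{\ell_i}\in R'_{(-\ell_i,\pm\ell_i)}\cap X_{nf}$, I would mirror the cascade of Lemma~\ref{lemma:unidirectional-growth} to show inductively
\[
\min_{\pm}u_n(x^{\pm}_{\ell_i})\geq\min\bigl(u_n(x^{+,+}),u_n(x^{-,+}),\min_{\pm}u_n(x^{\pm}_{\ell_{i+1}})+1\bigr),
\]
each step being an application of Proposition~\ref{prop:special-pareto} to the quadruple consisting of (i) $x^{+,+},x^{-,+}$ (or earlier chain points), which populate $Q_{\pm,+}(q')\cap A_{\zeta(q)}(q)$, and (ii) $x^{\pm}_{\ell_{i+1}}$, which populate $Q_{\pm,-}(q')$. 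A final application at $x_0$ itself yields the claimed inequality. A countable intersection over $(x_0,q',a,b)\in\Q^2\times\mathcal D\times(\Q\cap(0,\infty))^2$ then delivers a single full-probability event; the uncountable parameter $q\in\tilde{\mathcal E}$ enters only through $\zeta(q)$ and $\bar R(q)$, so no measurability issue arises.

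\textbf{Main obstacle.} The delicate step is to verify at every iteration that the four candidate points land in the correct cones relative to the current reference. The ``backward'' cone memberships $x^{\pm}_{\ell_{i+1}}\in x^{\pm}_{\ell_i}+\inte(Q_{\pm,-}(q'))$ follow automatically from the axis-alignment of the grid with $(q'^\perp,q')$ and the strict inequality $\ell_{i+1}<\ell_i$, so they amount to a routine cell-geometry check in the transformed square. The ``forward'' cone memberships, by contrast, must hold within the \emph{restricted} region $A_{\zeta(q)}(q)\cap\inte(Q_{\pm,+}(q'))$, and since $q\ne q'$ in general this restriction is not automatic. The hypotheses $q'\in B(q,\bar R(q))$ and $x_0+D^{q'}_-[a,b]\subseteq x^{\pm,+}-A_{\zeta(q)}(q)\cap\inte(Q_{\pm,+}(q'))$ from the lemma statement are designed exactly for this: they propagate to each chain point by convexity of the cones and ensure that Proposition~\ref{prop:special-pareto}'s hypotheses remain intact down the iteration, including at the final application at $x_0$.
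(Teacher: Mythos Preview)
Your proposal is correct and matches the paper's own proof essentially line for line: both adapt the diagonal-box chain construction of Lemma~\ref{lemma:unidirectional-growth} with $Q_{-,+}(q')$ playing the role of $[0,\infty)^2$, then replace the polyhedral criterion of Lemma~\ref{lemma:cone-reduction} by Proposition~\ref{prop:special-pareto} at each step, using the hypothesis $x_0+D^{q'}_-[a,b]\subseteq x^{\pm,+}-A_{\zeta(q)}(q)\cap\inte(Q_{\pm,+}(q'))$ to guarantee the restricted-cone memberships persist along the entire chain. Your explicit discussion of the countable intersection over $(x_0,q',a,b)\in\mathbb Q^2\times\mathcal D\times(\mathbb Q\cap(0,\infty))^2$ and the remark that $q$ enters only deterministically through $\zeta(q),\bar R(q)$ is a point the paper leaves implicit but is indeed the reason the countable dense set $\mathcal D$ was introduced.
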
  

We will now show how to prove Lemma \ref{lemma:reduced-planar-growth} using Lemma \ref{lemma:special-planar-growth} and Proposition \ref{prop:special-pareto}.  Lemma \ref{lemma:special-planar-growth} will be proved immediately afterward, while the next two sections are devoted to the proof of Proposition \ref{prop:special-pareto}.

	\begin{proof}[Proof of Lemma \ref{lemma:reduced-planar-growth}]  By definition of $u_{*}$, we can fix a (random) sequence $(x^{(n)})_{n \in \mathbb{N}} \subseteq U$ such that 
		\begin{align*}
			\lim_{n \to \infty} x^{(n)} = x_{0}, \quad \lim_{n \to \infty} n^{-\frac{1}{2}} u_{n}(x^{(n)}) &= u_{*}(x_{0}).
		\end{align*}
	
	Given $\nu > 0$ small, choose points $x^{+,-}_{\nu}, x^{-,-}_{\nu} \in \mathbb{Q}^{2} \cap U$ such that
		\begin{equation*}
			x^{\pm,-}_{\nu} \in x_{0} + \inte(Q_{\pm,-}(q)) \quad \text{and} \quad x^{\pm,-}_{\nu} + D^{q}_{-}[a-\nu,b-\nu] \subseteq x_{0} + \inte(D_{-}^{q}[a,b]).
		\end{equation*}
	Since $\mathcal{D}$ is dense in $\tilde{\mathcal{E}}$, we can choose $q_{\nu} \in \mathcal{D} \cap B(q,\bar{R}(q))$ such that
		\begin{align*}
			x^{\pm}_{\nu} + D^{q_{\nu}}_{-}[a-\nu,b-\nu] &\subseteq x_{0} + \inte(D^{q}_{-}[a,b]), \\
			x_{0} + D^{q}_{-}[a,b] &\subseteq x^{\pm,+} - \inte(Q_{\pm,+}(q_{\nu})).
		\end{align*}
	
	We are now in a position to apply Lemma \ref{lemma:special-planar-growth}.  In particular, for all $n$ sufficiently large, we have
		\begin{equation*}
			u_{n}(x^{\pm,-}_{\nu}) \geq \min \left\{ u_{n}(x^{+,+}), u_{n}(x^{-,+}), s_{n} + \rho(f(x_{0})) \sqrt{2 (a - \nu)(b - \nu)n} \right\},
		\end{equation*}
	where $s_{n}$ is given by 
		\begin{equation*}
			s_{n} := \min\left\{ u_{n}(z) \, \mid \, z \in x_{0} + D^{q}_{-}[a,b]\right\}.
		\end{equation*}

	At the same time, if $n$ is large enough, then
		\begin{gather*}
			x^{+,-}_{\nu} \in x_{n} + \inte(Q_{+,-}(q_{\nu})), \quad x^{-,-}_{\nu} \in x_{n} + \inte(Q_{-,-}(q_{\nu})), \\
			x^{+,+} \in x_{n} + \inte(Q_{+,+}(q_{\nu})) \cap A_{\zeta(q)}(q), \quad x^{-,+} \in x_{n} + \inte(Q_{-,+}(q_{\nu})) \cap A_{\zeta(q)}(q),
		\end{gather*}
	and, therefore, by Proposition \ref{prop:special-pareto} and the dynamic programming principle,
		\begin{align*}
			u_{n}(x_{n}) &\geq \min \left\{  u_{n}(x^{+,+}), u_{n}(x^{-,+}), u_{n}(x^{+,-}_{\nu}), u_{n}(x^{-,-}_{\nu}) \right\} \\
					&\geq \min\left\{ u_{n}(x^{+,+}), u_{n}(x^{-,+}), s_{n} + \rho(f(x_{0})) \sqrt{2 (a - \nu)(b - \nu)n} \right\}.
		\end{align*}
		
	Finally, renormalizing by $n^{\frac{1}{2}}$ and sending first $n \to \infty$ and then $\nu \to 0^{+}$, we obtain
		\begin{equation*}
			u_{*}(x_{0}) \geq \min \left\{ u_{*}(x^{+,+}), u_{*}(x^{-,+}), s + \rho(f(x_{0})) \sqrt{2a b} \right\}.
		\end{equation*}
	\end{proof}  
	
\begin{proof}[Proof of Lemma \ref{lemma:special-planar-growth}]  The argument is similar in spirit to that of Lemma \ref{lemma:unidirectional-growth} so we only sketch the proof.  As in that lemma,
	on an event of probability one, for all $n$ large enough, there is a random integer $L(n) \geq \rho(f(x_{0})) \sqrt{2abn}$ and a random subset $\{x_{1},\dots,x_{2L}\} \subseteq X_{n} \cap (x_{0} + D^{q'}_{-}[a,b])$ such that 
		\begin{gather*}
			x_{0} \in (x_{1} - \inte(Q_{-,-}(q'))) \cap (x_{2} - \inte(Q_{+,-}(q')))
		\end{gather*}
and, for each $i \in \{1,2,\dots,L(n)-1\}$,
		\begin{gather*}
			\{x_{2i-1},x_{2i}\} \subseteq (x_{2i + 1} - \inte(Q_{-,-}(q'))) \cap (x_{2(i + 1)} - \inte(Q_{+,-}(q'))).
		\end{gather*}
	(Compared to Lemma \ref{lemma:unidirectional-growth}, at this stage, all that is different is $[0,\infty)^{2}$ is replaced by $Q_{-,+}(q')$; the geometrical picture differs only by a rotation.)
	
	In order to invoke the dynamic programming principle, we utilize Proposition \ref{prop:special-pareto}.  Indeed, by assumption, for each $i \in \{1,2,\dots,L(n)\}$, we have
		\begin{align*}
			x^{-,+} &\in (x_{2i-1} + A_{\zeta(q)}(q) \cap \inte(Q_{-,+}(q'))) \cap (x_{2i} + A_{\zeta(q)}(q) \cap \inte(Q_{-,+}(q'))), \\
			x^{+,+} &\in (x_{2i - 1} + A_{\zeta(q)}(q) \cap \inte(Q_{+,+}(q'))) \cap (x_{2i} + A_{\zeta(q)}(q) \cap \inte(Q_{+,+}(q'))).
		\end{align*}
	Thus, the proposition implies that
		\begin{equation*}
			x_{2i-1},x_{2i} \in \inte(\mathcal{P}(\{x^{+,+},x^{-,+},x_{2i+1},x_{2(i+1)}\})),
		\end{equation*}
	which, in terms of the height function, reads
		\begin{equation} \label{eq:pre-iterate}
			\min(u_{n}(x_{2i-1}), u_{n}(x_{2i})) \geq \min (u_{n}(x^{+,+}),u_{n}(x^{-,+}), u_{n}(x_{2i+1}) + 1, u_{n}(x_{2(i+1)}) + 1).
		\end{equation}
	
	We conclude by iterating the bounds in \eqref{eq:pre-iterate}. \end{proof}

\subsection{Manipulations of Cones}  So far, we have proved Lemma \ref{lemma:degenerate-directions-dual-2} conditional on Proposition \ref{prop:special-pareto}.  This last proposition follows from a series of geometrical observations.  The main observations concern properties of the cones $\{Q_{p}\}_{p \in \mathcal{N}^{*}}$, which are detailed in this section.  The next section explains the remainder of the proof of Proposition \ref{prop:special-pareto}.

In what follows, given a $q \in \{\varphi = 1\}$, we denote by $N(q)$ the set
	\begin{equation*}
		N(q) = \{p \in \mathcal{N}^{*} \, \mid \, q^{\perp} \in \inte(\mathcal{Q}_{p})\}.
	\end{equation*}

The following basic observations concerning directions in $\tilde{\mathcal{E}}$ (see \eqref{eq:tilde-E}) will be fundamental in what follows.  In fact, the second observation provides half of the proof of Proposition \ref{prop:special-pareto}.
	
	\begin{prop} \label{P: basic generality} For each $q \in \{\varphi = 1\}$, we have $\#N(q) \leq 1$.  Further, for $q \in \tilde{\mathcal{E}}$, if $p \in \mathcal{N}^{*}$ is such that $\{p,-p\} \cap N(q) = \emptyset$, then there is a $\bullet \in \{(+,+),(-,+),(+,-),(-,-)\}$ such that 
		\begin{equation*}
			\mathcal{Q}_{p} \subseteq Q_{\bullet}(q),
		\end{equation*}
		where $Q_{\bullet}$ is as in \eqref{eq:plus-minus-cones}.
	\end{prop}  
	
		\begin{proof}  To start with, recall that $\{\inte(\mathcal{Q}_{p})\}_{p \in \mathcal{N}^{*}}$ is pairwise disjoint.  Hence there is at most one element in $N(q)$.  
		
		Next, suppose that $p \in \mathcal{N}^{*}$ and $\{-p,p\} \cap N(q) = \emptyset$.  Since $q \in \tilde{\mathcal{E}}$, the results of Section \ref{sec:duality} imply that $\langle v_{p}, q^{\perp} \rangle$ and $\langle (-w_{p}), q^{\perp} \rangle$ have the same sign, that is,
			\begin{equation*}
				\langle v_{p},q^{\perp} \rangle \langle (-w_{p}), q^{\perp} \rangle \geq 0.
			\end{equation*}
		At the same time, from the fact that $\{q^{\perp},-q^{\perp}\} \cap \inte(\mathcal{Q}_{p}) = \emptyset$ and $\mathcal{Q}_{p} = \cone(\{v_{p},-w_{p}\})$, we similarly deduce that $\langle v_{p}, q \rangle$ and $\langle (-w_{p}), q \rangle$ have the same sign.  It follows that $\mathcal{Q}_{p}$ is contained in one of the quadrants determined by the basis $\{q,q^{\perp}\}$.  These quadrants are exactly $Q_{+,+}(q)$, $Q_{-,+}(q)$, $Q_{+,-}(q)$, and $Q_{-,-}(q)$.    \end{proof}
		
In the next result, we observe that for $q', q \in \tilde{\mathcal{E}}$ sufficiently close together, the cone determined by $N(q')$ necessarily misses a conic neighborhood of the line through the origin determined by $q$, or, more precisely, $A_{\epsilon}(q)$ for some small enough $\epsilon$.  We will see in the next section that this observation accounts for one half of the proof of Proposition \ref{prop:special-pareto}.

	\begin{prop} \label{prop:epsilon-choice} Given $q \in \tilde{\mathcal{E}}$, there are constants $\bar{R}_{1}(q), \zeta(q) > 0$ such that if $q' \in B(q,\bar{R}_{1}(q)) \cap \{\varphi = 1\}$ and $p' \in N(q')$, then 
		\begin{equation*}
			\mathcal{Q}_{p'} \subseteq \{v \in \mathbb{R}^{2} \, \mid \, \langle v, q^{\perp} \rangle \geq 0\} \quad \text{and} \quad \mathcal{Q}_{p'} \cap \bar{A}_{\zeta(q)}(q) = \{0\}.
		\end{equation*}
	\end{prop}
	
		\begin{proof}  In view of Proposition \ref{P: basic generality}, there are two cases to consider: (i) $N(q) = \{p\}$ for some $p \in \mathcal{N}^{*}$ and (ii) $N(q) = \emptyset$. 
		
		\textbf{Case (i): $N(q) = \{p\}$ for some $p \in \mathcal{N}^{*}$.}
		
		In this case, we first claim that there is an $\bar{R}_{1}(q) > 0$ such that $N(q') = \{p\}$ for each $q' \in B(q,\bar{R}_{1}(q)) \cap \{\varphi = 1\}$.  Put slightly differently, there is a choice of $\bar{R}_{1}(q)$ for which the identity $p' = p$ necessarily holds.
		
		To see this, recall that $q^{\perp} \in \inte(\mathcal{Q}_{p})$ by definition of $N(q)$.  Hence we can fix $\bar{R}_{1}(q) > 0$ such that $B(q^{\perp},\bar{R}_{1}(q)) \subseteq \inte(\mathcal{Q}_{p})$.  Since $N(q')$ contains at most one element, we conclude that $N(q') = N(q)$ for each $q' \in B(q,\bar{R}_{1}(q)) \cap \{\varphi = 1\}$.
		
	In view of what was just proved, we only need to find a $\zeta(q) > 0$ such that the desired inclusions hold with $\mathcal{Q}_{p'} = \mathcal{Q}_{p}$.  Toward that end, we know that $q^{\perp} \in \inte(\mathcal{Q}_{p})$, but $\{q,-q\} \cap \mathcal{Q}_{p} = \emptyset$ since $q \in \tilde{\mathcal{E}}$.   This readily implies that 
		\begin{equation} \label{eq:use-it-a-little-later}
			\mathcal{Q}_{p} \setminus \{0\} \subseteq \inte(Q_{+,+}(q) \cup Q_{-,+}(q)).
		\end{equation}  
From this, we deduce that if we define $\zeta(q)$ by
		\begin{equation*}
			\zeta(q) = \frac{1}{2} \min \left\{ \frac{\langle v_{p}, q^{\perp} \rangle}{|\langle v_{p},q\rangle|}, \frac{\langle (-w_{p}),q^{\perp} \rangle}{|\langle w_{p}, q \rangle|} \right\} > 0,
		\end{equation*}
	then $\mathcal{Q}_{p} \cap \bar{A}_{\zeta(q)} = \{0\}$.  
	
	Finally, observe that $\{v \in \mathbb{R}^{2} \, \mid \, \langle v, q^{\perp} \rangle \geq 0\} = Q_{+,+}(q) \cup Q_{-,+}(q)$.  Combining this with \eqref{eq:use-it-a-little-later}, we conclude $\mathcal{Q}_{p} \subseteq \{v \in \mathbb{R}^{2} \, \mid \, \langle v, q^{\perp} \rangle \geq 0\}$.
	
	\textbf{Case (ii): $N(q) = \emptyset$.} 
	
	We start by proving that $\mathcal{Q}_{p'} \cap \bar{A}_{\zeta(q)}(q) = \{0\}$ provided $\zeta(q)$ and $\bar{R}_{1}(q)$ are small enough.  Here it is simplest to argue by contradiction.  Suppose that we can find a sequence $(q_{n})_{n \in \mathbb{N}} \subseteq \{\varphi = 1\}$ converging to $q$ and a sequence $(v_{n})_{n \in \mathbb{N}} \subseteq S^{1}$ such that 
		\begin{equation*}
			v_{n} \in \mathcal{Q}_{p_{n}} \cap A_{n^{-1}}(q), \, \, N(q_{n}) = \{p_{n}\} \quad \text{for each} \, \, n \in \mathbb{N}.
		\end{equation*}
	Since $v_{n} \in S^{1} \cap A_{n^{-1}}(q)$ for each $n$, the accumulation points of $(v_{n})_{n \in \mathbb{N}}$ are contained in $\{-\|q\|^{-1}q,\|q\|^{-1}q\}$.  Let us assume, passing to a subsequence if necessary, that $\lim_{n \to \infty} v_{n} = \|q\|^{-1} q$; the case where the limit equals $-\|q\|^{-1} q$ can be treated similarly.

	Recall, by definition of $N(q_{n})$, that $q_{n}^{\perp} \in \mathcal{Q}_{p_{n}}$ for each $n$.  At the same time, since $q_{n}^{\perp} \to q^{\perp}$ and $v_{n} \to \|q\|^{-1} q$, we know that
		\begin{equation*}
			\lim_{n \to \infty} |v_{n} \times q_{n}^{\perp}| = \|q^{\perp}\| > 0.
		\end{equation*}
	From this, if we define $\mathcal{N}^{*}(\|q^{\perp}\|/2)$ as in \eqref{eq:finite-set}, then $(p_{n})_{n \geq N} \subseteq \mathcal{N}^{*}(\|q^{\perp}\|/2)$ for some $N \in \mathbb{N}$.  Thus, since $\mathcal{N}^{*}(\|q^{\perp}\|/2)$ is finite (see the proof of Proposition \ref{prop:hj-finite-set}), we deduce that the set $\{p_{n} \, \mid \, n \in \mathbb{N}\}$ is finite.  In particular, up to passing to a subsequence, we can assume without loss of generality that $p_{n} = p_{N}$ for all $n \geq N$.  
	
	This gives the desired contradiction.  Indeed, we know that $v_{n} \in \mathcal{Q}_{p_{N}}$ for all $n \in \mathbb{N}$ and $\mathcal{Q}_{p_{N}}$ is closed.  Hence $\|q\|^{-1} q = \lim_{n \to \infty} v_{n} \in \mathcal{Q}_{p_{N}}$, but then this contradicts the fact that $q \in \tilde{\mathcal{E}}$.  
	
	It remains to show that $\mathcal{Q}_{p'} \subseteq \{v \in \mathbb{R}^{2} \, \mid \, \langle v, q^{\perp} \rangle \geq 0\}$ provided $\bar{R}_{1}(q)$ is small enough.  Once again, this follows readily from contradiction: if it were not true, we could find a sequence $(q_{n})_{n \in \mathbb{N}} \subseteq \{\varphi = 1\}$ converging to $q$ and vectors $(v_{n})_{n \in \mathbb{N}} \subseteq S^{1}$ such that 
		\begin{equation*}
			\langle v_{n},q^{\perp} \rangle \leq 0, \quad v_{n} \in \mathcal{Q}_{p_{n}}, \quad N(q_{n}) = \{p_{n}\}.
		\end{equation*}
	Restricting to large $n$ if necessary, we can assume that $\langle q_{n}^{\perp}, q^{\perp} \rangle > 0$ for all $n \in \mathbb{N}$.  Hence, by continuity, there is a $t_{n} \in [0,1]$ such that $\langle (1 - t_{n}) v_{n} + t_{n} q_{n}^{\perp}, q^{\perp} \rangle = 0$.  Yet $\{v_{n},q_{n}^{\perp} \} \subseteq \mathcal{Q}_{p_{n}}$ so, by convexity, $(1 - t_{n})v_{n} + t_{n} q_{n}^{\perp} \in \mathcal{Q}_{p_{n}}$.  Renormalizing by the length, this implies that either $\|q\|^{-1} q \in \mathcal{Q}_{p_{n}}$ or $-\|q\|^{-1} q \in \mathcal{Q}_{p_{n}}$, which contradicts our assumption that $q \in \tilde{\mathcal{E}}$ in any case. \end{proof}  
	
The previous proposition showed that if $q'$ is close enough to $q$ and $N(q')$ is non-empty, then we can conclude that the cone $\mathcal{Q}_{p}$ avoids $A_{\epsilon}(q)$ for some small $\epsilon > 0$.  The next result is a more-or-less straightforward observation about such cones.
	
\begin{prop} \label{P: convexity thing} Fix $q \in \{\varphi = 1\}$, $p \in \mathcal{N}^{*}$, and $\zeta > 0$.  If $\mathcal{Q}_{p}$ satisfies
	\begin{equation*}
		\mathcal{Q}_{p} \subseteq \{v \in \mathbb{R}^{2} \, \mid \, \langle v, q^{\perp} \rangle \geq 0\} \quad \text{and} \quad \mathcal{Q}_{p} \cap \bar{A}_{\zeta}(q) = \{0\},
	\end{equation*}
then 
	\begin{equation*}
		A_{\zeta}(q) \cap \{v \in \mathbb{R}^{2} \, \mid \, \langle v, q \rangle \geq 0\} \subseteq Q_{p} \quad \text{and} \quad A_{\zeta}(q) \cap \{v \in \mathbb{R}^{2} \, \mid \, \langle v, q \rangle \leq 0\} \subseteq -Q_{p}.
	\end{equation*}
\end{prop}

\begin{figure}
	\begin{tikzpicture}[scale=1.2]

\draw[thick,->] (0,0) -- (0,3);
\draw[thick,->] (0,0) -- (0,-3); 
\draw[thick,->] (0,0) -- (-4,0);
\draw[thick,->] (0,0) -- (4,0);

\draw[anchor=east] (0,3) node {$q^{\perp}$};
\draw[anchor=south] (4,0) node {$q$};

\draw[dashed] (0,0) -- (4,0.75);
\draw[dashed] (0,0) -- (4,-0.75);
\draw[dashed] (0,0) -- (-4,0.75);
\draw[dashed] (0,0) -- (-4,-0.75);

\draw[anchor=south] (3,0) node {$A_{\zeta}(q)$};

\draw[thin,->] (0,0) -- (1,3);
\draw[thin,->] (0,0) -- (-1,-3);
\draw[thin,->] (0,0) -- (-1.5,3);
\draw[thin,->] (0,0) -- (1.5,-3);

\draw[anchor = east] (0,2.25) node {$\mathcal{Q}_{p}$};
\draw[anchor = west] (1.5,-3) node {$w_{p}$};
\draw[anchor = west] (1,3) node {$v_{p}$};
\draw[anchor = west] (2, 2) node {$Q_{p}$};
\draw[anchor = north] (-2,-2) node {$-Q_{p}$};

\end{tikzpicture}
	\caption{Proof of Proposition \ref{P: convexity thing}.
	} \label{fig:cone-proof}
\end{figure}
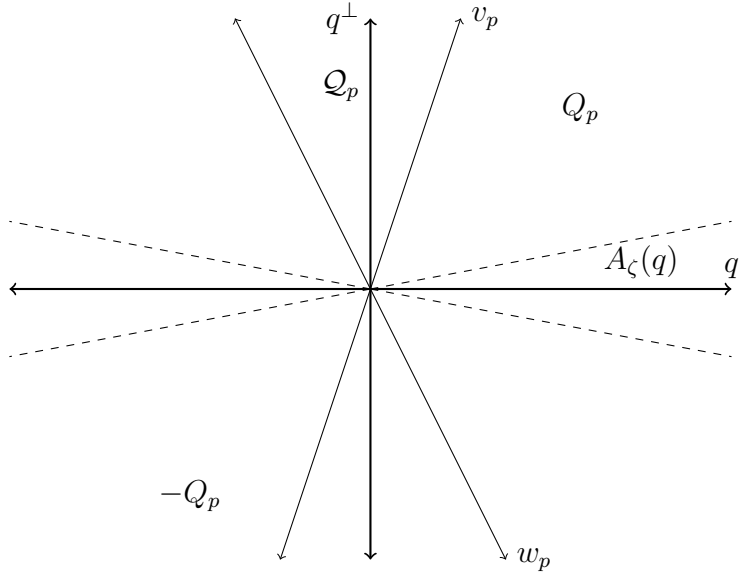

	See Figure \ref{fig:cone-proof} for a ``proof by picture."  The interested reader is invited to work out the details of a rigorous proof on their own.
	
\subsection{Proof of Proposition \ref{prop:special-pareto}}  In this section, we prove Proposition \ref{prop:special-pareto} using the observations made in the previous section and a few additional lemmas.  

	The first lemma allows us to easily relate the hypothesis \eqref{E: special-pareto-1} to the cones appearing in Proposition \ref{P: convexity thing}.
	
		\begin{lemma} \label{lemma:limiting-direction-part}  Fix $q \in \tilde{\mathcal{E}}$ and let $\zeta(q) > 0$ be the constant from Proposition \ref{prop:epsilon-choice}.  There is a $\bar{R}_{2}(q) > 0$ such that if $q' \in B(q,\bar{R}_{2}(q)) \cap \{\varphi = 1\}$, then 
			\begin{align*}
				Q_{+,+}(q') \cap A_{\zeta(q)}(q) \subseteq \{v \in \mathbb{R}^{2} \, \mid \, \langle v, q \rangle \geq 0\}, \\
				Q_{-,+}(q') \cap A_{\zeta(q)}(q) \subseteq \{v \in \mathbb{R}^{2} \, \mid \, \langle v, q \rangle \leq 0\}.
			\end{align*}
		\end{lemma}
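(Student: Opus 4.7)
The plan is to reduce the lemma to a quantified continuity statement: for $q'$ sufficiently close to $q$ on $\{\varphi = 1\}$, the sign of $\langle v, q' \rangle$ must agree with the sign of $\langle v, q \rangle$ whenever $v \in A_{\zeta(q)}(q)$. Once this is established, both desired inclusions follow immediately from the definitions of $Q_{+,+}(q')$ and $Q_{-,+}(q')$, which force $\langle v, q' \rangle \geq 0$ and $\langle v, q' \rangle \leq 0$, respectively.

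To establish the sign agreement, I would decompose $q'$ in the orthogonal basis $\{q, q^{\perp}\}$ as $q' = \alpha q + \beta q^{\perp}$ with $\alpha = \|q\|^{-2}\langle q', q \rangle$ and $\beta = \|q\|^{-2}\langle q', q^{\perp} \rangle$. Since $q \in \tilde{\mathcal{E}}$ forces $q \neq 0$ and since $(\alpha, \beta)$ depends continuously on $q'$, we have $\alpha \to 1$ and $\beta \to 0$ as $q' \to q$. Then, for any $v \in A_{\zeta(q)}(q)$, I would expand
\begin{equation*}
\langle v, q' \rangle = \alpha \langle v, q \rangle + \beta \langle v, q^{\perp} \rangle
\end{equation*}
and invoke the defining inequality of $A_{\zeta(q)}(q)$ to bound the second term by
\begin{equation*}
|\beta \langle v, q^{\perp} \rangle| \leq |\beta|\,\zeta(q)\,|\langle v, q \rangle|.
\end{equation*}

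Next I would choose $\bar{R}_{2}(q) > 0$ small enough that $\alpha > 0$ and $\alpha - |\beta|\zeta(q) > 0$ uniformly for all $q' \in B(q, \bar{R}_{2}(q)) \cap \{\varphi = 1\}$; this is possible since $\alpha \to 1$ and $\beta \to 0$. Combining the two displays above (and noting that the strict inequality defining $A_{\zeta(q)}(q)$ forces $\langle v, q \rangle \neq 0$, since $\langle v, q \rangle = 0$ would yield $|\langle v, q^{\perp} \rangle| < 0$), I obtain
\begin{equation*}
\mathrm{sign}(\langle v, q' \rangle) = \mathrm{sign}(\langle v, q \rangle) \quad \text{for every} \, \, v \in A_{\zeta(q)}(q).
\end{equation*}
The two inclusions then follow by contradiction: if $v \in Q_{+,+}(q') \cap A_{\zeta(q)}(q)$ had $\langle v, q \rangle < 0$, then $\langle v, q' \rangle < 0$ would contradict the defining inequality of $Q_{+,+}(q')$; the case of $Q_{-,+}(q')$ is handled symmetrically.

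I do not anticipate a serious obstacle: the argument is purely a quantitative continuity computation, and crucially the threshold $\bar{R}_{2}(q)$ depends only on $\alpha, \beta, \zeta(q)$ and not on $v$, so a single choice works uniformly over all $v \in A_{\zeta(q)}(q)$. The only mildly delicate point is verifying that $A_{\zeta(q)}(q)$ avoids the line $\{\langle v, q \rangle = 0\}$, which is a direct consequence of its definition.
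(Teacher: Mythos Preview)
Your argument is correct and actually cleaner than the paper's. The paper argues by contradiction and compactness: assuming the first inclusion fails along a sequence $q_n \to q$, it picks unit vectors $v_n \in Q_{+,+}(q_n) \cap A_{\zeta(q)}(q)$ with $\langle v_n, q \rangle \leq 0$, passes to a subsequential limit $v_* \in S^1$, and checks that $v_* \in Q_{+,+}(q) \cap \bar{A}_{\zeta(q)}(q)$ forces $\langle v_*, q \rangle = \langle v_*, q^{\perp} \rangle = 0$, contradicting $\|v_*\| = 1$. Your route is instead a direct quantitative estimate: decomposing $q' = \alpha q + \beta q^{\perp}$ and using the cone condition $|\langle v, q^{\perp} \rangle| < \zeta(q)\,|\langle v, q \rangle|$ to show $\langle v, q' \rangle$ and $\langle v, q \rangle$ share a (nonzero) sign whenever $\alpha - |\beta|\zeta(q) > 0$. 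This buys you an explicit, constructive choice of $\bar{R}_2(q)$ depending only on $\zeta(q)$ and $\|q\|$, and avoids any compactness or subsequence extraction. The paper's approach is softer and perhaps more in keeping with the surrounding contradiction arguments, but yours is more elementary and transparent.
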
  
		
			\begin{proof}  We only prove the inclusion involving $Q_{+,+}(q')$ since the other one follows by similar arguments.  
			
			We argue by contradiction.  If the claim were false, we could fix a sequence $(q_{n})_{n \in \mathbb{N}} \subseteq \{\varphi = 1\}$ converging to $q$ and a sequence $(v_{n})_{n \in \mathbb{N}} \subseteq S^{1}$ such that, for each $n \in \mathbb{N}$,
				\begin{equation*}
					v_{n} \in Q_{+,+}(q_{n}) \cap A_{\zeta(q)}(q), \quad \langle v_{n}, q \rangle \leq 0.
				\end{equation*}
			Since $S^{1}$ is compact, we lose no generality assuming that the limit $v_{*} = \lim_{n \to \infty} v_{n}$ exists.  Now the vector $v_{*}$ is an element of $Q_{+,+}(q) \cap \bar{A}_{\zeta(q)}(q)$ and, thus,
				\begin{equation*}
					0 \leq \langle v_{*}, q^{\perp} \rangle \leq \zeta(q) \langle v_{*}, q \rangle = \zeta(q) \cdot \lim_{n \to \infty} \langle v_{n}, q \rangle \leq 0.
				\end{equation*}
	We deduce that $0 = \langle v_{*}, q^{\perp} \rangle = \langle v_{*}, q \rangle$, hence $v_{*} = 0$, contradicting the fact that $\|v_{*}\| = 1$.
				\end{proof}  
		
	The final lemma is a fundamental observation that explains the role of the cones $Q_{+,+}(q')$, $Q_{-,+}(q')$, $Q_{+,-}(q')$, and $Q_{-,-}(q')$ in Proposition \ref{prop:special-pareto}.
				
				\begin{lemma} \label{lemma:convex-thing} If $q \in \mathbb{R}^{2} \setminus \{0\}$ and there are points $y^{+,+},y^{-,+},y^{+,-},y^{-,-} \in \R^{2}$ such that
	\begin{equation*}
		y^{\bullet} \in \inte(Q_{\bullet}(q)) \quad \text{for each} \, \, \bullet \in \{(+,+),(-,+),(+,-),(-,-)\},
	\end{equation*}
then
	\begin{equation*}
		0 \in \inte(\conv(y^{+,+},y^{-,+},y^{+,-},y^{-,-})).
	\end{equation*}
\end{lemma}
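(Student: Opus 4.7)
\textbf{Proof plan for Lemma \ref{lemma:convex-thing}.}

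The plan is to argue by contradiction using the separation theorem for convex sets together with the fact that $\{q, q^{\perp}\}$ is an \emph{orthogonal} basis. Denote $P = \conv\{y^{+,+}, y^{-,+}, y^{+,-}, y^{-,-}\}$. Suppose for contradiction that $0 \notin \inte(P)$. Since $P$ is a convex polygon and $0 \notin \inte(P)$, a standard separation argument (see \cite[Section 11]{rockafellar}) provides a vector $v \in \mathbb{R}^{2} \setminus \{0\}$ with $\langle v, y^{\bullet} \rangle \leq 0$ for each $\bullet \in \{(+,+),(-,+),(+,-),(-,-)\}$.

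Next, observe that $\mathbb{R}^{2} = Q_{+,+}(q) \cup Q_{-,+}(q) \cup Q_{+,-}(q) \cup Q_{-,-}(q)$, so $v$ belongs to at least one of the four closed cones; after replacing each $y^{\bullet}$ by the corresponding point in the opposite quadrant if necessary (which only relabels the sign pattern), we may assume $v \in Q_{+,+}(q)$, i.e.\ $\langle v, q \rangle \geq 0$ and $\langle v, q^{\perp} \rangle \geq 0$ with at least one strict. Writing $v = \alpha q + \beta q^{\perp}$ and $y^{+,+} = \gamma q + \delta q^{\perp}$ with $\alpha,\beta \geq 0$ (not both zero) and $\gamma, \delta > 0$ (since $y^{+,+} \in \inte(Q_{+,+}(q))$), and using the orthogonality $\langle q, q^{\perp} \rangle = 0$, we compute
\begin{equation*}
\langle v, y^{+,+} \rangle = \alpha \gamma \|q\|^{2} + \beta \delta \|q^{\perp}\|^{2} > 0,
\end{equation*}
which contradicts $\langle v, y^{+,+} \rangle \leq 0$. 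Hence $0 \in \inte(P)$.

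The main (and essentially only) subtlety is recognizing that the orthogonality of $q$ and $q^{\perp}$ is what makes the separating functional fail: without orthogonality the cross terms $\alpha\delta\langle q, q^{\perp}\rangle$ and $\beta\gamma\langle q,q^{\perp}\rangle$ could spoil the sign of $\langle v, y^{+,+}\rangle$. This is a short, clean argument and I do not anticipate any serious obstacle; the only care needed is to handle the boundary cases where $\alpha = 0$ or $\beta = 0$ (corresponding to $v$ lying on one of the two coordinate axes), which is automatic since then the other coefficient is strictly positive and the corresponding term $\beta\delta\|q^{\perp}\|^{2}$ or $\alpha\gamma\|q\|^{2}$ is already strictly positive thanks to $\gamma, \delta > 0$.
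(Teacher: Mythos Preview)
Your proof is correct. The argument by separation is clean: if $0 \notin \inte(P)$, a supporting halfspace gives $v \neq 0$ with $\langle v, y^{\bullet}\rangle \leq 0$ for all four points; writing $v$ and $y^{\bullet}$ in the orthogonal basis $\{q,q^{\perp}\}$ and choosing the $y^{\bullet}$ in the same open quadrant as $v$ forces $\langle v, y^{\bullet}\rangle > 0$, a contradiction. (Your ``relabeling'' step is really just this observation: rather than renaming signs, you can simply pick whichever $y^{\bullet}$ lies in the closed quadrant containing $v$.)

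The paper takes a different, less elementary route: it observes that the four cones $Q_{\bullet}(q)$ are precisely the flat cones associated with the polyhedral norm $\varphi_{q}(q') = |\langle q',q\rangle| + |\langle q',q^{\perp}\rangle|$ (a rotated $\ell^{1}$ norm), so Theorem~\ref{thm:cone_efficient} gives $0 \in \inte(\mathcal{P}_{\varphi_{q}}(\{y^{\bullet}\}))$, and then Corollary~\ref{cor:-constrained-convex-hull} yields $0 \in \inte(\conv(\{y^{\bullet}\}))$. This is natural in context since it recycles the Pareto-hull machinery already developed, but your direct separation argument is self-contained and avoids invoking those results. Either way the underlying reason the lemma holds is the orthogonality of $q$ and $q^{\perp}$, which you correctly identify as the key point.
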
   

	\begin{proof}  If we define $\varphi_{q}$ by
		\[
		\varphi_{q}(q') = |\langle q',q \rangle| + |\langle q',q^{\perp}\rangle|,
		\]
		then the second statement in Theorem \ref{thm:cone_efficient} implies that $0 \in \inte(\mathcal{P}_{\varphi_{q}}(y^{+,+},y^{-,+},y^{+,-},y^{-,-}))$.  Therefore, by Corollary \ref{cor:-constrained-convex-hull}, $0$ is necessarily in the interior of the convex hull of those points.  \end{proof}

		\begin{proof}[Proof of Proposition \ref{prop:special-pareto}]  Let $\bar{R}_{1}(q), \zeta(q) > 0$ and $\bar{R}_{2}(q) > 0$ be the constants from Proposition \ref{prop:epsilon-choice} and Lemma \ref{lemma:limiting-direction-part}, respectively.  Define $\bar{R}(q) = \min(\bar{R}_{1}(q),\bar{R}_{2}(q))$.
		
		With this choice of $\zeta(q)$ and $\bar{R}(q)$, suppose that $q' \in B(q,\bar{R}(q)) \cap \{\varphi = 1\}$ and the points $\{x^{+,+},x^{-,+},x^{+,-},x^{-,-}\}$ satisfy the hypotheses \eqref{E: special-pareto-1} and \eqref{E: special-pareto-2}.  We need to prove that $0 \in \inte(\mathcal{P}(\{x^{+,+},x^{-,+},x^{+,-},x^{-,-}\}))$.  Recall from Corollary \ref{cor:-constrained-convex-hull} that it suffices to establish that
			\begin{gather}
				\inte(Q_{p}) \cap \{x^{+,+},x^{-,+},x^{+,-},x^{-,-}\} \neq \emptyset \quad \text{for each} \, \, p \in \mathcal{N}^{*}, \label{E: cone inclusion} \\
				0 \in \inte(\conv(\{x^{+,+},x^{-,+},x^{+,-},x^{-,-}\})). \label{E: convex part}
			\end{gather}
		Notice that \eqref{E: convex part} follows from \eqref{E: special-pareto-1} and \eqref{E: special-pareto-2} after an immediate application of Lemma \ref{lemma:convex-thing}.  It only remains to verify \eqref{E: cone inclusion}.
		
		Suppose that $p \in \mathcal{N}^{*}$.  We consider cases.
		
		\textbf{Case 1: $p \in N(q')$ or $-p \in N(q')$}
		
		We want to prove that \eqref{E: cone inclusion} holds.  We will assume that $p \in N(q')$ and go on to show that 
			\begin{equation*}
				\inte(Q_{p}) \cap \{x^{+,+},x^{-,+},x^{+,-},x^{-,-}\} \neq \emptyset \quad \text{and} \quad \inte(Q_{-p}) \cap \{x^{+,+},x^{-,+},x^{+,-},x^{-,-}\} \neq \emptyset.
			\end{equation*}
		If instead $-p \in N(q')$, the desired conclusion follows from consideration of $-p$.
		
		By assumption, the hypotheses of Proposition \ref{prop:epsilon-choice} hold.  Hence we can invoke Proposition \ref{P: convexity thing} to find that
			\begin{equation*}
				A_{\zeta(q)}(q) \cap \{v \in \mathbb{R}^{2} \, \mid \, \langle v, q \rangle \geq 0\} \subseteq Q_{p}, \quad A_{\zeta(q)}(q) \cap \{v \in \mathbb{R}^{2} \, \mid \, \langle v, q \rangle \leq 0 \} \subseteq Q_{-p}.
			\end{equation*}
		At the same time, Lemma \ref{lemma:limiting-direction-part} implies that
			\begin{align*}
				A_{\zeta(q)}(q) \cap Q_{+,+}(q') &\subseteq \{v \in \mathbb{R}^{2} \, \mid \, \langle v, q \rangle \geq 0\}, \\
				A_{\zeta(q)}(q) \cap Q_{-,+}(q') &\subseteq \{v \in \mathbb{R}^{2} \, \mid \, \langle v, q \rangle \leq 0\}.
			\end{align*}
		Therefore, invoking \eqref{E: special-pareto-1}, we conclude that
			\begin{equation*}
				x^{+,+} \in \inte(Q_{p}), \quad x^{-,+} \in \inte(-Q_{p}) = \inte(Q_{-p}).
			\end{equation*}
		
		\textbf{Case 2: $\{p,-p\} \cap N(q') = \emptyset$}
		
		By Proposition \ref{P: basic generality}, we know that there is a $\bullet \in \{(+,+),(-,+),(+,-),(-,-)\}$ such that 
			\begin{equation*}
				\mathcal{Q}_{p} \subseteq Q_{\bullet}(q').
			\end{equation*}
		Recall that $-\mathcal{Q}_{p} = \mathcal{Q}_{-p}$ (see \eqref{eq:negation-thing}).  Thus, as in the previous step, there is no loss of generality in assuming $\bullet \in \{(+,+),(-,+)\}$.
		
		If $\mathcal{Q}_{p} \subseteq Q_{+,+}(q')$, then $\{-w_{p},v_{p}\} \subseteq Q_{+,+}(q')$.  Hence, recalling \eqref{eq:planar_thing}, we deduce that
			\begin{equation*}
				\langle (q')^{\perp}, v_{p}^{*} \rangle \leq 0, \quad \langle (q')^{\perp}, w_{p}^{*} \rangle \leq 0, \quad
				\langle q', v_{p}^{*} \rangle \geq 0, \quad \text{and} \quad \langle q', w_{p}^{*} \rangle \geq 0.
			\end{equation*} 
		In view of formula \eqref{eq:coordinates}, these inequalities immediately imply that $Q_{+,-}(q') \subseteq Q_{p}$ and $Q_{-,+}(q') \subseteq -Q_{p} = Q_{-p}$.  Therefore, by assumption,
			\begin{equation*}
				x^{+,-} \in \inte(Q_{+,-}(q')) \subseteq \inte(Q_{p}), \quad x^{-,+} \in \inte(Q_{-,+}(q')) \subseteq \inte(Q_{-p}).
			\end{equation*}
		
		If instead $\mathcal{Q}_{p} \subseteq Q_{-,+}(q')$, then we argue as in the previous paragraph to find $Q_{+,+}(q') \subseteq Q_{p}$ and $Q_{-,-}(q') \subseteq Q_{-p}$.  Therefore,
			\begin{equation*}
				x^{-,-} \in \inte(Q_{-,-}(q')) \subseteq \inte(Q_{-p}), \quad x^{+,+} \in \inte(Q_{+,+}(q')) \subseteq \inte(Q_{p}).
			\end{equation*}
		\end{proof}

\section{Further remarks and some open problems}\label{sec:conclusion}

\subsection{Higher dimensions}
\begin{figure}
\includegraphics[width=0.25\textwidth]{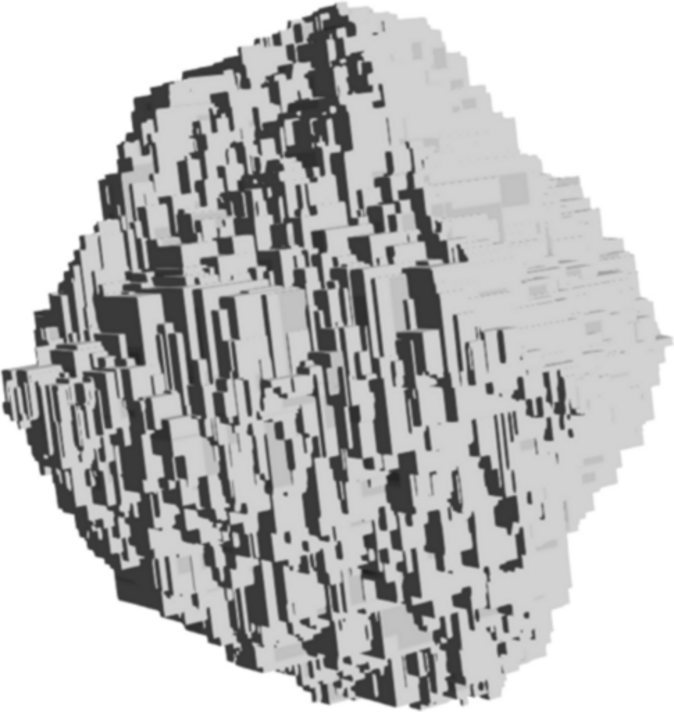} \qquad
\includegraphics[width=0.25\textwidth]{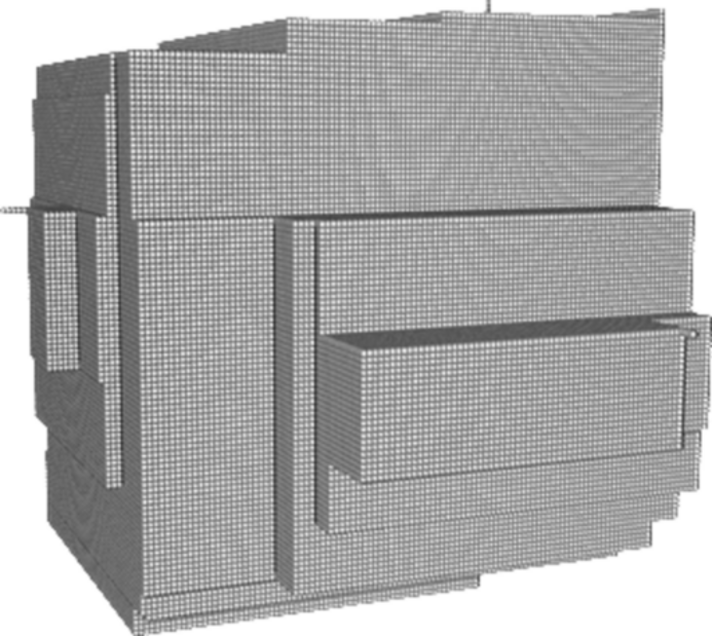}
\caption{Snapshots of $\ell^{\infty}$ (left) and $\ell^{1}$ (right) Pareto peeling of Poisson points in a cube in 3D.}
\end{figure}

A natural followup is to analyze the behavior of Pareto peeling in $\R^d$ for $d > 2$. 
In higher dimensions, only much weaker versions of Theorem \ref{thm:cone_efficient} are available \cite{durier1986sets} and we expect this reflects new phenomena that occur in higher dimensions. 
On the one hand, when $\varphi(\cdot) = \|\cdot\|_{\infty}$, the situation is similar to the two dimensional case.  
The family of cones that describe the Pareto hull 
are rotated quadrants, 
\begin{equation}
Q_{k}^{\pm} = \{ x \in \R^d \, \mid \, \pm x_k = \|x\|_{\infty}\},
\end{equation}
for $k = 1, \ldots, d$. Equivalently, $Q_k$ are cones generated by 0 and facets of the cube $[-1,1]^{d}$.  In this case, it is straightforward to extend the above arguments
to prove the following.

\begin{theorem} \label{theorem:l-inf-convergence}
	If $X_n$ are Poisson point processes in $\mathrm{U}$, Pareto efficient, bounded, and open, with intensities $n$ and $\varphi(x) = \|x\|_{\infty}$ then, on an event of probability 1, the sequence of rescaled height functions
	$n^{-1/d} u_n := \bar{u}_n \to \bar{u}$, where $\bar{u}$ is the unique viscosity solution 
	to the PDE 
	\[
	\begin{cases}
	\max_{k} \left( \prod_j \langle D \bar{u} ,  v_{k,j} \rangle \right) = c_d  &\mbox{ in $\mathrm{U}$} \\
	\bar{u} = 0 &\mbox{ on $\partial \mathrm{U}$}, 
	\end{cases}
	\]
	$\{v_{k,j}\}$ range over the extremal directions of $Q_k^{\pm}$, and $c_d > 0$ is a finite constant. 
\end{theorem}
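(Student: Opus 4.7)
The proof follows the four-step framework of Theorem \ref{theorem:pareto-convergence}. Since $\varphi = \|\cdot\|_\infty$ is polyhedral (Definition \ref{def:norms}) in $\R^d$, the set $\mathcal{N}^*$ of corners of $\{\varphi^* \leq 1\} = \{\|\cdot\|_1 \leq 1\}$ is exactly $\{\pm e_k\}_{k=1}^d$, and the flat cones of Section \ref{subsec:norm-partition} coincide with the cones $\{Q_k^\pm\}$ appearing in the statement. The proof of Proposition \ref{prop:dpp} extends verbatim and yields the polyhedral dynamic programming principle
\[
u_A(x) = \min_{k,\pm} \sup_{y \in x + \inte(Q_k^\pm)} u_A(y) + 1_A(y),
\]
and the proof of Proposition \ref{prop:polyhedral-compatibility} shows that every bounded open Pareto efficient $\mathrm{U} \subseteq \R^d$ is automatically compatible, so no boundary obstruction arises.

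The $L^\infty$ bound and boundary H\"older estimate (Lemmas \ref{lemma:l-inf-estimate} and \ref{lemma:boundary-continuity}) reduce to $d$-dimensional longest-chain asymptotics once one has a change-of-variables analog of Lemma \ref{lemma:nds-change-of-variables}. For $d \geq 3$, each $Q_k^\pm$ has $2^{d-1}$ extremal directions and is therefore not simplicial, but one can triangulate $Q_k^\pm$ into simplicial sub-cones, apply the linear reduction of Section \ref{sec:isomorphism} in each piece, and compare $Q_k^\pm$-longest chains in boxes to coordinate longest chains in the transformed boxes, invoking the $d$-dimensional Hammersley--Aldous--Diaconis asymptotics. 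With these estimates in hand, the subsolution property of $u^*$ is obtained by mimicking the direct-verification argument of Section \ref{sec:subsolution}: at a contact point where $D\psi(x_0) \neq 0$, the DPP confines the asymptotic growth of $u_n(x_0)$ by the longest chain in a simplex built from the extremal directions of the appropriate $Q_k^\pm$, yielding the subsolution inequality through the methods of \cite{calder2015pde, calder2016direct}.

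For the supersolution property of $u_*$, the polyhedrality of $\|\cdot\|_\infty$ eliminates the `round' set $\tilde{\mathcal{E}}$ of Section \ref{sec:degenerate-directions}, so only the $d$-dimensional analogs of the box-growth Lemma \ref{lemma:box-growth} and of Lemmas \ref{lemma:degenerate-directions-dual-1} and \ref{lemma:degenerate-directions-nonzero} are needed to force $D\bar u$ to lie in the interior of some dual cone $(Q_k^\pm)^*$; the considerably more involved Lemma \ref{lemma:degenerate-directions-dual-2} and Proposition \ref{prop:special-pareto} are not required. Once this nondegeneracy is established, the conditional supersolution argument of Proposition \ref{prop:conditional-supersolution} generalizes: a forced-peel construction via Lemma \ref{lemma:force-peel} combined with the Legendre transform of the $d$-variate convex analog of the function $h$ in Lemma \ref{lemma:legendre-convex} identifies the effective Hamiltonian, and Lemma \ref{lemma:comparison-principle} closes the argument. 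The main obstacle is extending the force-peel Lemma \ref{lemma:cone-reduction} and the box-growth Lemma \ref{lemma:box-growth} to $d \geq 3$: each $Q_k^\pm$ now has $2^{d-1}$ extremal rays, so forcing a new peel at $x_0$ requires simultaneous control over Poisson points in each of many neighboring corner regions, and the chain-of-points construction becomes combinatorially more intricate. The geometry nevertheless remains polyhedral throughout, so these generalizations amount to careful book-keeping, and the constant $c_d$ emerges from the sharp constant in the $d$-dimensional longest-chain limit.
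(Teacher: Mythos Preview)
The paper does not actually prove this theorem: it appears in Section~\ref{sec:conclusion} with only the sentence ``it is straightforward to extend the above arguments to prove the following'' as justification. Your proposal is therefore not competing with a proof in the paper but rather supplying the roadmap the paper omits, and your outline follows exactly the route the paper implicitly suggests---rerunning the DPP, the compatibility check, the $L^\infty$/boundary estimates, and the sub/supersolution arguments of Sections~\ref{sec:subsolution}--\ref{sec:degenerate-directions} in $\R^d$, with the simplifications afforded by polyhedrality.

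Your identification of the non-simplicial nature of $Q_k^\pm$ for $d\geq 3$ (each has $2^{d-1}$ extremal rays rather than $d$) as the main new difficulty is correct and is precisely where ``straightforward'' becomes optimistic. One point to be careful about: the Legendre-transform step you invoke via ``the $d$-variate convex analog of the function $h$'' is tailored to simplicial cones (where $h(a_1,\dots,a_d)=-(a_1\cdots a_d)^{1/d}$), and the Hamiltonian in the statement is a product of $2^{d-1}$ linear forms, not $d$. So either the conditional-supersolution step needs a genuinely different convex-analytic identity adapted to the non-simplicial geometry of $Q_k^\pm$, or one must argue that the longest-chain scaling in $Q_k^\pm$ really does produce that $2^{d-1}$-fold product---neither of which is quite as routine as triangulating and citing Hammersley. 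This is a real gap in both your sketch and the paper's one-line claim, though the overall architecture you describe is sound.
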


On the other hand, cones with more complex geometries are also possible in higher dimensions. For example, when $\varphi(\cdot) = \|\cdot\|_1$ in three dimensions, the dynamic programming principle becomes
\begin{equation}
u_A(x) = \inf_{C_{\delta}} \sup_{z \in x - \inte(C_{\delta})} (u_{A}(z) + 1_{A}(z))
\end{equation}
where $\delta$ ranges over $\{ (\pm 1, \mp 1, 0), (\pm 1, 0, \mp 1), (0, \pm 1, \mp 1)\}$
and if, say, $\delta = (1, 1, 0)$	then $C_{\delta} = \R^+ \times \R^+ \times \R$. Importantly, $C_{\delta}$  are convex, but not pointed. However, they are pointed in one dimension lower which leads to the inequality
\[
u_{A}(x_1,x_2,x_3) \leq  \min(h_{A}^{1}(x_1,x_2), h_{A}^{2}(x_1,x_3), h_{A}^{3}(x_2,x_3))
\]
where $h_{A}^{i}$ is $\ell^1$-Pareto peeling in two dimensions and $A$ is projected from $\R^3$ to $\R^2$ in the indicated way. These considerations suggest the following. 
\begin{conj}
	Under the same assumptions as Theorem \ref{theorem:l-inf-convergence}, in dimension $d = 3$, when $\varphi(\cdot) = \|\cdot\|_1$, $n^{-1/2} u_{n}$ almost surely converges to $\bar{h}$ locally uniformly where $\bar{h}$ is the unique viscosity solution to
	\[
	\begin{cases}
	\max( |\bar{h}_{x_1} \bar{h}_{x_2}|, |\bar{h}_{x_1} \bar{h}_{x_3}|, |\bar{h}_{x_2} \bar{h}_{x_3}|) = 1   & \mbox{ in $\mathrm{U}$,} \\
	\bar{h} = 0 &\mbox{ on $\partial \mathrm{U}$.}
	\end{cases}
	\]
\end{conj}	
Of note, this conjecture suggests different scalings for $\ell^1$ and $\ell^{\infty}$-Pareto peeling in dimensions higher than 2. 

\subsection{Other versions of peeling}
In this article, we have only considered one particularly convenient notion of Pareto efficiency.  Our definition of Pareto hull corresponds to what is known in location analysis as a {\it strictly efficient set}
but there are also efficient and weakly efficient sets, which we now discuss. 

Consider $\R^d$ equipped with a norm $\varphi(\cdot)$, let $A \subseteq \R^d$ and denote by $B_{a_i}(x)$ the closed ball of radius $\varphi(x-a_i)$ centered at $a_i$. 
 \begin{enumerate}
\item  The set of {\it efficient} points with respect to $A$ is
\begin{align*}
E(A) = &\{ x \in X \, \mid \, \forall y \not = x, (\exists a \in A, \varphi(a - x) < \varphi(a-y)) \mbox{ or } \\
&(\forall a \in A, \varphi(a-x) \leq \varphi(a-y) ).
\end{align*}
		\item The set of {\it strictly efficient} points with respect to $A$ is
\begin{align*}
c(A) &= \{ x \in \R^d \, \mid \, \forall y \not = x  \\
&\mbox{ there exists $a \in A$  with $\varphi(a-x) < \varphi(a-y)$} \}
\end{align*}
or equivalently $x \in c(A)$ if and only if $\bigcap_{i=1}^n B_{a_i}(x) = \{x\}$.
	\item The set of {\it weakly efficient} points with respect to $A$ is
\begin{align*}
C(A) &= \{ x \in \R^d \, \mid \, \forall y \not = x, \\
&\mbox{ there exists $a \in A$  with $\varphi(a-x) \leq \varphi(a-y)$}\}
\end{align*}
or equivalently $x \in C(A)$ if and only if $\bigcap_{i=1}^n \inte(B_{a_i}(x)) = \emptyset$.

\end{enumerate}

The definitions imply $A \subseteq c(A) \subseteq E(A) \subseteq C(A)$. 
Moreover, one may check that if $A \subseteq B$ then, $c(A) \subseteq c(B)$ and $C(A) \subseteq C(B)$. 
However, counterexamples demonstrate $A \subseteq B$ with $E(A) \not\subset E(B)$ --- see \cite{durier1986sets}. 
The monotonicity of strictly and weakly efficient sets suggest both enjoy scaling limits in general; however, it is not clear how to use these to tightly bound
$E(A)$. Indeed, weakly and strictly efficient peeling may have different scalings as indicated in Section \ref{subsec:weakly-efficient}. 

On the other hand, if $\varphi(\cdot)$ is induced by an inner product or its unit ball is strictly convex and $d = 2$, then $c(A) = C(A) = E(A) = \conv(A)$ \cite{durier1986sets}. Interestingly, Durier-Michelot have an example of a strictly convex norm ball in $d=3$ where $c(A) \not \subset \conv(A)$ --- see Section 4.3 of \cite{durier1986sets}.

\subsection{Weakly efficient peeling in two dimensions and higher} \label{subsec:weakly-efficient}
\begin{figure}
	\includegraphics[width=0.25\textwidth]{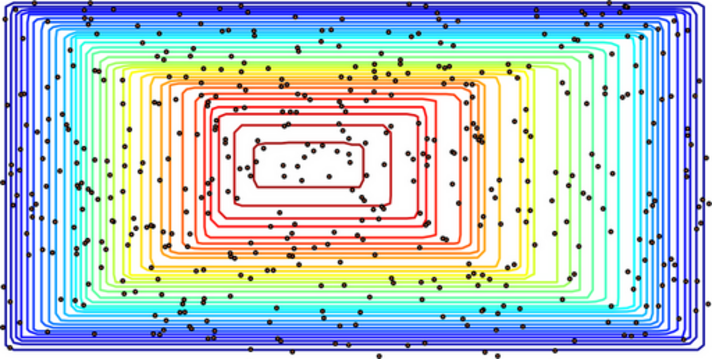} \qquad 
	\includegraphics[width=0.25\textwidth]{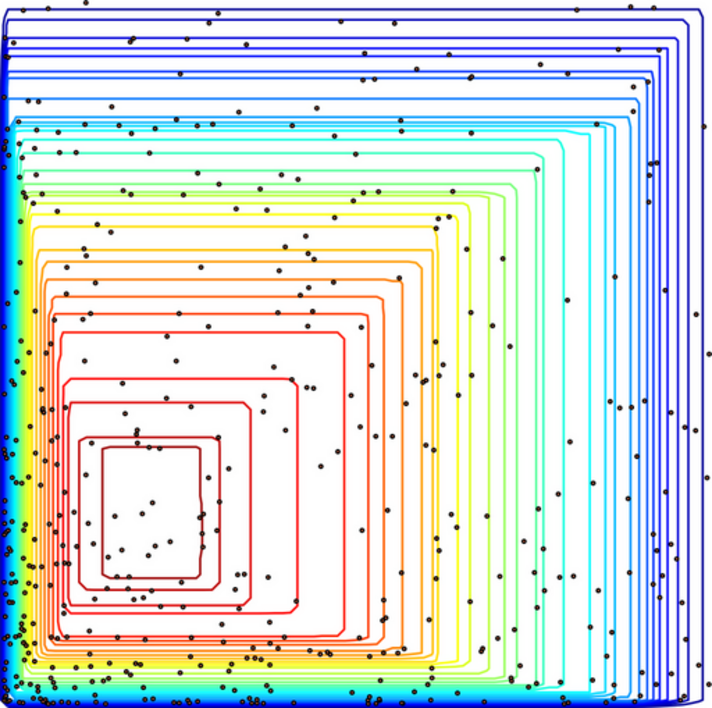}
	\caption{Level sets of weakly-efficient $\ell^1$-peeling for Poisson points in rectangular domains}
	\label{fig:weak-l1-peeling}
\end{figure}
In two dimensions, weakly efficient Pareto hulls are simpler to analyze than strictly efficient sets but the analogous height functions appear to have a different scaling. 
Specifically, for a finite set of points $X \subseteq \R^2$ and a norm $\varphi(\cdot)$, denote the {\it weak Pareto hull} by 
\begin{equation}
W_1(A) = C(A) \quad \mbox{and} \quad W_{n+1}(A) = C(A \cap \inte(W_n(A)))
\end{equation}
and the height function by 
\[
h_{X} = \sum_{n \geq 1} 1_{\inte(W_n(A))}.
\]
Weak Pareto hulls have a simpler inclusion constraint than strictly efficient sets --- see Theorem 4.3 of \cite{durier1986sets} and Theorem 3 of \cite{pelegrin1989determination}. 
For example, when $\varphi(\cdot) = \|\cdot\|_1$ the height function satisfies the dynamic programming principle,  
\[
h_{X}(x) = \inf_{C_i} \sup_{z \in x - \inte(C_i)} (h_{X}(z) + 1_{X}(z))
\]
where 
\[
C_i = \{ x \in \R^2 \, \mid \, \langle x , \xi_i \rangle \geq 0 \}
\]
for 
\[
\xi_1 = (1,0) \quad \xi_2 = (-1,0) \quad \xi_3 = (0,1) \quad \xi_4 = (0,-1).
\]
Equivalently $C(X)$ is the bounding rectangle of $X$, 
\begin{equation}
\br(X) := \{ z \in \R^2 \, \mid \,\mbox{ $m \leq z \leq M$} \},
\end{equation}
where $m_i := \min_{z \in X} z_i$ and $M_i := \max_{z \in X} z_i$ for $i = 1, 2$ and, as in Section \ref{sec:q-nds}, the vector inequalities are pointwise. A straightforward analysis, essentially counting Poisson points, 
leads to the following. 
\begin{example} \label{prop:weakly-l1-efficient}
Let $\varphi(\cdot) = \|\cdot\|_1$. If $X_n$ are Poisson point processes in $[-1,1]^2$ with intensities $n$, then, almost surely, the sequence of rescaled height functions $n^{-1} h_{n}$ converges to 
$\bar{h}$ where 
\begin{equation}
\bar{h}(x) = \min(1-x_1^2, 1- x_2^2).
\end{equation}
\end{example}

One can also check that if $f_i:\R^2 \to \R$ are continuous and invertible, the map 
\begin{equation}
F(x_1,x_2) = (f_1(x_1), f_2(x_2)) 
\end{equation}
`preserves bounding rectangles'. That is if $(x,y) \in \br(A)$ then $F(x,y) \in \br(F(A))$. 
This can be used to extend Example \ref{prop:weakly-l1-efficient} to rectangular domains --- see Figure \ref{fig:weak-l1-peeling}. However, it is not clear if there is a simple description of the limit when the Poisson intensity is not strictly positive in $\br(A)$. 

The situation in higher dimensions again appears to be even more difficult --- the cones describing weakly efficient sets may not always be convex --- see Example 2 in Section 4.2 of \cite{durier1986sets}.

\subsection{Other asymptotic regimes} Comparing our results to those of \cite{calder2020limit}, one observes that Pareto hull peeling has a different asymptotic behavior depending on the character of the norm $\varphi$: if the norm is strictly convex, then the height function scales like $n^{\frac{2}{3}}$, while the presence of even a single facet results in a scaling like $n^{\frac{1}{2}}$.  Further, while both the facets and the halfspaces contribute to the DPP (Proposition \ref{prop:dpp}), the facets dominate in the $n^{\frac{1}{2}}$ scaling regime, so much so that the geometry of the round parts $\mathcal{E}$ do not contribute in any way to the Hamiltonian $\bar{H}_{\varphi}$. 

This leads to a natural question: are there scaling regimes in which both the flat cones $\{Q_{p}\}_{p \in \mathcal{N}^{*}}$ and the round parts $\mathcal{E}$ contribute meaningfully to the limit?  One approach would be to vary the norm along with the intensity $n$ of the Poisson points.

For a specific example, in the discussion that follows, let $\varphi_k$ denote the norm with unit ball given by a regular $k$-gon.  Suppose that we vary both the parameter $k$ as well as $n$, the intensity of the point cloud, having in mind that both $k$ and $n$ are large.

In the extreme case when we first send $k \to \infty$ and then send $n \to \infty$, we recover the same asymptotic behavior as in convex hull peeling.   This is due to an observation that goes back to \cite{thisse1984some}.  In the statement, we write $\mathcal{P}_k$ for the Pareto hull 
with respect to $\varphi_k$. 
\begin{theorem}[Theorem 5 in \cite{thisse1984some}] \label{theorem:convergence-to-convex-hull}
	For each finite point set $A \subseteq \R^2$, it holds that for each sufficiently small $\delta > 0$, 
	there exists $k_0(A, \delta)$ such that,
	\[
	\inte(B_{\delta}^-(\conv(A)) \subset \inte(\mathcal{P}_k(A)) \subset \inte(\conv(A)) \quad \mbox{for all $k \geq k_0$}, 
	\]
	where the notation $B_{\delta}^-$ denotes the inner $\delta$ neighborhood of a set, $B_{\delta}^-(X) = \{ y \in X : B(y, \delta) \subseteq X\}$.
	In particular, for $k$ sufficiently large, $\partial \mathcal{P}_k(A) \cap A = \partial \conv(A) \cap A$. 
\end{theorem}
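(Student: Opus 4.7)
The plan is to apply Corollary \ref{cor:-constrained-convex-hull}, which characterizes the interior of the Pareto hull by two conditions: (i) $A \cap (x + \inte(Q_{p})) \neq \emptyset$ for every $p \in \mathcal{N}^{*}_{k}$, and (ii) $x \in \inte(\conv(A))$. The right-hand inclusion $\inte(\mathcal{P}_{k}(A)) \subseteq \inte(\conv(A))$ is immediate from (ii). All the work is in the reverse inclusion, where the key point is to show that condition (i) is forced, uniformly in $p$, once $x$ sits deep inside $\conv(A)$ and $k$ is large.

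I first plan to identify the geometry of the flat cones $Q_{p}$ for the regular $k$-gon norm. Each facet of $\{\varphi_{k} = 1\}$ has angular extent $2\pi/k$, so a direct trigonometric computation using $\mathcal{Q}_{p} = \cone(v_{p},-w_{p})$ and $Q_{p} = \cone(v_{p},w_{p})$ shows that $Q_{p}$ is convex, bisected by the unit vector $\hat{n}_{p} := -p^{\perp}/\|p^{\perp}\|$, with half-angle $\pi/2 - \pi/k$. Equivalently,
\begin{equation*}
y \in \inte(Q_{p}) \iff \langle \hat{n}_{p}, y \rangle > \|y\| \sin(\pi/k),
\end{equation*}
so that each $Q_{p}$ approaches the halfplane $H_{\hat{n}_{p}}$ as $k \to \infty$.

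Given $x \in \inte(B_{\delta}^{-}(\conv(A)))$, so that $B(x,\delta) \subseteq \conv(A)$, and any $p \in \mathcal{N}_{k}^{*}$, the point $x + \delta \hat{n}_{p}$ belongs to $\conv(A)$ and is therefore a convex combination of the elements of $A$; averaging the linear functional $a \mapsto \langle \hat{n}_{p}, a - x \rangle$ produces some $a_{i} \in A$ with $\langle \hat{n}_{p}, a_{i} - x \rangle \geq \delta$. Since $x, a_{i} \in \conv(A)$, we have $\|a_{i} - x\| \leq R := \mathrm{diam}(A)$. Choosing $k_{0} = k_{0}(A,\delta)$ so that $R \sin(\pi/k_{0}) < \delta$, we obtain, for every $k \geq k_{0}$ and every $p \in \mathcal{N}_{k}^{*}$,
\begin{equation*}
\langle \hat{n}_{p}, a_{i} - x \rangle \geq \delta > R \sin(\pi/k) \geq \|a_{i} - x\| \sin(\pi/k),
\end{equation*}
which by the characterization above places $a_{i} \in x + \inte(Q_{p})$. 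Condition (i) holds, so Corollary \ref{cor:-constrained-convex-hull} yields $x \in \inte(\mathcal{P}_{k}(A))$.

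Finally, the ``in particular" assertion falls out of the two inclusions together with $A \subseteq \mathcal{P}_{k}(A)$ (apply the definition of Pareto hull with $a = x$ when $x \in A$). Any $a \in A \cap \inte(\conv(A))$ is contained in some $\inte(B_{\delta}^{-}(\conv(A)))$ and hence, for $k$ sufficiently large, belongs to $\inte(\mathcal{P}_{k}(A))$, so is not on $\partial \mathcal{P}_{k}(A)$; conversely any $a \in A \cap \partial \conv(A)$ lies in $\mathcal{P}_{k}(A) \subseteq \conv(A)$ while every neighborhood meets the complement of $\conv(A)$, hence also of $\mathcal{P}_{k}(A)$, placing $a$ on $\partial \mathcal{P}_{k}(A)$. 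The only genuinely geometric step I anticipate is the trigonometric identification of $Q_{p}$ as a near-halfplane bisected by $-p^{\perp}/\|p^{\perp}\|$; everything else is convex-hull averaging combined with the machinery of Section \ref{sec:pareto-hulls}.
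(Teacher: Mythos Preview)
Your proof is correct and follows exactly the route the paper indicates: the paper's proof consists of a single sentence pointing to Theorem~\ref{thm:cone_efficient} (the cone characterization of Pareto hulls), and you have supplied the details of that deduction via Corollary~\ref{cor:-constrained-convex-hull}. Your trigonometric identification of $Q_{p}$ as a cone of half-angle $\pi/2-\pi/k$ bisected by $-p^{\perp}/\|p^{\perp}\|$ is the natural way to make the convergence $Q_{p}\to H_{\hat n_{p}}$ quantitative, and the convex-combination averaging step is the standard way to extract a witness $a_{i}\in A$ from $x+\delta\hat n_{p}\in\conv(A)$.
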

\begin{proof}
	A version of this result was proved in \cite{thisse1984some} and the result as stated may be deduced directly from Theorem \ref{thm:cone_efficient}.	
\end{proof}
This implies, for a fixed set of points $A$, that the sequence of height functions associated with $\varphi_k$ converges to the height function for convex hull peeling --- see Figure \ref{fig:kgon-converge-to-convex-hull-peeling}.  Thus, if we first send $k \to \infty$ and then send $n \to \infty$, the height functions behave as in convex hull peeling, and, by \cite{calder2020limit}, the relevant PDE is
	\begin{equation} \label{eq:curvature}
		\langle Du, \text{cof}(-D^{2}u) Du \rangle = f(x)^{2}.
	\end{equation}

\begin{figure}
	\includegraphics[width=0.22\textwidth]{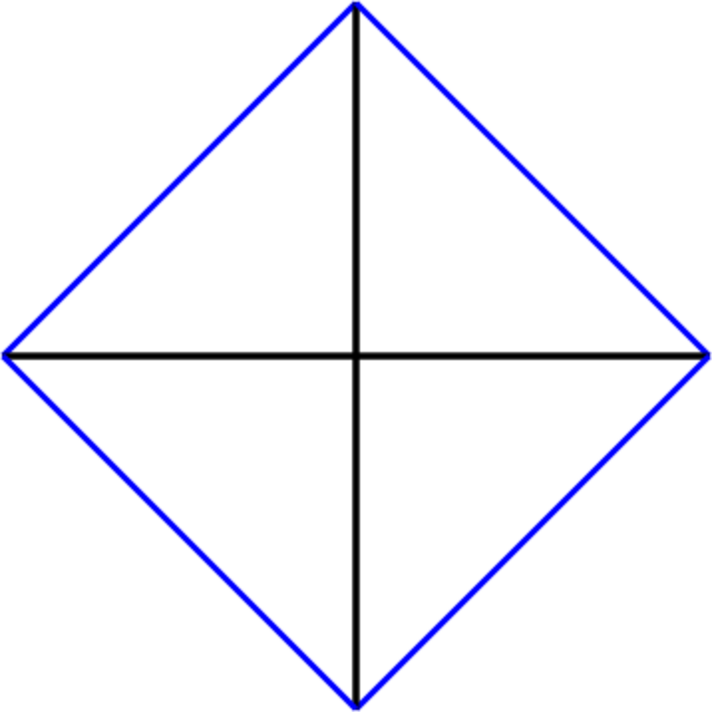}
	\includegraphics[width=0.22\textwidth]{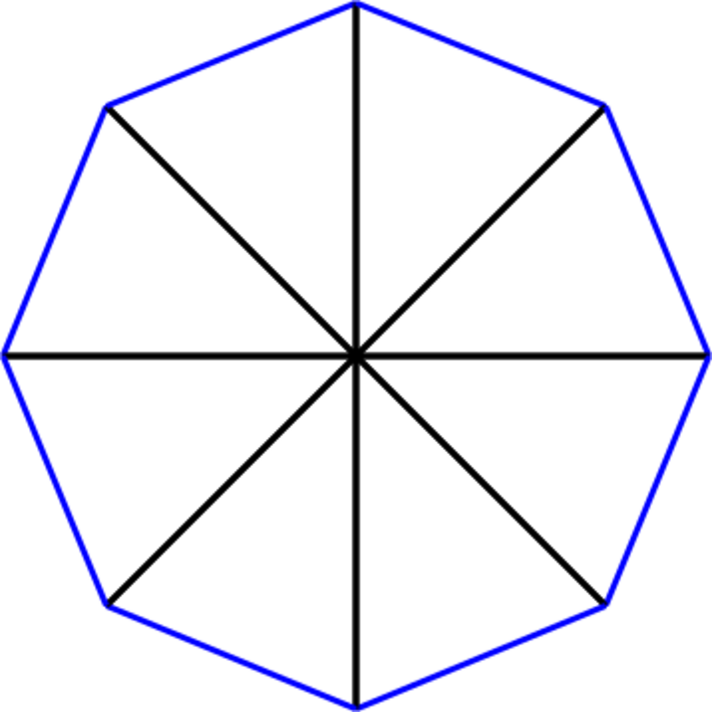}
	\includegraphics[width=0.22\textwidth]{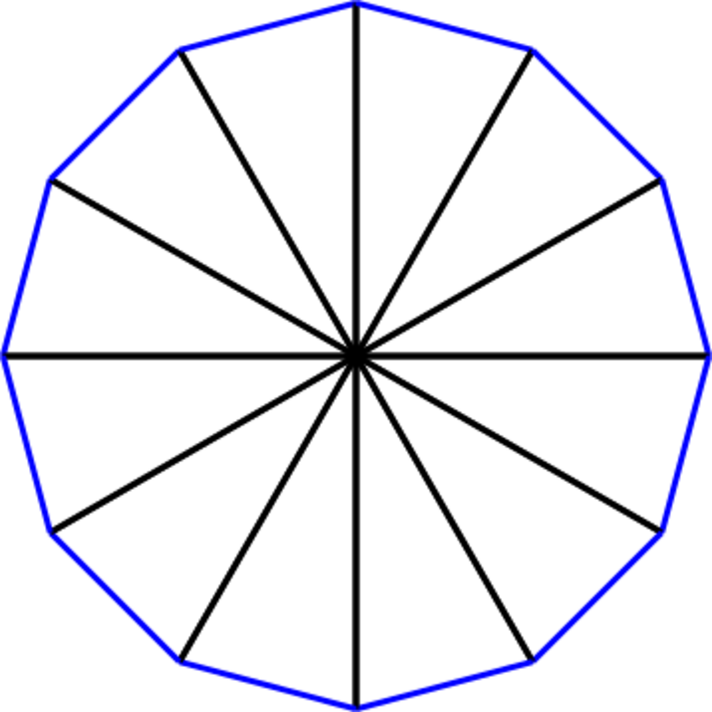} 
	\includegraphics[width=0.22\textwidth]{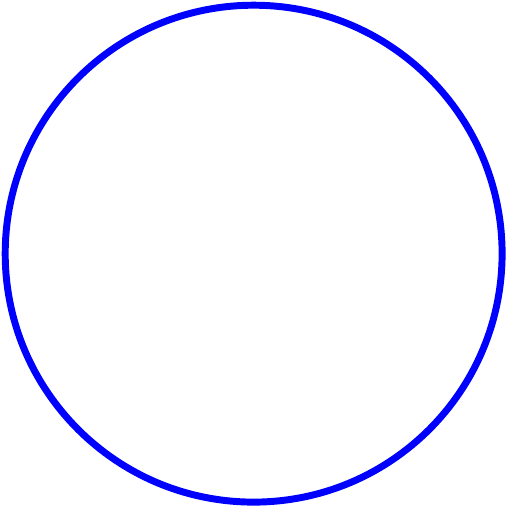} \\

	\includegraphics[width=0.22\textwidth]{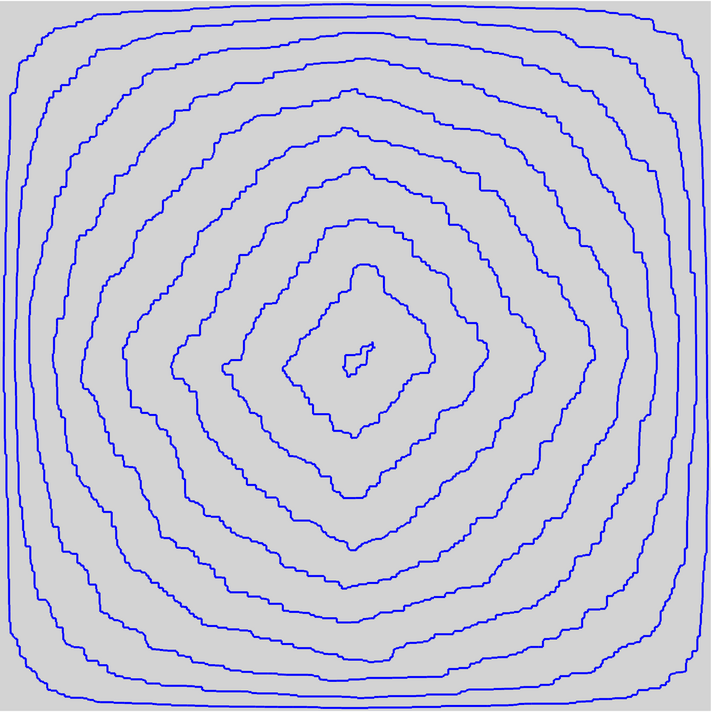}
	\includegraphics[width=0.22\textwidth]{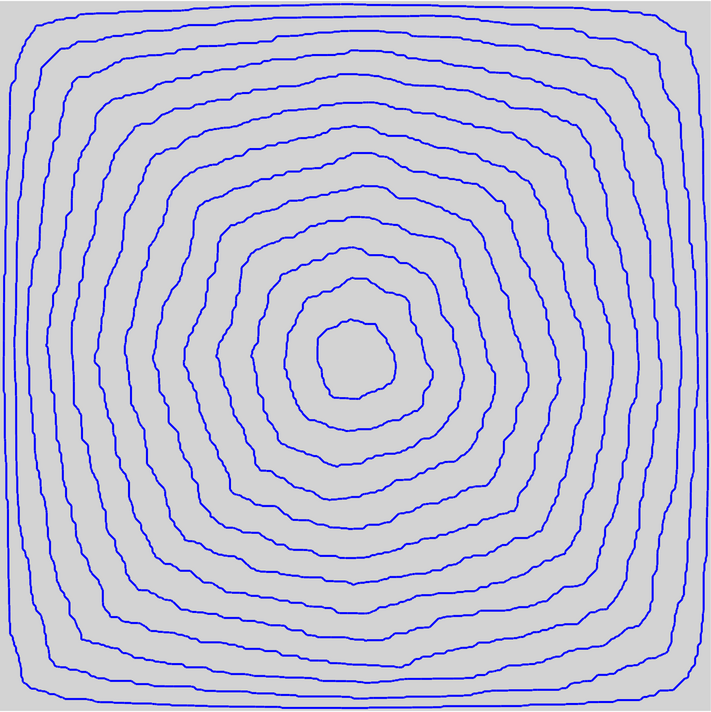}
	\includegraphics[width=0.22\textwidth]{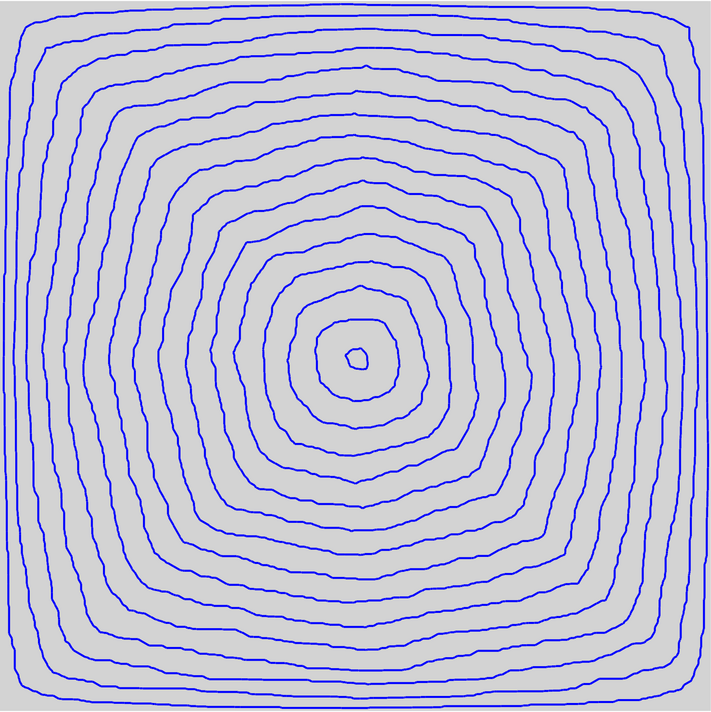}
	\includegraphics[width=0.22\textwidth]{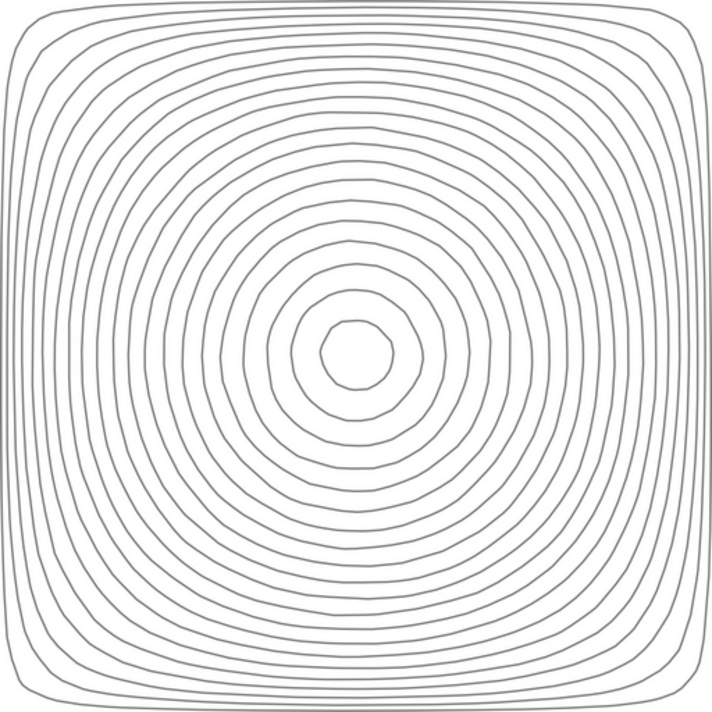}
	
	\caption{The first three columns are Pareto peeling of $10^5$ Poisson random points in a square with respect to the indicated ``k-gon'' norms, $\varphi_k$ for $k = 4,8,12$. The last column is convex hull peeling.}
	\label{fig:kgon-converge-to-convex-hull-peeling}
\end{figure}

On the other hand, at the opposite extreme, the limit that emerges when we first send $n \to \infty$ and then send $k \to \infty$ is less clear.  Toward this end, first, note that the Hamiltonian $\bar{H}_{\varphi_{k}}$ is given by 
\begin{equation} \label{eq:kgon-hamiltonian}
\bar{H}_{\varphi_k}(\xi) =  \max_{j = 0, \ldots, k - 1}  \frac{\langle \xi,v_{j,k} \rangle \langle \xi, w_{j,k} \rangle}{\sin(\frac{2\pi}{k})}
\end{equation}
where 
\[
\begin{aligned}
w_{j,k} = - \left( \cos(\frac{2\pi (j + 1)}{k}), \sin(\frac{2\pi (j + 1)}{k}) \right), \quad v_{j,k} = \left( \cos (\frac{2\pi j}{k}), \sin(\frac{2\pi j}{k}) \right).
\end{aligned}
\]
Since the form of $\bar{H}_{\varphi_{k}}$ is explicit, it should be possible to use it to characterize the behavior of the solution $\bar{u}^{(k)}$ of \eqref{eq:eikonal} in the limit $k \to \infty$.
	
The discussion above suggests that, in general, very different limiting equations might arise in the limit $\min\{k,n\} \to +\infty$ depending on the rate at which $k$ increases relative to $n$.  It would be interesting to study this question in more detail, particularly to determine whether or not there is a choice of $k = k(n)$ so that both first- and second-order terms appear in the limiting PDE.

A related alternative approach would be to add small, $n$-dependent facets to the Euclidean norm $\|\cdot\|_{2}$.  For instance, given a small angle $\theta \in (0,\pi)$, consider the norm $\varphi_{\theta}$ given by 
	\begin{equation*}
		\varphi_{\theta}(v) = \max\{\|v\|_{2}, \cos(2^{-1}\theta)^{-1} |v_{2}|\}.
	\end{equation*}
In this case, the boundary curve $\{\varphi_{\theta} = 1\}$ is almost identical to the unit circle, except for two line segments of length $2 \sin(\frac{\theta}{2})$ that cross the coordinate axes perpendicularly.    If we choose a sequence $(\theta_{n})_{n \in \mathbb{N}}$ such that $\theta_{n} \to 0$ as $n \to \infty$, then the facets get smaller and smaller so, as in the last example, it is not obvious a priori what the limiting behavior is or how it depends on the choice of sequence.  Again, with an eye toward deriving PDE involving both first- and second-order terms, such examples may be of interest.  See Figures \ref{fig:mixed-kgon-manyfacets} and \ref{fig:mixed-peeling-one-facet} for some simulations in this direction; as elsewhere in the paper, the red color indicates the influence of the round parts $\mathcal{E}$.

\begin{figure}
	\includegraphics[width=0.22\textwidth]{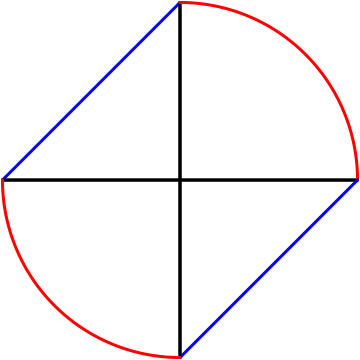}
	\includegraphics[width=0.22\textwidth]{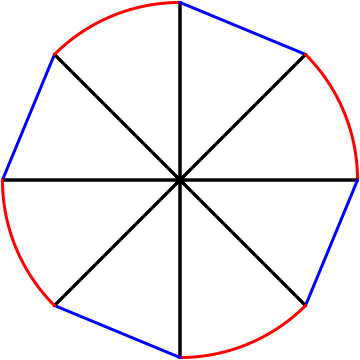}
	\includegraphics[width=0.22\textwidth]{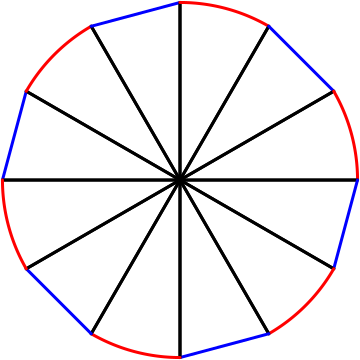}
	
	\includegraphics[width=0.22\textwidth]{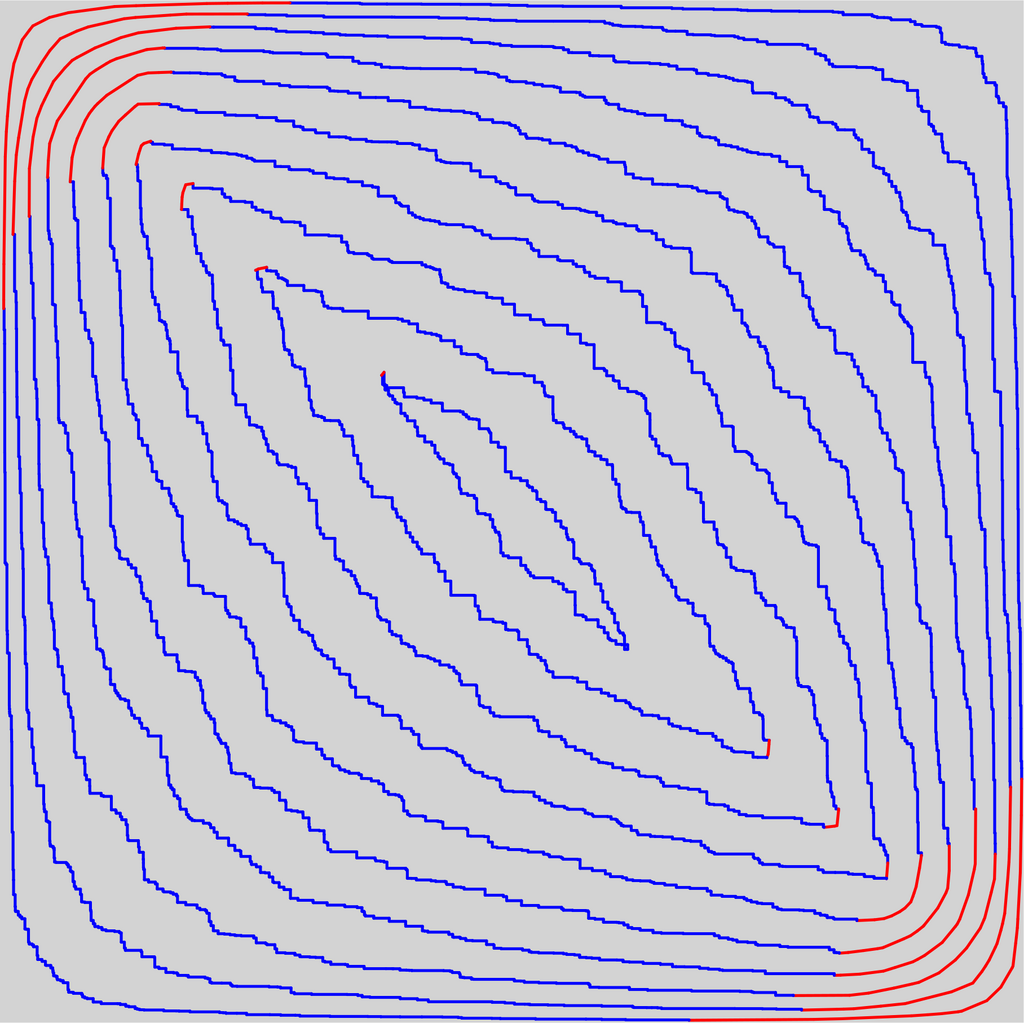}
	\includegraphics[width=0.22\textwidth]{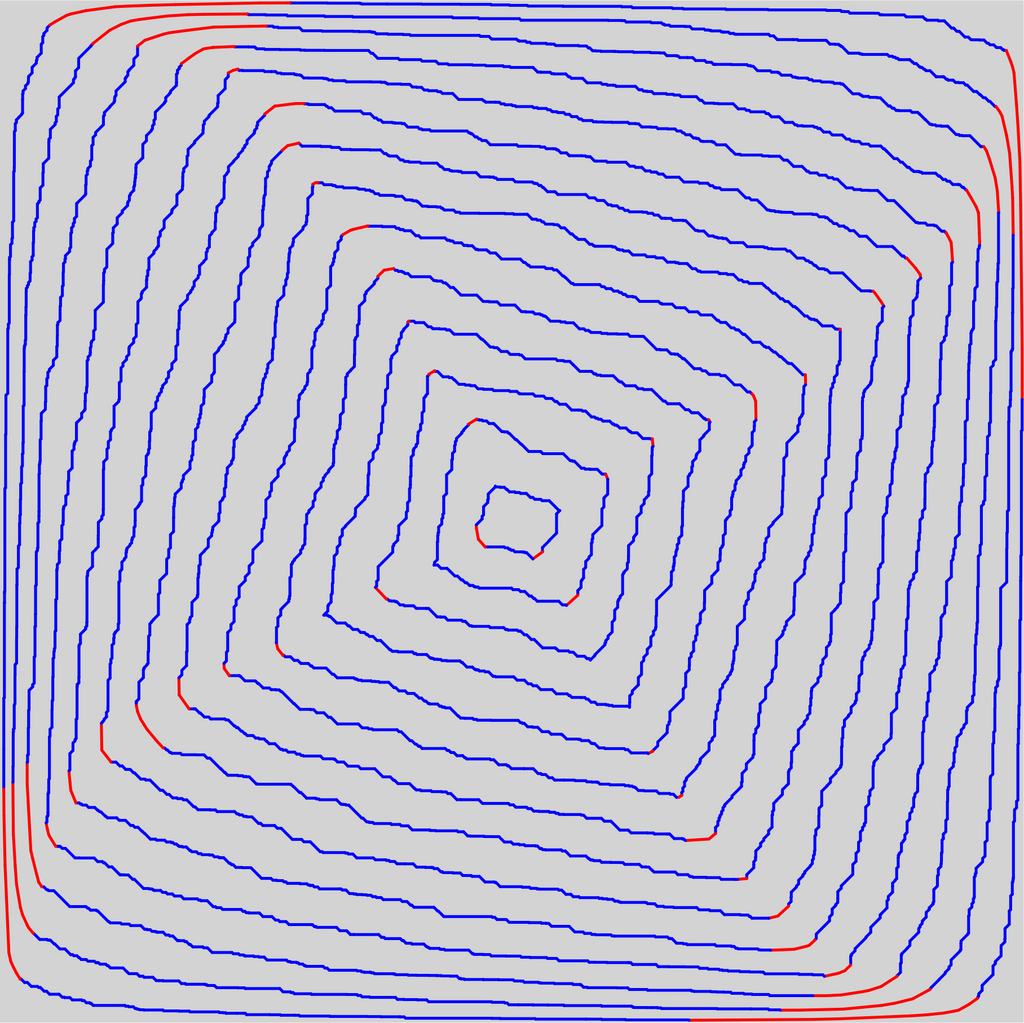}
	\includegraphics[width=0.22\textwidth]{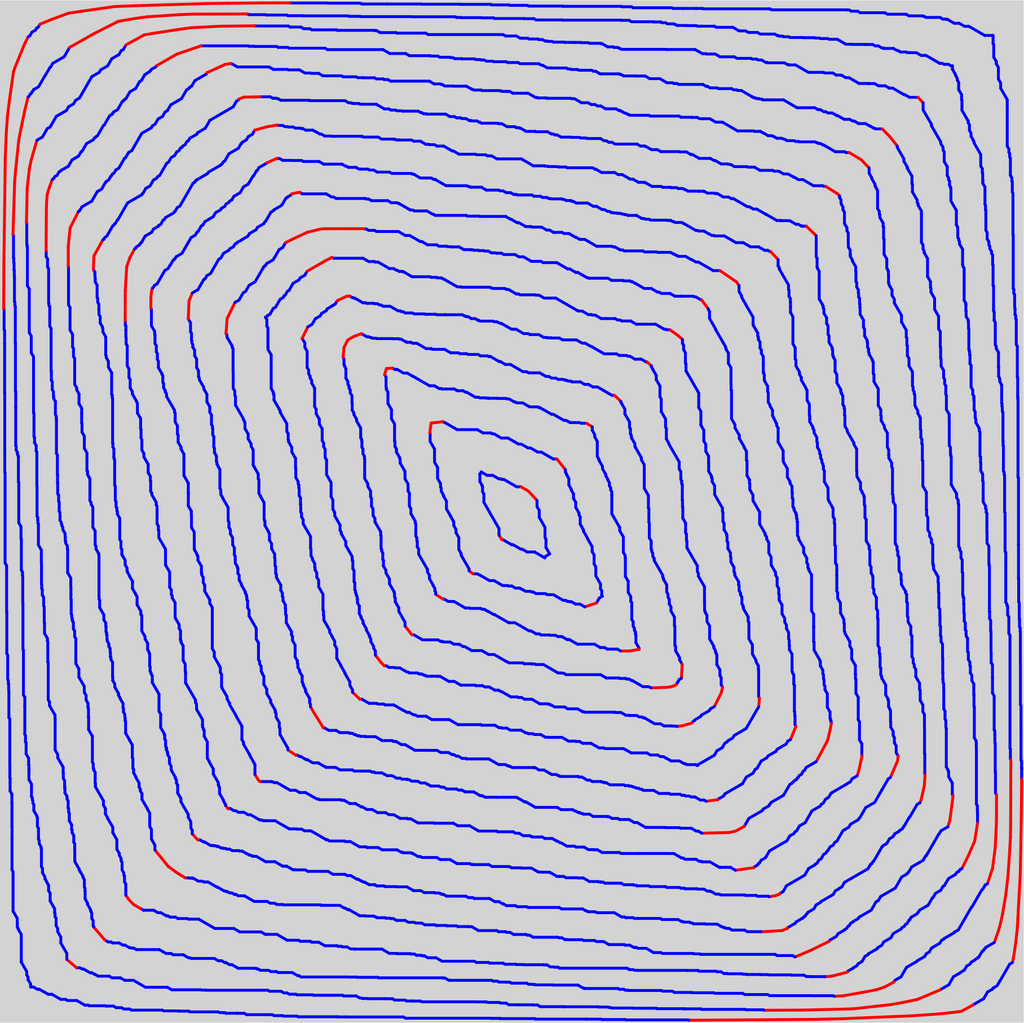}
	
	\caption{Pareto peeling of $10^5$ Poisson random points in a square with respect to the indicated ``mixed k-gon'' norms with $\frac{k}{2}$ facets, $\varphi^{(mixed)}_k$ for $k = 4,8,12$. }
	\label{fig:mixed-kgon-manyfacets}
\end{figure}

\begin{figure}
	\includegraphics[width=0.22\textwidth]{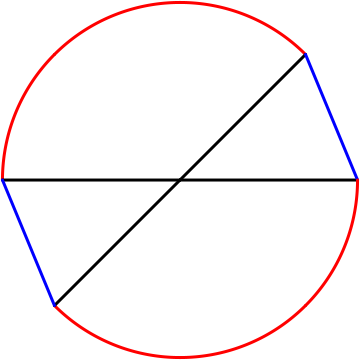}
	\includegraphics[width=0.22\textwidth]{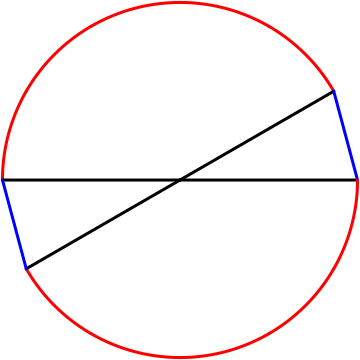}
	\includegraphics[width=0.22\textwidth]{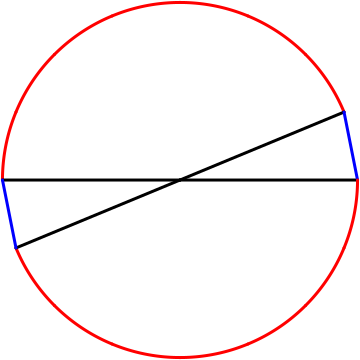}
	
	\includegraphics[width=0.22\textwidth]{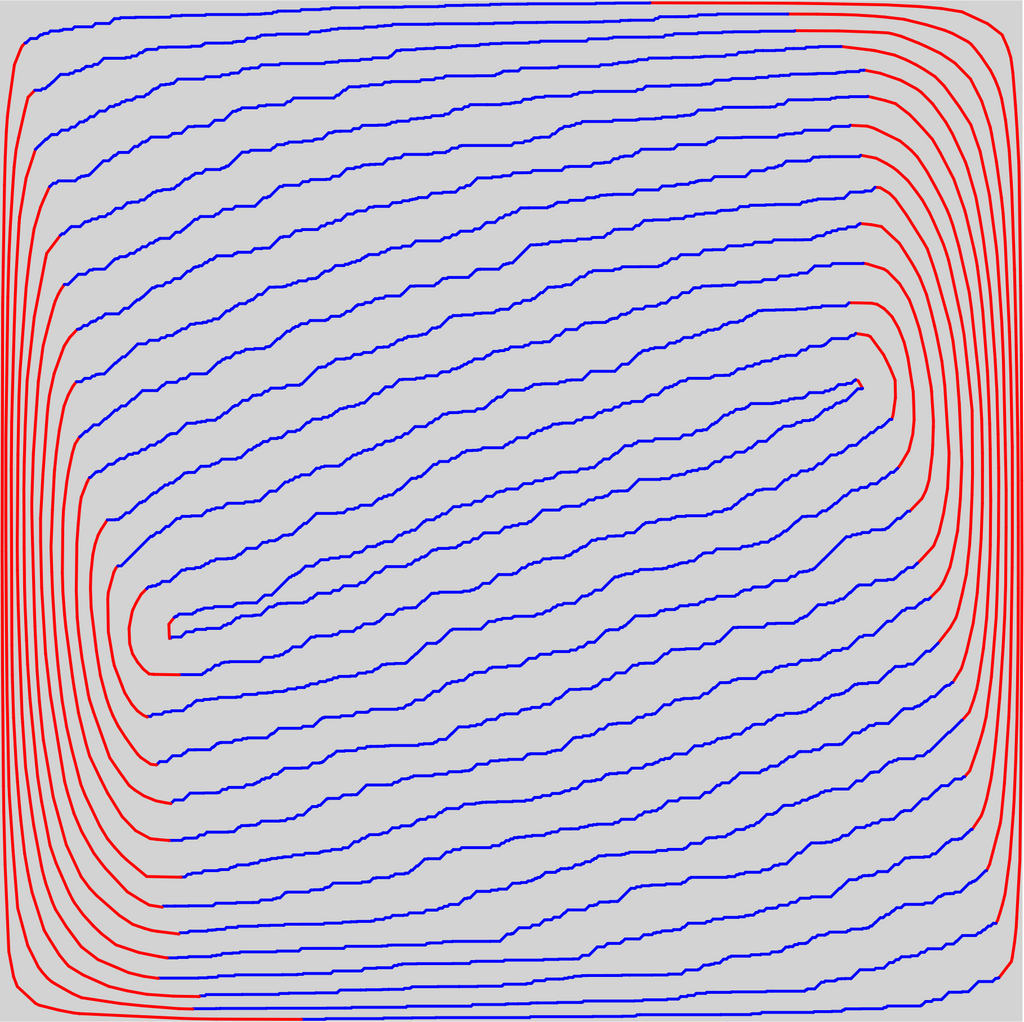}
	\includegraphics[width=0.22\textwidth]{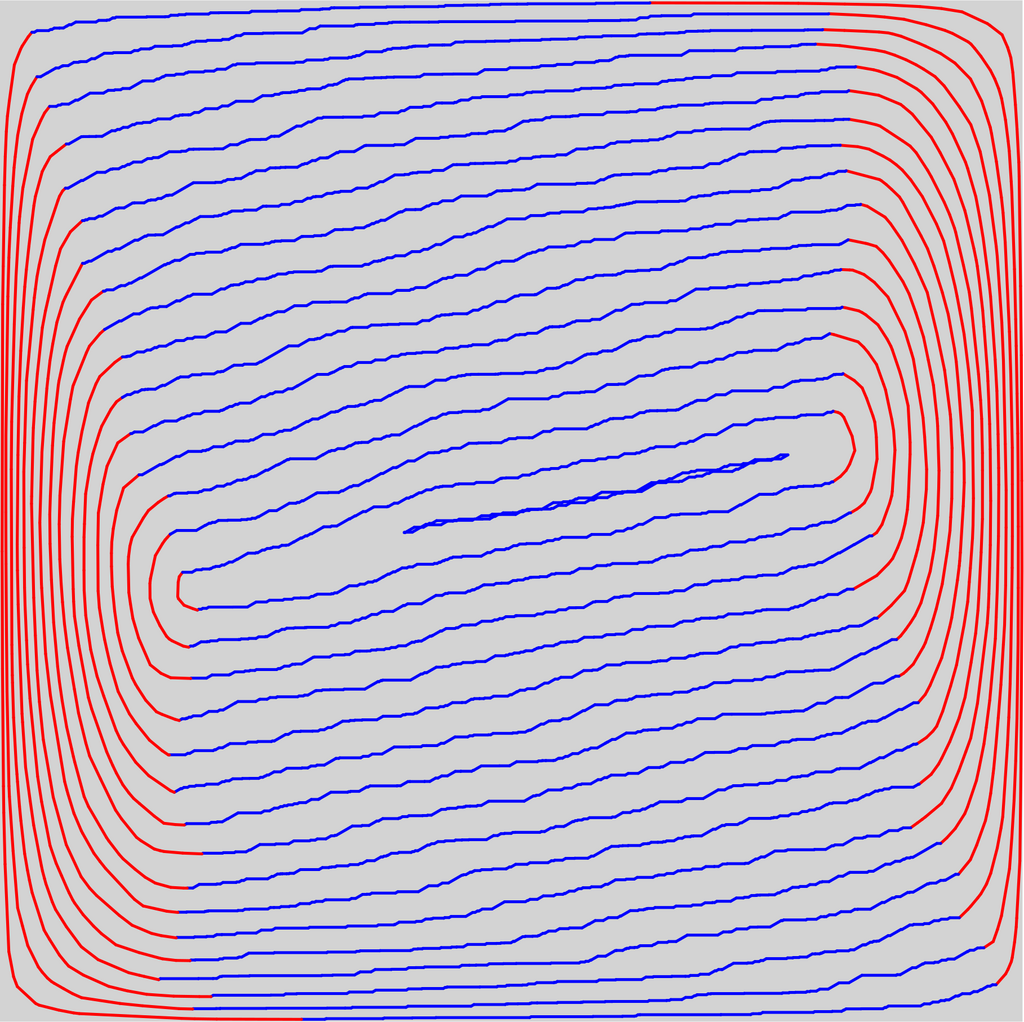}
	\includegraphics[width=0.22\textwidth]{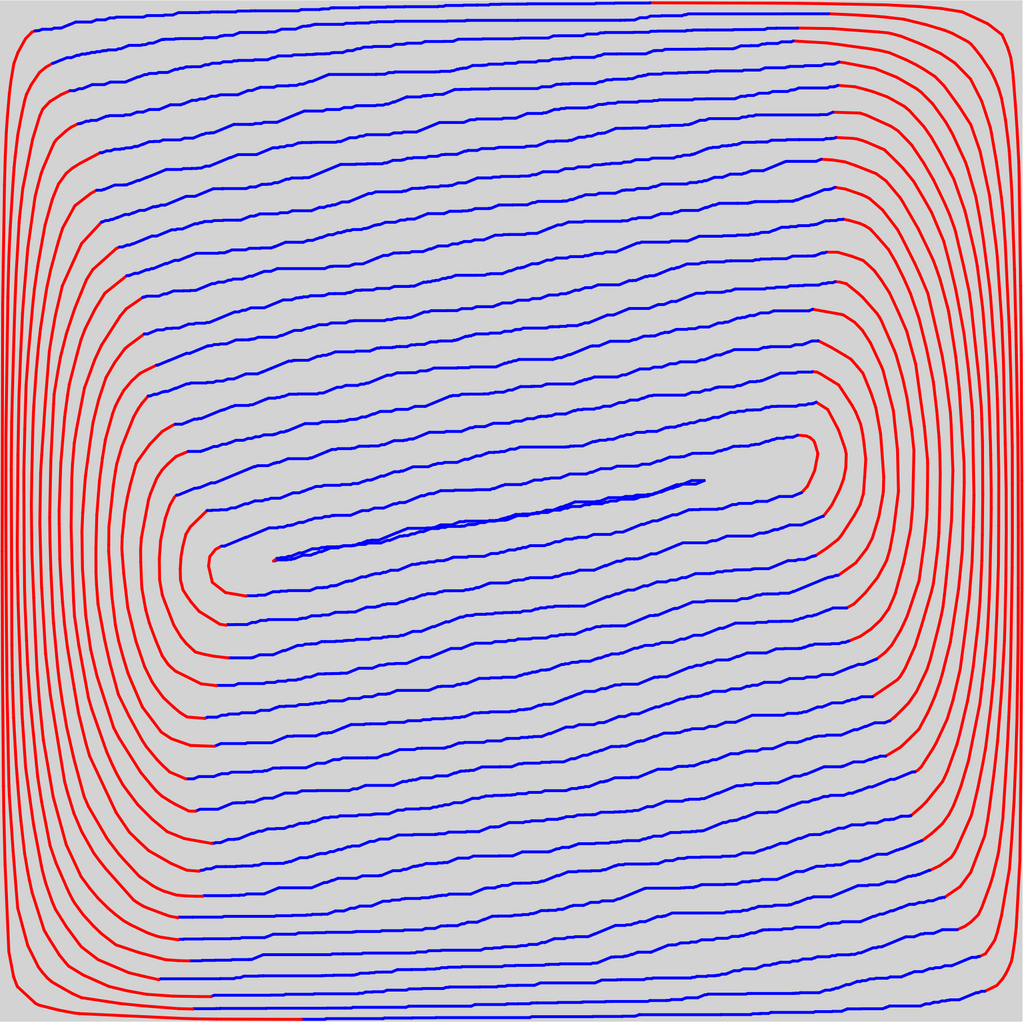}
	
	\caption{Pareto peeling of $10^5$ Poisson random points in a square with respect to the indicated ``mixed'' norms with just two small facets, $\varphi_{\theta}$ with opening angles $\theta = \frac{2\pi}{k}$, where $k = 8,12,16$. }
	\label{fig:mixed-peeling-one-facet}
\end{figure}

\bibliography{peeling_2d.bib}{}

\providecommand{\bysame}{\leavevmode\hbox to3em{\hrulefill}\thinspace}
\providecommand{\MR}{\relax\ifhmode\unskip\space\fi MR }
\providecommand{\MRhref}[2]{%
  \href{http://www.ams.org/mathscinet-getitem?mr=#1}{#2}
}
\providecommand{\href}[2]{#2}
\begin{thebibliography}{LNSdG19}

\bibitem[AD95]{aldous_diaconis}
D.~Aldous and P.~Diaconis, \emph{Hammersley's interacting particle process and
  longest increasing subsequences}, Probab. Theory Related Fields \textbf{103}
  (1995), no.~2, 199--213. \MR{1355056}

\bibitem[BCD97]{bardi_capuzzo-dolcetta}
Martino Bardi and Italo Capuzzo-Dolcetta, \emph{Optimal control and viscosity
  solutions of {H}amilton-{J}acobi-{B}ellman equations}, Systems \& Control:
  Foundations \& Applications, Birkh\"{a}user Boston, Inc., Boston, MA, 1997,
  With appendices by Maurizio Falcone and Pierpaolo Soravia. \MR{1484411}

\bibitem[Bel04]{bellettini}
Giovanni Bellettini, \emph{Anisotropic and crystalline mean curvature flow}, A
  sampler of {R}iemann-{F}insler geometry, Math. Sci. Res. Inst. Publ.,
  vol.~50, Cambridge Univ. Press, Cambridge, 2004, pp.~49--82. \MR{2132657}

\bibitem[BS98]{barles_souganidis_fronts}
Guy Barles and Panagiotis~E. Souganidis, \emph{A new approach to front
  propagation problems: theory and applications}, Arch. Rational Mech. Anal.
  \textbf{141} (1998), no.~3, 237--296. \MR{1617291}

\bibitem[BV04]{boyd2004convex}
Stephen Boyd and Lieven Vandenberghe, \emph{Convex optimization}, Cambridge
  University Press, Cambridge, 2004. \MR{2061575}

\bibitem[Cal]{calderminicourse}
Jeff Calder, \emph{Minicourse: {P}artial {D}ifferential {E}quations for {D}ata
  {P}eeling}.

\bibitem[Cal16]{calder2016direct}
\bysame, \emph{A direct verification argument for the {H}amilton-{J}acobi
  equation continuum limit of nondominated sorting}, Nonlinear Anal.
  \textbf{141} (2016), 88--108. \MR{3512400}

\bibitem[Cal17]{calder2017numerical}
\bysame, \emph{Numerical schemes and rates of convergence for the
  {H}amilton-{J}acobi equation continuum limit of nondominated sorting}, Numer.
  Math. \textbf{137} (2017), no.~4, 819--856. \MR{3719045}

\bibitem[CC22]{cook2022rates}
Brendan Cook and Jeff Calder, \emph{Rates of convergence for the continuum
  limit of nondominated sorting}, SIAM J. Math. Anal. \textbf{54} (2022),
  no.~1, 872--911. \MR{4376298}

\bibitem[CEH14a]{calder2014continuum}
Jeff Calder, Selim Esedo\={g}lu, and Alfred~O. Hero, \emph{A continuum limit
  for non-dominated sorting}, 2014 Information Theory and Applications Workshop
  (ITA), IEEE, 2014, pp.~1--7.

\bibitem[CEH14b]{calder2014hamilton}
\bysame, \emph{A {H}amilton-{J}acobi equation for the continuum limit of
  nondominated sorting}, SIAM J. Math. Anal. \textbf{46} (2014), no.~1,
  603--638. \MR{3163240}

\bibitem[CEH15]{calder2015pde}
\bysame, \emph{A {PDE}-based approach to nondominated sorting}, SIAM J. Numer.
  Anal. \textbf{53} (2015), no.~1, 82--104. \MR{3296616}

\bibitem[CIL92]{crandall1992user}
Michael~G. Crandall, Hitoshi Ishii, and Pierre-Louis Lions, \emph{User's guide
  to viscosity solutions of second order partial differential equations}, Bull.
  Amer. Math. Soc. (N.S.) \textbf{27} (1992), no.~1, 1--67. \MR{1118699}

\bibitem[CKO00]{corne2000pareto}
David~W Corne, Joshua~D Knowles, and Martin~J Oates, \emph{The {P}areto
  envelope-based selection algorithm for multiobjective optimization},
  International conference on parallel problem solving from nature, Springer,
  2000, pp.~839--848.

\bibitem[CS20]{calder2020limit}
Jeff Calder and Charles~K. Smart, \emph{The limit shape of convex hull
  peeling}, Duke Math. J. \textbf{169} (2020), no.~11, 2079--2124. \MR{4132581}

\bibitem[Dal04]{dalal2004counting}
Ketan Dalal, \emph{Counting the onion}, Random Structures Algorithms
  \textbf{24} (2004), no.~2, 155--165. \MR{2035873}

\bibitem[DM85]{durier1985geometrical}
Roland Durier and Christian Michelot, \emph{Geometrical properties of the
  {F}ermat-{W}eber problem}, European J. Oper. Res. \textbf{20} (1985), no.~3,
  332--343. \MR{800909}

\bibitem[DM86]{durier1986sets}
\bysame, \emph{Sets of efficient points in a normed space}, J. Math. Anal.
  Appl. \textbf{117} (1986), no.~2, 506--528. \MR{848473}

\bibitem[DM94]{durier1994set}
\bysame, \emph{On the set of optimal points to the {W}eber problem: further
  results}, Transportation Sci. \textbf{28} (1994), no.~2, 141--149.
  \MR{1273125}

\bibitem[Dur87]{durier1987sets}
Roland Durier, \emph{Sets of efficiency in a normed space and inner product},
  Recent Advances and Historical Development of Vector Optimization, Springer,
  1987, pp.~114--128.

\bibitem[Dur90]{durier1990pareto}
\bysame, \emph{On {P}areto optima, the {F}ermat-{W}eber problem, and polyhedral
  gauges}, Math. Programming \textbf{47} (1990), no.~1, (Ser. A), 65--79.
  \MR{1054842}

\bibitem[Ham72]{hammersley1972few}
J.~M. Hammersley, \emph{A few seedlings of research}, Proceedings of the
  {S}ixth {B}erkeley {S}ymposium on {M}athematical {S}tatistics and
  {P}robability ({U}niv. {C}alifornia, {B}erkeley, {C}alif., 1970/1971), {V}ol.
  {I}: {T}heory of statistics, 1972, pp.~345--394. \MR{0405665}

\bibitem[Kin93]{kingman1992poisson}
J.~F.~C. Kingman, \emph{Poisson processes}, Oxford Studies in Probability,
  vol.~3, The Clarendon Press, Oxford University Press, New York, 1993, Oxford
  Science Publications. \MR{1207584}

\bibitem[Kuh67]{kuhn1967pair}
H.~W. Kuhn, \emph{On a pair of dual nonlinear programs}, Nonlinear
  {P}rogramming ({NATO} {S}ummer {S}chool, {M}enton, 1964), North-Holland,
  Amsterdam, 1967, pp.~37--54. \MR{0219311}

\bibitem[Kuh73]{kuhn1973note}
Harold~W. Kuhn, \emph{A note on {F}ermat's problem}, Math. Programming
  \textbf{4} (1973), 98--107. \MR{316102}

\bibitem[Lio82]{hjbook}
Pierre-Louis Lions, \emph{Generalized solutions of {H}amilton-{J}acobi
  equations}, Research Notes in Mathematics, vol.~69, Pitman (Advanced
  Publishing Program), Boston, Mass.-London, 1982. \MR{667669}

\bibitem[LNSdG19]{laporte2019introduction}
Gilbert Laporte, Stefan Nickel, and Francisco Saldanha-da Gama,
  \emph{Introduction to location science}, Location science, Springer, 2019,
  pp.~1--21.

\bibitem[Luc85]{luc1985structure}
Dinh~The Luc, \emph{Structure of the efficient point set}, Proc. Amer. Math.
  Soc. \textbf{95} (1985), no.~3, 433--440. \MR{806083}

\bibitem[Luc05]{dinh2005generalized}
\bysame, \emph{Generalized convexity in vector optimization}, Handbook of
  generalized convexity and generalized monotonicity, Nonconvex Optim. Appl.,
  vol.~76, Springer, New York, 2005, pp.~195--236. \MR{2098901}

\bibitem[MS23]{morfe_souganidis}
Peter~S. Morfe and Panagiotis~E. Souganidis, \emph{Comparison principles for
  second-order elliptic/parabolic equations with discontinuities in the
  gradient compatible with finsler norms}, Journal of Functional Analysis
  \textbf{285} (2023), no.~4, 109983.

\bibitem[MT03]{marsden}
Jerrold~E Marsden and Anthony Tromba, \emph{Vector calculus}, Macmillan, 2003.

\bibitem[NM97]{ndiaye1997}
Malick Ndiaye and Christian Michelot, \emph{A geometrical construction of the
  set of strictly efficient points in the polyhedral norm case}, Proceedings of
  the 9th {M}eeting of the {EURO} {W}orking {G}roup on {L}ocational {A}nalysis
  ({B}irmingham, 1996), no.~11, 1997, pp.~89--99. \MR{1689832}

\bibitem[NM98]{ndiaye1998efficiency}
M~Ndiaye and C~Michelot, \emph{Efficiency in constrained continuous location},
  European Journal of Operational Research \textbf{104} (1998), no.~2,
  288--298.

\bibitem[Nou05]{nouioua2005enveloppes}
Karim Nouioua, \emph{{E}nveloppes de {P}areto et r{\'e}seaux de {M}anhattan},
  Ph.D. thesis, PhD thesis, L’Universit{\'e} de la M{\'e}diterran{\'e}e,
  2005.

\bibitem[PF88]{pelegrin1988determination}
B.~Pelegrin and F.~R. Fernandez, \emph{Determination of efficient points in
  multiple-objective location problems}, vol.~35, 1988, Multiple criteria
  decision making, pp.~697--705. \MR{981198}

\bibitem[PF89]{pelegrin1989determination}
\bysame, \emph{Determination of efficient solutions for point-objective
  locational decision problems}, vol.~18, 1989, Facility location analysis:
  theory and applications (Namur, 1987), pp.~93--102. \MR{999155}

\bibitem[Roc15]{rockafellar}
Ralph~T. Rockafellar, \emph{Convex analysis}, Convex analysis, Princeton
  university press, 2015.

\bibitem[Sch97]{div-grad-curl}
HM~Schey, \emph{Div, grad curl, and all that: An informal text on vector
  calculus, 3" ed}, 1997.

\bibitem[Sch14]{schneider}
Rolf Schneider, \emph{Convex bodies: the {B}runn-{M}inkowski theory}, expanded
  ed., Encyclopedia of Mathematics and its Applications, vol. 151, Cambridge
  University Press, Cambridge, 2014. \MR{3155183}

\bibitem[SLH09]{smith2009locational}
Honora~K Smith, Gilbert Laporte, and Paul~Robert Harper, \emph{Locational
  analysis: highlights of growth to maturity}, Journal of the Operational
  Research Society \textbf{60} (2009), no.~1, S140--S148.

\bibitem[Tra21]{tran}
Hung~Vinh Tran, \emph{Hamilton-{J}acobi equations---theory and applications},
  Graduate Studies in Mathematics, vol. 213, American Mathematical Society,
  Providence, RI, [2021] \copyright 2021. \MR{4328923}

\bibitem[TWW84]{thisse1984some}
J.-F. Thisse, J.~E. Ward, and R.~E. Wendell, \emph{Some properties of location
  problems with block and round norms}, Oper. Res. \textbf{32} (1984), no.~6,
  1309--1327. \MR{775261}

\bibitem[WW85]{ward1985using}
James~E. Ward and Richard~E. Wendell, \emph{Using block norms for location
  modeling}, Oper. Res. \textbf{33} (1985), no.~5, 1074--1090. \MR{806920}

\end{thebibliography}
\bibliographystyle{amsalpha}

\end{document}